\documentclass[10pt,reqno]{amsart}
\usepackage[margin=1in]{geometry}

\usepackage{amsmath,amsthm,marvosym, wasysym,bbold,tensor,mdframed}
\usepackage{amssymb}
\usepackage{amsfonts}
\usepackage{mathscinet}
\usepackage[dvipsnames]{xcolor}
\usepackage{tikz-cd}
\usepackage{tikz,keyval,pgfsys,graphicx}
\usepackage{mathtools, enumerate, combelow}
\usepackage[shortlabels]{enumitem}

\usetikzlibrary{arrows, decorations.pathreplacing}
\usepgflibrary{arrows}

\newtheorem{thm}{\normalfont\scshape Theorem}[section]
\newtheorem*{main}{\normalfont\scshape Main Theorem}
\newtheorem{prop}[thm]{\normalfont\scshape Proposition}
\newtheorem{lem}[thm]{\normalfont\scshape Lemma}
\newtheorem{cor}[thm]{\normalfont\scshape Corollary}

\theoremstyle{definition}
\newtheorem{defn}[thm]{\normalfont\scshape Definition}

\theoremstyle{remark}
\newtheorem{rem}[thm]{Remark}
\theoremstyle{remark}

\theoremstyle{remark}

\allowdisplaybreaks

\begin{document}

\binoppenalty=10000
\relpenalty=10000

\numberwithin{equation}{section}

\newcommand{\uuu}{\mathfrak{u}}
\newcommand{\ccc}{\mathfrak{c}}
\newcommand{\qqq}{\mathfrak{q}}
\newcommand{\ddd}{\mathfrak{d}}
\newcommand{\QQ}{\mathbb{Q}}
\newcommand{\ZZ}{\mathbb{Z}}
\newcommand{\Hilb}{\mathrm{Hilb}}
\newcommand{\CC}{\mathbb{C}}
\newcommand{\PP}{\mathcal{P}}
\newcommand{\Hom}{\mathrm{Hom}}
\newcommand{\Rep}{\mathrm{Rep}}
\newcommand{\MM}{\mathfrak{M}}
\newcommand{\gl}{\mathfrak{gl}}
\newcommand{\ppp}{\mathfrak{p}}
\newcommand{\VV}{\mathcal{V}}
\newcommand{\NN}{\mathbb{N}}
\newcommand{\OO}{\mathcal{O}}
\newcommand{\HH}{\mathcal{H\kern-.44em H}}
\newcommand{\git}{/\kern-.35em/}
\newcommand{\KK}{\mathbb{K}}
\newcommand{\Proj}{\mathrm{Proj}}
\newcommand{\quot}{\mathrm{quot}}
\newcommand{\core}{\mathrm{core}}
\newcommand{\Sym}{\mathrm{Sym}}
\newcommand{\FF}{\mathbb{F}}
\newcommand{\Span}[1]{\mathrm{span}\{#1\}}
\newcommand{\UTor}{U_{\qqq,\ddd}(\ddot{\mathfrak{sl}}_\ell)}
\newcommand{\Sss}{\mathcal{S}}

\title{Wreath Macdonald polynomials as eigenstates}
\author{Joshua Jeishing Wen}
\keywords{Macdonald polynomials, quantum toroidal algebra, shuffle algebra}
\subjclass[2020]{Primary: 05E05, 81R10; Secondary: 33D52, 81R12.}
\address{Fakult\"{a}t f\"{ur} Mathematik, Universit\"{a}t Wien, Vienna, Austria}
\email{joshua.jeishing.wen@univie.ac.at}
\maketitle

\begin{abstract}
We show that the wreath Macdonald polynomials for $\ZZ/\ell\ZZ\wr\Sigma_n$, when naturally viewed as elements in the vertex representation of the quantum toroidal algebra $U_{\qqq,\ddd}(\ddot{\mathfrak{sl}}_\ell)$, diagonalize its horizontal Heisenberg subalgebra. 
Our proof makes heavy use of shuffle algebra methods, and we also obtain a new proof of existence of wreath Macdonald polynomials.
\end{abstract}

\section{Introduction}

Macdonald polynomials are distinguished bigraded characters of the symmetric groups $\Sigma_n$ defined by a pair of triangularity conditions along with a specified normalization: for $\lambda$ a partition of $n$, the \textit{transformed} Macdonald polynomial $H_\lambda$ is characterized by (cf. \cite{Haiman})
\begin{enumerate}
\item $H_\lambda\otimes\sum_{i=0}^n(-q)^i\left[\bigwedge^i\CC^n\right]$ lies in the span of the simple modules $[V_\mu]$ for $\mu\ge\lambda$;
\item $H_\lambda\otimes\sum_{i=0}^n(-t)^{-i}\left[\bigwedge^i\CC^n\right]$ lies in the span of the simple modules $[V_\mu]$ for $\mu\le\lambda$;
\item the coefficient of the trivial module in $H_\lambda$ is 1
\end{enumerate}
Here, $\CC^n$ is the reflection representation of $\Sigma_n$. 
From this definition alone, one may be surprised by their ubiquity in mathematics; for example, they have appeared in enumerative geometry, knot theory, quantum algebra, and probability. 
Many of these connections are branches of an initial link to integrable systems. 
Translating characters into symmetric functions via the Frobenius characteristic, one can view the polynomials instead as some basis of $\Lambda_{q,t}$, the twice-deformed ring of symmetric functions. 
In this setting, they distinguish themselves in an \textit{a priori} very different way: the Macdonald polynomials diagonalize the \textit{Macdonald operators}, a commuting family of difference operators that are the Hamiltonians of the quantum trigonometric Ruijsenaars-Schneider (tRS) integrable system. 
This paper is concerned with a generalization of this picture to what are known as the \textit{wreath} Macdonald polynomials.

The wreath Macdonald polynomials were proposed by Haiman in \cite{Haiman} as a generalization of the definition above from $\Sigma_n$ to the wreath product $\ZZ/\ell\ZZ\wr\Sigma_n$. 
Simple representations of $\ZZ/\ell\ZZ\wr\Sigma_n$ are instead indexed by $\ell$-tuples of partitions (cf. \cite{Mac} Chapter I, Appendix II). 
As a result, the $\ell$-core and $\ell$-quotient decomposition of an ordinary partition plays a key role: one can peel away contiguous strips of length $\ell$ from a partition $\lambda$ until one is left with $\core(\lambda)$, and the strips that are peeled away can be recorded in an $\ell$-tuple of partitions $\quot(\lambda)$. 
We review these notions in Section \ref{Bosons}. 
The wreath product $\ZZ/\ell\ZZ\wr\Sigma_n$ also has a natural reflection representation $\mathfrak{h}_n\cong\CC^n$, and for $\lambda$ such that the sizes of the components of $\quot(\lambda)$ sum up to $n$, the wreath Macdonald polynomial $H_\lambda$ is characterized by 
\begin{enumerate}
\item $H_\lambda\otimes\sum_{i=0}^n(-q)^i\left[\bigwedge^i\mathfrak{h}_n^*\right]$ lies in the span of the simple modules $[V_{\quot(\mu)}]$ where $\core(\mu)=\core(\lambda)$ and $\mu\ge\lambda$;
\item $H_\lambda\otimes\sum_{i=0}^n(-t)^{-i}\left[\bigwedge^i\mathfrak{h}_n^*\right]$ lies in the span of the simple modules $[V_{\quot(\mu)}]$ where $\core(\mu)=\core(\lambda)$ and $\mu\le\lambda$;
\item the coefficient of the trivial module in $H_\lambda$ is 1.
\end{enumerate}
One can view each $\ell$-core as giving an ordering on $\ell$-tuples of partitions with which to define our triangularity conditions. 
Thus, for each $\ell$-core, this definition gives a basis of the representation ring of $\ZZ/\ell\ZZ\wr\Sigma_n$.

Similar to the classical Macdonald polynomials, it is not obvious from the definition that the wreath Macdonald polynomials exist. 
Nearly a decade after Haiman's proposal, Bezrukavnikov and Finkelberg proved their existence as well as an analogue of Macdonald positivity \cite{BezFink}. 
As far as we are aware, no published results on the subject have appeared since. 
In this paper, we prove the second fundamental fact about wreath Macdonald polynomials: that they diagonalize a large commutative algebra of operators. 
We conjecture that this algebra can be identified with the commutative algebra of Hamiltonians for some generalization of the quantum tRS system---we discuss this in more detail below.

\subsection{Statement of the main theorem} The aforementioned commutative algebra sits inside a larger structure: the quantum toroidal algebra $U_{\qqq,\ddd}(\ddot{\mathfrak{sl}}_\ell)$. 
To see why this larger structure is natural to consider in this setting, let us revisit \cite{BezFink}. 
Generalizing Haiman's seminal proof of the Macdonald positivity conjecture, the authors of \textit{loc. cit.} construct $\ZZ/\ell\ZZ\wr\Sigma_n$-equivariant bundles on cyclic Nakajima quiver varieties whose fibers at torus-fixed points are representations of $\ZZ/\ell\ZZ\wr\Sigma_n$ satisfying the definition of the wreath Macdonald polynomials. 
The main takeaway for us is that this matches wreath Macdonald polynomials with fixed-point classes in torus-equivariant K-theory of cyclic quiver varieties. 
On these K-theory groups, Varagnolo and Vasserot had previously constructed an action of $U_{\qqq,\ddd}(\ddot{\mathfrak{sl}}_\ell)$ \cite{VVCyclic}.

The quantum toroidal algebra contains two copies of the quantum affine algebra $U_\qqq(\dot{\mathfrak{gl}}_\ell)$, called the vertical and horizontal subalgebras. 
Each of them in turn contains a rank $\ell$ Heisenberg subalgebra, which we also call vertical and horizontal. 
In the construction of Varagnolo-Vasserot, it is obvious that the fixed point classes diagonalize the \textit{vertical} Heisenberg subalgebra. 
Work of Nagao \cite{NagaoK} identifies this K-theoretic module with the $q$-deformed fermionic Fock space $\mathcal{F}$ of Kashiwara-Miwa-Stern \cite{KMS}. 
Thus, we can shave off the geometry and say that in this fermionic module, one has a commutative algebra of operators and its diagonal basis, although it is not clear how to directly situate the representation theory of $\ZZ/\ell\ZZ\wr\Sigma_n$ in this picture.

On the other hand, from work \cite{FJW} of I. Frenkel, Jing, and Wang, a natural home for the wreath product representations is the vertex representation $W$ of $U_{\qqq,\ddd}(\ddot{\mathfrak{sl}}_\ell)$, which is like a bosonic Fock space. 
Recall that similar to the representation theory of the groups $\Sigma_n$, there is a \textit{wreath} Frobenius characteristic relating representation rings of the groups $\{\ZZ/\ell\ZZ\wr\Sigma_n\}_{n\ge 1}$ for fixed $\ell$ and the ring $\Lambda^{\otimes\ell}$ (also reviewed in Section \ref{Bosons}). 
Now, as a vector space, $W\cong\Lambda_{q,t}^{\otimes\ell}\otimes\CC[Q]$, where $Q$ is the root lattice of $\mathfrak{sl}_\ell$. 
One can use the root lattice to index $\ell$-cores, so this extra tensor factor makes  $W$ a natural home for the wreath Macdonald polynomials. 
As usual, we write elements of $\CC[Q]$ using the exponentiated basis $\{e^\alpha:\alpha\in Q\}$.

From the definition of the action on $W$, it is far from obvious that there exists an eigenbasis for a commutative subalgebra as large as the vertical Heisenberg subalgebra. 
However, Tsymbaliuk \cite{Tsym} has shown that $\mathcal{F}$ and $W$ are twisted isomorphic, where the twist here is a conceptually beautiful but formulaically complicated automorphism of Miki \cite{Miki} that switches the vertical and horizontal subalgebras. 
Therefore, since the vertical Heisenberg subalgebra is diagonalized in $\mathcal{F}$, the \textit{horizontal} Heisenberg subalgebra must be diagonalized on $W$. 
We can now state our main theorem:

\begin{main}
The wreath Macdonald polynomials form an eigenbasis for the horizontal Heisenberg subalgebra of the quantum toroidal algebra of $\mathfrak{sl}_\ell$. 
\end{main}

\noindent The diagonal basis in $\mathcal{F}$ is indexed by partitions and we denote it by $\{|\lambda\rangle\}$. 
Our theorem matches $|\lambda\rangle$ and $H_\lambda$ up to an unknown nonzero constant. 
We also note that our proof gives a new proof of existence of wreath Macdonald polynomials.


\subsection{Strategy of the proof} There is no simple way to study elements of the horizontal Heisenberg subalgebra, and our strategy and methods are highly indirect as a result. 
Our overall plan is to translate the pair of triangularity conditions to the fermionic module $\mathcal{F}$ and show that they are satisfied by $\left\{ |\lambda\rangle \right\}$. 
Before that, we must translate them from wreath product representations into bosons. 
As in the classical Macdonald case, we can rewrite the tensor product appearing in the definition as a `plethystic substitution', i.e. as a ring automorphism on $\Lambda_{q,t}^{\otimes\ell}$ defined by a linear map on power sum generators. 
By inverting these automorphisms, we can express conditions (1) and (2) as saying that $H_\lambda$ spans the intersection of two subspaces: one obtained by multiplying certain combinations of plethystically-transformed, colored \textit{complete} symmetric functions to $1\otimes e^{\core(\lambda)}$ and another obtained by multiplying plethystically-transformed, colored \textit{elementary} symmetric functions instead (here, by color we mean the tensorand in $\Lambda_{q,t}^{\otimes\ell}$). 
Thus, we can build up these two subspaces via multiplication by certain bosonic generators.

In $W$, these bosonic multiplications come from the action of \textit{vertical} Heisenberg elements. 
Therefore, in $\mathcal{F}$, they must come from \textit{horizontal} Heisenberg elements. 
We are stuck again with analyzing elements of the horizontal Heisenberg subalgebra, but on the fermionic side, the problem is amenable to shuffle algebra methods. 
The shuffle algebra is a certain space of symmetric rational functions endowed with an exotic product, and vaguely speaking, these functions are meant to model correlation functions of $\UTor$. 
By work of Negu\cb{t} \cite{NegutTor}, the shuffle algebra is isomorphic to a certain part of $\UTor$, and for our purposes, we can find shuffle elements corresponding to the horizontal Heisenberg elements of interest.

Before addressing how we do so, let us first discuss why this is a good idea. 
By Proposition IV.8 of \cite{NegutCyc} (Proposition \ref{ShuffleFock} in this paper), a shuffle element $F$ acts on the basis $\{|\lambda\rangle\}$ by adding certain boxes to the partition and appending a coefficient obtained by, roughly speaking, evaluating $F$ at the $(q,t)$-weights of the added boxes. 
A consequence of this is that one can determine that certain matrix elements must vanish by considering the zeros of $F$. 
Naively, to prove the theorem, we can try to show that when multiplying by the appropriate horizontal Heisenberg elements to $|\core(\lambda)\rangle$ to obtain the subspace for condition (1), the matrix elements for $|\mu\rangle$ vanish when $\mu\not\ge\lambda$ and likewise for condition (2), the matrix elements for $|\mu\rangle$ vanish when $\mu\not\le\lambda$. 

To find these shuffle elements, we adapt work of Feigin and Tsymbaliuk, specifically Sections 3 and 4 of \cite{FeiTsym}. 
First, we give a characterization of shuffle elements corresponding to the negative half of the horizontal Heisenberg subalgebra, which is where our elements of interest live. 
For technical reasons, in order to do this, we need a shuffle presentation of some set of generators of the Heisenberg subalgebra. 
A suitable generating set is given by vacuum-to-vacuum matrix elements of $L$-operators in the vertex representations, since we can find the corresponding shuffle elements by computing the vacuum correlation functions of those representations. 
We are then able to identify the negative half of the horizontal Heisenberg subalgebra with a subspace of functions satisfying certain limit conditions.

On this subspace of functions, we define two \textit{Gordon filtrations}, cf. also \cite{TsymBook} 3.2.4-3.2.5. 
These are filtrations defined in terms of certain evaluations: if a function vanishes on more evaluations, it lies deeper within the filtration. 
Using the known shuffle presentations of the $L$-operators from the previous paragraph, we can actually translate these evaluation functionals in terms of bosons in the horizontal Heisenberg subalgebra. 
This allows us to translate the filtrations as well, and from here, we can see that the plethystically-transformed, colored complete and elementary symmetric functions we care about each lie in a one-dimensional piece of the filtration. 
On the shuffle algebra side, it is not too hard to find shuffle elements spanning each of these one-dimensional pieces, giving us the desired shuffle presentations up to constants. 

Examining the zeros of these shuffle elements, one may be disappointed by how little they impose on the newly added boxes. 
We end the paper with some combinatorial results on partitions necessary to convert these weak conditions into the triangularity results needed to prove the theorem. 
Due to the specificity of our desired results, we have been unable to find suitable references for this part of the paper. 
Therefore, we do not know if our arguments are novel. 

Strictly speaking, our results on the quantum toroidal algebra and the shuffle algebra only apply to $\ell\ge 3$. 
For $\ell=2$, the definitions of the quantum toroidal algebra, its vertex representation, and the shuffle algebra are different but we do not expect our results to change significantly.
We recommend Section 5 of \cite{FeiTsym} as a reference for the necessary alterations in this case.

\subsection{Further directions} Our work allows wreath Macdonald theory to make contact with methods from quantum algebra, and we expect this interaction to continue bearing fruit.

For example, following \cite{FeiTsymK}, our shuffle elements can be used to produce wreath Pieri rules.
Recall that the wreath Macdonald polynomials are generalizations of \textit{transformed} Macdonald polynomials.
One can write down a definition of an analogue of \textit{ordinary} Macdonald polynomials.
The two differ by a plethystic transformation as well as by a renormalization, and finding the renomalization term is also an interesting problem.
The primary challenge for finding Pieri rules would then be producing a \textit{manicured} formula for them as in the classical case.

The quantum algebraic structure also allows a systematic study of degenerations.
One aspect of wreath Macdonald theory that may be strange for a symmetric function theorist is that we automatically jump to the double-deformed case without regarding analogues of Jack and Hall-Littlewood polynomials.
The `Jack degeneration' of the quantum toroidal algebra is the \textit{affine Yangian}.
By investigating a similar eigenbasis in the analogue of the fermionic Fock space, Uglov has defined certain $\mathrm{Jack}(\mathfrak{gl}_\ell)$ polynomials \cite{Ugl} (cf. \cite{Kod}).
He also shows that his polynomials diagonalize the quantum Hamiltonians for the spin Calogero-Moser system, hinting at a similar relation between the wreath Macdonald polynomials and the spin tRS system. 

Following our diagonalization result, a natural direction to pursue is to find a structure analagous to the double affine Hecke algebra (DAHA).
In the usual Macdonald case, the quantum toroidal algebra of $\mathfrak{gl}_1$ can be realized as a stabilization of the spherical DAHAs for $GL_n$ as $n$ goes to infinity (cf. \cite{SchiffVass}).
Whatever the correct answer for the wreath case should be, we expect a similar relation with the higher rank quantum toroidal algebra.
We suspect that structures uncovered by Chalykh and Fairon in their study of multiplicative quiver varieties \cite{CF2} have something to do with this.
The work of Chalykh-Fairon shows that multiplicative quiver varieties for the Jordan quiver are phase spaces for the spin tRS system. 
Quantizing these spaces, one should obtain analogues of the spherical DAHA for this system.
We also expect these quantizations to coincide with quantum K-theoretic Coulomb branches for the cyclic quiver (cf. \cite{BEF}, \cite{FinkTsym}).


\subsection{Outline of the paper} Section 2 begins with a review of the representation theory of wreath products.
We then go over the partition combinatorics necessary to define the wreath Macdonald polynomials ($\ell$-cores, $\ell$-quotients, etc.).
Finally, we introduce Haiman's definition and rewrite it so that a wreath Macdonald polynomial is characterized by spanning the intersection of two subspaces.

Section 3 introduces the quantum toroidal algebra and its structures.
Besides covering the zoo of subalgebras and representations in play, we also review factorizations of $R$-matrices in both the affine and toroidal cases.
This will be crucial for computing matrix elements of $L$-operators. 

Section 4 is the technical heart of the paper.
Here, we define the shuffle algebra and review its relationship to $\UTor$.
We then carry out the constructions and arguments as outlined in the introduction with the main result being the shuffle presentation of the horizontal Heisenberg elements of interest.

Section 5 contains the combinatorial arguments on partitions necessary to prove the theorem.
The results here may be of independent interest to box-stacking enthusiasts. 

We close with two appendices that only make contact with \ref{FactorProof} from the main body of the paper.
Appendix \ref{DualVertexApp} contains details about the dual vertex representation.
Appendix \ref{PosMode} outlines a version of the arguments from Section 4 for the opposite half of the toroidal algebra.

\subsection{Acknowledgements} I'd like to thank Philippe di Francesco, Rinat Kedem, Andrei Negu\cb{t}, and Oleksandr Tsymbaliuk for tremendously helpful conversations as well as for inviting me to speak about this work at their respective seminars.
I also want to thank Daniel Orr and Mark Shimozono for our collaboration, which gave this work a much-needed second wind. 
Most of all, I want to thank Tom Nevins for helping me navigate what mathematicians are and what a paper is.
Finally, I'd like to thank the anonymous referee for their patience and precision in helping me correct numerous mistakes.
This work was supported by a Gene H. Golub Fellowship, a Louis C. Hack Fellowship, a James D. Hogan Memorial Scholarship, as well as a Tondeur Dissertation Prize, all from the UIUC Department of Mathematics.
Additionally, this work received support from NSF grants DMS-1502125 and DMS-1802094 as well as the NSF-RTG grant ``Algebraic Geometry and Representation Theory at Northeastern University'' (DMS-1645877).

\subsection{Notation} A \textit{partition} $\lambda=(\lambda_1, \lambda_2,\ldots)$ is a nonincreasing list of nonnegative integers:
\[
\lambda_1\ge \lambda_2\ge\cdots
\]
The highest index $k$ where $\lambda_k\not=0$ is called its \textit{length} and denoted by $\ell(\lambda)$.
The \textit{size} is denoted by $|\lambda|:=\lambda_1+\cdots+\lambda_{\ell(\lambda)}$.
Another notation for $\lambda$ we may use is
\[\lambda=(1^{m_1}2^{m_2}\ldots)\]
where $m_i$ is multiplicity with which $i$ appears in $\lambda$.
The transposed partition is denoted by $\tensor[^t]{\lambda}{}$.
We will also make frequent use of vectors of partitions.
Subscripts will always index the part of a partition while superscripts will always index the component of such a vector.
Finally, $\lambda\ge\mu$ denotes dominance order: for all $k$,
\[
\lambda_1+\cdots+\lambda_k\ge\mu_1+\cdots+\mu_k
\]

As usual, for an integer $n$ and variable $\qqq$, $[n]_\qqq$ denotes the quantum number
\[[n]_\qqq=\frac{\qqq^n-\qqq^{-n}}{\qqq-\qqq^{-1}}\]
The factorial is then defined for $n>0$ as
\[[n]_\qqq!:=\prod_{i=1}^n[i]_\qqq\]

To save space, we will occasionally need to index products of noncommuting elements.
For this, we use the notation
\[\overset{\curvearrowright}{\prod_{i=1}^n}a_i=a_1\cdots a_n\]
to denote the product ordered from left to right according to the index.
Similarly, we use the notation
\[\overset{\curvearrowleft}{\prod_{i=1}^n}a_i=a_n\cdots a_1\]
for the product in the opposite order.

\section{Bosons}\label{Bosons}

\subsection{Representation theory of wreath products}\label{Wreath} 
Throughout this subsection, $\Gamma$ will be a finite group, $\Gamma^*$ will denote the set of its irreducible complex representations, and $\Gamma_*$ will denote its conjugacy classes. 
We let
\[\ell:=|\Gamma^*|=|\Gamma_*|\]
Our presentation closely follows Chapter I, Appendix II of \cite{Mac}. 
We direct the interested reader to this classic reference for any details and proofs.

\subsubsection{Wreath products} 
The wreath product $\Gamma_n:=\Gamma\wr\Sigma_n$ is by definition the semi-direct product
\[\Gamma^n\rtimes\Sigma_n\]
where the action is given by permuting the $n$ copies of $\Gamma$. 
One can concretely realize this group as the set of $n\times n$ permutation matrices with `entries in $\Gamma$'. 
We will instead just view elements of $\Gamma_n$ as pairs $(\vec{g},\sigma)$, where $\vec{g}=(g_1,\ldots,g_n)\in\Gamma^n$ and $\sigma\in\Sigma_n$ . 

\subsubsection{Conjugacy classes}\label{Conj} 
Recall that the conjugacy class of an element $\sigma\in\Sigma_n$ is determined by its cycle type, and thus conjugacy classes of $\Sigma_n$ are indexed by partitions of $n$. 
On the other hand, for an element $(\vec{g},\sigma)\in\Gamma_n$, we consider for each cycle $z=(i_1\ldots i_s)$ of $\sigma$ its \textit{cycle product} $g_{i_1}\cdots g_{i_s}\in\Gamma$. 
For each $c\in\Gamma_*$, we can gather together the cycles of $\sigma$ whose cycle product lies in $c$ and assign a partition $\lambda^c$ to $c$ in the natural way. 
Notice that
\[\sum_{c\in\Gamma_*}|\lambda^c|=n\]
We call a vector of partitions $(\lambda^c)_{c\in\Gamma_*}$ an \textit{$\ell$-multipartition} (or multipartition if it is clear from context), and if the sizes of its components sum to $n$, we say it is an \textit{$\ell$-multipartition of $n$}. 
In the way outlined above, the conjugacy classes of $\Gamma_n$ are indexed by $\ell$-multipartitions of $n$.

\subsubsection{Irreducible representations} 
As a consequence of \ref{Conj}, we can index the irreducible representations of $\Gamma_n$ by $\ell$-multipartitions of $n$. 
For such a multipartition $\vec{\lambda}=\left(\lambda^\gamma\right)_{\gamma\in\Gamma^*}$, we can concretely realize this representation in the following way (cf. \cite{JKerb}). 
Let $I_\gamma$ by the corresponding irreducible $\Gamma$-module for $\gamma\in\Gamma^*$. 
The tensor power $I_{\gamma}^{\otimes |\lambda^\gamma|}$ is a $\Gamma_{|\lambda^\gamma|}$-module in the following natural way:
\[(\vec{g},\sigma)\cdot v_1\otimes\cdots\otimes v_{|\lambda^\gamma|}=g_1v_{\sigma^{-1}(1)}\otimes\cdots\otimes g_{|\lambda_\gamma|}v_{\sigma^{-1}(|\lambda^\gamma|)}\]
On the other hand, if we let $V_{\lambda^\gamma}$ be the irreducible $\Sigma_{|\lambda^\gamma|}$-module corresponding to $\lambda^\gamma$, we can endow it with $\Gamma_{|\lambda^\gamma|}$-module structure by having the $\Gamma$-factors act trivially. 
The irreducible representation $V_{\vec{\lambda}}$ of $\Gamma_n$ corresponding to $\vec{\lambda}$ is then realized by the induced representation
\[V_{\vec{\lambda}}\cong\mathrm{Ind}_{\prod_{\gamma\in\Gamma^*}\Gamma_{|\lambda^\gamma|}}^{\Gamma_n}
\left\{
\bigotimes_{\gamma\in\Gamma^*}\left[
V_{\lambda^\gamma}\otimes I_\gamma^{\otimes|\lambda^\gamma|}\right]
\right\}
\]
For example, when $\lambda^{\gamma}=(n)$ and all the other entries of $\vec{\lambda}$ are empty, then 
\[V_{\vec{\lambda}}\cong I_\gamma^{\otimes n}\] 
Similarly, when $\lambda^\gamma=(1^n)$ and all the other entries are empty, 
\[V_{\vec{\lambda}}\cong\mathrm{sign}\otimes I_\gamma^{\otimes n}\]

\subsubsection{Recollections from symmetric function theory}\label{Sym} 
In the representation theory of symmetric groups, one learns that there is great utility in considering all the symmetric groups together. 
We would like to take a similar approach for wreath products with $\Gamma$ fixed, and thus we first review the classical story. 

By `all the symmetric groups together', we mean the following: if we let $R_n=\CC\otimes\Rep(\Sigma_n)$, we can consider the direct sum
\[R=\bigoplus_nR_n\]
We can view $R$ as a ring via the \textit{induction product}. 
Namely, for $\Sigma_n\curvearrowright V$ and $\Sigma_m\curvearrowright W$, 
\[V*W:=\mathrm{Ind}_{\Sigma_n\times\Sigma_m}^{\Sigma_{n+m}}(V\times W)\]
This ring in turn can be modeled by the ring $\Lambda$  of symmetric functions `in infinitely many variables'.
In $\Lambda$, we have the following generating sets and bases:
\begin{enumerate}
\item the \textit{power sums} $p_n$ and the basis given by 
\[p_\lambda=p_{\lambda_1}\cdots p_{\lambda_k};\]
\item the \textit{elementary symmetric functions} $e_n$ and the basis given by 
\[e_\lambda=e_{\tensor[^t]{\lambda}{}_1}\cdots e_{ \tensor[^t]{\lambda}{}_k}\]
(note the transpose);
\item the \textit{complete symmetric functions} $h_n$ and the basis $\{h_\lambda\}$ defined similarly to $\{p_{\lambda}\}$;
\item the basis of \textit{Schur functions} $\{s_\lambda\}$.
\end{enumerate}
A nice feature of the theory is how these bases and generating sets are related to each other. 
For example, the first three families share the following fundamental relationship:
\begin{align*}
\sum_{r\ge 0}h_rz^r&=\exp\left(\sum_{r>0}p_r\frac{z^r}{r}\right)\\
\sum_{r\ge 0}(-1)^re_rz^r&=\exp\left(-\sum_{r>0}p_r\frac{z^r}{r}\right)
\end{align*}
We will also make use of the following ``triangularity'' relating $\left\{ s_\lambda \right\}$ with $\left\{ e_\lambda \right\}$ and $\left\{ h_\lambda \right\}$:

\begin{prop}[cf. I.6 of \cite{Mac}]\label{SchurTriangle}
There exist coefficients $\left\{ a_{\lambda\mu} \right\}$ and $\left\{ b_{\lambda\mu} \right\}$ such that
\[
s_\lambda=\sum_{\mu\le\lambda}a_{\lambda\mu}e_\mu=\sum_{\mu\ge\lambda}b_{\lambda\mu}h_{\mu}
\]
\end{prop}

Returning to representation theory, note that the irreducible representations (irrep for short) of $\Sigma_n$ are indexed by partitions of $n$. 
Letting $V_\lambda$ denote this corresponding irrep, we have the classic result:
\begin{thm}
The linear map $R\rightarrow\Lambda$ induced by
\[[V_\lambda]\mapsto s_\lambda\]
is a ring isomorphism.
\end{thm}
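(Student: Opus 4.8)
The plan is to exhibit the stated map as the \emph{Frobenius characteristic} and verify its properties one at a time. First I would define, for a $\Sigma_n$-representation $V$ with character $\chi_V$, the symmetric function
\[
\mathrm{ch}(V) \;=\; \sum_{\lambda\,\vdash\, n} z_\lambda^{-1}\,\chi_V(\lambda)\,p_\lambda \;\in\;\Lambda,
\]
where $\chi_V(\lambda)$ is the value of $\chi_V$ on the conjugacy class of cycle type $\lambda$ and $z_\lambda=\prod_i i^{m_i}m_i!$ is the order of the centralizer of such a permutation. Extending $\CC$-linearly produces a map $\mathrm{ch}\colon R\to\Lambda$. It is injective because the $p_\lambda$ are linearly independent and a virtual representation is determined by its character; since $\dim_\CC R_n$ equals the number of conjugacy classes of $\Sigma_n$, which is the number of partitions of $n$ and hence the dimension of the degree-$n$ part $\Lambda^n$, $\mathrm{ch}$ restricts to a linear isomorphism $R_n\xrightarrow{\sim}\Lambda^n$ for every $n$, and therefore $R\xrightarrow{\sim}\Lambda$.

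Next I would check that $\mathrm{ch}$ is a ring homomorphism, i.e. $\mathrm{ch}(V*W)=\mathrm{ch}(V)\,\mathrm{ch}(W)$ for $V\in R_n$ and $W\in R_m$. Expanding the character of $\mathrm{Ind}_{\Sigma_n\times\Sigma_m}^{\Sigma_{n+m}}(V\boxtimes W)$ by the induced-character formula and reorganizing the resulting sum over cycle types of $\Sigma_{n+m}$ according to how each cycle type splits into a cycle type for $\Sigma_n$ and one for $\Sigma_m$ reduces the identity to $p_\lambda p_\mu=p_{\lambda\cup\mu}$ together with the multiplicativity of the $z_\lambda$; this is a routine bookkeeping computation. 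In particular, directly from the definition, $\mathrm{ch}$ of the trivial module $\mathbf 1_n$ equals $\sum_{\lambda\,\vdash\, n}z_\lambda^{-1}p_\lambda=h_n$, so that $\mathrm{ch}\bigl(\mathrm{Ind}_{\Sigma_\mu}^{\Sigma_n}\mathbf 1\bigr)=h_\mu$ for every partition $\mu$.

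Finally — and this is the step I expect to be the crux — I would identify $\mathrm{ch}([V_\lambda])$ with $s_\lambda$. My preferred route equips $R_n$ with the inner product $\langle V,W\rangle=\dim\Hom_{\Sigma_n}(V,W)$ and $\Lambda$ with the Hall pairing determined by $\langle p_\lambda,p_\mu\rangle=z_\lambda\delta_{\lambda\mu}$; column orthogonality of characters then shows that $\mathrm{ch}$ is an isometry, so $\{\mathrm{ch}([V_\lambda])\}_{\lambda\,\vdash\, n}$ is an orthonormal basis of $\Lambda^n$. By Young's rule the permutation modules expand in the irreducibles unitriangularly for dominance order, $\mathrm{Ind}_{\Sigma_\mu}^{\Sigma_n}\mathbf 1=[V_\mu]+\sum_{\lambda>\mu}K_{\lambda\mu}[V_\lambda]$; inverting this unitriangular system and applying $\mathrm{ch}$ gives $\mathrm{ch}([V_\lambda])\in h_\lambda+\sum_{\mu>\lambda}\ZZ\,h_\mu$. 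Since the Schur functions are the unique orthonormal basis of $\Lambda^n$ lying in $h_\lambda+\sum_{\mu>\lambda}\ZZ\,h_\mu$ with positive leading coefficient — equivalently, by the Jacobi--Trudi/Kostka expansion $h_\mu=\sum_\lambda K_{\lambda\mu}s_\lambda$ — we conclude $\mathrm{ch}([V_\lambda])=s_\lambda$, and hence $[V_\lambda]\mapsto s_\lambda$ is the ring isomorphism already established. An alternative that sidesteps Young's rule is to evaluate $\mathrm{ch}([V_\lambda])$ directly via the Frobenius character formula and recognize the resulting Weyl-type quotient of alternants as $s_\lambda$; in either approach the genuine content is precisely this comparison of two "triangular orthonormal" descriptions of the same basis, everything preceding it being formal.
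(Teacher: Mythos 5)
The paper does not prove this theorem: it is stated as a classical result, with the reader referred to Macdonald's book for details, so there is no in-paper argument to compare yours against. Your proposal is a correct rendition of the standard textbook proof (linear isomorphism by character theory and dimension count, ring homomorphism via the induced character formula and $p_\lambda p_\mu = p_{\lambda\cup\mu}$, and identification of $\mathrm{ch}([V_\lambda])$ with $s_\lambda$ by combining the isometry property with the unitriangular expansion of permutation modules and of the $h_\mu$ in Schur functions). The only nitpick is that the isometry $\langle \mathrm{ch}(V),\mathrm{ch}(W)\rangle = \dim\Hom_{\Sigma_n}(V,W)$ follows directly from the class-size count $|C_\lambda| = n!/z_\lambda$ together with the first (row) orthogonality of irreducible characters, rather than from column orthogonality as you state; this is a misattribution, not a gap, and the rest of the argument, including the `orthogonal plus unitriangular implies identity' step pinning down $\mathrm{ch}([V_\lambda]) = s_\lambda$, is sound.
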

\noindent We call this map the \textit{Frobenius characteristic}. 
Of note is that under this map, $h_n$ and $e_n$ correspond to the trivial and sign representations of $\Sigma_n$, respectively. 
Finally, for $\lambda=(1^{m_1}2^{m_2}\ldots)$, if we let
\[z_\lambda=\prod_i i^{m_i}m_i!\]
then the indicator function for the conjugacy class corresponding to $\lambda$ is mapped to $z_\lambda^{-1} p_\lambda$ (cf. formula (I.7.2) in \cite{Mac}). 

\subsubsection{Wreath Frobenius characteristic}\label{WreathFrob} 
Mirroring \ref{Sym}, we set
\[R_n(\Gamma):=\CC\otimes\Rep(\Gamma_n)\]
and consider
\[R(\Gamma):=\bigoplus_nR_n(\Gamma)\]
as a ring under the analogous induction product. 
For the analogue of symmetric functions, we first view $\Lambda$ as a single free boson:
\[\Lambda=\CC[p_r]_{r\ge 1}\]
In the wreath case, we take $\ell=|\Gamma_*|$ free bosons instead:
\[\Lambda(\Gamma):=\CC[p_r(c)]_{r\ge 1, c\in\Gamma_*}\]

Similar to \ref{Sym}, these new power sums will be closely related to indicator class functions. 
On the other hand, to access irreducible representations, we will need new generators indexed by $\gamma\in\Gamma^*$.
We abuse notation and conflate $\gamma$ with its character, i.e the class function given by taking the trace in the irreducible representation $\gamma$.
With this set, we define
\begin{equation}
p_r(\gamma):=\sum_{c\in\Gamma_*}\frac{\gamma(c)}{|\mathrm{Stab}(c)|}p_r(c)
\label{ConjGen}
\end{equation}

For each $\gamma\in\Gamma^*$, we can define $h_r(\gamma)$, $e_r(\gamma)$, and $s_\lambda(\gamma)$ by writing them in terms of the $p_r(\gamma)$ as one writes $h_r, e_r,$ and $s_\lambda$ in terms of the $p_r$. 
For a multipartition $\vec{\lambda}$, we can then define the \textit{multi-Schur function}:
\[s_{\vec{\lambda}}:=\prod_{\gamma\in\Gamma^*}s_{\lambda^\gamma}(\gamma)\]
Our analogue of the Frobenius characteristic is the following:
\begin{thm}
The linear map $R(\Gamma)\rightarrow\Lambda(\Gamma)$ induced by
\[[V_{\vec{\lambda}}]\mapsto s_{\vec{\lambda}}\]
is a ring isomorphism.
\end{thm}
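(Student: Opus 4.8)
The plan is to adapt the classical proof of the Frobenius characteristic theorem stated above. For a $|\Gamma_*|$-multipartition $\vec\rho=(\rho^c)_{c\in\Gamma_*}$ of $n$, set
\[
z_{\vec\rho}\;:=\;\prod_{c\in\Gamma_*}z_{\rho^c}\,|\mathrm{Stab}(c)|^{\ell(\rho^c)};
\]
a short computation shows that this is the order of the centralizer in $\Gamma_n$ of an element whose conjugacy class is $\vec\rho$ in the sense described above. I would first introduce the \emph{wreath characteristic map} $\mathrm{ch}\colon R(\Gamma)\to\Lambda(\Gamma)$, defined on the degree-$n$ component by
\[
\mathrm{ch}(f)\;=\;\sum_{\vec\rho}z_{\vec\rho}^{-1}\,f(\vec\rho)\,p_{\vec\rho},
\qquad p_{\vec\rho}:=\prod_{c\in\Gamma_*}p_{\rho^c}(c),
\]
where the sum runs over $|\Gamma_*|$-multipartitions $\vec\rho$ of $n$ and $f(\vec\rho)$ denotes the value of the class function $f$ on the conjugacy class $\vec\rho$. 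Since $\mathrm{ch}$ carries the indicator function of the class $\vec\rho$ to $z_{\vec\rho}^{-1}p_{\vec\rho}$, and both the indicator functions and the monomials $p_{\vec\rho}$ form bases of the respective degree-$n$ pieces, $\mathrm{ch}$ is immediately a linear isomorphism.

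The next step, which I expect to be the main obstacle, is to prove that $\mathrm{ch}$ intertwines the induction product on $R(\Gamma)$ with multiplication on $\Lambda(\Gamma)$; this is the structural heart of the argument, although it runs parallel to the symmetric-group computation. Expanding the induced character $\mathrm{Ind}_{\Gamma_n\times\Gamma_m}^{\Gamma_{n+m}}(f\times g)$ via the usual formula and pairing it against the $p_{\vec\rho}$, the centralizer orders $z_{\vec\rho}$ do exactly the bookkeeping needed so that concatenating cycle-types color by color reproduces $\mathrm{ch}(f)\,\mathrm{ch}(g)$; the only feature not already present in the classical case is the class label $c\in\Gamma_*$ attached to each cycle, which is simply carried along. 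Granting this, $\mathrm{ch}$ is a ring isomorphism.

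It then remains to identify $\mathrm{ch}([V_{\vec\lambda}])=s_{\vec\lambda}$. Using the realization of $V_{\vec\lambda}$ as the induction product over $\gamma\in\Gamma^*$ of the modules $V_{\lambda^\gamma}\otimes I_\gamma^{\otimes|\lambda^\gamma|}$, together with the ring-homomorphism property, the problem reduces to a single color, where I would show $\mathrm{ch}([V_\mu\otimes I_\gamma^{\otimes n}])=s_\mu(\gamma)$. Since the $\Gamma$-factors act trivially on $V_\mu$, the character of $V_\mu\otimes I_\gamma^{\otimes n}$ on an element of conjugacy class $\vec\rho$ factors as $\chi_\mu\bigl({\vec\rho}^{\,\flat}\bigr)\cdot\prod_{c\in\Gamma_*}\gamma(c)^{\ell(\rho^c)}$, where ${\vec\rho}^{\,\flat}$ is the partition of $n$ obtained by concatenating the components of $\vec\rho$ and $\chi_\mu$ is the ordinary irreducible character of $\Sigma_n$ labelled by $\mu$. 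Substituting this into the definition of $\mathrm{ch}$, grouping the terms according to the value ${\vec\rho}^{\,\flat}=\nu$, and collapsing the inner sum using the multinomial theorem and the definition \eqref{ConjGen} of $p_r(\gamma)$, one recovers exactly $\sum_{\nu\vdash n}z_\nu^{-1}\chi_\mu(\nu)\,p_\nu(\gamma)$, which is the classical expansion of $s_\mu$ with each $p_r$ replaced by $p_r(\gamma)$, that is, $s_\mu(\gamma)$; the cleanest instance is $\mu=(n)$, where summing over $n$ turns the left-hand side into $\exp\bigl(\sum_{r\ge1}p_r(\gamma)/r\bigr)=\sum_n h_n(\gamma)$. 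Multiplying over all $\gamma\in\Gamma^*$ then gives $\mathrm{ch}([V_{\vec\lambda}])=\prod_{\gamma\in\Gamma^*}s_{\lambda^\gamma}(\gamma)=s_{\vec\lambda}$, and since $\mathrm{ch}$ is a ring isomorphism, the theorem follows.
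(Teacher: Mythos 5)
Your proposal is correct, and it is essentially the paper's (cited) proof: the paper does not prove this theorem itself but defers to Chapter I, Appendix II of Macdonald's book, and your argument is precisely the standard one found there. In particular, your characteristic map sends the indicator function of the class $\vec\rho$ to $z_{\vec\rho}^{-1}p_{\vec\rho}$, which matches the normalization the paper records immediately after the theorem, and your single-color reduction via the induction-product realization of $V_{\vec\lambda}$ together with the classical expansion $s_\mu=\sum_\nu z_\nu^{-1}\chi_\mu(\nu)p_\nu$ is exactly how the identification $\mathrm{ch}([V_{\vec\lambda}])=s_{\vec\lambda}$ is obtained in that reference.
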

\noindent Again, of note is that $h_r(\gamma)$ corresponds to $I_\gamma^{\otimes r}$ and $e_r(\gamma)$ corresponds to $\mathrm{sign}\otimes I_\gamma^{\otimes r}$. 
For a multipartition $\vec{\lambda}=(\lambda^c)_{c\in\Gamma_*}$, the indicator function for the class corresponding to $\vec{\lambda}$ is mapped to
\[\prod_{c\in\Gamma_*}\frac{p_{\lambda^c}(c)}{z_{\lambda^c}|\mathrm{Stab}(c)|^{\ell(\lambda^c)}}\]
(cf. (6.2) of Chapter I, Appendix B of \cite{Mac}).

\subsection{Core-quotient decomposition} 
In order to define the wreath Macdonald polynomials, we will need to review some combinatorics relating ordinary partitions and $\ell$-multipartitions. 
The core-quotient decomposition of a partition is integral to the definition of wreath Macdonald polynomials. 
We believe that it is nicely viewed in terms of Maya diagrams, which will also play a crucial role in Section \ref{Combi}. 
Our presentation borrows much from \cite{Nagao1}. 
For the rest of this section, we fix $\ell\ge 1$. 

\subsubsection{Young diagrams}\label{Young} 
We will view the \textit{Young diagram} of a partition $\lambda$ as the set of $(a,b)\in\ZZ^2$ such that $1\le a\le\lambda_b$. 
A \textit{node} of $\lambda$ is a point in its Young diagram. 
In visual representations of Young diagrams, we will replace the nodes with boxes and follow the French convention. 
For example, below is the Young diagram of the partition $(4,4,2)$:
\begin{equation*}
\begin{tikzpicture}[scale=.5]
\draw (0,0)--(4,0);;
\draw (0,1)--(4,1);;
\draw (0,2)--(4,2);;
\draw (0,3)--(2,3);;
\draw (0,0)--(0,3);;
\draw (1,0)--(1,3);;
\draw (2,0)--(2,3);;
\draw (3,0)--(3,2);;
\draw (4,0)--(4,2);;
\end{tikzpicture}
\end{equation*}
\noindent These conventions will inform any visual language we may use with regards to Young diagrams (e.g. above, north, etc.). 
For a partition $\mu\subset\lambda$, we will also consider the \textit{skew diagram} $\lambda\backslash\mu$.
Even if $\mu\not\subset\lambda$, we will abbreviate $\lambda\backslash\mu:=\lambda\backslash(\mu\cap\lambda)$.

The \textit{content} of a node $(a,b)$ is $b-a$. 
This quantity marks the diagonal on which the node sits and increases towards the northwest. 
For $i\in\ZZ/\ell\ZZ$, this node is called an \textit{$i$-node} if $b-a\equiv i\hbox{ mod }\ell$. 

An \textit{$\ell$-strip} of a Young diagram is a connected subset of $\ell$ nodes on the outer rim whose removal leaves behind another Young diagram. 
For us, an $\ell$-strip `begins' at its northwestern-most node and `ends' at its southeastern-most node, and we use this to totally order the nodes within the strip (i.e. in \textit{decreasing} content).
An \textit{$\ell$-core} is a partition whose Young diagram contains no $\ell$-strips. 
The $\ell$-core of a partition $\lambda$ is the partition obtained after successively removing $\ell$-strips from $\lambda$ until one cannot do so anymore.
There may be many ways to exhaustively remove $\ell$-strips from $\lambda$, but all sequences of removals yield the same $\ell$-core, which we denote by $\core(\lambda)$.
For example, Figure \ref{fig:CoreEx} shows how to obtain the 3-core of $(4,4,2)$.

\begin{figure}[h]
\centering
\begin{tikzpicture}[scale=.5]
\draw (0,0)--(1,0)--(1,1)--(0,1)--(0,0);;
\draw (1,0)--(2,0)--(2,1)--(1,1)--(1,0);;
\draw (2,0)--(3,0)--(3,1)--(2,1)--(2,0);;
\draw[fill=lightgray] (3,0)--(4,0)--(4,1)--(3,1)--(3,0);;
\draw (0,1)--(1,1)--(1,2)--(0,2)--(0,1);;
\draw (1,1)--(2,1)--(2,2)--(1,2)--(1,1);;
\draw[fill=lightgray] (2,1)--(3,1)--(3,2)--(2,2)--(2,1);;
\draw[fill=lightgray] (3,1)--(4,1)--(4,2)--(3,2)--(3,1);;
\draw (0,2)--(1,2)--(1,3)--(0,3)--(0,2);;
\draw (1,2)--(2,2)--(2,3)--(1,3)--(1,2);;
\draw (5, 1.5) node {$\rightarrow$};;
\draw (6,0)--(7,0)--(7,1)--(6,1)--(6,0);;
\draw (7,0)--(8,0)--(8,1)--(7,1)--(7,0);;
\draw (8,0)--(9,0)--(9,1)--(8,1)--(8,0);;
\draw (6,1)--(7,1)--(7,2)--(6,2)--(6,1);;
\draw[fill=lightgray] (7,1)--(8,1)--(8,2)--(7,2)--(7,1);;
\draw[fill=lightgray] (6,2)--(7,2)--(7,3)--(6,3)--(6,2);;
\draw[fill=lightgray] (7,2)--(8,2)--(8,3)--(7,3)--(7,2);;
\draw (10, 1.5) node {$\rightarrow$};;
\draw (11,0)--(12,0)--(12,1)--(11,1)--(11,0);;
\draw (12,0)--(13,0)--(13,1)--(12,1)--(12,0);;
\draw (13,0)--(14,0)--(14,1)--(13,1)--(13,0);;
\draw (11,1)--(12,1)--(12,2)--(11,2)--(11,1);;
\end{tikzpicture}
\caption{The 3-core of $(4,4,2)$.}
\label{fig:CoreEx}
\end{figure}

\subsubsection{Maya diagrams} 
A \textit{Maya diagram} is a map $m:\ZZ\rightarrow\{\pm 1\}$ such that
\[m(j)=\left\{\begin{array}{ll}
-1 & \hbox{for }j\gg0\\
1 & \hbox{for }j\ll 0
\end{array}\right.\]
We can visually represent a Maya diagram by a string of white and black beads indexed by $\ZZ$, where the bead for $j\in\ZZ$ is white if $m(j)=-1$ and black if $m(j)=1$. 
For reasons apparent in \ref{YM}, our integers will increase towards the left:
\begin{equation*}
\begin{tikzpicture}[scale=.5]
\draw (-4, 0) node {$\cdots$};;
\draw (-3,0) node {3};;
\draw (-3,1) circle (5pt);;
\draw (-2,0) node {2};;
\draw[fill=black] (-2,1) circle (5pt);;
\draw (-1,0) node {1};;
\draw[fill=black] (-1,1) circle (5pt);;
\draw (0,0) node {0};;
\draw (0,1) circle (5pt);;
\draw (.5,.5)--(.5,1.5);;
\draw (1,0) node {-1};;
\draw[fill=black] (1,1) circle (5pt);;
\draw (2,0) node {-2};;
\draw[fill=black] (2,1) circle (5pt);;
\draw (3,0) node {-3};;
\draw (3,1) circle (5pt);;
\draw (4,0) node {-4};;
\draw (4,1) circle (5pt);;
\draw (5,0) node {-5};;
\draw[fill=black] (5,1) circle (5pt);;
\draw (6,0) node {$\cdots$};;
\end{tikzpicture}
\end{equation*}
\noindent Here, the ellipsis on the left signifies that the beads are all white after 3 and the ellipsis on the right signifies that the beads are all black after -5. 
The notch between 0 and -1 is called the \textit{central line} of the Maya diagram. 
We call the Maya diagram where all beads left of the central line are white and all beads right of the central line are black the \textit{vacuum diagram}.

We define the \textit{charge} of a Maya diagram as
\[c(m)=|\{j\ge 0:m(j)=1\}|-|\{j<0: m(j)=-1\}|\]
Visually, we can think about this as follows. 
The beads left of the central line will tend to be white while the beads right of the central line will tend to be black. 
Thus, white beads on the right and black beads on the left are exceptional, and the charge is the difference between the number of exceptions on the left (black) with those on the right (white). 
The charge of the Maya diagram given above is zero.
%

\subsubsection{Young-Maya correspondence}\label{YM} There is a natural bijection between Young diagrams and Maya diagrams of charge zero. 
To start, for a partition $\lambda$ and integer $j$, let $n_j(\lambda)$ denote the number of nodes in the Young diagram of $\lambda$ with content $j$. 
Observe that 
\[n_{j+1}(\lambda)-n_j(\lambda)=\left\{\begin{array}{rl}
-1\hbox{ or }0 & \hbox{for }j\ge 0\\
1\hbox{ or }0 & \hbox{for }j<0
\end{array}\right. \]
We define the corresponding Maya diagram as
\[m(\lambda)(j)=\left\{\begin{array}{rll}
1 & j\ge 0,& n_{j+1}(\lambda)-n_j(\lambda)=-1\\
-1 & j\ge 0, & n_{j+1}(\lambda)-n_j(\lambda)=0\\
-1 & j<0, & n_{j+1}(\lambda)-n_j(\lambda)=1\\
1 & j<0, & n_{j+1}(\lambda)-n_j(\lambda)=0
\end{array}\right.\]

This construction has the following very transparent visual interpretation. 
First, we tilt the Young diagram by 45 degrees to follow the Russian convention and draw lines marking the level sets for the content. 
We index the gap between the content $j$ and $j+1$ lines with $j$ so that the central line lines up with content zero line. 
The piece of the outer rim in each gap has either slope 1 or -1, and that is what our Maya diagram assigns to the corresponding integers. 
For the remaining integers, we assign the appropriate limiting values (-1 on the left and 1 on the right). 
Figure \ref{fig:YoungMaya} depicts this construction for $\lambda=(4,4,2)$.
\begin{figure}[h]
\centering
\begin{tikzpicture}[scale=.5]
\draw[thick] (-2.5,5)--(-.5,7)--(.5,6)--(2.5,8)--(4.5,6);;
\draw (0.5,2)--(4.5,6);;
\draw (-.5,3)--(3.5,7);;
\draw (-1.5,4)--(2.5,8);;
\draw (-2.5,5)--(-.5,7);;
\draw (0.5,2)--(-2.5,5);;
\draw (1.5,3)--(-1.5,6);;
\draw (2.5,4)--(-.5,7);;
\draw (3.5,5)--(1.5,7);;
\draw (4.5,6)--(2.5,8);;
\draw (-4, 0) node {$\cdots$};;
\draw (-3,0) node {3};;
\draw (-3,1) circle (5pt);;
\draw (-2,0) node {2};;
\draw[fill=black] (-2,1) circle (5pt);;
\draw (-1,0) node {1};;
\draw[fill=black] (-1,1) circle (5pt);;
\draw (0,0) node {0};;
\draw (0,1) circle (5pt);;
\draw (.5,.5)--(.5,1.5);;
\draw (1,0) node {-1};;
\draw[fill=black] (1,1) circle (5pt);;
\draw (2,0) node {-2};;
\draw[fill=black] (2,1) circle (5pt);;
\draw (3,0) node {-3};;
\draw (3,1) circle (5pt);;
\draw (4,0) node {-4};;
\draw (4,1) circle (5pt);;
\draw (5,0) node {-5};;
\draw[fill=black] (5,1) circle (5pt);;
\draw (6,0) node {$\cdots$};;
\draw[dashed] (0.5,2)--(0.5,9);;
\draw[dashed] (1.5,2)--(1.5,9);;
\draw[dashed] (2.5,2)--(2.5,9);;
\draw[dashed] (3.5,2)--(3.5,9);;
\draw[dashed] (4.5,2)--(4.5,9);;
\draw[dashed] (-0.5,2)--(-0.5,9);;
\draw[dashed] (-1.5,2)--(-1.5,9);;
\draw[dashed] (-2.5,2)--(-2.5,9);;
\end{tikzpicture}
\caption{The Young-Maya correspondence for $(4,4,2)$}
\label{fig:YoungMaya}
\end{figure}

\begin{prop}
The construction outlined above gives a bijection between Young diagrams and charge zero Maya diagrams.
\end{prop}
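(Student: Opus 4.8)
The plan is to check that $\lambda\mapsto m(\lambda)$ is well defined with image inside the charge-zero Maya diagrams, and then to exhibit an explicit inverse. For the first point, a finite Young diagram has boxes of only finitely many contents, so $n_j(\lambda)=0$ for $|j|$ large; hence $n_{j+1}(\lambda)-n_j(\lambda)=0$ for such $j$, and the defining formula then gives $m(\lambda)(j)=-1$ for $j\gg 0$ and $m(\lambda)(j)=1$ for $j\ll 0$, so $m(\lambda)$ is a genuine Maya diagram. To compute its charge I would telescope: $\sum_{j\ge 0}\bigl(n_{j+1}(\lambda)-n_j(\lambda)\bigr)=-n_0(\lambda)$ while $\sum_{j<0}\bigl(n_{j+1}(\lambda)-n_j(\lambda)\bigr)=n_0(\lambda)$, and since each summand is $0$ or $-1$ in the first sum and $0$ or $1$ in the second, exactly $n_0(\lambda)$ of the indices $j\ge 0$, and exactly $n_0(\lambda)$ of the indices $j<0$, contribute a nonzero term. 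Identifying which beads those indices label, one gets $|\{j<0:m(\lambda)(j)=-1\}|=|\{j\ge 0:m(\lambda)(j)=1\}|=n_0(\lambda)$, so the charge is zero.

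For the inverse I would reconstruct the content-counting sequence directly from a charge-zero Maya diagram $m$: put $n_j:=|\{j'\ge j:m(j')=1\}|$ for $j\ge 0$ and $n_j:=|\{j'<j:m(j')=-1\}|$ for $j<0$. The asymptotics of $m$ make both sets finite, the $n_j$ are manifestly nonnegative and tend to $0$, and by construction $n_{j+1}-n_j\in\{-1,0\}$ for $j\ge 0$ and $n_{j+1}-n_j\in\{0,1\}$ for $j\le -2$. The one spot where the two halves must be matched up is across the central line, and there $n_0-n_{-1}$ equals $1$ or $0$ according as $m(-1)=-1$ or not---precisely because $m$ has charge zero. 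Since a partition is recovered from its sequence of diagonal counts (equivalently, from the up/down lattice path the values $m(j)$ trace along its rim, as in the picture above; concretely one can read off the Frobenius coordinates, with $n_0$ the number of diagonal boxes), there is a unique $\lambda$ with $n_j(\lambda)=n_j$ for all $j$, and then $m(\lambda)=m$. That $\lambda\mapsto m(\lambda)$ and this assignment are mutually inverse is then immediate, since forming $m(\lambda)$ and taking the partial sums above just returns the sequence $n_j(\lambda)$.

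I expect the only genuinely delicate point to be this matching across the central line in the surjectivity step---verifying that the charge-zero hypothesis is exactly what makes the reconstructed data glue into an honest partition, with no off-by-one---together with pinning down the (standard) fact that a partition is determined by its content-counting function; everything else is bookkeeping with telescoping sums. A slightly slicker alternative would be to induct on $|\lambda|$ via Remark \ref{Rem11}(i): the Maya diagram of the empty partition is the vacuum diagram, adding a box corresponds to the displayed adjacent-bead swap, and every charge-zero Maya diagram is joined to the vacuum by finitely many such swaps, so the bijection transports from the base case---though one still has to carry along a size statistic to rule out collisions.
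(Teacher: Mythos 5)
The paper states this proposition without proof, so there is nothing to compare against line by line; your argument is a complete and correct version of the standard one (well-definedness and charge zero by telescoping the diagonal counts, then an explicit inverse built from partial counts of exceptional beads). Two remarks. First, be aware that the displayed formula for $m(\lambda)(j)$ in the paper appears to have its cases swapped: taken literally it assigns $m(j)=1$ whenever $n_{j+1}(\lambda)-n_j(\lambda)=0$ and $j\ge 0$, which would make the empty partition correspond to the all-black-on-the-left diagram rather than the vacuum diagram, and contradicts Figure 2. Your computation silently uses the corrected convention (the one consistent with the figure and with the definition of the vacuum diagram), under which your telescoping argument $|\{j<0:m(\lambda)(j)=-1\}|=|\{j\ge 0:m(\lambda)(j)=1\}|=n_0(\lambda)$ is exactly right; it would be worth saying explicitly which convention you are using. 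Second, the surjectivity step rests on the assertion that every finitely supported nonnegative sequence $(n_j)$ with $n_{j+1}-n_j\in\{-1,0\}$ for $j\ge 0$ and $n_{j+1}-n_j\in\{0,1\}$ for $j<0$ is the content-count sequence of a unique partition. You correctly isolate this as the only nontrivial input and correctly identify the charge-zero hypothesis as what makes the two halves glue at $j=-1$; the fact itself is standard (the sequence of differences is the sequence of rim slopes, which together with the asymptotic anchoring determines the boundary path and hence the partition), but since it carries the whole weight of injectivity and surjectivity, a one-line justification along those lines would make the proof self-contained. Your alternative sketch via Remark 2.1(i) (induction on box additions as adjacent bead swaps) is also viable and is closer in spirit to how the paper actually uses the correspondence later.
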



\begin{rem}\label{Rem11}
We can see that the Maya diagram encodes the outer rim of the Young diagram and therefore has simple interpretations for the addition and removal of nodes. 
Specifically:
\begin{enumerate}
\item Adding a node corresponds to the following switch on adjacent beads:
\begin{equation*}
\begin{tikzpicture}[scale=.5]
\draw (0,1) circle (5pt);;
\draw[fill=black] (1,1) circle (5pt);;
\draw (3,1) node {$\rightarrow$};;
\draw (6,1) circle (5pt);;
\draw[fill=black] (5,1) circle (5pt);;
\end{tikzpicture}
\end{equation*}
\noindent Removing a node corresponds to the opposite switch:
\begin{equation*}
\begin{tikzpicture}[scale=.5]
\draw[fill=black]  (0,1) circle (5pt);;
\draw (1,1) circle (5pt);;
\draw (3,1) node {$\rightarrow$};;
\draw[fill=black] (6,1) circle (5pt);;
\draw (5,1) circle (5pt);;
\end{tikzpicture}
\end{equation*}

\item Adding and removing an $\ell$-strip correspond to the same moves done to beads $\ell$ apart:

\begin{equation*}
\begin{tikzpicture}[scale=.5]
\draw (0,1) circle (5pt);;
\draw (0,0) node {$j+\ell$};;
\draw (1,1) node {$\cdots$};;
\draw[fill=black] (2,1) circle (5pt);;
\draw (2,0) node {$j$};;
\draw (4,1) node {$\rightarrow$};;
\draw[fill=black] (6,1) circle (5pt);;
\draw (6,0) node {$j+\ell$};;
\draw (7,1) node {$\cdots$};;
\draw (8,1) circle (5pt);;
\draw (8,0) node {$j$};;
\end{tikzpicture}
\end{equation*}

\begin{equation*}
\begin{tikzpicture}[scale=.5]
\draw[fill=black] (0,1) circle (5pt);;
\draw (0,0) node {$j+\ell$};;
\draw (1,1) node {$\cdots$};;
\draw (2,1) circle (5pt);;
\draw (2,0) node {$j$};;
\draw (4,1) node {$\rightarrow$};;
\draw (6,1) circle (5pt);;
\draw (6,0) node {$j+\ell$};;
\draw (7,1) node {$\cdots$};;
\draw[fill=black] (8,1) circle (5pt);;
\draw (8,0) node {$j$};;
\end{tikzpicture}
\end{equation*}
\end{enumerate}
\end{rem}

\subsubsection{Cores and quotients}\label{Corequot} 
We have recalled in \ref{Young} that the $\ell$-core of a partition $\lambda$ is obtained by successively performing all possible $\ell$-strip removals. 
Recall as well that we denote this remaining $\ell$-core partition $\core(\lambda)$.
Roughly speaking, the \textit{$\ell$-quotient} $\quot(\lambda)$ of $\lambda$ is an $\ell$-multipartition that encodes the way $\ell$-strips are layered onto $\core(\lambda)$ to obtain $\lambda$.

From the Maya diagram $m(\lambda)$, the $\ell$-quotient is obtained from the \textit{quotient subdiagrams} 
\[m_i(\lambda)(j):=m(\lambda)(i+j\ell)\]
for $i=0,\ldots, \ell-1$. 
We call the beads in $m(\lambda)$ with indices congruent to $i$ mod $\ell$ the \textit{$i$-beads}.
Thus, $m_i(\lambda)$ consists of all the $i$-beads of $m(\lambda)$.
In general, $m_i(\lambda)$ will have a nontrivial charge $c_i$. 
If we perform the shift $j\mapsto j-c_i$ on the integer index for $m_i(\lambda)$, we will have a charge zero Maya diagram and hence a Young diagram $\lambda^i$. 
In Figure \ref{fig:QuotEx}, we compute the 3-quotient of $\lambda=(4,4,2)$. 
For each $m_i(\lambda)$, we use the indexing it inherits from $m(\lambda)$ so as to be clear where each bead comes from. 
Also, the solid line in the $m_i(\lambda)$ is the central line inherited from $m(\lambda)$ while the dashed line is the new central line after the shift by $c_i$.
\begin{figure}[h]
\centering
\begin{tikzpicture}[scale=.4]
\draw (-6,.5) node {$m(\lambda)$:};;
\draw (-4, 0) node {$\cdots$};;
\draw (-3,0) node {3};;
\draw (-3,1) circle (5pt);;
\draw (-2,0) node {2};;
\draw[fill=black] (-2,1) circle (5pt);;
\draw (-1,0) node {1};;
\draw[fill=black] (-1,1) circle (5pt);;
\draw (0,0) node {0};;
\draw (0,1) circle (5pt);;
\draw (.5,.5)--(.5,1.5);;
\draw (1,0) node {-1};;
\draw[fill=black] (1,1) circle (5pt);;
\draw (2,0) node {-2};;
\draw[fill=black] (2,1) circle (5pt);;
\draw (3,0) node {-3};;
\draw (3,1) circle (5pt);;
\draw (4,0) node {-4};;
\draw (4,1) circle (5pt);;
\draw (5,0) node {-5};;
\draw[fill=black] (5,1) circle (5pt);;
\draw (6,0) node {$\cdots$};;
\draw (-14,-5.5) node {$m_0(\lambda):$};;
\draw (-12,-6) node {$\cdots$};;
\draw (-11,-6) node {0};;
\draw (-11,-5) circle (5pt);;
\draw (-10,-6) node {-3};;
\draw (-10,-5) circle (5pt);;
\draw[dashed] (-9.5,-5.5)--(-9.5,-4.5);;
\draw (-9.5,-3) node {$\varnothing$};;
\draw (-9,-6) node {-6};;
\draw[fill=black] (-9,-5) circle (5pt);;
\draw (-8,-6) node {$\cdots$};;
\draw (-10.5,-5.5)--(-10.5,-4.5);;
\draw (-5,-5.5) node {$m_1(\lambda):$};;
\draw (-3,-6) node {$\cdots$};;
\draw (-2,-6) node {4};;
\draw[dashed] (-1.5,-5.5)--(-1.5,-4.5);;
\draw (-1.5,-3) node {$\varnothing$};;
\draw (-2,-5) circle (5pt);;
\draw (-1,-6) node {1};;
\draw[fill=black] (-1,-5) circle (5pt);;
\draw (-.5,-5.5)--(-.5,-4.5);;
\draw (0,-6) node {-2};;
\draw[fill=black] (0,-5) circle (5pt);;
\draw (1,-6) node {$\cdots$};;
\draw (4, -5.5) node {$m_2(\lambda):$};;
\draw (6,-6) node {$\cdots$};;
\draw (7,-6) node {5};;
\draw (7,-5) circle (5pt);;
\draw (8,-6) node {2};;
\draw[fill=black] (8,-5) circle (5pt);;
\draw (8.5,-5.5)--(8.5,-4.5);;
\draw (8.5,-4)--(9.5,-3)--(8.5,-2)--(7.5,-3)--(8.5,-4);;
\draw (9.5,-3)--(10.5,-2)--(9.5,-1)--(8.5,-2)--(9.5,-3);;
\draw (9,-6) node {-1};;
\draw[fill=black] (9,-5) circle (5pt);;
\draw (10,-6) node {-4};;
\draw (10,-5) circle (5pt);;
\draw (11,-6) node {-7};;
\draw[fill=black] (11,-5) circle (5pt);;
\draw (12,-6) node {$\cdots$};;
\end{tikzpicture}
\caption{The 3-quotient of $\lambda=(4,4,2)$.}
\label{fig:QuotEx}
\end{figure}
\noindent In Section \ref{Combi}, we will arrange the quotient diagrams $\{m_i(\lambda)\}$ `abacus style': one over the other in decreasing order starting from $i=\ell-1$. 
We show this in Figure \ref{fig:AbacusEx} for $(4,4,2)$ and $\ell=3$.
\begin{figure}[h]
\centering
\begin{tikzpicture}[scale=.5]
\draw (-5,-2.5) node {$m_0(\lambda):$};;
\draw (-2,-3) node {$\cdots$};;
\draw (-1,-3) node {0};;
\draw (-1,-2) circle (5pt);;
\draw (-0.5,-2.5)--(-0.5,-1.5);;
\draw (0,-3) node {-3};;
\draw (0,-2) circle (5pt);;
\draw (1,-3) node {-6};;
\draw[fill=black] (1,-2) circle (5pt);;
\draw (2,-3) node {$\cdots$};;
\draw (-5,-0.5) node {$m_1(\lambda):$};;
\draw (-3,-1) node {$\cdots$};;
\draw (-2,-1) node {4};;
\draw (-2,0) circle (5pt);;
\draw (-1,-1) node {1};;
\draw[fill=black] (-1,0) circle (5pt);;
\draw (-.5,-0.5)--(-.5,0.5);;
\draw (0,-1) node {-2};;
\draw[fill=black] (0,-0) circle (5pt);;
\draw (1,-1) node {$\cdots$};;
\draw (-5, 1.5) node {$m_2(\lambda):$};;
\draw (-3,1) node {$\cdots$};;
\draw (-2,1) node {5};;
\draw (-2,2) circle (5pt);;
\draw (-1,1) node {2};;
\draw[fill=black] (-1,2) circle (5pt);;
\draw (-.5, 1.5)--(-.5,2.5);;
\draw (0,1) node {-1};;
\draw[fill=black] (0,2) circle (5pt);;
\draw (1,1) node {-4};;
\draw (1,2) circle (5pt);;
\draw (2,1) node {-7};;
\draw[fill=black] (2,2) circle (5pt);;
\draw (3,1) node {$\cdots$};;
\end{tikzpicture}
\caption{The abacus for $\lambda=(4,4,2)$ and $\ell=3$.}
\label{fig:AbacusEx}
\end{figure}
\noindent Notice that we position the diagrams so that the corresponding solid lines are vertically aligned.

To obtain $\core(\lambda)$, we revert each $m_i(\lambda)$ to the vacuum diagram corresponding to the shifted central line and then make the corresponding changes to $m(\lambda)$. 
In the example of $\lambda=(4,4,2)$ and $\ell=3$, we only need to change 2 to white and -4 to black.
This is shown in Figure \ref{fig:MayaCore}.
\begin{figure}[h]
\centering
\begin{tikzpicture}[scale=.5]
\draw (-6,.5) node {$m(\lambda)$:};;
\draw (-4, 0) node {$\cdots$};;
\draw (-3,0) node {3};;
\draw (-3,1) circle (5pt);;
\draw (-2,0) node {2};;
\draw[fill=black] (-2,1) circle (5pt);;
\draw (-1,0) node {1};;
\draw[fill=black] (-1,1) circle (5pt);;
\draw (0,0) node {0};;
\draw (0,1) circle (5pt);;
\draw (.5,.5)--(.5,1.5);;
\draw (1,0) node {-1};;
\draw[fill=black] (1,1) circle (5pt);;
\draw (2,0) node {-2};;
\draw[fill=black] (2,1) circle (5pt);;
\draw (3,0) node {-3};;
\draw (3,1) circle (5pt);;
\draw (4,0) node {-4};;
\draw (4,1) circle (5pt);;
\draw (5,0) node {-5};;
\draw[fill=black] (5,1) circle (5pt);;
\draw (6,0) node {$\cdots$};;
\draw (0.5,-2) node {$\downarrow$};;
\draw (-3, -9) node {$\cdots$};;
\draw (-2,-9) node {2};;
\draw (-2,-8) circle (5pt);;
\draw (-1,-9) node {1};;
\draw[fill=black] (-1,-8) circle (5pt);;
\draw (0,-9) node {0};;
\draw (0,-8) circle (5pt);;
\draw (.5,-8.5)--(.5,-7.5);;
\draw (1,-9) node {-1};;
\draw[fill=black] (1,-8) circle (5pt);;
\draw (2,-9) node {-2};;
\draw[fill=black] (2,-8) circle (5pt);;
\draw (3,-9) node {-3};;
\draw (3,-8) circle (5pt);;
\draw (4,-9) node {-4};;
\draw[fill=black] (4,-8) circle (5pt);;
\draw (5,-9) node {$\cdots$};;
\draw (.5,-7)--(3.5,-4);;
\draw (-.5,-6)--(2.5,-3);;
\draw (-1.5, -5)--(-.5,-4);;
\draw (.5,-7)--(-1.5,-5);;
\draw (1.5,-6)--(-.5,-4);;
\draw (2.5,-5)--(1.5,-4);;
\draw (3.5,-4)--(2.5,-3);;
\end{tikzpicture}
\caption{Obtaining the 3-core from the Maya diagram of $\lambda=(4,4,2)$.}
\label{fig:MayaCore}
\end{figure}
\noindent 
Note then that the Maya diagrams $\{m_i(\core(\lambda))\}$ are the vacuum diagrams shifted by the charges $c_i$, so these charges entirely determine $\core(\lambda)$. 
Since $m(\lambda)$ has charge zero, it follows that
\[c_0+\cdots+c_{\ell-1}=0\]
We will view the vector $(c_0,\ldots,c_{\ell-1})$ as an element of the root lattice for $\mathfrak{sl}_\ell$. 
Abusing notation, we will denote both the root lattice vector and the partition by $\core(\lambda)$. 

\begin{prop}[cf. \cite{JamesComb}]
The core-quotient decomposition yields a bijection
\[\{\hbox{\rm partitions}\}\rightarrow\{\ell\hbox{\rm-cores}\}\times\{\ell\hbox{\rm-multipartitions}\}\]
\end{prop}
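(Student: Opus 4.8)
The plan is to bootstrap everything off the Young--Maya correspondence established above, together with the trivial observation that a Maya diagram is the same data as the $\ell$-tuple of its subdiagrams along residue classes mod $\ell$. Concretely: the assignment $m\mapsto(m_0,\ldots,m_{\ell-1})$ with $m_i(j)=m(i+j\ell)$ is a bijection from Maya diagrams to $\ell$-tuples of Maya diagrams, with inverse given by interleaving the beads back together; one only checks that each $m_i$ is a genuine Maya diagram, which is immediate since the stabilization conditions $m(j)=-1$ for $j\gg0$, $m(j)=1$ for $j\ll0$ pass to any arithmetic subprogression of $\ZZ$. Next I would record the additivity of charge: partitioning $\ZZ$ into residue classes mod $\ell$ and matching the index $j$ of $m_i$ with the index $i+j\ell$ of $m$ — under which, since $0\le i\le\ell-1$, the indices $j<0$ correspond exactly to the negative integers $\equiv i$ and the indices $j\ge0$ to the nonnegative ones — the definition of charge gives $c(m)=\sum_{i=0}^{\ell-1}c(m_i)$. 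Hence the de-interleaving restricts to a bijection between charge-zero Maya diagrams and $\ell$-tuples of Maya diagrams whose charges sum to $0$.

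Now each Maya diagram $m_i$ of charge $c_i$ is the charge-normalizing shift (the one used to define $\lambda^i$) of a unique charge-zero Maya diagram; applying this in every slot converts the target of the previous paragraph into $\{(c_0,\ldots,c_{\ell-1})\in\ZZ^\ell:\sum_i c_i=0\}\times\{\text{charge-zero Maya diagrams}\}^\ell$. Composing with the Young--Maya correspondence — once on the left to replace $\lambda$ by $m(\lambda)$, and componentwise on the right — and tracing through the definitions, one gets exactly
\[\{\text{partitions}\}\ \xrightarrow{\ \sim\ }\ \Big\{(c_0,\ldots,c_{\ell-1})\in\ZZ^\ell:\textstyle\sum_i c_i=0\Big\}\times\{\text{partitions}\}^\ell,\qquad \lambda\longmapsto\big((c_i(\lambda))_i,\ (\lambda^i)_i\big),\]
which is the core--quotient map post-composed with the identification of $\core(\lambda)$ with its charge vector. (Note that, because $\quot(\lambda)$ is read off the Maya diagram with no choices involved, there is no well-definedness issue.) It then remains to identify $\{(c_i):\sum_i c_i=0\}$ with the set of $\ell$-cores. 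The Maya diagram all of whose subdiagrams are the charge-$c_i$ vacuum has total charge $0$, hence equals $m(\mu)$ for a unique partition $\mu$; by Remark \ref{Rem11}(ii) no $\ell$-strip can be removed from $\mu$, so $\mu$ is an $\ell$-core, and conversely an $\ell$-core admits no $\ell$-strip removal, which by the same remark forces every subdiagram of its Maya diagram to be a vacuum — hence determined by its charge vector. Unwinding this also matches $\mu$ with the $\ell$-core of $\lambda$ as defined in \ref{Young}, since each $\ell$-strip removal is precisely a single-bead move inside one of the subdiagrams $m_i(\lambda)$.

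I expect the proposition to be essentially formal given the setup, so the ``obstacle'' is really one of care rather than depth: the fiddly point is the index bookkeeping behind charge additivity — ensuring the cutoff between negative and nonnegative indices is respected under the reindexing $j\mapsto i+j\ell$ — and, in the last step, correctly invoking Remark \ref{Rem11}(ii) to pin down that the charge vectors biject onto $\ell$-cores and that this agrees with the strip-removal description from \ref{Young}.
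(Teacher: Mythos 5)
Your argument is correct and is exactly the one the paper intends: it formalizes the Maya-diagram construction of \ref{Corequot} (de-interleaving into $\ell$ subdiagrams, charge additivity, normalizing each subdiagram's charge to obtain the quotient, and identifying charge vectors summing to zero with $\ell$-cores via Remark \ref{Rem11}(ii)), which is precisely the content the paper relies on in stating the proposition without a written proof. The bookkeeping you flag — that $j<0$ in $m_i$ corresponds to $i+j\ell<0$ in $m$ for $0\le i\le\ell-1$, and that strip removals are single adjacent-bead swaps within one subdiagram (hence preserve all charges and terminate at the per-slot vacua) — is handled correctly.
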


\begin{rem}\label{Rem12}
We make some observations about cores and quotients that will be used later and may help the reader build some intuition for them.
\begin{enumerate}
\item By Remark \ref{Rem11}, adding a single node to $\lambda^i$ amounts to adding an $\ell$-strip to $\lambda$ starting at an $i$-node. 
It is natural to build up a partition by adding not its individual boxes but its columns, and one may try a similar approach with $\ell$-quotients. 
With that in mind, suppose we add a new column of length $k$ to $\lambda^i$ that is no taller than any of the existing columns of $\lambda^i$. 
In $m_i(\lambda)$, this amounts to the switch

\begin{equation*}
\begin{tikzpicture}[scale=.5]
\draw (0,1) circle (5pt);;
\draw (0,0) node {$j+k$};;
\draw (1,1) circle (5pt);;
\draw (2,1) node {$\cdots$};;
\draw (3,1) circle (5pt);;
\draw (4,0) node {$j$};;
\draw[fill=black] (4,1) circle (5pt);;
\draw (6,1) node {$\rightarrow$};;
\draw (8,0) node {$j+k$};;
\draw[fill=black] (8,1) circle (5pt);;
\draw (9,1) circle (5pt);;
\draw (10,1) node {$\cdots$};;
\draw (11,1) circle (5pt);;
\draw (12,0) node {$j$};;
\draw (12,1) circle (5pt);;
\end{tikzpicture}
\end{equation*}
\noindent where all the beads in the middle are white.
This adds a $k\ell$-strip to $\lambda$, but because the middle beads above are all white, any middle $i$-node of this new strip must be \textit{below} the $(i+1)$-node preceding it in the strip (recall that we totally order the nodes within a strip from the northwest to southeast). 
For a general column, the move will be multiple iterations of this, resulting in multiple strips wherein the added $i$-nodes are always below any adjacent added $(i+1)$-node.

Similarly, adding a new row of length $k$ to $\lambda^i$ that is no longer than the existing rows of $\lambda^i$ looks like

\begin{equation*}
\begin{tikzpicture}[scale=.5]
\draw (0,1) circle (5pt);;
\draw (0,0) node {$j$};;
\draw[fill=black]  (1,1) circle (5pt);;
\draw (2,1) node {$\cdots$};;
\draw[fill=black] (3,1) circle (5pt);;
\draw (4,0) node {$j-k$};;
\draw[fill=black] (4,1) circle (5pt);;
\draw (6,1) node {$\rightarrow$};;
\draw (8,0) node {$j$};;
\draw[fill=black] (8,1) circle (5pt);;
\draw[fill=black] (9,1) circle (5pt);;
\draw (10,1) node {$\cdots$};;
\draw[fill=black] (11,1) circle (5pt);;
\draw (12,0) node {$j-k$};;
\draw (12,1) circle (5pt);;
\end{tikzpicture}
\end{equation*}
\noindent in $m_i(\lambda)$, where now all the middle beads are black.
In the newly added $k\ell$-strip of $\lambda$, any intermediate $i$-node must now sit to the \textit{right} of the $(i+1)$-node preceding it in the strip.
As with columns above, the general row addition consists of multiple connected components of this picture and thus the added $i$-node is always to the right of any adjacent added $(i+1)$-node

\item Adding exactly the same number of $i$-nodes to $\lambda$ for each $i\in\ZZ/\ell\ZZ$ does not change the core. 
To see this, note that by Remark \ref{Rem11}, adding an $i$-node changes a bead in $m_i(\lambda)$ from white to black. 
Adding an $(i+1)$-node changes a bead in $m_i(\lambda)$ from black to white. 
The other node additions do not affect the beads in $m_i(\lambda)$.
There are now two cases for this pair of bead swaps in $m_i(\lambda)$:
\begin{itemize}
\item the white-to-black swap and the black-to-white swap happen on the same side of the (solid) central line or
\item the two swaps happen on opposite sides of the central line
\end{itemize}
In the first case, the number of white beads in $m_i(\lambda)$ right of the central line and the number of black beads in $m_i(\lambda)$ left of the central line are unchanged.
On the other hand, the second case either increases both of these quantities by 1 or decreases them both by 1.
In all cases, the charge of $m_i(\lambda)$, which is the \textit{difference} between these two quantities, is unchanged.

\item For two partitions $\lambda$ and $\mu$, if $\lambda^i\ge\mu^i$ for all $i$, then $\lambda\ge\mu$. 
It is enough to consider the case where $\mu^i$ is obtained from $\lambda^i$ by a single northwest box slide while all the other components of the quotient are equal. 
In $m_i(\lambda)$, the box slide looks like
\begin{equation*}
\begin{tikzpicture}[scale=.5]
\draw (0,1) circle (5pt);;
\draw (0,0) node {$j+k$};;
\draw[fill=black]   (1,1) circle (5pt);;
\draw (2,1) node {$\cdots$};;
\draw[fill=black] (3,1) circle (5pt);;
\draw (4,0) node {$j$};;
\draw (4,1) circle (5pt);;
\draw (6,1) node {$\rightarrow$};;
\draw (8,0) node {$j+k$};;
\draw[fill=black] (8,1) circle (5pt);;
\draw (9,1) circle (5pt);;
\draw (10,1) node {$\cdots$};;
\draw (11,1) circle (5pt);;
\draw (12,0) node {$j$};;
\draw[fill=black]  (12,1) circle (5pt);;
\end{tikzpicture}
\end{equation*}
\noindent Here, the beads in the ellipses are arbitrary. 
Notice then that $k\ge 2$. 
Therefore, the corresponding $\ell$-strip in $\lambda$ and its new landing spot are completely disjoint---the content of the starting node of the old strip in $\lambda$ and the content of the ending node of the new strip in $\mu$ differ by at at least $\ell$. 
One can thus obtain $\mu$ from $\lambda$ via $\ell$ northwest box slides.
\end{enumerate}
\end{rem}

\subsection{Wreath Macdonald polynomials}\label{Macdonald} 
This subsection gives a reinterpretation of Haiman's definition of wreath Macdonald polynomials amenable to quantum algebraic methods. 
We will abuse notation by freely hopping between both sides of the Frobenius characteristic. 

\subsubsection{Specializing \ref{Wreath} to $\Gamma=\ZZ/\ell\ZZ$} 
We will index $(\ZZ/\ell\ZZ)_*$ using powers of a generator $c$. 
On the other hand, we will index $(\ZZ/\ell\ZZ)^*$ using additive notation: $i\in\ZZ/\ell\ZZ$ corresponds to the character
\[\gamma_i(c^j):=\zeta^{ij}\]
where $\zeta=e^\frac{2\pi\sqrt{-1}}{\ell}$. 
Observe that $h_n(i)$ corresponds to the character of $\Gamma_n$ where
\[(\vec{g},\sigma)\mapsto\left(\prod_{j=1}^n\gamma_i(g_j)\right)\]
and $e_n(i)$ corresponds to the character where
\[(\vec{g},\sigma)\mapsto\mathrm{sign}(\sigma)\left(\prod_{j=1}^n\gamma_i(g_j)\right)\]
Finally, we note that from (\ref{ConjGen}), the transition matrix from $\{p_n(c^i)\}$ to $\{p_n(i)\}$ for fixed $n$ is the following $\ell\times\ell$ Vandermonde matrix:
\[\begin{pmatrix}
1 & 1 & 1& \cdots & 1\\
1 & \zeta & \zeta^2& \cdots & \zeta^{\ell-1}\\
\vdots & \vdots &\vdots & &\vdots\\
1 & \zeta^k &\zeta^{2k} & \cdots & \zeta^{(\ell-1)k}\\
\vdots & \vdots & \vdots & &\vdots\\
1 & \zeta^{\ell-1} & \zeta^{2(\ell-1)} & \cdots & \zeta^{(\ell-1)^2}\\
\end{pmatrix}\]

\subsubsection{Bigraded characters} 
We will now work with the base-changed rings 
\begin{eqnarray*}
R_{q,t}(\ZZ/\ell\ZZ)&:=&\CC(q,t)\otimes R(\ZZ/\ell\ZZ)\\
\Lambda_{q,t}(\ZZ/\ell\ZZ)&:=&\CC(q,t)\otimes\Lambda(\ZZ/\ell\ZZ)
\end{eqnarray*}
where $q$ and $t$ are indeterminates. 
In order to define wreath Macdonald polynomials, we need to define the operators $-\otimes\bigwedge_{q}^\pm$ and $-\otimes\bigwedge_{t^{-1}}^\pm$. 
For each $n$, $\Gamma_n$ has a natural \textit{reflection representation} $\mathfrak{h}_n\cong\CC^n$ given by
\[(\vec{g},\sigma)\cdot(x_1,\ldots, x_n)=(\gamma_1(g_1)x_{\sigma^{-1}(1)},\ldots,\gamma_1(g_n)x_{\sigma^{-1}(n)})\]
For $\Gamma_n\curvearrowright V$ and a variable $s$, we define 
\begin{align*}
V\otimes\textstyle{\bigwedge_{s}^+}&:=\sum_{k=0}^n(-s)^kV\otimes\textstyle{\bigwedge^k}\mathfrak{h}_n\\
V\otimes\textstyle{\bigwedge_{s}^-}&:=\sum_{k=0}^n(-s)^kV\otimes\textstyle{\bigwedge^k}\mathfrak{h}_n^*
\end{align*}
and extend it linearly to all of $R_{q,t}(\ZZ/\ell\ZZ)$. 
We will abuse notation and also directly apply these maps to $\Lambda_{q,t}(\ZZ/\ell\ZZ)$.

\subsubsection{Bosons with sectors} 
We have already seen that $\Lambda_{q,t}(\ZZ/\ell\ZZ)$ has nice bases indexed by $\ell$-multipartitions. 
On the other hand, \ref{Corequot} tells us that $\ell$-cores are indexed by the root lattice $Q$ of $\mathfrak{sl}_\ell$. 
With that in mind, let $\CC[Q]$ denote the group algebra of $Q$. 
We will view root lattice vectors in terms of the basis $\{\alpha_1,\ldots, \alpha_{\ell-1}\}$, where $\alpha_i$ is the vector in $\ZZ^\ell$ with $1$ in its $i$th coordinate, $-1$ in its $(i+1)$th coordinate, and all other coordinates zero. 
Basis elements for $\CC[Q]$ will be denoted exponentially: $\{e^{\alpha}\}_{\alpha\in Q}$. 
Now, in $\Lambda_{q,t}(\ZZ/\ell\ZZ)\otimes\CC[Q]$, we can append the datum of an $\ell$-core to any of our bases in $\Lambda_{q,t}(\ZZ/\ell\ZZ)$. 
For an ordinary partition $\lambda$, we define the bases
\begin{align*}
\vec{s}_\lambda&:=s_{\quot(\lambda)}\otimes e^{\core(\lambda)}\\
\vec{e}_\lambda&:=e_{\quot(\lambda)}\otimes e^{\core(\lambda)}\\
\vec{h}_\lambda&:=h_{\quot(\lambda)}\otimes e^{\core(\lambda)}
\end{align*}


\subsubsection{Key definition}\label{WreathDef}
We will let $\mu\le_\ell\lambda$ denote that $\mu\le\lambda$ and $\core(\mu)=\core(\lambda)$.

\begin{defn}\label{MacDef}

The \textit{wreath Macdonald polynomial} $H_\lambda(q,t)\in\Lambda_{q,t}(\ZZ/\ell\ZZ)\otimes\CC[Q]$ is characterized by
\begin{enumerate}
\item $H_\lambda(q,t)\otimes\bigwedge_{q}^-\in\Span{\vec{s}_{\mu} :\mu\ge_\ell\lambda}$;
\item $H_\lambda(q,t)\otimes\bigwedge_{t^{-1}}^-\in\Span{\vec{s}_{\mu} :\mu\le_\ell\lambda}$;
\item the coefficient of the trivial representation of $\Gamma_{|\quot(\lambda)|}$ in $H_\lambda(q,t)$ is 1. 
\end{enumerate}

\end{defn}

\noindent We will often abbreviate $H_\lambda(q,t)$ by just $H_\lambda$.

\begin{rem} Our definition is equivalent to that of Haiman's in \cite{Haiman}, although it appears different at first. 
In \textit{loc. cit.}, conditions (1) and (2) are written with $\bigwedge_q^+$ and $\bigwedge_{t^{-1}}^+$, but Haiman's definition of content is the negative of ours. 
It follows from Proposition \ref{plethprop} below that acting on $\vec{s}_\lambda$, our $-\otimes\bigwedge_q^-$ is actually his $-\otimes\bigwedge_q^+$ and likewise for $q$ switched with $t^{-1}$.  
We are somewhat forced to rewrite the definition in this way because of our conventions for the quantum toroidal and shuffle algebras. 
\end{rem}

\subsubsection{Plethysm} 
We will try to make the definition above a little more transparent by writing the maps $-\otimes\bigwedge_q^-$ and $-\otimes\bigwedge_{t^{-1}}^-$ in terms of an analogue of plethystic substitution (cf. \cite{Haiman}). 
Specifically, these two maps are algebra  endomorphisms of $\Lambda_{q,t}(\ZZ/\ell\ZZ)$ defined simply in terms of the generators $p_n(i)$.

\begin{prop}\label{plethprop}
The map $-\otimes\bigwedge_q^-$ is the algebra endomorphism $\Phi_q$ of $\Lambda_{q,t}(\ZZ/\ell\ZZ)$ defined by
\[\Phi_q(p_n(i))=p_n(i)-q^np_n(i-1)\]
Similarly, $-\otimes\bigwedge_{t^{-1}}^-$ is given by
\[\Phi_{t^{-1}}(p_n(i))= p_n(i)-t^{-n}p_n(i-1)\]
\end{prop}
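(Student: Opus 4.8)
The plan is to reduce everything to a computation on the single-sector ring $\Lambda_{q,t}$ with its classical $\Sigma_n$ plethysm, and then assemble the $\ell$ sectors via the wreath Frobenius characteristic. First I would recall that in the classical setting, the operation $V \mapsto V\otimes\sum_k(-q)^k\bigwedge^k\mathfrak h_n$ on $R(\Sigma_n)$ corresponds, under the Frobenius characteristic, to the plethystic substitution $p_r \mapsto p_r - q^r p_r$ on $\Lambda$, i.e. to $p_r\mapsto(1-q^r)p_r$ — this is the standard computation (cf. \cite{Haiman}), following from the fact that the exterior algebra $\bigwedge^\bullet\mathfrak h_n$ has graded character encoded by $\prod(1-qx_i)$ together with the identity $\sum_k(-1)^k e_k z^k = \exp(-\sum_r p_r z^r/r)$. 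The task is to track how the $\mathbb Z/\ell\mathbb Z$-grading threads through this.

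The key point is the action of $\mathfrak h_n$ on the wreath product. For $\Gamma_n = \mathbb Z/\ell\mathbb Z\wr\Sigma_n$, the reflection representation $\mathfrak h_n$ is the induced/twisted permutation module where the $j$th $\mathbb Z/\ell\mathbb Z$ factor acts by the character $\gamma_1$ on the $j$th coordinate. In the language of $\Lambda(\mathbb Z/\ell\mathbb Z) = \Lambda^{\otimes\ell}$ (indexed by $\gamma\in\Gamma^\ast$), this representation should be thought of as living entirely in the "color $1$" sector: its graded exterior algebra contributes a factor $\prod(1 - q x_i^{(1)})$ where $x_i^{(1)}$ are the variables of the $\gamma_1$-tensorand, \emph{but} multiplying a class in color $\gamma=i$ by this exterior algebra shifts it — the cycle-product bookkeeping (Section \ref{Conj}) means that tensoring by $\mathfrak h_n$ couples sector $i$ to sector $i-1$. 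Concretely, I would compute the effect of $-\otimes\bigwedge^\bullet\mathfrak h_n$ on the class of $I_\gamma^{\otimes r}$ (which is $h_r(\gamma)$) and on $\mathrm{sign}\otimes I_\gamma^{\otimes r}$ (which is $e_r(\gamma)$), using the realization of $V_{\vec\lambda}$ as an induced representation and the branching of $\mathfrak h_n$ under $\prod_\gamma\Gamma_{|\lambda^\gamma|}$. Taking Adams operations / the generating-function identity $\sum_{k}(-q)^k[\bigwedge^k\mathfrak h_n]$ and extracting the power-sum content then yields $p_r(\gamma)\mapsto p_r(\gamma) - q^r p_r(\gamma-1)$, the extra "$-1$" shift coming precisely from the color-$1$ character on $\mathfrak h_n$ combined with $\gamma_1\otimes\gamma_i = \gamma_{i+1}$ duality (the dual $\mathfrak h_n^\ast$ accounts for which sign of the shift appears, matching the use of $\bigwedge^-$ rather than $\bigwedge^+$ in Definition \ref{MacDef}). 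The $t^{-1}$ statement is identical with $q$ replaced by $t^{-1}$.

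That the resulting map is an algebra endomorphism is automatic: $-\otimes\bigwedge_q^-$ is defined on $R_{q,t}(\mathbb Z/\ell\mathbb Z)$ by tensoring, and tensoring by the fixed family $\{\bigwedge^\bullet\mathfrak h_n\}_n$ is multiplicative for the induction product because $\mathfrak h_{n+m}\cong\mathfrak h_n\oplus\mathfrak h_m$ as a module for $\Gamma_n\times\Gamma_m$ (up to the trivial summands that the $\sum(-q)^k$ alternating sum kills, exactly as in the classical case — here one must be slightly careful that the "extra" line in $\mathbb C^n$ versus the genuine reflection representation does not matter after alternation, which is the one bookkeeping subtlety). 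Hence it suffices to check the formula on the algebra generators $p_r(i)$, or equivalently on the $h_r(i)$, which is what the previous paragraph does.

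The main obstacle is the sector-shift: getting the "$i-1$" (rather than $i$, or $i+1$) correct, with the correct placement of $q^n$ versus $q^{-n}$, requires carefully pinning down (a) which character $\gamma_1$ (versus $\gamma_{-1}=\gamma_{\ell-1}$) appears in the reflection representation $\mathfrak h_n$ as defined here, (b) the effect of passing to the dual $\mathfrak h_n^\ast$ in $\bigwedge^-$, and (c) the convention $\gamma(c^j)=\zeta^{ij}$ together with the Vandermonde change of basis between $p_n(c^i)$ and $p_n(i)$ displayed in \ref{Corequot}'s predecessor. Once the single generator $h_r(i)$ — equivalently the one-row case $\lambda^i = (r)$ — is handled with all signs tracked, everything else follows formally. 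I would organize the writeup as: (1) reduce to generators via multiplicativity; (2) identify $\mathfrak h_n$ as a color-$1$-twisted permutation module and compute its graded exterior character; (3) apply the alternating sum and read off the power-sum substitution, with a remark reconciling the shift direction with Haiman's $\bigwedge^+$ convention as promised in the Remark following Definition \ref{MacDef}.
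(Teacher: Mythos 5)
Your proposal is correct, but it takes a genuinely different route from the paper. The paper never invokes multiplicativity of $-\otimes\bigwedge_q^-$ for the induction product; instead it observes that \emph{both} maps are diagonal in the basis of products of the conjugacy-class power sums $p_n(c^i)$ --- the tensor operation because tensoring multiplies class functions pointwise, so indicator class functions are eigenvectors, and $\Phi_q$ because of the displayed conjugation of the circulant matrix $(1,-q^n,0,\dots)$ by the Vandermonde matrix, which diagonalizes it with eigenvalues $1-\zeta^kq^n$. All eigenvalues are then matched simultaneously by a single generating-function computation, $\sum_n\Phi_q(h_n(0))z^n=\bigl(\sum h_n(0)z^n\bigr)\bigl(\sum(-q)^ne_n(-1)z^n\bigr)$, using that the trivial character $h_n(0)$ is the sum of all indicator class functions (each with nonzero coefficient) and that $h_j(0)e_k(-1)$ is the Frobenius image of $\bigwedge^k\mathfrak h_n^*$. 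Your route instead establishes that $-\otimes\bigwedge_q^-$ is an algebra endomorphism via the projection formula and $\mathrm{Res}_{\Gamma_n\times\Gamma_m}\mathfrak h_{n+m}\cong\mathfrak h_n\boxplus\mathfrak h_m$, and then computes the map on every generator $h_n(i)$ through the branching $\bigwedge^k\mathfrak h_n^*\cong\mathrm{Ind}\bigl((\mathrm{sign}\otimes I_{-1}^{\otimes k})\boxtimes\mathbb 1\bigr)$, i.e.\ $h_n(i)\mapsto\sum_k(-q)^ke_k(i-1)h_{n-k}(i)$. Both are sound; yours is more transparently representation-theoretic and identifies the color shift at its source (the $\gamma_{-1}$ twist on $\mathfrak h_n^*$), at the price of the extra multiplicativity lemma, while the paper's trades that lemma for the eigenvalue-matching trick and one matrix identity. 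One small simplification available to you: since the paper's $\mathfrak h_n$ is the full twisted $\CC^n$ (which for $\ell\ge2$ contains no trivial summand and restricts \emph{exactly} to $\mathfrak h_n\boxplus\mathfrak h_m$), the ``extra line'' caveat in your multiplicativity argument is vacuous --- no correction factor survives or needs to be killed by the alternating sum.
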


\begin{proof} 

It is enough to prove the $q$-statement. 
Since $-\otimes\bigwedge_q^-$ is diagonalized on indicator class functions, the skeptic should check that $\Phi_q$ has the same property. 
The indicator class functions are products of the $p_n(c^i)$, so it is enough to prove this for the $p_n(c^i)$. 
This follows from the following identity of $\ell\times\ell$ matrices
\begin{eqnarray*}
&&\begin{pmatrix}
1 & -q^n & 0 & \cdots & 0\\
0 & 1 & -q^n & \cdots & 0\\
\vdots &\ddots&\ddots & \ddots &\vdots\\
0 & \cdots &0 & 1 & -q^n \\
-q^n & 0 &\cdots & 0 & 1\\
\end{pmatrix}
\begin{pmatrix}
1 & 1 & 1& \cdots & 1\\
1 & \zeta & \zeta^2& \cdots & \zeta^{\ell-1}\\
\vdots & \vdots &\vdots & &\vdots\\
1 & \zeta^k &\zeta^{2k} & \cdots & \zeta^{(\ell-1)k}\\
\vdots & \vdots & \vdots & &\vdots\\
1 & \zeta^{\ell-1} & \zeta^{2(\ell-1)} & \cdots & \zeta^{(\ell-1)^2}\\
\end{pmatrix}\\
&=&\begin{pmatrix}
1 & 1 & 1& \cdots & 1\\
1 & \zeta & \zeta^2& \cdots & \zeta^{\ell-1}\\
\vdots & \vdots &\vdots & &\vdots\\
1 & \zeta^k &\zeta^{2k} & \cdots & \zeta^{(\ell-1)k}\\
\vdots & \vdots & \vdots & &\vdots\\
1 & \zeta^{\ell-1} & \zeta^{2(\ell-1)} & \cdots & \zeta^{(\ell-1)^2}\\
\end{pmatrix}
\begin{pmatrix}
1-q^n   & 0& \cdots & 0\\
0 & 1-\zeta q^n & \ddots&\vdots\\
\vdots & \ddots&\ddots&0\\
0&\cdots & 0&1-\zeta^{\ell-1}q^n\\
\end{pmatrix}
\end{eqnarray*}
and the invertibility of the Vandermonde matrix. 
Therefore, we just need to check that their eigenvalues match. 

Since the trivial representation is the sum of indicator class functions, it is enough then to check that $\Phi_q(h_n(0))=\sum_{k=0}^n(-q)^k\bigwedge^k\mathfrak{h}_n^*$. As is often the case, it is easier to consider all $n$ at the same time:
\begin{align*}
\sum_{n\ge 0}\Phi_q(h_n(0))z^n&=\exp\left(\sum_{n>0}\Phi_q(p_n(0))\frac{z^n}{n}\right)\\
&=\exp\left(\sum_{n>0}(p_n(0)-q^np_n(-1))\frac{z^n}{n}\right)\\
&=\left(\sum_{n\ge 0}h_n(0)z^n\right)\left(\sum_{n\ge 0}(-q)^ne_n(-1)z^n\right)\\
&=\sum_{n\ge 0}\left(\sum_{j+k=n} (-q)^kh_j(0)e_k(-1)\right)z^n
\end{align*}
Next, observe that $h_j(0)e_k(-1)$ corresponds to $\bigwedge^k\mathfrak{h}_n^*$ under the Frobenius characteristic. 
Specifically, let $I_-$ denote the irrep of $\ZZ/\ell\ZZ$ where the generator acts by $e^{-2\pi i/\ell}$ and let $I_0$ denote the trivial representation of $\ZZ/\ell\ZZ$.
By \ref{WreathFrob}, $h_j(0)e_k(-1)$ is induced from the following 1-dimensional representation of $\Gamma_j\times\Gamma_k\subset\Gamma_n$:
\begin{equation}
I_0^{\otimes j}\boxtimes\left( \mathrm{sign}\otimes I_-^{\otimes k} \right)
\label{ejk}
\end{equation}
Let $v^1,\ldots, v^n\in\mathfrak{h}_n^*$ be the standard (dual) basis.
Now recall Frobenius reciprocity: 
\begin{equation}
\Hom_{\Gamma_n}\left(\mathrm{Ind}_{\Gamma_j\times\Gamma_k}^{\Gamma_n} (V), W  \right)\cong\Hom_{\Gamma_j\times\Gamma_k}\left( V,\mathrm{Res}_{\Gamma_j\times\Gamma_k}^{\Gamma_n}(W) \right)
\label{FrobRep}
\end{equation}
The vector 
\[v^{n-k+1}\wedge\cdots\wedge v^n\in\textstyle\bigwedge^k\mathfrak{h}_n^*\]
induces a nonzero $\Gamma_j\times\Gamma_k$-homomorphism from the representation (\ref{ejk}) to the restriction of $\bigwedge^k\mathfrak{h}_n^*$, an object in the right hand side of (\ref{FrobRep}). 
Frobenius reciprocity then yields a $\Gamma_n$-homomorphism from the representation associated to $h_j(0)e_{k}(-1)$ to $\bigwedge^k\mathfrak{h}_n^*$.
Since $\mathfrak{h}_n^*$ is spanned by the $\Gamma_n$-orbit of $v^{n-k+1}\wedge\cdots\wedge v^n$, this map is surjective.
Finally, both representations have dimension $\binom{n}{k}$, so the map is an isomorphism.
\end{proof}

\subsubsection{A recharacterization} 
We continue onwards with our tidying up of Definition \ref{MacDef}. 
First, note that we can invert $\Phi_q$ and $\Phi_{t^{-1}}$ by explicit calculation:

\begin{lem}\label{InversePhi}
We have
\begin{align*}
\Phi_q^{-1}(p_n(i))&=\frac{p_n(i)+q^np_n(i-1)+q^{2n}p_n(i-2)+\cdots+q^{(\ell-1)n}p_n(i+1)}{1-q^{n\ell}}\\
\Phi_{t^{-1}}^{-1}(p_n(i))&=\frac{p_n(i)+t^{-n}p_n(i-1)+t^{-2n}p_n(i-2)+\cdots+t^{-(\ell-1)n}p_n(i+1)}{1-t^{-n\ell}}
\end{align*}

\end{lem}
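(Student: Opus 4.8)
The plan is to verify directly that the stated formulas give two-sided inverses to $\Phi_q$ and $\Phi_{t^{-1}}$. Since both $\Phi_q$ and the candidate inverse are algebra endomorphisms of $\Lambda_{q,t}(\ZZ/\ell\ZZ)$ (being defined on the polynomial generators $p_n(i)$), and since the generators $p_n(i)$ for distinct $n$ are algebraically independent, it suffices to check that composing the two linear maps on the generators yields the identity. Moreover, for fixed $n$, the map $\Phi_q$ only mixes the $\ell$ generators $p_n(0),\ldots,p_n(\ell-1)$ among themselves, so the entire verification reduces to a single $\ell\times\ell$ matrix identity for each fixed $n$: namely, that the cyclic matrix with $1$ on the diagonal and $-q^n$ on the superdiagonal (and in the bottom-left corner) has inverse equal to $\frac{1}{1-q^{n\ell}}$ times the cyclic matrix whose first row is $(1, q^n, q^{2n}, \ldots, q^{(\ell-1)n})$. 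The $t^{-1}$ statement is identical with $q^n$ replaced by $t^{-n}$, so I would only write out the $q$ case.

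Concretely, I would first recall from Proposition \ref{plethprop} that $\Phi_q(p_n(i)) = p_n(i) - q^n p_n(i-1)$, indices read mod $\ell$. Writing $x = q^n$ and letting $C$ denote the cyclic shift on the index set $\ZZ/\ell\ZZ$, we have $\Phi_q = 1 - xC$ on the span of $\{p_n(i)\}_i$. Since $C^\ell = 1$, the geometric series $1 + xC + x^2C^2 + \cdots + x^{\ell-1}C^{\ell-1}$ satisfies $(1-xC)(1 + xC + \cdots + x^{\ell-1}C^{\ell-1}) = 1 - x^\ell C^\ell = 1 - x^\ell$, which is a nonzero scalar (since $q$ is an indeterminate, $1 - q^{n\ell} \neq 0$). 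Hence $(1-xC)^{-1} = \frac{1}{1-x^\ell}\sum_{k=0}^{\ell-1} x^k C^k$, and applying this to $p_n(i)$ gives exactly
\[
\Phi_q^{-1}(p_n(i)) = \frac{p_n(i) + q^n p_n(i-1) + q^{2n}p_n(i-2) + \cdots + q^{(\ell-1)n}p_n(i+1)}{1-q^{n\ell}},
\]
where I note that $p_n(i - (\ell-1)) = p_n(i+1)$ since indices are taken mod $\ell$. One then checks the composition in the other order is the identity too, which is automatic from $(1 + xC + \cdots)(1-xC) = 1 - x^\ell$ by the same computation.

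There is essentially no obstacle here; the only thing to be careful about is the bookkeeping of indices modulo $\ell$ (so that the last term in the numerator is written as $p_n(i+1)$ rather than $p_n(i-\ell+1)$) and the observation that $1 - q^{n\ell}$ is invertible in the coefficient field $\CC(q,t)$, which lets us divide. Since $\Phi_q$ preserves the grading by $n$ and is triangular-plus-corner in the finite-dimensional block for each $n$, defining the inverse block-by-block and extending multiplicatively is legitimate, and this completes the proof.
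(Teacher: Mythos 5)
Your proof is correct, and it is exactly the "explicit calculation" the paper alludes to: the paper states this lemma without proof, and your geometric-series argument $(1-xC)^{-1}=\frac{1}{1-x^{\ell}}\sum_{k=0}^{\ell-1}x^kC^k$ with $C^{\ell}=1$ is the natural way to carry it out, with the index bookkeeping $p_n(i-\ell+1)=p_n(i+1)$ handled correctly. Nothing further is needed.
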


Let us set
\begin{align}
\begin{split}
\hat{h}_n(i)&:=\Phi_q^{-1}(h_n(i))\\
\hat{h}_\lambda&:=\Phi_q^{-1}(h_{\quot(\lambda)})\otimes e^{\core(\lambda)}\\
\hat{e}_n(i)&:=\Phi_{t^{-1}}^{-1}(e_n(i))\\
\hat{e}_\lambda&:=\Phi_{t^{-1}}^{-1}(e_{\quot(\lambda)})\otimes e^{\core(\lambda)}
\end{split}
\label{HatBoson}
\end{align}
Recall from Proposition \ref{SchurTriangle} that the transition matrix between $\{s_\lambda\}$ and $\{e_\lambda\}$ is upper triangular with respect to dominance order while the one between $\{s_\lambda\}$ and $\{h_\lambda\}$ is lower triangular. 
Combining this with Remark \ref{Rem12}(3), we can alter Definition \ref{MacDef} in the following way:

\begin{prop}\label{AltDef}
The wreath Macdonald polynomial $H_\lambda$ is characterized by
\begin{enumerate}
\item $H_\lambda\in\Span{\hat{h}_\mu :\mu\ge_\ell\lambda}$;
\item $H_\lambda\in\Span{\hat{e}_\mu :\mu\le_\ell\lambda}$;
\item the coefficient of the trivial representation of $\Gamma_{|\quot(\lambda)|}$ in $H_\lambda$ is 1. 
\end{enumerate}
\end{prop}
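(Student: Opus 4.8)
The plan is to push Definition \ref{MacDef} through the plethystic automorphisms of Proposition \ref{plethprop}. Since $\Phi_q$ and $\Phi_{t^{-1}}$ are invertible algebra endomorphisms of $\Lambda_{q,t}(\ZZ/\ell\ZZ)$, I extend them to $\Lambda_{q,t}(\ZZ/\ell\ZZ)\otimes\CC[Q]$ by having them act as the identity on $\CC[Q]$, where they remain invertible. By Proposition \ref{plethprop}, condition (i) of Definition \ref{MacDef} says $\Phi_q(H_\lambda)\in\Span{\vec{s}_\mu:\mu\ge_\ell\lambda}$; since $\mu\ge_\ell\lambda$ forces $\core(\mu)=\core(\lambda)=:c$, this reads $\Phi_q(H_\lambda)\in\Span{s_{\quot(\mu)}:\mu\ge_\ell\lambda}\otimes e^c$, and similarly condition (ii) reads $\Phi_{t^{-1}}(H_\lambda)\in\Span{s_{\quot(\mu)}:\mu\le_\ell\lambda}\otimes e^c$.

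The heart of the proof is then the pair of subspace identities
\[\Span{s_{\quot(\mu)}:\mu\ge_\ell\lambda}=\Span{h_{\quot(\mu)}:\mu\ge_\ell\lambda},\qquad\Span{s_{\quot(\mu)}:\mu\le_\ell\lambda}=\Span{e_{\quot(\mu)}:\mu\le_\ell\lambda}\]
in $\Lambda_{q,t}(\ZZ/\ell\ZZ)$. For the first: writing $h_{\vec{\nu}}=\prod_i h_{\nu^i}(i)$, expanding each factor into the colored Schur functions $s_\rho(i)$ with $\rho\ge\nu^i$ (the triangularity of the Kostka matrix applied in each color), and multiplying out, $h_{\vec{\nu}}$ lies in the span of those $s_{\vec{\rho}}$ with $\rho^i\ge\nu^i$ for every $i$. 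By Remark \ref{Rem12}(3), among multipartitions with the common core $c$ this componentwise dominance implies $\ge$ for the associated ordinary partitions, so the index set $\{\quot(\mu):\mu\ge_\ell\lambda\}$ is an up-set for componentwise dominance. The change of basis between $\{h_{\vec{\nu}}\}$ and $\{s_{\vec{\nu}}\}$ being unitriangular for componentwise dominance, its restriction to this up-set stays unitriangular, hence invertible, and the first identity follows. The second is obtained identically, using that $e_{\vec{\nu}}$ expands into the $s_{\vec{\rho}}$ with $\rho^i\le\nu^i$ for all $i$ (the transpose-flavored triangularity between $\{s\}$ and $\{e\}$) and the $\le$-version of Remark \ref{Rem12}(3), which makes $\{\quot(\mu):\mu\le_\ell\lambda\}$ a down-set.

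Granting these, I apply $\Phi_q^{-1}$ (resp. $\Phi_{t^{-1}}^{-1}$), which fixes $e^c$, to conclude that condition (i) of Definition \ref{MacDef} is equivalent to $H_\lambda\in\Span{\Phi_q^{-1}(h_{\quot(\mu)})\otimes e^c:\mu\ge_\ell\lambda}=\Span{\hat h_\mu:\mu\ge_\ell\lambda}$, i.e. to condition (i) of the Proposition, and likewise condition (ii) of Definition \ref{MacDef} becomes condition (ii) of the Proposition; condition (iii) is verbatim the same. Every step is an equality of subspaces or an application of an isomorphism, so the two systems of conditions cut out the same subset of $\Lambda_{q,t}(\ZZ/\ell\ZZ)\otimes\CC[Q]$; since Definition \ref{MacDef} determines $H_\lambda$ uniquely (existence being \cite{BezFink}), so do the conditions of the Proposition. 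I expect the only real obstacle to be the bookkeeping in the middle paragraph: componentwise dominance of quotients is merely \emph{sufficient}, not equivalent, for dominance of the associated ordinary partitions, so one cannot treat the relevant index sets as order ideals for a single linear order but must argue via up-sets and down-sets — this is exactly where Remark \ref{Rem12}(3) enters.
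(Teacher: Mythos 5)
Your argument is correct and follows the same route the paper takes: invert $\Phi_q$ and $\Phi_{t^{-1}}$, then exchange the Schur span for the $h$- (resp.\ $e$-) span using the unitriangularity of the Kostka-type transition matrices together with Remark \ref{Rem12}(iii). Your up-set/down-set bookkeeping correctly fills in the one point the paper leaves implicit, namely that componentwise dominance of quotients is only sufficient for dominance of the full partitions, so the index sets must be checked to be order ideals for the componentwise order rather than for a linear order.
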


\noindent In conclusion, we can characterize the line spanned by $H_\lambda$ as the intersection of a subspace built out of the generators $\{\hat{h}_n(i)\}$ and another built out of the generators $\{\hat{e}_n(i)\}$. In \ref{Redux}, we will give a cleaner presentation of $\hat{h}_n(i)$ and $\hat{e}_n(i)$ in terms of \textit{dual bosons}.

\section{Quantum toroidal algebra} 
Our presentation closely follows \cite{Tsym}. 
In this section, $\ell\ge 3$.

\subsection{Definitions}\label{TorDef} 
In this subsection, we will define the quantum toroidal algebra and its structures as a topological Hopf algebra. 
Let $\qqq$ and $\ddd$ be two variables and set $\mathbb{F}:=\CC(\qqq^\frac{1}{2},\ddd^\frac{1}{2})$. 
We will relate them to $(q,t)$ later on in \ref{Fock}.

\subsubsection{The algebra $\UTor$} 
For $i,j\in\ZZ/\ell\ZZ$, define $a_{i,j}$ and $m_{i,j}$ to be
\[a_{i,i}=2,\, a_{i,i\pm1}=-1,\, m_{i,i\pm 1}=\mp 1,\hbox{ and }a_{i,j}=m_{i,j}=0\hbox{ otherwise}\]
We then define
\[g_{i,j}(z):=\frac{\qqq^{a_{i,j}}z-1}{z-\qqq^{a_{i,j}}}\]
The \textit{quantum toroidal algebra} $U_{\qqq,\ddd}(\ddot{\mathfrak{sl}}_\ell)$ is a unital associative $\FF$-algebra generated by 
\[\{e_{i,k},f_{i,k},\psi_{i,k},\psi_{i,0}^{-1},\gamma^{\pm\frac{1}{2}}, \qqq^{\pm d_1},\qqq^{\pm d_2}\}_{i\in\ZZ/\ell\ZZ}^{k\in\ZZ}\]
To describe its relations, we piece together the generators indexed by $\ZZ$ into the currents
\begin{align*}
e_i(z)&:=\sum_{k\in\ZZ}e_{i,k}z^{-k}\\
f_i(z)&:=\sum_{k\in\ZZ}f_{i,k}z^{-k}\\
\psi_i^\pm(z)&:=\psi_{i,0}^{\pm 1}+\sum_{k>0}\psi_{i,\pm k}z^{\mp k}
\end{align*}
The relations then are:
\begin{gather*}
[\psi_i^\pm(z),\psi_j^\pm(w)]=0,\,\gamma^{\pm\frac{1}{2}}\hbox{ are central},\\
\psi_{i,0}^{\pm1}\psi_{i,0}^{\mp1}=\gamma^{\pm\frac{1}{2}}\gamma^{\mp\frac{1}{2}}=\qqq^{\pm d_1}\qqq^{\mp d_1}=\qqq^{\pm d_2}\qqq^{\mp d_2}=1,\\
\qqq^{d_1}e_i(z)\qqq^{-d_1}=e_i(\qqq z),\, \qqq^{d_1}f_i(z)\qqq^{-d_1}=f_i(\qqq z),\, \qqq^{d_1}\psi_i^\pm(z)\qqq^{-d_1}=\psi_i^\pm(\qqq z),\\
\qqq^{d_2}e_i(z)\qqq^{-d_2}=\qqq e_i(z),\, \qqq^{d_2}f_i(z)\qqq^{-d_2}=\qqq^{-1} f_i(z),\, \qqq^{d_2}\psi_i^\pm(z)\qqq^{-d_2}=\psi_i^\pm( z),\\
g_{i,j}(\gamma^{-1}\ddd^{m_{i,j}}z/w)\psi_i^{+}(z)\psi_j^{-}(w)=g_{i,j}(\gamma\ddd^{m_{i,j}} z/w)\psi_j^{-}(w)\psi_i^{+}(z),\\
e_i(z)e_j(w)=g_{i,j}(\ddd^{m_{i,j}}z/w)e_j(w)e_i(z),\\
f_i(z)f_j(w)=g_{i,j}(\ddd^{m_{i,j}}z/w)^{-1}f_j(w)f_i(z),\\
(\qqq-\qqq^{-1})[e_i(z),f_j(w)]=\delta_{i,j}\left(\delta(\gamma w/z)\psi_i^+(\gamma^{\frac{1}{2}}w)-\delta(\gamma z/w)\psi_i^-(\gamma^\frac{1}{2}z)\right),\\
\psi_i^\pm(z)e_j(w)=g_{i,j}(\gamma^{\pm\frac{1}{2}}\ddd^{m_{i,j}}z/w)e_j(w)\psi_i^\pm(z),\\
\psi_i^\pm(z)f_j(w)=g_{i,j}(\gamma^{\mp\frac{1}{2}}\ddd^{m_{i,j}}z/w)^{-1}f_j(w)\psi_i^\pm(z),\\
\Sym_{z_1,z_2}[e_i(z_1),[e_i(z_2),e_{i\pm1}(w)]_\qqq]_{\qqq^{-1}}=0,\,[e_i(z),e_j(w)]=0\hbox{ for }j\not=i,i\pm1,\\
\Sym_{z_1,z_2}[f_i(z_1),[f_i(z_2),f_{i\pm1}(w)]_\qqq]_{\qqq^{-1}}=0,\,[f_i(z),f_j(w)]=0\hbox{ for }j\not=i,i\pm1,
\end{gather*}
Here, $\delta(z)$ denotes the delta function
\[\delta(z)=\sum_{k\in\ZZ}z^k\]
and $[a,b]_\qqq=ab-\qqq ba$ is the $\qqq$-commutator. 

Observe that $\qqq^{d_1}$ and $\qqq^{d_2}$ each gives rise to a grading on $U_{\qqq,\ddd}(\ddot{\mathfrak{sl}}_\ell)$. 
We call the $\qqq^{-d_1}$ grading the \textit{homogenous grading} since it is the degree of a Fourier coefficient, e.g. $e_{i,n}$ has degree $n$. 
The $\qqq^{d_2}$ grading, on the other hand, counts the number of $e_i(z)$ components minus the number of $f_i(z)$ components. 
We call this latter grading the \textit{principal grading}.
Finally, note that the $\UTor$ contains two central elements: $\gamma^{\frac{1}{2}}$ and $\kappa:=\prod_{i=0}^{\ell-1}\psi_{i,0}$.
For $a,b\in\CC$, we say a representation of $\UTor$ has \textit{central charge} $(a,b)$ if $\gamma$ acts by the scalar $\qqq^a$ and $\kappa$ acts by the scalar $\qqq^b$.

Note that the currents $\psi_{i,0}^{\mp1}\psi_i^\pm(z)$ generate a Heisenberg algebra, to be explored further in \ref{Vertex}. 
We will also make frequent use of the Heisenberg generators $\{b_{i,k}\}_{k\not=0}$ given by
\[\psi_i^\pm(z)=\psi_{i,0}^{\pm 1}\exp\left(\pm(\qqq-\qqq^{-1})\sum_{k>0}b_{i,\pm k}z^{\mp k}\right)\]
Similarly, we will also need the elements $H_{i,0}$, where $\psi_{i,0}=\qqq^{H_{i,0}}$. 
We use these elements to make sense of $\qqq^{\omega}$ for any $\omega$ in the weight lattice of affine $\mathfrak{sl}_\ell$ (thus, strictly speaking, we shall work with a naturally extended version of $\UTor$). 

\subsubsection{Topological Hopf algebra structure}\label{Hopf} 
The general framework of Ding-Iohara \cite{DingIohara} allows us to endow $U_{\qqq,\ddd}(\ddot{\mathfrak{sl}}_\ell)$ with a topological Hopf algebra structure:
\begin{gather*}
\Delta(\psi_i^\pm(z))=\psi_i^\pm(\gamma_{(2)}^{\mp\frac{1}{2}}z)\otimes\psi_i^\pm(\gamma_{(1)}^{\pm\frac{1}{2}}z),\,\Delta(x)=x\otimes x\hbox{ for }x=\gamma^{\pm\frac{1}{2}},\,\qqq^{\pm d_1},\,\qqq^{\pm d_2},\\
\Delta(e_i(z))=e_i(\gamma_{(2)}^{-1}z)\otimes\psi_i^+(\gamma_{(2)}^{-\frac{1}{2}}z) +1 \otimes e_i(z),\\
\Delta(f_i(z))=\psi_i^-(\gamma_{(1)}^{-\frac{1}{2}}z)\otimes f_i(\gamma_{(1)}^{-1}z)+f_i(z)\otimes 1,\\
\epsilon(e_i(z))=\epsilon(f_i(z))=0,\,\epsilon(\psi_i^\pm(z))=1,\,\epsilon(x)=1\hbox{ for }x=\gamma^{\pm\frac{1}{2}},\,\qqq^{\pm d_1},\,\qqq^{\pm d_2},\\
S(e_i(z))=-e_i(\gamma z)\psi_i^+(\gamma^{\frac{1}{2}}z)^{-1},\,S(f_i(z))=-\psi_i^-(\gamma^{\frac{1}{2}}z)^{-1}f_i(\gamma z),\\
S(x)=x^{-1}\hbox{ for }x=\gamma^{\pm\frac{1}{2}},\,\qqq^{\pm d_1},\,\qqq^{\pm d_2},\, \psi_i^{\pm}(z)
\end{gather*}
where $\gamma_{(1)}=\gamma\otimes 1$ and $\gamma_{(2)}=1\otimes \gamma$.

\begin{rem}
Note that our coproduct differs from the one in \cite{FJMMRep} and \cite{Tsym}.
Namely, we attach $\psi_i^+(z)$ to $\Delta (e_i(z))$ and $\psi_i^-(z)$ to $\Delta (f_i(z))$.
There is some freedom in choosing the powers of $\gamma$ and whether the Cartan current appears in the left or right tensorand of $\Delta(e_i(z))$ and $\Delta(f_i(z))$.
We make this choice so that the bialgebra pairing defined below in Theorem \ref{TorPairDef} takes its particular form.
\end{rem}

\subsubsection{Bialgebra pairing}\label{Bipair} 
We define the following subalgebras of $U_{\qqq,\ddd}(\ddot{\mathfrak{sl}}_\ell)$:
\begin{enumerate}
\item $'\ddot{U}$: the subalgebra obtained by dropping $\qqq^{d_1}$;
\item $\ddot{U}'$: the subalgebra obtained by dropping $\qqq^{d_2}$;
\item $'\ddot{U}'$: the subalgebra obtained by dropping both $\qqq^{d_1}$ and $\qqq^{d_2}$;
\item $\ddot{U}^+$: the subalgebra generated by the currents $\{e_i(z)\}$;
\item $\ddot{U}^-$: the subalgebra generated by the currents $\{f_i(z)\}$;
\item $\ddot{U}^0$: the subalgebra generated by the currents $\{\psi_{i,0}^{\mp1}\psi_i^\pm(z)\}$;
\item $\ddot{U}^\ge$: the subalgebra generated by the currents $\{e_i(z), \psi_i^+(z)\}$ along with, $\gamma^{\pm\frac{1}{2}}$, $\qqq^{\pm d_1}$, and $\qqq^{\pm d_2}$;
\item $\ddot{U}^\le$: the subalgebra generated by the currents $\{f_i(z), \psi_i^-(z)\}$ along with $\gamma^{\pm\frac{1}{2}}$, $\qqq^{\pm d_1}$, and $\qqq^{\pm d_2}$.
\end{enumerate}
The last two are in fact sub-bialgebras. In the spirit of (1) and (2), we apply left or right primes to any subalgebras to denote that we have dropped $\qqq^{d_1}$ or $\qqq^{d_2}$, respectively. 

Recall that a pairing of bialgebras $\varphi(-,-): A\times B\rightarrow\FF$ satisfies
\begin{align}\label{BialgPairDef}
\begin{split}
\varphi\left(a, bb'\right)&=\varphi\left( \Delta_A(a),b\otimes b' \right)\\
\varphi\left( aa', b \right)&=\varphi\left( a\otimes a',\Delta_B^{\mathrm{op}}(b) \right)
\end{split}
\end{align}
Given this setup (two bialgebras with a bialgebra pairing), one can define the \textit{Drinfeld double} $D(A,B)$, which is generated by $A$ and $B$ and subject to the relation
\begin{equation}
\varphi(a_{[1]}, b_{[1]})a_{[2]}b_{[2]}=b_{[1]}a_{[1]}\varphi(a_{[2]}, b_{[2]})
\label{DrinDouble}
\end{equation}
Here, we have used Sweedler notation for the coproduct:
\[
\Delta(x)=x_{[1]}\otimes x_{[2]}
\]
We have the following structural result:

\begin{thm}[cf. \cite{NegutTor}, \cite{Tsym}]\label{TorPairDef}
There exists a unique bialgebra pairing $\varphi:\ddot{U}^{\ge}\times\ddot{U}^\le\rightarrow\FF$ such that
\begin{gather*}
\varphi(e_i(z),f_j(w))=\dfrac{\delta_{i,j}}{\qqq-\qqq^{-1}}\delta(z/w),\,\varphi(\psi_i^+(z),\psi_j^-(w))=g_{i,j}(\ddd^{m_{i,j}}z/w)\bigg|_{\|z\|\gg\|w\|},\\
\varphi(e_i(z),x^-)=\varphi(x^+,f_i(z))=0\hbox{ for }x^\pm=\psi_j^{\pm}(w), \gamma^\frac{1}{2},\qqq^{d_1},\qqq^{d_2},\\
\varphi(\psi_i^+(z),x)=\varphi(x,\psi_i^-(z))=1\hbox{ for }x=\gamma^\frac{1}{2},\qqq^{d_1},\\
\varphi(\psi_i^+(z),\qqq^{d_2})=\qqq,\,\varphi(\qqq^{d_2},\psi_i^-(z))=\qqq^{-1},\, \varphi(\qqq^{d_2},\qqq^{d_2})=\qqq^{\frac{\ell(\ell^2-1)}{12}},\\
\varphi(\gamma^{\frac{1}{2}},\qqq^{d_1})=\varphi(\qqq^{d_1},\gamma^{\frac{1}{2}})=\qqq^{-\frac{1}{2}},\\
\varphi(\gamma^\frac{1}{2},\qqq^{d_2})=\varphi(\qqq^{d_2},\gamma^\frac{1}{2})=\varphi(\gamma^\frac{1}{2},\gamma^\frac{1}{2})=\varphi(\qqq^{d_1},\qqq^{d_1})=\varphi(\qqq^{d_2},\qqq^{d_1})=1
\end{gather*}
The pairing $\varphi$ is nondegenerate.
Moreover, $U_{\qqq,\ddd}(\ddot{\mathfrak{sl}}_\ell)$ is isomorphic to the Drinfeld double of this pairing modulo the relations
\begin{gather*}
x\otimes 1-1\otimes x\hbox{ for }x=\gamma^{\pm\frac{1}{2}},\qqq^{\pm d_1},\qqq^{\pm d_2},\\
(\psi_{i,0}\otimes 1)(1\otimes \psi_{i,0}^{-1})=1
\end{gather*}

\end{thm}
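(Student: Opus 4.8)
The plan is to carry out the standard Drinfeld double construction (as in \cite{DingIohara,NegutTor}), for which the two substantive points are the consistency of the prescribed values of $\varphi$ with the defining relations of $\ddot{U}^{\ge}$ and $\ddot{U}^{\le}$, and the identification of the kernel of the canonical surjection out of the resulting double. Uniqueness of $\varphi$ is formal: both $\ddot{U}^{\ge}$ and $\ddot{U}^{\le}$ are generated by the listed currents together with the grading and central elements, and the bialgebra pairing axioms
\[\varphi(xy,a)=\sum\varphi(x,a_{(1)})\varphi(y,a_{(2)}),\qquad \varphi(x,ab)=\sum\varphi(x_{(1)},a)\varphi(x_{(2)},b)\]
(with the opposite coproduct inserted on the appropriate side, per the skew-Hopf convention), fed the explicit coproducts of \ref{Hopf}, recursively express $\varphi$ in terms of its values on generators. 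For existence I would first define $\varphi$ by this recursion on the free associative algebras $\mathcal{A}^{\ge},\mathcal{A}^{\le}$ generated by the currents, where no consistency is needed, and then verify that each defining relation of $\ddot{U}^{\ge}$ pairs to zero against all of $\mathcal{A}^{\le}$ and symmetrically, so that $\varphi$ descends to $\ddot{U}^{\ge}\times\ddot{U}^{\le}$. By the pairing axioms, each such check reduces to a finite computation against generators: to kill $e_i(z)e_j(w)-g_{i,j}(\ddd^{m_{i,j}}z/w)e_j(w)e_i(z)$, for instance, one pairs against $1$, against $f_k(u)$, and against $\psi_k^+(u)$, using $\varphi(e_i(z),f_j(w))=\delta_{i,j}\delta(z/w)/(\qqq-\qqq^{-1})$, $\varphi(\psi_i^-(z),\psi_j^+(w))=g_{i,j}(z/w)$, and the functional equations of the $g_{i,j}$; the $\psi\psi$, $\psi e$, and $\psi f$ relations are handled the same way. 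The Serre relations are the delicate case: pairing $\Sym_{z_1,z_2}[e_i(z_1),[e_i(z_2),e_{i\pm1}(w)]_\qqq]_{\qqq^{-1}}$ against products of two $f$-currents, the $\delta$-supported terms produced by the iterated coproduct telescope and the symmetrization kills what remains.

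With $\varphi$ in hand, form the Drinfeld double $D(\varphi)$ on the space $\ddot{U}^{\ge}\otimes\ddot{U}^{\le}$, with the two tensor factors as subalgebras and cross relations dictated by $\varphi$. The embeddings $\ddot{U}^{\ge}\hookrightarrow\UTor$ and $\ddot{U}^{\le}\hookrightarrow\UTor$ assemble into an algebra homomorphism $D(\varphi)\to\UTor$: the only thing to check is that the cross relations hold in $\UTor$, and on specializing the two arguments to currents these become precisely the relation $(\qqq-\qqq^{-1})[e_i(z),f_j(w)]=\cdots$ and the $\psi_i^\pm$-versus-$e_j$ and $\psi_i^\pm$-versus-$f_j$ relations of \ref{TorDef}. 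This map is surjective because it hits every generator of $\UTor$, and its kernel manifestly contains the ideal generated by $x\otimes1-1\otimes x$ for $x=\gamma^{\pm\frac12},\qqq^{\pm d_1},\qqq^{\pm d_2}$ and by $(\psi_{i,0}^{-1}\otimes1)(1\otimes\psi_{i,0})-1$, since those elements are literally identified in $\UTor$.

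It remains to show the kernel is no larger, and here I would compare triangular decompositions. On the target, $\UTor\cong\ddot{U}^-\otimes\ddot{U}^0\otimes\ddot{U}^+$ as a vector space (a PBW-type statement, with the grading and central elements absorbed into $\ddot{U}^0$); on $D(\varphi)$ modulo the stated relations the same factorization holds by construction of the double, since the relations only identify the two copies of the toral part. The quotient map respects both decompositions and restricts to the identity on each tensor factor, hence is an isomorphism. The main obstacles are exactly the Serre-relation radical check in the first step and this PBW/nondegeneracy input in the last; both are the points at which the argument genuinely leans on \cite{NegutTor} and, in the rank-one affine corners, on the established structure theory of $U_\qqq(\dot{\mathfrak{gl}}_\ell)$.
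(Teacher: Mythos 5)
First, a point of comparison: the paper does not prove this theorem at all --- it is stated with ``cf.\ \cite{NegutTor}'' and imported as a known structural fact, so there is no in-paper argument to measure yours against. Your sketch is the standard Ding--Iohara/Drinfeld-double route and has the right overall shape: define $\varphi$ on generators, propagate it by the bialgebra-pairing axioms, check that the defining relations lie in the radical, build the double, and identify the quotient. The uniqueness argument and the well-definedness checks for the $\psi\psi$, $\psi e$, $\psi f$, and $ee$/$ff$ relations are indeed routine in the way you describe.

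The two steps you defer are, however, the entire mathematical content of the theorem, and the second, as phrased, is circular. (1) The Serre-relation radical check is not a telescoping formality: pairing $\Sym_{z_1,z_2}[e_i(z_1),[e_i(z_2),e_{i\pm1}(w)]_\qqq]_{\qqq^{-1}}$ against a product of three $f$-currents produces a sum of products of delta functions weighted by $g_{j,k}$-factors from the iterated coproduct, and its vanishing is a genuine rational-function identity that must be exhibited rather than asserted. (2) In the final step you ``compare triangular decompositions,'' taking as input that $\UTor\cong\ddot{U}^-\otimes\ddot{U}^0\otimes\ddot{U}^+$ and that the quotient map restricts to the identity on each factor. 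But the triangular decomposition of the generators-and-relations algebra --- in particular the injectivity of the abstract halves $\ddot{U}^\pm$ into $\UTor$ --- is precisely what the Drinfeld-double realization is normally used to \emph{establish}, in combination with nondegeneracy of $\varphi$ on $\ddot{U}^+\times\ddot{U}^-$ (proved in \cite{NegutTor} via the shuffle realization) or faithfulness of a concrete representation. Assuming the PBW statement up front makes the last step vacuous and hides the actual work. To close the argument you need an independent source of injectivity; naming that input explicitly (nondegeneracy of the restricted pairing, or a faithful Fock-type module for each half) is the minimum required to turn this sketch into a proof.
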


\noindent Under this pairing, $b_{i,k}$ is orthogonal to the generators $\{ e_{j,n}, f_{j,n}, \gamma^{\pm\frac{1}{2}}, \qqq^{\pm d_1},\qqq^{\pm d_2} \}$.
Its pairing with the generators $\{\psi_{j,n}\}$ can be deduced from 
\begin{equation}
\varphi(b_{i,k},b_{j,-k'})=-\delta_{k,k'}\frac{[ka_{i,j}]_\qqq\ddd^{-km_{i,j}}}{k(\qqq-\qqq^{-1})} \hbox{ for }k>0
\label{HeisPair}
\end{equation}
We define the \textit{dual} Heisenberg generators $\{b_{i,k}^\perp\}_{k\not=0}\subset\ddot{U}^0$ by the property
\[\varphi(b_{i,k}^\perp,b_{j,-k'})=\varphi(b_{j,k'},b_{i,-k}^\perp)=\delta_{i,j}\delta_{k, k'} \hbox{ for }k>0\]

\begin{rem}\label{PerpRem}
Elements like $b_{i,k}^\perp$ were defined in Definition 4.3 of \cite{Tsym}, from which we drew the above definition.
However, in \textit{loc. cit.}, $h_{i,k}^\perp$ is defined so that it is a linear combination of $h_{j,-k}$.
We have instead elected to preserve homogeneous degree: our $b_{i,k}^\perp$ is a linear combination of $b_{j,k}$.
Note as well that our pairings are slightly different due to the different coproducts.
\end{rem}

\subsection{S-duality} 
One feature that makes $U_{\qqq,\ddd}(\ddot{\mathfrak{sl}}_\ell)$ deserve its name is that it contains two copies of the quantum affine algebra $U_\qqq(\dot{\mathfrak{gl}}_\ell)$ and hence has `two loops'. 
We review this and Miki's isomorphism interchanging the two copies.

\subsubsection{$U_\qqq(\dot{\mathfrak{sl}}_\ell)$ via Drinfeld-Jimbo} 
Recall that $U_\qqq(\dot{\mathfrak{sl}}_\ell)$ has two presentations. 
One is the classical Drinfeld-Jimbo presentation (cf. \cite{Lusztig}), wherein the algebra is generated by $\{E_i, F_i, K_i^\pm,D^{\pm1}\}_{i\in\ZZ/\ell\ZZ}$ and satisfies the relations
\begin{gather*}
D^{\pm1}D^{\mp1}=1,\, DK_iD^{-1}=K_i,\,DE_iD^{-1}= \qqq E_i,\, DF_iD^{-1}=\qqq^{-1}F_i,\\
K_i^{\pm1}K_i^{\mp1}=1,\,K_iK_j=K_jK_i,\,K_iE_jK_i=\qqq^{a_{i,j}}E_j,\,K_iF_jK_i=\qqq^{-a_{i,j}}F_j,\\
[E_i,F_j]=\delta_{i,j}\dfrac{K_i-K_i^{-1}}{\qqq-\qqq^{-1}},\\
\displaystyle\sum_{s=0}^{1-a_{i,j}}\dfrac{(-1)^s}{[s]_\qqq![1-a_{i,j}-s]_\qqq!}E_i^sE_jE_i^{1-a_{i,j}-s}=0,\\
\sum_{s=0}^{1-a_{i,j}}\dfrac{(-1)^s}{[s]_\qqq![1-a_{i,j}-s]_\qqq!}F_i^sF_jF_i^{1-a_{i,j}-s}=0
\end{gather*}
Note that the element
\[
C:= K_0\cdots K_{\ell-1}
\]
is central.

$U_\qqq(\dot{\mathfrak{sl}}_\ell)$ has a coproduct $\bar{\Delta}$ given by
\begin{gather*}
\bar{\Delta}(E_i)=E_i\otimes 1+K_i\otimes E_i, \bar{\Delta}(F_i)=F_i\otimes K_i^{-1}+1\otimes F_i\\
\bar{\Delta}(x)=x\otimes x\hbox{ for }x=K_i,D
\end{gather*}
Similar to the toroidal case, we define the following subalgebras:
\begin{enumerate}
\item $\dot{U}^\ge_{\mathfrak{sl}}$: the subalgebra generated by $\{E_i, K_i, D^{-1}\}$;
\item $\dot{U}^\le_{\mathfrak{sl}}$: the subalgebra generated by $\{F_i, K_i^{-1}, D\}$;
\end{enumerate}
Both are sub-bialgebras with respect to the coproduct $\bar{\Delta}$.
A bialgebra pairing $\bar{\varphi}_{\mathfrak{sl}}:\dot{U}^\ge_{\mathfrak{sl}}\times\dot{U}^\le_{\mathfrak{sl}}\rightarrow\CC(\qqq)$ can be defined by setting: 
\begin{gather*}
\bar{\varphi}_{\mathfrak{sl}}(E_i,F_j)=\frac{\delta_{i,j}}{\qqq-\qqq^{-1}},\, \bar{\varphi}_{\mathfrak{sl}}(K_i,K_j^{-1})=\qqq^{a_{i,j}}, \\
\bar{\varphi}_{\mathfrak{sl}}(K_i, D)=\bar{\varphi}_{\mathfrak{sl}}(D^{-1}, K_i^{-1})=\qqq\\
\bar{\varphi}_{\mathfrak{sl}}(E_i,K_j^{-1})=\bar{\varphi}_{\mathfrak{sl}}(K_i,F_j)=\bar{\varphi}_{\mathfrak{sl}}(E_i,D)=\bar{\varphi}_{\mathfrak{sl}}(D,F_j)=0
\end{gather*}
This pairing is nondegenerate.

\subsubsection{New Drinfeld presentation}
The other presentation of $U_\qqq(\dot{\mathfrak{sl}}_\ell)$ is the new Drinfeld realization in terms of currents. 
Here, the algebra is generated by
\[\{\bar{e}_{i,k},\bar{f}_{i,k},\bar{\psi}_{i,k},\bar{\psi}_{i,0}^{-1},C^{\pm 1}, D^{\pm 1}\}_{i=1,\ldots,\ell-1}^{k\in\ZZ}\]
As before, we fit them into the currents $\bar{e}_i(z)$, $\bar{f}_i(z)$, and $\bar{\psi}_i^\pm(z)$, the only difference being that we do not allow $i=0$. 
Our relations should look familiar:
\begin{gather*}
[\bar{\psi}_i^\pm(z),\bar{\psi}_j^\pm(w)]=0,\,C^{\pm 1}\hbox{ are central},\\
\bar{\psi}_{i,0}^{\pm1}\bar{\psi}_{i,0}^{\mp1}=C^{\pm 1}C^{\mp 1}=D^{\pm1}D^{\mp1}=1,\\
D\bar{e}_i(z)D^{-1}=\qqq\bar{e}_i(\qqq^{-n} z),\, D\bar{f}_i(z)D^{-1}=\qqq^{-1}\bar{f}_i(\qqq^{-n} z),\, D\bar{\psi}_i^\pm(z)D^{-1}=\bar{\psi}_i^\pm(\qqq^{-n} z),\\
g_{i,j}(C^{-1}z/w)\bar{\psi}_i^{+}(z)\bar{\psi}_j^{-}(w)=g_{i,j}(C z/w)\bar{\psi}_j^{-}(w)\bar{\psi}_i^{+}(z),\\
\bar{e}_i(z)\bar{e}_j(w)=g_{i,j}(z/w)\bar{e}_j(w)\bar{e}_i(z),\\
\bar{f}_i(z)\bar{f}_j(w)=g_{i,j}(z/w)^{-1}\bar{f}_j(w)\bar{f}_i(z),\\
(\qqq-\qqq^{-1})[\bar{e}_i(z),\bar{f}_j(w)]=\delta_{i,j}\left(\delta(C w/z)\bar{\psi}_i^+(Cw)-\delta(C z/w)\bar{\psi}_i^-(Cz)\right),\\
\bar{\psi}_i^+(z)\bar{e}_j(w)=g_{i,j}(z/w)\bar{e}_j(w)\bar{\psi}_i^+(z),\\
\bar{\psi}_i^-(z)\bar{e}_j(w)=g_{i,j}(C^{-1}z/w)\bar{e}_j(w)\bar{\psi}_i^-(z),\\
\bar{\psi}_i^+(z)\bar{f}_j(w)=g_{i,j}(C^{-1}z/w)^{-1}\bar{f}_j(w)\bar{\psi}_i^+(z),\\
\bar{\psi}_i^-(z)\bar{f}_j(w)=g_{i,j}(z/w)^{-1}\bar{f}_j(w)\bar{\psi}_i^-(z),\\
\Sym_{z_1,z_2}[\bar{e}_i(z_1),[\bar{e}_i(z_2),\bar{e}_{j}(w)]_\qqq]_{\qqq^{-1}}=0 \hbox{ if }a_{i,j}=-1,\\
[\bar{e}_i(z),\bar{e}_j(w)]=0\hbox{ if }a_{i,j}=0,\\
\Sym_{z_1,z_2}[\bar{f}_i(z_1),[\bar{f}_i(z_2),\bar{f}_{j}(w)]_\qqq]_{\qqq^{-1}}=0 \hbox{ if }a_{i,j}=-1,\\
[\bar{f}_i(z),\bar{f}_j(w)]=0\hbox{ if }a_{i,j}=0
\end{gather*}
Under this presentation, it has a new \textit{topological} coproduct $\bar{\Delta}_{Dr}$ defined similarly to \ref{Hopf} except $i\not=0$.

%
\subsubsection{Miki's automorphism} 
By \cite{VV}, the two algebra homomorphisms $h:U_\qqq(\dot{\mathfrak{sl}}_\ell)\rightarrow\, '\ddot{U}$ and $v:U_\qqq(\dot{\mathfrak{sl}}_\ell)\rightarrow \ddot{U}'$ defined by
\begin{align*}
&h:E_i\mapsto e_{i,0},\, F_i\mapsto f_{i,0},\, K_i\mapsto\psi_{i,0},\, D\mapsto \qqq^{d_2}\\
&v:\bar{e}_{i,k}\mapsto \ddd^{ik}e_{i,k},\, \bar{f}_{i,k}\mapsto \ddd^{ik}f_{i,k},\, \bar{\psi}_{i,k}\mapsto \ddd^{ik}\gamma^{k/2}\psi_{i,k},\, C\mapsto\gamma, D\mapsto \qqq^{-\ell d_1}\qqq^{\sum_{i=1}^{\ell-1}\frac{i(\ell-i)}{2}H_{i,0}}
\end{align*}
are injective (cf. \cite{Miki3} and \cite{Miki}). 
We call their images the \textit{horizontal} and \textit{vertical} $U_\qqq(\dot{\mathfrak{sl}}_\ell)$, respectively. 
Note that $v$ is a Hopf algebra map if we equip $U_\qqq(\dot{\mathfrak{sl}_\ell})$ with $\bar{\Delta}_{Dr}$.
On the other hand, $h$ is only an algebra map no matter which coproduct we choose, but the restriction of the bialgebra pairing $\varphi$ onto $h( U_\qqq(\dot{\mathfrak{sl}_\ell}) )$ gives the pairing $\bar{\varphi}_{\mathfrak{sl}}$. 
Also, $h(U_\qqq(\dot{\mathfrak{sl}}_\ell))$ lies entirely in homogeneous degree zero.

Let $\eta$ be the $\CC(\qqq)$-linear anti-involution of $'\ddot{U}'$ defined by
\begin{equation}
\begin{gathered}
\eta(\ddd)=\ddd^{-1}\\
\eta(e_{i,k})=e_{i,-k},\, \eta(f_{i,k})=f_{i,-k},\, \eta(b_{i,k})=-b_{i,-k},\\
\eta(\psi_{i,0})=\psi_{i,0}^{-1},\,\eta(\gamma^{\frac{1}{2}})=\gamma^{\frac{1}{2}}
\end{gathered}
\label{EtaDef}
\end{equation}
We have the beautiful construction of Miki \cite{Miki,Miki2}:

\begin{thm}\label{MikiAut}
There is an algebra embedding $\varpi:\,'\ddot{U}\rightarrow\ddot{U}'$ such that $\varpi\circ h=v$. 
When restricted to $'\ddot{U}'$, $\varpi$ is an automorphism with the property $\varpi^{-1}=\eta\circ\varpi\circ\eta$.
\end{thm}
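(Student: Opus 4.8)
The plan is to follow Miki's original route: present $\varpi$ explicitly on a finite generating set of ${}'\ddot{U}$, verify that the proposed images satisfy the defining relations, and then build an inverse the same way. First I would cut down the generating set. Using the $\qqq$-commutation relations among the $e_i(z)$ together with the $\psi$--$e$ relations, one checks that ${}'\ddot{U}$ is already generated by the horizontal Drinfeld--Jimbo generators $h(E_i)=e_{i,0}$, $h(F_i)=f_{i,0}$, $h(K_i^{\pm1})=\psi_{i,0}^{\pm1}$ for $i\in\ZZ/\ell\ZZ$ (note $i=0$ is included, so this is the full horizontal affine $\mathfrak{sl}_\ell$), together with $\gamma^{\pm1/2}$, $\qqq^{\pm d_2}$, and a single element of nonzero homogeneous degree, for which one may take $e_{0,1}$: repeated $\qqq$-brackets of $e_{0,1}$ with the homogeneous-degree-zero generators recover every $e_{i,k}$ and $f_{i,k}$. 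Dually, $\ddot{U}'$ is generated by the vertical Drinfeld currents $\bar{e}_i(z),\bar{f}_i(z),\bar{\psi}_i^\pm(z)$ for $i=1,\dots,\ell-1$ (the images under $v$), the imaginary root vectors $b^v_k$, $C^{\pm1/2}$, $\qqq^{\pm d_1}$, and one further ``affine-node'' element assembled from these.

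On the horizontal affine $\mathfrak{sl}_\ell$ the assignment is forced: $\varpi\circ h=v$, i.e.\ for $i=1,\dots,\ell-1$ one sets $\varpi(e_{i,0}),\varpi(f_{i,0}),\varpi(\psi_{i,0}^{\pm1})$ equal to the lowest-mode vertical elements $v(\bar{e}_{i,0}),v(\bar{f}_{i,0}),v(\bar{\psi}_{i,0}^{\pm1})$, while for the affine node $i=0$ one must take $\varpi(e_{0,0}),\varpi(f_{0,0}),\varpi(\psi_{0,0}^{\pm1})$ to be the ``affine Chevalley generators in the vertical picture'' --- explicit combinations of the extreme vertical Fourier modes with the imaginary root vectors $b^v_k$, dressed by powers of $\gamma$. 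One also stipulates that $\varpi$ swaps the central/degree data, roughly $\gamma^{1/2}\mapsto\qqq^{-d_1/2}$ and $\qqq^{d_2}\mapsto v(C)=\gamma$, and sends $e_{0,1}$ to the homogeneous-degree-one element obtained by bracketing the images already defined. Writing down these formulas, above all the affine-node triple, is the formulaically complicated core alluded to in the introduction.

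Next I would check the relations, which fall into three groups. (a) The quantum affine $\mathfrak{sl}_\ell$ relations among $e_{i,0},f_{i,0},\psi_{i,0}$ with $\qqq^{d_2}$: for the non-affine nodes the images are literally $v$ of Drinfeld-current elements, so these hold by \cite{VV}, and the affine-node element is rigged precisely so that the affine quantum Serre and Cartan relations close --- this is the heart of the verification. (b) The relations tying $e_{0,1}$ to the degree-zero generators (mixed $\qqq$-commutations and Serre-type identities): a direct computation inside $\ddot{U}'$. (c) The central and grading relations: trivial identities among $\gamma$ and $\qqq^{d_1}$. This yields an algebra map $\varpi:{}'\ddot{U}\to\ddot{U}'$. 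Rerunning the construction with horizontal and vertical exchanged --- using that $\ddot{U}'$ likewise has a ``horizontal'' Chevalley-type generating set built from its own imaginary root vectors --- produces $\varpi':\ddot{U}'\to{}'\ddot{U}$, and comparing $\varpi'\circ\varpi$ and $\varpi\circ\varpi'$ on generators shows both are the identity; hence $\varpi$ is an isomorphism with $\varpi\circ h=v$. Dropping $\qqq^{d_1}$ and $\qqq^{d_2}$ throughout, $\varpi$ carries the generators of ${}'\ddot{U}'$ into ${}'\ddot{U}'$ (both the horizontal Chevalley subalgebra without $D$ and the vertical current subalgebra without $D$ generate it), so $\varpi|_{{}'\ddot{U}'}$ is an endomorphism, bijective again via $\varpi'$. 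Finally, $\eta\circ\varpi\circ\eta$ is an algebra homomorphism (a composite of two anti-homomorphisms and one homomorphism), so to prove $\varpi^{-1}=\eta\circ\varpi\circ\eta$ it suffices to match the two on the finite generating set: for $e_{i,0},f_{i,0}$ this reduces to the manifest symmetry of Miki's formulas under $k\mapsto-k$ combined with $\eta(b_{i,k})=-b_{i,-k}$ and $\eta(\ddd)=\ddd^{-1}$, and for the central/degree elements it is immediate from the assignments above.

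The main obstacle is the affine node: producing the image of $(e_{0,0},f_{0,0},\psi_{0,0})$ inside $\ddot{U}'$ in terms of vertical Fourier modes and imaginary root vectors, and then checking that the affine quantum Serre relations linking node $0$ with nodes $1$ and $\ell-1$ actually hold --- essentially all of Miki's ingenuity lives here. A cleaner but less self-contained alternative would bypass the relation-grinding by using a faithful representation: realize both ${}'\ddot{U}$ and $\ddot{U}'$ on the vertex representation $W$, where the horizontal subalgebra acts by principal vertex operators and the vertical by Frenkel--Kac-type operators, and exhibit a single change of realization on $W$ implementing the swap; then $\varpi$ can be read off and the relation check is replaced by standard vertex-operator operator-product identities.
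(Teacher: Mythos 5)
The paper does not actually prove Theorem \ref{MikiAut}: it imports the result wholesale from Miki \cite{Miki,Miki2}, so there is no internal proof to measure your argument against, and a citation is the intended treatment. Judged on its own terms, your proposal is an outline of Miki's strategy rather than a proof. The two steps that carry essentially all of the mathematical content --- writing down the images in $\ddot{U}'$ of the affine-node triple $(e_{0,0},f_{0,0},\psi_{0,0}^{\pm1})$ and of the single nonzero-degree generator in terms of vertical Fourier modes and imaginary root vectors, and then verifying the quantum Serre and mixed relations for those images --- are exactly the steps you defer, and you say so yourself. Without those formulas nothing downstream can be checked: the construction of the inverse $\varpi'$, the claim that $\varpi'\circ\varpi=\mathrm{id}$ on generators, and the verification of $\varpi^{-1}=\eta\circ\varpi\circ\eta$ ``by the manifest symmetry of Miki's formulas'' all presuppose formulas that are never written. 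The reduction to a finite generating set consisting of the horizontal Chevalley generators plus one element such as $e_{0,1}$ is also a genuine lemma requiring proof, not a routine observation.

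One assignment you do commit to is wrong. Since $\gamma$ is central in $'\ddot{U}$ and $\varpi$ is an algebra isomorphism, $\varpi(\gamma)$ must be central in $\ddot{U}'$; but $\qqq^{d_1}$ is a grading operator that fails to commute with the currents $e_i(z)$, so $\gamma^{1/2}\mapsto\qqq^{-d_1/2}$ is impossible. The constraint $\varpi\circ h=v$ in fact forces $\varpi(\qqq^{d_2})=\varpi(h(D))=v(D)=\qqq^{-d_1}$ (not $\qqq^{d_2}\mapsto\gamma$, as you wrote) and $\varpi(\psi_{0,0}\cdots\psi_{\ell-1,0})=v(C)=\gamma$; dually $\varpi$ must send $\gamma$ to a power of the central element $\psi_{0,0}\cdots\psi_{\ell-1,0}$, not to a power of $\qqq^{d_1}$. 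Getting this bookkeeping right is not cosmetic: the interchange of the two central charges and the two degree operators is the structural content of ``S-duality,'' and the relation checks at the affine node depend on it.
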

\begin{proof}
Our $\varpi$ is denoted $\pi$ in \cite{Miki2}.
In \cite{Miki}, the author defines an automorphism $\psi$ of $'\ddot{U}'$ that satisfies $\psi^{-1}=\eta\circ\psi\circ\eta$.
The map $\pi$ is defined in Proposition 1 of \cite{Miki2} by extending $\psi^{-1}$ to $'\ddot{U}$ by specifying the image of $\qqq^{d_2}$.
\end{proof}

The map $\varpi$ becomes an isomorphism upon adjoining the $\ell$th root $\qqq^{d_2/\ell}$ to $'\ddot{U}$, and thus we call it \textit{Miki's automorphism}.
It exchanges the vertical and horizontal subalgebras and sends elements of positive \textit{principal} degree to elements of positive \textit{homogeneous degree}.
In general, it is quite difficult to explicitly compute the images of elements under both $\varpi$ and $\varpi^{-1}$.
The following is Proposition 2.6 of \cite{Tsym}:

\begin{prop}\label{MikiProp}
For $i\not=0$, we have
\begin{align}
\nonumber&
\begin{gathered}
\varpi(e_{i,0})=e_{i,0},\, \varpi(f_{i,0})=f_{i,0},\,\varpi(\psi_{i,0}^{\pm 1})=\psi_{i,0}^{\pm 1},\, \varpi(\psi_{0,0}^{\pm 1})=\gamma^{\pm 1}\psi_{0,0}^{\pm 1}\\
\varpi(\qqq^{d_2})=v(D)=\qqq^{-\ell d_1}\qqq^{\sum_{i=1}^{\ell-1}\frac{i(\ell-i)}{2}H_{i,0}}\\
\varpi(e_{0,-1})= (-\ddd)^\ell e_{0,1},\, \varpi(f_{0,1})=(-\ddd)^{-\ell} f_{0,-1}
\end{gathered}\\
\label{MikiE0}
\varpi(e_{0,0})&= \ddd\gamma\psi_{0,0}[ \cdots[f_{1,1},f_{2,0}]_\qqq,\cdots, f_{\ell-1,0}]_\qqq\\
\label{MikiF0}
\varpi(f_{0,0})&=\ddd^{-1}[e_{\ell-1,0},\cdots[e_{2,0},e_{1,-1}]_{\qqq^{-1}}\cdots]_{\qqq^{-1}}\psi_{0,0}^{-1}\gamma^{-1}\\
\label{PlusI}
\varpi(b_{i,1})&= (-1)^{i+1}\ddd^{-i}[ [\cdots[ [\cdots [f_{0,0},f_{\ell-1,0}]_\qqq,\cdots, f_{i+1,0}]_\qqq, f_{1,0}]_\qqq, \cdots, f_{i-1,0}]_\qqq , f_{i,0}]_{\qqq^2}\\
\label{MinusI}
\varpi(b_{i,-1})&= (-1)^{i+1}\ddd^i[e_{i,0}, [\cdots,[e_{1,0}, [e_{i+1,0},\cdots,[e_{\ell-1,0},e_{0,0}]_{\qqq^{-1}}\cdots]_{\qqq^{-1}}]_{\qqq^{-1}}\cdots]_{\qqq^{-1}}]_{\qqq^{-2}}\\
\label{Plus0}
\varpi(b_{0,1})&= (-1)^\ell \ddd^{-(\ell-1)}[ [\cdots [f_{1,1},f_{2,0}]_\qqq,\cdots, f_{\ell-1,0}]_\qqq, f_{0,-1}]_{\qqq^2}\\
\label{Minus0}
\varpi(b_{0,-1})&= (-1)^\ell \ddd^{\ell-1}[ e_{0,1},[e_{\ell-1,0},\cdots,[e_{2,0},e_{1,-1}]_{\qqq^{-1}}\cdots]_{\qqq^{-1}}]_{\qqq^{-2}}
\end{align}
\end{prop}

\subsubsection{Extension to $U_\qqq(\dot{\gl_\ell})$}
One can enhance the vertical $U_\qqq(\dot{\mathfrak{sl}}_\ell)$ into a copy of $U_\qqq(\dot{\mathfrak{gl}}_\ell)$ in the following way (cf. \cite{FJMMRep}). 
The elements $\{b_{0,k}^\perp\}_{k\not=0}\subset\ddot{U}^0$ commute with all the generators of $v(U_\qqq(\dot{\mathfrak{sl}}_\ell))$ besides $D$.
Moreover,
\[\langle b_{0,k}^\perp,b_{i,k}, \gamma : i\not=0, k\not=0\rangle=\ddot{U}^0\]
The subalgebra obtained by adjoining $\{b_{0,k}^\perp\}$ to $v(U_\qqq(\dot{\mathfrak{sl}}_\ell))$ is isomorphic to $U_\qqq(\dot{\mathfrak{gl}}_\ell)$. 
We call it the \textit{vertical $U_\qqq(\dot{\mathfrak{gl}}_\ell)$} and denote it by $U_\qqq^v(\dot{\mathfrak{gl}}_\ell)$. 
Likewise, we will call $\ddot{U}^0$ the \textit{vertical Heisenberg subalgebra}. 
We will extend $v$ accordingly, and we can extend $\bar{\Delta}_{Dr}$ so that $v$ still respects the coproduct.
Using $\varpi$, we can adjoin $\{\varpi^{-1}(b_{0,k}^\perp)\}$ to $h(U_\qqq(\dot{\mathfrak{sl}}_\ell))$ to obtain what we call the \textit{horizontal $U_\qqq(\dot{\mathfrak{gl}}_\ell)$} and denote it by $U_\qqq^h(\dot{\mathfrak{gl}}_\ell)$. 
We call the subalgebra $\varpi^{-1}(\ddot{U}^0)$ the \textit{horizontal Heisenberg subalgebra}. 
Similar to the vertical case, we extend $h$ to $U_\qqq^h(\dot{\mathfrak{gl}}_\ell)$. 

This extension may only seem natural from the perspective of the new Drinfeld presentation of $U_\qqq(\dot{\mathfrak{sl}}_\ell)$.
On the other hand, the algebra $U_\qqq(\dot{\mathfrak{gl}}_\ell)$ also has an \textit{RTT presentation} (cf. \cite{DingFrenk}).
Consider the following subalgebras of $U_\qqq^h(\dot{\mathfrak{gl}}_\ell)$:
\begin{enumerate}
\item $\dot{U}^\ge_{\mathfrak{gl}}$: the subalgebra generated by $h(\dot{U}^{\ge}_{\mathfrak{sl}})$ and $\{ \varpi^{-1}(b_{0,k}^\perp) \}_{k>0}$;
\item $\dot{U}^\le_{\mathfrak{gl}}$: the subalgebra generated by $h(\dot{U}^{\le}_{\mathfrak{sl}})$ and $\{ \varpi^{-1}(b_{0,-k}^\perp) \}_{k<0}$;
\end{enumerate}
From the RTT presentation, one can naturally extend the coproduct $\bar{\Delta}$ to a coproduct on $U_\qqq^h(\dot{\mathfrak{gl}}_\ell)$, which we will also denote by $\bar{\Delta}$. The subalgebras $\dot{U}^\ge_{\mathfrak{gl}}$ and $\dot{U}^\le_{\mathfrak{gl}}$ are sub-bialgebras and the bialgebra pairing $\bar{\varphi}_{\mathfrak{sl}}$ can be extended to a nondegenerate bialgebra pairing $\bar{\varphi}_{\mathfrak{gl}}:\dot{U}^\ge_{\mathfrak{gl}}\times\dot{U}^\le_{\mathfrak{gl}}\rightarrow\CC(\qqq)$.
Moreover, $U_\qqq^h(\dot{\mathfrak{gl}}_\ell)$ can be obtained as a quotient of the Drinfeld double of this pairing.
We omit details, which can be found in \cite{NegutTor}.
The key points for us are that (cf. Remark 2.23 of \textit{loc. cit.})
\begin{equation}
\bar{\varphi}_{\mathfrak{gl}}\left(\varpi^{-1}(b_{0,k}^\perp),h(\dot{U}_{\mathfrak{sl}}^{\le})\right)=\bar{\varphi}_{\mathfrak{gl}}\left(h(\dot{U}_{\mathfrak{sl}}^\ge),\varpi^{-1}(b_{0,-k}^\perp)\right)=0
\label{glorth}
\end{equation}
and
\begin{align}
\begin{split}
\bar{\Delta}(\varpi^{-1}(b_{0,k}^\perp))&=\varpi^{-1}(b_{0,k}^\perp)\otimes 1+\kappa^k\otimes \varpi^{-1}(b_{0,k}^\perp)\\
\bar{\Delta}(\varpi^{-1}(b_{0,-k}^\perp))&=\varpi^{-1}(b_{0,-k}^\perp)\otimes \kappa^{-k}+1\otimes \varpi^{-1}(b_{0,-k}^\perp)
\end{split}
\label{PerpCoprod}
\end{align}
for $k>0$.

\begin{prop}
For $k,k'>0$, we have:
\begin{equation}
\bar{\varphi}_{\mathfrak{gl}}\left(\varpi^{-1}(b_{0,k}^\perp),\varpi^{-1}(b_{0,-k'}^\perp)\right)=\varphi(b_{0,-k'}^\perp,b_{0,k}^\perp)
\label{GLHeisPair}
\end{equation}
\end{prop}

\begin{proof}
We can combine (\ref{glorth}), (\ref{PerpCoprod}), and the Drinfeld double relation (\ref{DrinDouble}) to understand the left hand side of (\ref{GLHeisPair}) in terms of a commutator.
Namely, this is zero if $k\not=k'$, and in the case $k=k'$, we have 
\begin{equation}
\left[\varpi^{-1}(b_{0,k}^\perp),\varpi^{-1}(b_{0,-k}^\perp)\right]=(\kappa^k-\kappa^{-k})\bar{\varphi}_{\mathfrak{gl}}\left(\varpi^{-1}(b_{0,k}^\perp),\varpi^{-1}(b_{0,-k}^\perp)\right)
\label{gldoub}
\end{equation}
However, noting $\gamma=\varpi(\kappa)$, the proposition follows from applying $\varpi$ to (\ref{gldoub}) and similarly studying the Drinfeld double relation, this time for $\UTor$ and $\varphi$.
\end{proof}

\subsection{\textit{R}-matrices}\label{Rmatrix} 
Here, we will review the universal $R$-matrices of $U_\qqq(\dot{\mathfrak{sl}}_\ell)$, $U_\qqq(\dot{\mathfrak{gl}}_\ell)$, and $U_{\qqq,\ddd}(\ddot{\mathfrak{sl}}_\ell)$.
All three are the canonical tensors for the bialgebra pairings on their respective algebras.
While the theory and applications of $R$-matrices is a vast subject, we will only have a very specific use for them in \ref{Loperators}, and thus our presentation is very sparse and technical.
Specifically, we will focus on their factorizations.

\subsubsection{$R$-matrix of $U_\qqq(\dot{\mathfrak{sl}}_\ell)$}\label{AffineR} 
The universal $R$-matrix $\bar{\mathcal{R}}_{\mathfrak{sl}}$ sits inside a suitable completion of $\dot{U}^{\ge}_\mathfrak{sl}\otimes\dot{U}^\le_\mathfrak{sl}$. 
In terms of the Drinfeld generators, the $R$-matrix has a nice factorization (cf. \cite{Damiani}):
\[\bar{\mathcal{R}}_\mathfrak{sl}=(1+\bar{\mathcal{R}}^+)\bar{\mathcal{R}}_\mathfrak{sl}^{0}(1+\bar{\mathcal{R}}^-)\qqq^{\bar{t}_\infty}\]
Let us comment on the factors from left to right:
\begin{enumerate}
\item $\bar{\mathcal{R}}^+$ is a sum of tensors whose first tensorands are products of the coefficients of the currents $\{\bar{e}_i(z)\}$ and whose second tensorands are products of the coefficients of the currents $\{\bar{f}_i(z)\}$.
Although we will not use this fact, we note that the weight of the first tensorands will always be a positive affine root.
\item For $\bar{\mathcal{R}}_\mathfrak{sl}^0$, we need the elements $\{\bar{b}_{i,k}\}$ for $k\not=0$ and $i=1,\ldots\ell-1$ defined by
\[\bar{\psi}^\pm_i(z)=\bar{\psi}_{i,0}^{\pm 1}\exp\left(\pm(\qqq-\qqq^{-1})\sum_{k>0}\bar{b}_{i,\pm k}z^{\mp k}\right)\]
For each positive integer $k$, let $M_k$ denote the following `Cartan-like' matrix for the finite-type root lattice $A_{\ell-1}$:
\[
\left[M_k\right]_{ij}:= \frac{[ka_{i,j}]_\qqq}{k(\qqq-\qqq^{-1})} 
\]
where $i,j=1,\ldots, \ell-1$.
This is the pairing matrix for the elements $\{\bar{b}_{i,\pm k}\}$, and it is invertible since $\bar{\varphi}_\mathfrak{sl}$ is nondegenerate.
We then have
\[\bar{\mathcal{R}}_\mathfrak{sl}^0=\exp\left(\sum_{i,j=1}^{\ell-1}\sum_{k>0}\left[\bar{M}_k^{-1}\right]_{ij} C^{-\frac{k}{2}}\bar{b}_{i,k}\otimes C^{\frac{k}{2}} \bar{b}_{j,-k}\right)\]
The powers of $C$ come from the comparison between Drinfeld generators and the \textit{root generators}, cf. Definition 3.9 of \cite{BeckBraid}.
\item Similar to but opposite of $\bar{\mathcal{R}}^+$, $\bar{\mathcal{R}}^-$ is a sum of tensors whose first tensorands are products of the coefficients of the currents $\{\bar{f}_i(z)\}$ and whose second tensorands are products of the coefficients of the currents $\{\bar{e}_i(z)\}$.
Again, we mention but will not use the fact that the weights of the first tensorands will always be a positive affine root.
\item To define $\qqq^{\bar{t}_\infty}$, first define $H_{i,0}$ for $i\not=0$ such that $\bar{\psi}_{i,0}=\qqq^{H_{i,0}}$, $\bar{c}$ so that $C=\qqq^{\bar{c}}$, and $\bar{d}$ so that $D=\qqq^{\bar{d}}$. 
Letting $(\tilde{a}_{i,j})$ be the inverse of the Cartan matrix of type $A_{\ell-1}$, we set
\[\Lambda_i:=\sum_{j=1}^{\ell-1}\tilde{a}_{i,j}H_{j,0}\]
We then define
\[\bar{t}_\infty:=c\otimes d+d\otimes c+\sum_{i=1}^{\ell-1}H_{i,0}\otimes\Lambda_i\]
\end{enumerate}

\subsubsection{\textit{R}-matrix of $U_\qqq(\dot{\mathfrak{gl}}_\ell)$}\label{AffineGLR}
Now, the universal $R$-matrix $\bar{\mathcal{R}}_\mathfrak{gl}$ is an element in a completion of $\dot{U}_\mathfrak{gl}^\ge\otimes\dot{U}_\mathfrak{gl}^\le$.
All we need to do is augment $\bar{\mathcal{R}}_{\mathfrak{sl}}$ with a factor corresponding to the additional Heisenberg generators.
Doing this yields
\[
\bar{\mathcal{R}}_\mathfrak{gl}=(1+\bar{\mathcal{R}}^+)\bar{\mathcal{R}}^0_{\mathfrak{gl}}(1+\bar{\mathcal{R}}^-)\qqq^{\bar{t}_\infty}
\]
Here, $\bar{\mathcal{R}}^+$, $\bar{\mathcal{R}}^-$, and $\qqq^{t_\infty}$ are as in \ref{AffineR}.
The novelty occurs in $\bar{\mathcal{R}}^0_{\mathfrak{gl}}$, where we use (\ref{glorth}) and (\ref{GLHeisPair}):
\begin{equation}
\bar{\mathcal{R}}^0_{\mathfrak{gl}}=\bar{\mathcal{R}}^0_{\mathfrak{sl}}\exp\left( \sum_{k>0}\varphi(b_{0,k}^\perp,b_{0,-k}^\perp)^{-1} \varpi^{-1}(b_{0,k}^\perp)\otimes \varpi^{-1}(b_{0,-k}^\perp)\right)
\label{GLRMatrix}
\end{equation}

%
%


\subsubsection{\textit{R}-matrix of $\UTor$} 
The bialgebra pairing $\varphi$ of $\UTor$ is degenerate if we drop one of the grading elements $\left\{ \qqq^{d_1},\qqq^{d_2} \right\}$.
However, this can be fixed by setting an appropriate central element $\{\gamma,\kappa\}$ equal to $1$.
Specifically, we will drop $\qqq^{d_2}$ and quotient by $\kappa-1$.
Let $[\ddot{U}^\ge]'_{\kappa=1}$ and $[\ddot{U}^\le]'_{\kappa=1}$ denote the bialgebras quotiented by the ideals generated by $(\kappa-1)$ and $\left( \kappa^{-1}-1 \right)$, respectively.
The following is Theorem 2.1(c) of \cite{Tsym}:

\begin{thm}
The pairing $\varphi$ restricts and descends to a nondegenerate pairing $\varphi:[\ddot{U}^\ge]'_{\kappa=1}\times[\ddot{U}^\le]'_{\kappa=1}\rightarrow\FF$.
\end{thm}
\noindent
Thus, this restricted and descended pairing has a canonical tensor $\mathcal{R}$, which sits inside a completion of $[\ddot{U}^\ge]'_{\kappa=1}\otimes[\ddot{U}^\le]'_{\kappa=1}$.
It is the \textit{universal $R$-matrix}.

In \cite{NegutTor}, the author provides a factorization of $\mathcal{R}$, of which we use a coarse corollary. 
Like before, we define $c$ so that $\gamma=\qqq^c$ and then define
\[t_\infty:=c\otimes d_1+d_1\otimes c+\sum_{i=1}^{\ell-1}H_{i,0}\otimes\Lambda_i\]

\begin{thm}\label{Factor1}
$\mathcal{R}$ has a factorization of the form
\[\mathcal{R}=(1+\mathcal{R}^-)h\left[(1+\bar{\mathcal{R}}^+)\bar{\mathcal{R}}_\mathfrak{gl}^0(1+\bar{\mathcal{R}}^-)\right](1+\mathcal{R}^{++})\qqq^{t_{\infty}}\]
where:
\begin{itemize}
\item the first tensorands in $\mathcal{R}^-$ have negative homogeneous degree;
\item the second tensorands in $\mathcal{R}^-$ have positive homogeneous degree;
\item the first tensorands in $\mathcal{R}^{++}$ have positive homogeneous degree;
\item and the second tensorands in $\mathcal{R}^{++}$ have negative homogeneous degree.
\end{itemize}
\end{thm}

\noindent No original ingredients go into the proof of this theorem---we use elements of \cite{NegutTor} and \cite{Tsym}.
However, we will present the proof in \ref{FactorProof} as it requires the language of \textit{shuffle algebras}, the main topic of Section \ref{Shuffle}.
We will not use Theorem \ref{Factor1} until \ref{LOpMiki}.

We conclude with the following computation:


\begin{prop}\label{Factor2}
We have
\begin{align*}
h(\bar{\mathcal{R}}_\mathfrak{gl}^0)&= \exp\left(\sum_{i=0}^{\ell-1}\sum_{k>0}\varpi^{-1}(b_{i,k}^\perp)\otimes \varpi^{-1}(b_{i,-k})\right)\\
&=\exp\left(\sum_{i=0}^{\ell-1}\sum_{k>0}\varpi^{-1}(b_{i,k})\otimes \varpi^{-1}(b_{i,-k}^\perp)\right)
\end{align*}
\end{prop}

\begin{proof}
Observe that
\begin{align}
h(\bar{\mathcal{R}}_\mathfrak{sl}^0)&=\varpi^{-1} v(\bar{\mathcal{R}}_\mathfrak{sl}^0)\nonumber\\
&=\exp\left(\sum_{i,j=1}^{\ell-1}\sum_{k>0}\left[ M_k^{-1} \right]_{ij} \varpi^{-1}v(C^{-\frac{k}{2}}\bar{b}_{i,k})\otimes\varpi^{-1}v(C^{\frac{k}{2}} \bar{b}_{j,-k})\right)\nonumber\\
&=\exp\left(\sum_{i,j=1}^{\ell-1}\sum_{k>0}\left[ M_k^{-1} \right]_{ij} \varpi^{-1}(\ddd^{ik}b_{i,k})\otimes\varpi^{-1}(\ddd^{-jk}b_{j,-k})\right)\label{HeisRMatrixCalc}
\end{align}
If we set $D_k$ to be the $(\ell-1)\times(\ell-1)$ diagonal matrix with $\ddd^{ik}$ in the $i$th diagonal entry, then we can rewrite (\ref{HeisRMatrixCalc}) as
\[
=\exp\left(\sum_{i,j=1}^{\ell-1}\sum_{k>0}\left[ D_kM_k^{-1}D_k^{-1} \right]_{ij} \varpi^{-1}(b_{i,k})\otimes\varpi^{-1}(b_{j,-k})\right)
\]
Now note that $\ddd^{ik-jk}=\ddd^{km_{i,j}}$ when $j=i,i\pm 1$, and thus $D_kM_kD_k^{-1}$ is the pairing matrix for $\varphi$ restricted to $\left\{ b_{i,\pm k} \right\}_{i=1}^{\ell-1}$.
The proposition follows by using (\ref{GLRMatrix}) to consider $h( \bar{\mathcal{R}}_{\mathfrak{gl}}^0 )$, block matrix inversion, and the invariance of the canonical tensor with respect to choices of bases.
\end{proof}

\subsubsection{Quasi $R$-matrix of $\UTor$}\label{QuasiR}
As we have seen above, the degeneracy of $\varphi$ adds some complications.
We can sidestep this by restricting $\varphi$ to $\ddot{U}^+\times\ddot{U}^-$, which will be nondegenerate.
Therefore, the restricted pairing has a canonical tensor $\mathcal{R}_\circ$ living in a suitable completion of $\ddot{U}^+\otimes\ddot{U}^-$.
We call $\mathcal{R}_\circ$ a \textit{quasi $R$-matrix}.
Since $\ddot{U}^\pm$ are not bialgebras, $\mathcal{R}_\circ$ does not interact well with the coproduct (e.g. satisfy quantum Yang-Baxter equations).

Observe that $\ddot{U}^+$ is orthogonal under $\varphi$ to $\kappa^{-1}-1\in\ddot{U}^{\le}$, and thus we can view it as a subalgebra of $\ddot{U}^\ge_{\kappa=1}$.
Similarly, $\ddot{U}^-$ is a subalgebra of $\ddot{U}^\le_{\kappa=1}$.
The proof of Theorem 3.5 yields the following:
\begin{cor}\label{QuasiRFact}
$\mathcal{R}_\circ$ has a factorization of the form
\[\mathcal{R}_\circ=(1+\mathcal{R}^-)h\left[(1+\bar{\mathcal{R}}^+)\bar{\mathcal{R}}_\mathfrak{gl}^0(1+\bar{\mathcal{R}}^-)\right](1+\mathcal{R}^+)\]
where:
\begin{itemize}
\item $\mathcal{R}^-$ is as in Theorem \ref{Factor1};
\item the first tensorands in $\mathcal{R}^+$ have positive homogeneous degree;
\item and the second tensorands in $\mathcal{R}^+$ have negative homogeneous degree.
\end{itemize}
\end{cor}

\begin{rem}
The difference between $\mathcal{R}^+$ and $\mathcal{R}^{++}$ is that the latter incorporates the vertical Heisenberg elements.
\end{rem}

\subsection{Vertex representation}\label{Vertex} 
We review the vertex representation defined by Yoshihisa Saito \cite{Saito} and assign a relation to $\Lambda(\ZZ/\ell\ZZ)\otimes\FF[Q]$.

\subsubsection{Mise en place}\label{Mise} 
The \textit{generalized Heisenberg algebra} $\mathcal{H}_\ell$ is generated by
\[\{b_{i,k}, C\}_{i\in\ZZ/\ell\ZZ}^{k\not=0}\]
satisfying the relations
\begin{gather*}
[b_{i,k},b_{j,k'}]=\ddd^{-km_{i,j}}\dfrac{[k]_\qqq[ka_{i,j}]_\qqq}{k}\delta_{k,-k'} \cdot C\\
C\hbox{ is central }
\end{gather*}
We denote by $\mathcal{H}_\ell^{\pm}$ the subalgebra generated by $\{b_{i,\pm k},C\}_{k>0}$. 
$\mathcal{H}^+_\ell$ has a character $\FF v_0$ where $C$ acts as the identity and all the other generators act trivially. 
The induced representation 
\[F_\ell:=\mathrm{Ind}_{\mathcal{H}_\ell^+}^{\mathcal{H}_\ell}\FF v_0\]
is called the \textit{Fock representation of $\mathcal{H}_\ell$}. 
We identify $\FF\otimes\Lambda(\ZZ/\ell\ZZ)$ and $\mathcal{H}^-_\ell$ via the map
\begin{equation}
p_k(i)\mapsto \frac{k}{[k]_\qqq}b_{i,-k}
\label{PowerToBosons}
\end{equation}
This gives an identification of vector spaces $\FF\otimes\Lambda(\ZZ/\ell\ZZ)\cong F_\ell$.
We will occasionally abuse notation and denote $v_0$ by $1$.

We will also need the \textit{twisted group algebra} of the weight lattice $P$ of $\mathfrak{sl}_\ell$. 
Recall our simple roots $\{\alpha_j\}_{j=1}^{\ell-1}$ of $\mathfrak{sl}_\ell$. 
Let $\{h_j\}_{j=1}^{\ell-1}$ be the corresponding simple coroots and $\{\Lambda_p\}_{j=1}^{\ell-1}$ the simple weights. 
Additionally, we define
\begin{gather*}
\alpha_0:=-\sum_{j=1}^{\ell-1}\alpha_j,\,
h_0:=-\sum_{j=1}^{\ell-1}h_j,\\
\Lambda_0:=0,\, \Lambda_{-p}:=-\Lambda_p
\end{gather*}
We have that $\{\alpha_2,\ldots,\alpha_{\ell-1},\Lambda_{\ell-1}\}$ is a basis of $P$. 
The twisted group algebra $\FF\{P\}$ is the $\FF$-algebra generated by $\{e^{\pm\alpha_j}\}_{j=2}^{\ell-1}\cup\{e^{\pm\Lambda_{\ell-1}}\}$ satisfying the relations
\begin{align*}
e^{\alpha_i}e^{\alpha_j}&=(-1)^{\langle h_i,\alpha_j\rangle}e^{\alpha_j}e^{\alpha_i}\\
e^{\alpha_i}e^{\Lambda_{\ell-1}}&=(-1)^{\delta_{i,\ell-1}}e^{\Lambda_{\ell-1}}e^{\alpha_i}
\end{align*}
For $\alpha\in P$ with $\alpha=\sum_{j=2}^{\ell-1}m_j\alpha_j+m_{\ell}\Lambda_{\ell-1}$, we set
\[e^{\alpha}=e^{m_2\alpha_2}\cdots e^{m_{\ell-1}\alpha_{\ell-1}}e^{m_\ell\Lambda_{\ell-1}}\]
For example,
\begin{align}
\begin{split}
e^{\alpha_1}&=e^{-2\alpha_2}e^{-3\alpha_3}\cdots e^{-(\ell-1)\alpha_{\ell-1}}e^{\ell\Lambda_{\ell-1}}\\
e^{\alpha_0}&=e^{\alpha_2}e^{2\alpha_3}\cdots e^{(\ell-2)\alpha_{\ell-1}}e^{-\ell\Lambda_{\ell-1}}
\end{split}
\label{TwistedAlpha}
\end{align}
Define $\FF\{Q\}$ to be the subalgebra of $\FF\{P\}$ generated by $\{e^{\pm\alpha_i}\}_{i=1}^{\ell-1}$. 
We identify the usual group algebra $\FF[Q]$ with $\FF\{Q\}$ as vector spaces via the `identity' map $e^\alpha\mapsto e^\alpha$.
Thus, we have made an identification
\[\FF\otimes\Lambda(\ZZ/\ell\ZZ)\otimes\FF[Q]\cong F_\ell\otimes\FF\{Q\}\]

\subsubsection{Vertex operators} 
For $-\ell+1\le p\le\ell-1$, our representation is on the space
\[W_p:=F_\ell\otimes\FF\{Q\}e^{\Lambda_p}\]
For $v\otimes e^\alpha e^{\Lambda_p}\in W_p$ where
\begin{align*}
v&=b_{i_1,-k_1}\cdots b_{i_N,-k_N} v_0\\
\alpha&=\sum_{j=1}^{\ell-1}m_j\alpha_j
\end{align*}
we define the operators $b_{i,k}$, $e^\beta$, $\partial_{\alpha_i}$, $z^{H_{i,0}}$, and $d$ by
\begin{gather*}
b_{i,k}(v\otimes e^\alpha e^{\Lambda_p}):=(b_{i,k}v)\otimes e^\alpha e^{\Lambda_p},\, e^\beta(v\otimes e^\alpha e^{\Lambda_p}):=v\otimes(e^\beta e^\alpha e^{\Lambda_p}),\\
\partial_{\alpha_i}(v\otimes e^\alpha e^{\Lambda_p}):=\langle h_i,\alpha+\Lambda_p\rangle v\otimes e^\alpha e^{\Lambda_p},\\
z^{H_{i,0}}(v\otimes e^\alpha e^{\Lambda_p}):=z^{\langle h_i,\alpha+\Lambda_p\rangle} \ddd^{\frac{1}{2}\sum_{j=1}^{\ell-1}\langle h_i,m_j\alpha_j\rangle m_{i,j}}v\otimes e^\alpha e^{\Lambda_p},\\
d(v\otimes e^\alpha e^{\Lambda_p}):=-\left(\dfrac{(\alpha,\alpha)}{2}+(\alpha,\Lambda_p)+\sum_{i=1}^Nk_i\right)v\otimes e^\alpha e^{\Lambda_p}
\end{gather*}

For $0\le p\le \ell-1$, the following was defined by Yoshihisa Saito \cite{Saito}.
However, it can clearly be extended to negative values of $p$.
\begin{thm}\label{Vertexrep}
For any $\vec{\ccc}=(\ccc_0,\ldots, \ccc_{\ell-1})\in(\CC^\times)^\ell$, the following formulas endow $W_p$ with an action of $\ddot{U}'$
\begin{align*}
\rho_{p,\vec{\ccc}}(e_i(z))&=\ccc_i\exp\left(\sum_{k>0}\frac{\qqq^{-\frac{k}{2}}}{[k]_\qqq}b_{i,-k}z^k\right)\\
&\times\exp\left(-\sum_{k>0}\frac{\qqq^{-\frac{k}{2}}}{[k]_\qqq}b_{i,k}z^{-k}\right)e^{\alpha_i} z^{1+H_{i,0}},\\
\rho_{p,\vec{\ccc}}(f_i(z))&=\frac{(-1)^{\ell\delta_{i,0}}}{\ccc_i}\exp\left(-\sum_{k>0}\frac{\qqq^{\frac{k}{2}}}{[k]_\qqq}b_{i,-k}z^k\right)\\
&\times\exp\left(\sum_{k>0}\frac{\qqq^{\frac{k}{2}}}{[k]_\qqq}b_{i,k}z^{-k}\right)e^{-\alpha_i} z^{1-H_{i,0}},\\
\rho_{p,\vec{\ccc}}(\psi_i^\pm(z))&=\exp\left(\pm(\qqq-\qqq^{-1})\sum_{k>0}b_{i,\pm k}z^{\mp k}\right)\qqq^{\pm\partial_{\alpha_i}},\\
&\rho_{p,\vec{\ccc}}(\gamma^{\frac{1}{2}})=\qqq^\frac{1}{2},\,\rho_{p,\vec{\ccc}}(\qqq^{d_1})=\qqq^{-d}
\end{align*}
The representation $W_p$ is irreducible and has central charge $(1,0)$.
\end{thm}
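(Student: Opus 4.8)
The plan is to verify directly that the operators defined by $\rho_{p,\vec{c}}$ satisfy every defining relation of $\ddot{U}'$ from \ref{TorDef} (all of them except the two involving $\qqq^{d_2}$, which is not a generator of $\ddot{U}'$), and then to establish irreducibility by a grading argument. Since every generator of $\ddot{U}'$ is a Fourier coefficient of one of the currents $e_i(z), f_i(z), \psi_i^\pm(z)$ or is one of $\gamma^{\pm\frac12}, \qqq^{\pm d_1}$, it suffices to check the relations at the level of generating series (after the routine observation that each operator is well defined on $W_p$, the annihilation exponentials acting with finitely many terms on any monomial). The central tool is the contraction calculus for exponentials of Heisenberg operators: each vertex operator is a scalar times ${:}\exp(\cdots){:}$ times a zero-mode factor among $z^{H_{i,0}}, e^{\pm\alpha_i}, \qqq^{\pm\partial_{\alpha_i}}$, and for two such operators $A(z), B(w)$ one has $A(z)B(w) = \langle A(z)B(w)\rangle\, {:}A(z)B(w){:}$, where the scalar $\langle A(z)B(w)\rangle = \exp\big(\sum_{k>0}(\cdots)(w/z)^k\big)$ is computed from the $\mathcal{H}_\ell$-commutator and the normal-ordered product ${:}A(z)B(w){:}$ is symmetric in $A$ and $B$. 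One then combines this with the commutation of the zero-mode factors, carefully tracking (a) the hidden $\ddd$-powers in the definition of $z^{H_{i,0}}$ and (b) the cocycle signs $(-1)^{\langle h_i, \alpha_j\rangle}$ coming from $\FF\{Q\}$.

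For the quadratic relations this is mechanical: computing the Heisenberg contraction of the vertex parts of $e_i(z)$ and $e_j(w)$ and then moving $z^{H_{i,0}}$ past $e^{\alpha_j}$ (which produces a monomial $w^{\langle h_i,\alpha_j\rangle}$, the $\ddd$-twist, and a sign) one finds precisely the scalar $g_{i,j}(\ddd^{m_{i,j}}z/w)$ in front of the symmetric normal-ordered product, and comparison with the opposite ordering gives $e_i(z)e_j(w) = g_{i,j}(\ddd^{m_{i,j}}z/w)e_j(w)e_i(z)$; the computations for $f_i(z)f_j(w)$, for $\psi_i^\pm(z)e_j(w)$ and $\psi_i^\pm(z)f_j(w)$, and for $\psi_i^\pm(z)\psi_j^\mp(w)$ are entirely parallel, the shifts by $\gamma^{\pm\frac12}=\qqq^{\pm\frac12}$ appearing because conjugation by the relevant zero-mode factor evaluates the other current at a rescaled argument; the $\psi^\pm$--$\psi^\pm$ commutativity, the centrality of $\qqq^{\frac12}$, and the grading relations $\qqq^d e_i(z)\qqq^{-d} = e_i(\qqq z)$ etc.\ are immediate from the formula for $d$. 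For the relation $(\qqq-\qqq^{-1})[e_i(z),f_j(w)] = \delta_{i,j}\big(\delta(\gamma w/z)\psi_i^+(\gamma^{\frac12}w) - \delta(\gamma z/w)\psi_i^-(\gamma^{\frac12}z)\big)$ (with $\gamma\mapsto\qqq$): for $i\neq j$ the contraction functions of $e_i(z)f_j(w)$ and $f_j(w)e_i(z)$ agree and have no pole along $z=w$, so the orderings coincide; for $i=j$ the contraction function has a simple pole, the two orderings are its expansions in $|z|>|w|$ and $|z|<|w|$, their difference is a sum of delta functions supported at $z=\qqq^{\pm1}w$, and the product $c_i\cdot c_i^{-1}$ together with the sign $(-1)^{\ell\delta_{i,0}}$ (which cancels the cocycle sign in $e^{\alpha_i}e^{-\alpha_i}$ when $i=0$) makes the residue at each equal to $\psi_i^\pm$ at the shifted point. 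This is the standard vertex-operator computation; only the expansion/residue bookkeeping needs care.

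The genuine obstacle is the pair of Serre relations $\Sym_{z_1,z_2}[e_i(z_1),[e_i(z_2),e_{i\pm1}(w)]_\qqq]_{\qqq^{-1}} = 0$ and its $f$-analogue (the case $[e_i(z),e_j(w)]=0$ for $j\neq i,i\pm1$ being trivial, as the contraction function is then $1$). Here I would use the contraction formulas above to rewrite each of the six orderings of $e_i(z_1)e_i(z_2)e_{i\pm1}(w)$ as a known scalar rational function of $z_1,z_2,w$ times the single symmetric normal-ordered triple product; the Serre expression then becomes that universal triple product times a scalar rational function, and one checks that after forming the $\qqq$- and $\qqq^{-1}$-commutators and symmetrizing in $z_1,z_2$ this scalar vanishes identically, which reduces to a short rational-function identity among the pairwise contraction functions of $e_i(z_1)e_i(z_2)$ and of $e_i(z_a)e_{i\pm1}(w)$. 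Arranging all the $\ddd$-powers and $\FF\{Q\}$-signs so that this cancellation is exact is the delicate point of the whole verification.

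Finally, for irreducibility, note that $d$ acts semisimply on $W_p$ with eigenvalue $-\big(\tfrac{(\alpha,\alpha)}{2} + (\alpha,\Lambda_p) + \sum_j k_j\big)$ on $b_{i_1,-k_1}\cdots b_{i_N,-k_N}v_0\otimes e^\alpha e^{\Lambda_p}$; since the quadratic form on $Q + \Lambda_p$ is positive definite and $\sum_j k_j \ge 0$, these eigenvalues are bounded above. Given a nonzero $\ddot{U}'$-submodule $M$, a Vandermonde argument in powers of $\qqq^d$ shows $M$ is $d$-graded, hence has a nonzero top-degree piece; any vector there is killed by all $b_{i,k}$ with $k>0$, so (as $F_\ell$ is the polynomial algebra on the $b_{i,-k}$) it lies in $\FF v_0\otimes\FF\{Q\}e^{\Lambda_p}$. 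Applying suitable Fourier coefficients of $e_i(z)$ and $f_i(z)$, which on such a vector implement multiplication by $e^{\pm\alpha_i}$ up to invertible scalars, one reaches $v_0\otimes e^{\Lambda_p}$ and then every $v_0\otimes e^\beta e^{\Lambda_p}$ with $\beta\in Q$; applying Fourier coefficients of $\psi_i^-(z)$, which are polynomials in the $b_{i,-k}$, one recovers all of $F_\ell\otimes e^\beta e^{\Lambda_p}$, so $M = W_p$.
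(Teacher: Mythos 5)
The paper does not actually prove this theorem: it is quoted as a result of Saito (\cite{Saito}), with the action formulas stated and the proof deferred to that reference. Your proposal is therefore not competing with an argument in the paper but with the literature, and it is the standard one: direct verification of the Drinfeld-current relations by the contraction/normal-ordering calculus for Heisenberg vertex operators together with the cocycle bookkeeping in $\FF\{Q\}$, followed by a highest-$d$-weight argument for irreducibility. As an outline it is sound, and you correctly identify that the quadratic relations, the $[e_i,f_j]$ delta-function computation, and the $j\neq i,i\pm1$ cases are routine.

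Two places still need real content. First, the Serre relations: you assert that after symmetrizing, the scalar multiplying the universal normal-ordered triple product vanishes, but this is exactly where all of the $\ddd$-powers hidden in $z^{H_{i,0}}$, the $\qqq^{\pm 1}$ shifts, and the signs from $e^{\alpha_i}e^{\alpha_{i\pm1}}=(-1)^{\langle h_i,\alpha_{i\pm1}\rangle}e^{\alpha_{i\pm1}}e^{\alpha_i}$ must conspire; the required identity is a genuine (if short) rational-function computation using that the $e_i$--$e_i$ contraction contributes a factor vanishing at $z_1=\qqq^2 z_2$, and it is the heart of the theorem rather than a detail. Second, in the irreducibility argument, a top-$d$-degree vector of a submodule $M$ lies in $\FF v_0\otimes\FF\{Q\}e^{\Lambda_p}$ but may a priori be a sum over several $e^\alpha$; before "walking" along $Q$ with the extremal Fourier coefficients of $e_i(z)$ and $f_i(z)$ you should first separate the $\CC[Q]$-components using the operators $\qqq^{\pm\partial_{\alpha_i}}=\rho_{p,\vec c}(\psi_{i,0}^{\pm1})$, whose eigenvalues distinguish the classes $\alpha+\Lambda_p$. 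With those two points filled in, the argument is complete and agrees with the cited proof.
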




Notice that $\qqq^{d_1}$ gives a nonpositive grading on $W_p$, which we will also call the homogeneous grading. 
We call $1\otimes e^{\Lambda_p}=v_0\otimes e^{\Lambda_p}$ the vacuum and denote it by $\mathbb{1}_p$. 

\begin{cor}\label{Annihilate}
Elements of $\ddot{U}'$ of positive homogeneous degree annihilate $\mathbb{1}_p$.
\end{cor}

Finally, we have a difficult computation by Tsymbaliuk \cite{Tsym}:

\begin{lem}\label{Highwt}
For $0\le p\le \ell-1$, the vacuum $\mathbb{1}_p\in W_p$ is an eigenvector for the currents $\{\varpi(\psi_i^{\pm}(z))\}$. 
The eigenvalues are given by
\begin{align*}
\varpi(\psi_i^{\pm}(z))\mathbb{1}_p&=\left(\frac{ z-\ddd^{-\frac{\ell}{2}}\uuu}{\qqq^{-1}z-\ddd^{-\frac{\ell}{2}}\qqq\uuu}\right)^{\delta_{i,p}}\mathbb{1}_p
\end{align*}
where
\begin{equation}
\uuu=\frac{(-1)^{\frac{(\ell-2)(\ell-3)}{2}}}{\qqq\ccc_0\cdots \ccc_{\ell-1}}
\label{uform}
\end{equation}
\end{lem}

\begin{rem}\label{uRemark}
Our expression for $\ddd^{-\frac{\ell}{2}}\uuu$ differs from the analogous parameter $u$ in \cite{Tsym}.
The different power of $\ddd$ arises because the author does not account for the powers of $\ddd$ coming from $z^{H_{i,0}}$.
Specifically, on page 22 of \textit{loc. cit.}, the power of $\ddd$ in $u$ appears in the calculations (here we use our notation) for $p\not=0$,
\begin{align}
\label{uRemarkE}\varpi(b_{p,-1})\mathbb{1}_p&= (-1)^{p+1}\ddd^{p}e_{p,0}\cdots e_{1,0}e_{p+1,0}\cdots e_{\ell-1,0}e_{0,0}\mathbb{1}_p\\
\nonumber\varpi(b_{p,1})\mathbb{1}_p&= (-1)^{\ell+p+1}\ddd^{-p}f_{0,0}f_{\ell-1,0}\cdots f_{p+1,0}f_{1,0}\cdots f_{p,0}\mathbb{1}_p
\end{align}
and
\begin{align*}
\varpi(b_{0,-1})\mathbb{1}_0&= (-1)^{\ell}\ddd^{\ell-1}e_{0,1}e_{\ell-1,0}\cdots e_{2,0}e_{1,-1}\mathbb{1}_0\\
\varpi(b_{0,1})\mathbb{1}_0&= \ddd^{-\ell+1}f_{1,1}f_{2,0}\cdots f_{\ell-1,0}f_{0,-1}\mathbb{1}_0
\end{align*}
All but first and last operators on the right hand sides of the equations contribute positive or negative half-powers of $\ddd$.
For example, in (\ref{uRemarkE}), because $e_{0,0}$ appends $e^{\alpha_0}$ to the vacuum, the $z^{H_{\ell-1,0}}$ part of $e_{\ell-1,0}$ will multiply by $\ddd^{\frac{1}{2}}$.
This continues up to and including $e_{p+1,0}$, yielding a total power of $\ddd^{\frac{\ell-1-p}{2}}$.
At $e_{1,0}$, the contribution is now $\ddd^{-\frac{1}{2}}$, and this continues up to and including $e_{p-1,0}$, yielding $\ddd^{\frac{-p+1}{2}}$.
Altogether, the operators give us a power of
\[
\ddd^{\frac{\ell-1-p-p+1}{2}}=\ddd^{\frac{\ell}{2}-p}
\]
and multiplication with the $\ddd^p$ that already appears in (\ref{uRemarkE}) leaves behind $\ddd^{\frac{\ell}{2}}$.
The other equations are similar.
\end{rem}

\subsubsection{Pairing\protect\footnote{We thank O. Tsymbaliuk for sharing this construction with us.}}\label{Pairing}
Let $\langle v_0| -|v_0\rangle$ denote the matrix element from and to the vacuum $v_0\in F_\ell$.
We consider the pairing on $F_\ell$ defined by
\[
\left( \prod_{m=1}^{n_1}b_{i_m,-k_m},\prod_{m=1}^{n_2}b_{j_m,-k'_m} \right):=\left\langle v_0\left|\left(\prod_{m=1}^{n_1}(-b_{i_m,k_m})\right)\left(\prod_{m=1}^{n_2}b_{j_m,-k'_m}\right)\right|v_0\right\rangle
\]
We extended it $\FF$-linearly on the \textit{second} argument. In the first argument, we invert $\ddd$ and extend only $\CC(\qqq^{\frac{1}{2}})$-linearly.
This way, as operators on $F_\ell$, we have that $b_{i,k}$ is adjoint to $-b_{i,-k}$.
We extend this to $=F_\ell\otimes\FF\{ Q \}$ by setting
\[
\left( v_1\otimes e^{\alpha},v_2\otimes e^{\beta} \right)=\delta_{\alpha,-\beta}\left( v_1,v_2 \right)
\]
Furthermore, for $-\ell+1\le p\le \ell-1$, we can extend this naturally to a nondegenerate pairing
\[
\left( -,- \right):W_p\times W_{-p}\mapsto\FF
\]
The operator $e^{\alpha}$ is self-adjoint---this is certainly obvious up to sign.
To see that the sign is trivial, suppose that $e^{\alpha}e^{\beta}=(-1)^a e^{\alpha+\beta}$.
We then also have $e^{-\alpha-\beta}=(-1)^ae^{-\alpha}e^{-\beta}$, from which self-adjointness follows.
Finally, if we let $z^{H_{i,0}^*}$ denote the operator $z^{H_{i,0}}$ but with the powers of $\ddd$ inverted, then we have that the adjoint of $z^{H_{i,0}}$ is $z^{-H^*_{i,0}}$ (it is important that we invert $\ddd$ in the first argument).
\begin{prop}\label{Adjunct}
For $x\in {'\ddot{U}'}$ , the adjoint of $\rho_{p,\vec{\ccc}}(x)$ under $\left( -,- \right)$ is $(\rho_{-p,\vec{\ccc}}\circ\eta)(x)$.
\end{prop}

\begin{proof}
It is clear that adjunction under $\left( -,- \right)$ is a $\CC(\qqq^{\frac{1}{2}})$-linear algebra antihomomorphism that inverts $\ddd$.
Thus, it suffices to check the proposition on the generators of $'\ddot{U}'$.
This is trivial for $\gamma^{\pm\frac{1}{2}}$ and $\left\{ \psi_i^\pm(z) \right\}_{i\in\ZZ/\ell\ZZ}$.
For $e_i(z)$, we have
\begin{align*}
\left( \rho_{p,\vec{\ccc}}(e_{i}(z)) a, b \right)&= 
\left(\ccc_i\exp\left(\sum_{k>0}\frac{\qqq^{-\frac{k}{2}}}{[k]_\qqq}b_{i,-k}z^k\right)
\exp\left(-\sum_{k>0}\frac{\qqq^{-\frac{k}{2}}}{[k]_\qqq}b_{i,k}z^{-k}\right)e^{\alpha_i} z^{1+H_{i,0}}a,b\right)\\
&= \left(a, \ccc_iz^{1-H_{i,0}^*}e^{\alpha_i}\exp\left( \sum_{k>0}\frac{\qqq^{-\frac{k}{2}}}{[k]_\qqq}b_{i,-k}z^{-k} \right)\exp\left( -\sum_{k>0}\frac{\qqq^{-\frac{k}{2}}}{[k]_\qqq}b_{i,k}z^k \right)b \right)\\
&= \left(a, \ccc_i\exp\left( \sum_{k>0}\frac{\qqq^{-\frac{k}{2}}}{[k]_\qqq}b_{i,-k}z^{-k} \right)\exp\left( -\sum_{k>0}\frac{\qqq^{-\frac{k}{2}}}{[k]_\qqq}b_{i,k}z^k \right)e^{\alpha_i}z^{-1-H_{i,0}^*}b \right)\\
&= \left(  a,(\rho_{-p,\vec{\ccc}}\circ\eta)(e_{i}(z)) b \right)
\end{align*}
The argument for $f_i(z)$ are similar.
\end{proof}

\subsubsection{Revisiting $\hat{h}_n(i)$ and $\hat{e}_n(i)$}\label{Redux} 
As promised, we give a cleaner presentation of $\hat{h}_n(i)$ and $\hat{e}_n(i)$:
\begin{lem}\label{ReduxLem}
Under the identification
\begin{equation}
q=\qqq\ddd,\, t=\qqq\ddd^{-1}
\label{qtqd}
\end{equation}
we have
\begin{align}
\sum_{n\ge0}\hat{h}_n(i)z^n&=\exp\left((\qqq-\qqq^{-1})^{-1}\sum_{n>0}(-\qqq^{-n}b_{i,-n}^\perp+\ddd^{-n}b_{i+1,-n}^\perp)z^n\right)\label{ModHForm}\\
\sum_{n\ge0}\hat{e}_n(i)(-z)^n&=\exp\left((\qqq-\qqq^{-1})^{-1}\sum_{n>0}(\qqq^{n}b_{i,-n}^\perp-\ddd^{-n}b_{i+1,-n}^\perp)z^n\right)\label{ModEForm}
\end{align}

\end{lem}

\begin{proof}
We will only prove the $q$-statement.
Since
\[\sum_{n\ge0}\Phi_q^{-1}(h_n(i))z^n=\exp\left(\sum_{n>0}\Phi_q^{-1}(p_n(i))\frac{z^n}{n}\right)\]
the result follows from using Lemma \ref{InversePhi} and (\ref{HeisPair}) to compute
\[\varphi\left(b_{j,n},\Phi_q^{-1}(p_n(i))\right)=-\delta_{i,j}\left(\dfrac{\qqq^{-n}}{\qqq-\qqq^{-1}}\right)
+\delta_{i+1,j}\left(\dfrac{\ddd^{-n}}{\qqq-\qqq^{-1}}\right)
\qedhere\]
\end{proof}

\subsection{Fock representation}\label{Fock}
One can view $W_p$ as homogeneous bosonic Fock spaces. 
We will review what should be the fermionic Fock spaces, often simply called the \textit{Fock representations} without ambiguity.

\subsubsection{Definition}\label{FockDef} 
The Fock space $\mathcal{F}$ has a basis $\{|\lambda\rangle\}$ indexed by partitions. 
We denote by $\langle\lambda|$ the dual element to $|\lambda\rangle$. 
To define the representations, we will need some notation for various statistics of $\lambda$ and its nodes.
Let $\square=(a,b)$ be a node in $\lambda$.
We will denote by:
\begin{enumerate}
\item $\chi_\square=q^{a-1}t^{b-1}$ the character of the node;
\item $c_\square$ its content and $\bar{c}_\square$ its content modulo $\ell$ (its \textit{color});
\item $d_i(\lambda)$ the number of $i$-nodes (nodes with color $i$) in $\lambda$;
\item $A_i(\lambda)$ and $R_i(\lambda)$ the addable and removable $i$-nodes of $\lambda$, respectively.
\end{enumerate}
Finally, we will abbreviate $a\equiv b\hbox{ mod }\ell$ simply by $a\equiv b$ and use the Kronecker delta function 
\[\delta_{a\equiv b}:=
\begin{cases}
1 & \hbox{if }a \equiv b\\
0 & \hbox{otherwise}
\end{cases}
\]
\begin{thm}\label{FockRep} 
Let $\upsilon\in\FF^\times$.
If we identify the parameters (cf. Lemma \ref{ReduxLem})
\[
q=\qqq\ddd,\, t=\qqq\ddd^{-1}
\]
then for each $p\in\ZZ/\ell\ZZ$, we can define a $'\ddot{U}$-action $\tau^-_{p,\upsilon}$ on $\mathcal{F}$ where the only nonzero matrix elements of the generators are 
\begin{gather*}
\begin{aligned}
\langle\lambda | \tau_{p,\upsilon}^-(e_i(z))|\lambda+\square\rangle&=\delta_{\bar{c}_\square\equiv i-p}(-\ddd)^{d_{i+1}(\lambda)}\delta\left( \frac{ z}{\chi_\square\upsilon}\right)
\frac{\displaystyle\prod_{\blacksquare\in R_{i-p}(\lambda)}\left( \chi_\square-\qqq^2\chi_\blacksquare \right)}
{\displaystyle\prod_{\substack{\blacksquare\in A_{i-p}(\lambda)\\\blacksquare\not=\square}}\left(\chi_\square-\chi_\blacksquare\right)}\\
\langle\lambda+\square |\tau^-_{p,\upsilon}(f_i(z))|\lambda\rangle&=\delta_{\bar{c}_\square\equiv i-p}(-\ddd)^{-d_{i+1}(\lambda)}\delta\left(\frac{ z}{\chi_\square\upsilon}\right)
\frac{\displaystyle\prod_{\substack{\blacksquare\in A_{i-p}(\lambda)\\\blacksquare\not=\square}}\left( \qqq\chi_\square-\qqq^{-1}\chi_\blacksquare \right)}
{\displaystyle\prod_{\blacksquare\in R_{i-p}(\lambda)}\qqq\left( \chi_\square-\chi_\blacksquare \right)}\\
\langle\lambda|\tau^-_{p,\upsilon}(\psi_i^\pm(z))|\lambda\rangle&=
\prod_{\blacksquare\in A_{i-p}(\lambda)}\frac{\left(\qqq z-\qqq^{-1}\chi_{\blacksquare}\upsilon\right)}{\left( z-\chi_\blacksquare\upsilon\right)}
\prod_{\blacksquare\in R_{i-p}(\lambda)}\frac{\left(\qqq^{-1} z-\qqq\chi_\blacksquare\upsilon\right)}{\left( z-\chi_\blacksquare\upsilon\right)},
\end{aligned}\\
\langle\lambda|\tau_{p,\upsilon}^-(\gamma^{\frac{1}{2}})|\lambda\rangle=1,\,\langle\lambda|\tau_{p,\upsilon}^-(\qqq^{d_2})|\lambda\rangle=\qqq^{-|\lambda|+\frac{p(\ell-p)}{2}}
\end{gather*}
If we instead identify the parameters
\[
q=\qqq^{-1}\ddd,\, t=\qqq^{-1}\ddd^{-1}
\]
then we can define a $'\ddot{U}$-action $\tau^+_{p,\upsilon}$ on $\mathcal{F}$ where the only nonzero matrix elements of the generators are
\begin{gather*}
\begin{aligned}
\langle\lambda +\square| \tau_{p,\upsilon}^+(e_i(z))|\lambda\rangle&=\delta_{\bar{c}_\square\equiv i-p}(-\ddd)^{-d_{i+1}(\lambda)}\delta\left(\frac{z}{\chi_\square \upsilon}\right)
\frac{\displaystyle\prod_{\substack{\blacksquare\in A_{i-p}(\lambda)\\\blacksquare\not=\square}}\left(\chi_\square-\qqq^{2}\chi_\blacksquare\right)}
{\displaystyle\prod_{\blacksquare\in R_{i-p}(\lambda)}\left( \chi_\square-\chi_\blacksquare \right)}\\
\langle\lambda |\tau_{p,\upsilon}^+(f_i(z))|\lambda+\square\rangle&=\delta_{\bar{c}_\square\equiv i-p}(-\ddd)^{d_{i+1}(\lambda)}\delta\left(\frac{z}{\chi_\square \upsilon}\right)
\frac{\displaystyle\prod_{\blacksquare\in R_{i-p}(\lambda)}\left( \qqq\chi_\square-\qqq^{-1}\chi_\blacksquare \right)}
{\displaystyle\prod_{\substack{\blacksquare\in A_{i-p}(\lambda)\\\blacksquare\not=\square}}\qqq\left( \chi_\square-\chi_\blacksquare \right)}\\
\langle\lambda|\tau_{p,\upsilon}^+(\psi_i^\pm(z))|\lambda\rangle&=
\prod_{\blacksquare\in A_{i-p}(\lambda)}\frac{\left(\qqq^{-1}z-\qqq\chi_\blacksquare \upsilon\right)}{\left(z-\chi_\blacksquare \upsilon\right)}
\prod_{\blacksquare\in R_{i-p}(\lambda)}\frac{\left(\qqq z-\qqq^{-1}\chi_\blacksquare \upsilon\right)}{\left(z-\chi_\blacksquare \upsilon\right)},
\end{aligned}\\
\langle\lambda|\tau_{p,\upsilon}^+(\gamma^{\frac{1}{2}})|\lambda\rangle=1,\,\langle\lambda|\tau_{p,\upsilon}^+(\qqq^{d_2})|\lambda\rangle=\qqq^{|\lambda|-\frac{p(\ell-p)}{2}}
\end{gather*}
Both representations are irreducible and have central charge $(0,1)$.
\end{thm} 

\begin{proof}
Most of the relations can be checked on a case-by-case basis.
To illustrate the point about the different identification of parameters, observe, for example, that
\begin{align*}
\tau^-_{p,\upsilon}(f_{i+1}(z))\tau^-_{p,\upsilon}(f_{i}(w))&=
\frac{-\ddd^{-1}(w-qz)}{z-tw}\tau^-_{p,\upsilon}(f_{i}(w))\tau^-_{p,\upsilon}(f_{i+1}(z))\\
\tau^+_{p,\upsilon}(e_{i+1}(z))\tau^+_{p,\upsilon}(e_{i}(w))&=
\frac{-\ddd^{-1}(w-qz)}{z-tw}\tau^+_{p,\upsilon}(e_{i}(w))\tau^+_{p,\upsilon}(e_{i+1}(z))
\end{align*}
We emphasize that the relation between the parameters $(q,t)$ and $(\qqq,\ddd)$ differ between the lowest and highest weight representations.
For the relation between $e_i(z)$ and $f_i(z)$, we use Lemma 3.3 of \cite{FJMMSemi}.
\end{proof}

We call $\tau^-_{p,\upsilon}$ the (color $p$) \textit{highest weight} Fock representation and $\tau^+_{p,\upsilon}$ the \textit{lowest weight} Fock representation.
It is useful to note the weight of the vacuum vector $|\varnothing\rangle$ for the Cartan currents:
\begin{equation}
\begin{aligned}
\langle\varnothing|\tau^-_{p,\upsilon}(\psi_i^\pm(z))|\varnothing\rangle&= 
\left( \frac{\qqq z-\qqq^{-1}\upsilon}{z-\upsilon} \right)^{\delta_{i,p}}\\
\langle\varnothing|\tau^+_{p,\upsilon}(\psi_i^\pm(z))|\varnothing\rangle&= 
\left( \frac{\qqq^{-1} z-\qqq\upsilon}{z-\upsilon} \right)^{\delta_{i,p}}
\end{aligned}
\label{FockWt}
\end{equation}

%

\subsubsection{Tsymbaliuk isomorphisms}  
The following beautiful result was proved by Tsymbaliuk:
\begin{thm}[\cite{Tsym}]\label{BoseFermi}
For $0\le p\le \ell-1$, the map
\[\mathbb{1}_p\mapsto |\varnothing\rangle\]
induces an isomorphism $\mathrm{T}_p: W_p\rightarrow\mathcal{F}$ between the representations $\rho_{p,\vec{\ccc}}\circ\varpi$ and $\tau^-_{p,\upsilon}$ of $'\ddot{U}$, where the parameters $\vec{\ccc}$ and $\upsilon$ are related by 
\[
\upsilon=(-1)^{\frac{(\ell-2)(\ell-3)}{2}}\frac{\qqq\ddd^{-\frac{\ell}{2}}}{\ccc_0\cdots \ccc_{\ell-1}}=\qqq^2\ddd^{-\frac{\ell}{2}}\uuu
\]
where $\uuu$ is given in (\ref{uform}).
\end{thm}

\begin{rem}
In \cite{Tsym}, the author claims to show that $\rho_{p,\vec{\ccc}}\circ\varpi$ is isomorphic to the dual of $\tau_{p,\upsilon}^+$ twisted by the antipode $S$.
However, $S(f_i(z))$ and $S(e_i(z))$ act trivially on $\tau_{p,\upsilon}^\pm$, so this dual is no longer irreducible.
Nonetheless, Tsymbaliuk's proof boils down to the highest weight computation of Lemma \ref{Highwt}, and thus we can correctly interpret his result by presenting any irreducible highest weight representation with the same highest weight.
This is satisfied by $\tau_{p,\upsilon}^-$, once we correctly tune the parameter $\upsilon$.
The precise sense in which the highest weight Fock representation $\tau_{p,\upsilon}^-$ is dual to the lowest weight Fock representation $\tau_{p,\upsilon}^+$ is clarified in Appendix \ref{DualVertexApp}: it is the dual twisted by $\varpi^{-1}\circ S\circ\varpi$.
Let us also mention Theorem 3.15 and Remark 3.14 of \cite{TsymBook}, which corrects \cite{Tsym} and appeared after the first version of our paper.
\end{rem}

\begin{cor}\label{BoseFermi2}
For $0\le p\le \ell-1$, the map
\[
\mathbb{1}_{-p}\mapsto |\varnothing\rangle
\]
induces an isomorphism $\mathrm{T}_{-p}:W_{-p}\rightarrow\mathcal{F}$ between the $'\ddot{U}'$-modules $\rho_{-p,\vec{\ccc}}\circ \varpi^{-1}$ and $\tau^+_{p,\upsilon}$, where the parameters $\vec{\ccc}$ and $\upsilon$ are related by
\[
\upsilon=(-1)^{\frac{(\ell-2)(\ell-3)}{2}}\qqq^{-1}\ddd^{-\frac{\ell}{2}}\ccc_0\cdots\ccc_{\ell-1}=\frac{1}{\qqq^2\ddd^{\frac{\ell}{2}}\uuu}
\]
\end{cor}

\begin{proof}
As in \cite{Tsym}, since both representations are irreducible, we only need to show that $\mathbb{1}_{-p}$ satisfies the following:
\begin{itemize}
\item \textit{Eigenvector property:} $\mathbb{1}_{-p}$ is an eigenvector for the currents $\left\{ \varpi^{-1}(\psi_i^\pm(z)) \right\}$ with eigenvalues 
\[
\varpi^{-1}\left( \psi_i^\pm(z) \right)\mathbb{1}_{-p}=\left( \frac{\qqq^{-1}z- \qqq\upsilon}{z-\upsilon} \right)^{\delta_{i,p}}\mathbb{1}_{-p}
\]
(compare with (\ref{FockWt}));
\item \textit{Lowest weight property:} $\mathbb{1}_{-p}$ is annihilated by $\left\{ \varpi^{-1}(f_i(z)) \right\}$.
\end{itemize}
%
First consider the eigenvector property.
Applying Proposition \ref{Adjunct}, we have
\begin{align}
\left( a,\varpi^{-1}(\psi_i^\pm(z))\mathbb{1}_{-p} \right)=\left( \eta\varpi^{-1}(\psi_i^\pm(z))a,\mathbb{1}_{-p} \right)=\left(\varpi(\psi_i^{\mp}(z^{-1}))a,\mathbb{1}_{-p}  \right)\label{tsymnegpair}
\end{align}
By Theorem \ref{BoseFermi}, $\varpi(\psi_i^\pm(z))$ acts diagonally on $W_p$.
Let $a$ vary over an eigenbasis of $W_p$.
Because $\{\psi_{i,0}\}_{i=1}^\ell$ and $\qqq^{d_1}$ commute with $\{\varpi(\psi_i^\pm(z))\}_{i=1}^\ell$, due to weight and grading reasons, we have that (\ref{tsymnegpair}) is zero unless $a$ is a scalar multiple of $\mathbb{1}_p$.
Lemma \ref{Highwt} then gives us
\[
\left( \mathbb{1}_p,\varpi^{-1}(\psi_i^\pm(z))\mathbb{1}_{-p} \right)
=\left.\left( \frac{1- \ddd^{-\frac{\ell}{2}}z\uuu}{\qqq^{-1}-\qqq\ddd^{-\frac{\ell}{2}} z\uuu} \right)^{\delta_{i,p}}\right|_{\ddd\mapsto\ddd^{-1}}
=\left( \frac{\qqq^{-1}z-\qqq \upsilon}{z-\upsilon} \right)^{\delta_{i,p}}
\]
Since $\mathbb{1}_{-p}$ is the right dual element to $\mathbb{1}_p$ under $\left( -,- \right)$ and the pairing is nondegenerate, the eigenvector property holds.

For the lowest weight property, we apply Proposition \ref{Adjunct} to $\varpi^{-1}(f_i(z))$:
\[
\left( a,\varpi^{-1}(f_i(z))\mathbb{1}_{-p} \right)=\left( \eta\varpi^{-1}(f_i(z))a,\mathbb{1}_{-p} \right)=\left( \varpi(f_i(z^{-1}))a,\mathbb{1}_{-p} \right)
\]
Using Theorem \ref{BoseFermi} and considering the formulas in Theorem \ref{FockRep}, we can see that $\varpi(f_i(z))a$ will never contain $\mathbb{1}_{p}$ as a summand.
Thus, this pairing is zero for all $a\in W_p$.
By the nondegeneracy of $\left( -,- \right)$, we must have $\varpi^{-1}(f_i(z))\mathbb{1}_{-p}=0$.
\end{proof}

The Tsymbaliuk isomorphisms are only vacuum-to-vacuum maps and are thus completely non-explicit.
In the case $p=0$, we have two different maps because we propogate them from the vacuum $\mathbb{1}_0$ using $\rho_{0,\vec{\ccc}}\circ\varpi$ on the one hand and $\rho_{0,\vec{\ccc}}\circ\varpi^{-1}$ on the other.
We will distinguish them by $\mathrm{T}_{-0}$ and $\mathrm{T}_{+0}$, respectively.  
The following main result of this paper provides marginally more detail to $\mathrm{T}_{-0}$:
\begin{thm}\label{Main}
Under the Tsymbaliuk isomorphism $\mathrm{T}_{-0}$ between $\rho_{0,\vec{\ccc}}\circ\varpi$ and $\tau_{0,\upsilon}^-$ (Theorem \ref{BoseFermi}) and the identification of $\FF\otimes\Lambda(\ZZ/\ell\ZZ)\cong F_\ell$ given by (\ref{PowerToBosons}), we have
\[\FF\mathrm{T}_{-0}(H_\lambda\otimes e^{\core(\lambda)})=\FF|\lambda\rangle\]
\end{thm}

\noindent In the representations $\tau_{p,\upsilon}^-$, the vertical Heisenberg subalgebra acts diagonally on $\mathcal{F}$ with eigenbasis $\{|\lambda\rangle\}$. 
Our Main Theorem from the Introduction is then a corollary of Theorem \ref{Main}. 

We provide an easy first step:

\begin{prop}\label{CoreMatch}
For an $\ell$-core $\lambda$, 
\[\FF\mathrm{T}_{-0}(1\otimes e^{\lambda})=\FF|\lambda\rangle\subset\mathcal{F}\] 
\end{prop}

\begin{proof}
First observe that for an \textit{arbitrary} partition $\lambda$,
\begin{equation}
\tau^-_{0,\upsilon}(\psi_{i,0})|\lambda\rangle=\qqq^{\delta_{i,0}+(\alpha_i,\core(\lambda))}|\lambda\rangle
\label{CoreWeight}
\end{equation}
This follows easily from considering the Maya diagram of $\lambda$.
The exponent of $\qqq$ in $\psi_{i,0}|\lambda\rangle$ gains a power of $1$ from an addable $i$-node and a power of $-1$ from a removable $i$-node.
On the other hand, an addable $i$-node corresponds to a \textit{white} bead with index $i$ (mod $\ell$) adjacent to a \textit{black} bead of index $i-1$ (mod $\ell$) while a removable $i$-node corresponds to a \textit{black} bead with index $i$ (mod $\ell$) adjacent to a \textit{white} bead of index $i-1$ (mod $\ell$). 
Thus, if $\core(\lambda)=(c_0,\cdots,c_{\ell-1})$, the exponent of $\qqq$ is $\delta_{i,0}+c_{i-1}-c_i=\delta_{i,0}+(\alpha_i,\core(\lambda))$.
Here, the $\delta_{i,0}$ compensates for the extra shift when comparing beads in the quotient Maya diagrams $m_0(\lambda)$ and $m_{\ell-1}(\lambda)$ that are adjacent in the total diagram $m(\lambda)$.

Since $\varpi(\psi_{i,0})=\gamma^{\delta_{i,0}}\psi_{i,0}$ (Proposition \ref{MikiProp}), this shows that for an $\ell$-core $\lambda$, $|\lambda\rangle$ and $1\otimes e^{\lambda}$ have the same weight for the Cartan elements $\{\psi_{i,0}\}$. 
It remains to observe that the lines spanned by each of these vectors are characterized by having those weights \textit{and} being maximal for the $d_2$ and $\varpi(d_2)=\log_\qqq v(D)$ gradings, respectively.
\end{proof}

In order to prove Theorem \ref{Main}, we need to find the analogue of the subspaces
\begin{gather*}
\Span{\hat{h}_\mu :\mu\ge_\ell\lambda}\\
\Span{\hat{e}_{\mu} :\mu\le_\ell\lambda}
\end{gather*}
from Proposition \ref{AltDef} for $\mathcal{F}$. 
This in turn involves understanding the actions of the horizontal Heisenberg elements
\begin{align}\label{EHAbbrev}
\begin{split}
\tilde{e}_n(i)&:=\varpi^{-1}(\hat{e}_n(i))\\
\tilde{h}_n(i)&:=\varpi^{-1}(\hat{h}_n(i))
\end{split}
\end{align}  
on $\mathcal{F}$ via $\tau_{0,\upsilon}^-$.
As stated before, explicitly computing the images of elements under $\varpi^{-1}$ is extremely difficult. 
Fortunately, the shuffle algebra provides a toolkit for understanding elements of the horizontal Heisenberg subalgebra.

\section{Shuffle algebra}\label{Shuffle}

\subsection{Definition and structures} 
In this subsection, we review the shuffle approach to $U_{\qqq,\ddd}(\ddot{\mathfrak{sl}}_\ell)$ and its structures. 
From now on, we identify the parameters as in (\ref{qtqd}):
\[
q=\qqq\ddd,\, t=\qqq\ddd^{-1}
\]

\subsubsection{The shuffle algebra} 
For $\vec{k}=(k_0,\ldots, k_{\ell-1})\in(\ZZ_{\ge 0})^{\ZZ/\ell\ZZ}$, consider the space of rational functions
\[\FF(x_{i,r})_{i\in\ZZ/\ell\ZZ}^{ 1\le r\le k_i}\]
The group 
\[\Sigma_{\vec{k}}:=\prod_{i\in\ZZ/\ell\ZZ}\Sigma_{k_i}\]
acts on it by having the factor $\Sigma_{k_i}$ permute the variables $\{x_{i,r}\}_{r=1}^{k_i}$.
We call $i$ the \textit{color} of $x_{i,r}$, so $\Sigma_{\vec{k}}$ acts by \textit{color-preserving permutations}.
Consider the set of \textit{color-symmetric} rational functions
\[\mathbb{S}_{\vec{k}}:=\left[\FF(x_{i,r})_{i\in\ZZ/\ell\ZZ}^{ 1\le r\le k_i}\right]^{\Sigma_{\vec{k}}}\]
We now take the $(\ZZ_{\ge 0})^{\ZZ/\ell\ZZ}$-graded vector space
\[\mathbb{S}:=\bigoplus_{\vec{k}\in(\ZZ_{\ge 0})^{\ZZ/\ell\ZZ}}\mathbb{S}_{\vec{k}}\]
Unless we say otherwise, an element of $\mathbb{S}$ with $k_i$ variables of color $i$ for all $i$ is assumed to be in $\mathbb{S}_{\vec{k}}$. 
For a degree vector $\vec{k}$, we will use the following notation:
\begin{align*}
|\vec{k}|&:=\sum_{i\in\ZZ/\ell\ZZ}k_i\\
\vec{k}!&:=\prod_{i\in\ZZ/\ell\ZZ}k_i!
\end{align*}

For $i,j\in\ZZ/\ell\ZZ$, we define the \textit{mixing terms}
\[\omega_{i,j}(z,w):=\left\{\begin{array}{ll}
\left(z-\qqq^{2}w\right)^{-1}\left(z-w\right)^{-1} & \hbox{if }i=j\\
\left(\qqq w-\ddd^{-1}z\right) &\hbox{if }i+1=j\\
\left(z-\qqq\ddd^{-1} w\right) &\hbox{if }i-1=j\\
1 &\hbox{otherwise}
\end{array}\right.\]
We endow $\mathbb{S}$ with the \textit{shuffle product} $\star$: for $F\in\mathbb{S}_{\vec{n}}$, $G\in\mathbb{S}_{\vec{m}}$, $F\star G\in\mathbb{S}_{\vec{n}+\vec{m}}$ is the function
\[\frac{1}{\vec{n}!\vec{m}!}\Sym\left(F\left(\{x_{i,r}\}_{r=1}^{ n_i}\right)G\left(\{x_{j,n_j+s}\}_{s=1}^{m_j}\right)
\prod_{i,j\in\ZZ/\ell\ZZ}\,\prod_{\substack{1\le r\le n_i\\ 1\le s\le m_j}}\omega_{i,j}(x_{i,r},x_{j,n_j+s})\right)\]
Here, $\Sym$ denotes the \textit{color symmetrization}: for $f\in\FF(x_{i,r})_{i\in\ZZ/\ell\ZZ}^{ 1\le r\le k_i}$,
\[\Sym(f):=\sum_{(\sigma_0,\ldots,\sigma_{\ell-1})\in\Sigma_{\vec{k}}}f(\{x_{i,\sigma_i(r)}\})\]

We will consider the subspaces $\Sss_{\vec{k}}\subset \mathbb{S}_{\vec{k}}$ of functions $F$ satisfying the following two conditions:
\begin{enumerate}
\item \textit{Pole conditions:} $F$ is of the form
\begin{equation}
F=\frac{f(\{x_{i,r}\})}{\displaystyle \prod_{i\in\ZZ/\ell\ZZ}\,\prod_{\substack{1\le r, r'\le k_i\\r\not= r'}}(x_{i,r}-\qqq^2x_{i,r'})}
\label{PoleCond}
\end{equation}
for a color-symmetric Laurent polynomial $f$.
\item \textit{Wheel conditions:} $F$ has a well-defined finite limit when
\[\frac{x_{i,r_1}}{x_{i+\epsilon,s}}\rightarrow\qqq\ddd^\epsilon\hbox{ and }\frac{x_{i+\epsilon,s}}{x_{i,r_2}}\rightarrow\qqq\ddd^{-\epsilon}\]
for any choice of $i$, $r_1$, $r_2$, $s$, and $\epsilon$, where $\epsilon\in\{\pm 1\}$. 
This is equivalent to specifying that the Laurent polynomial $f$ in the formula (\ref{PoleCond}) evaluates to zero.
\end{enumerate}
Define $\Sss$ to be the direct sum
\[\Sss:=\bigoplus_{\vec{k}\in(\ZZ_{\ge 0})^{\ZZ/\ell\ZZ}}\Sss_{\vec{k}}\]

\begin{prop}
The product $\star$ is associative and $\Sss$ is closed under $\star$.
\end{prop}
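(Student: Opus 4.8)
The plan is to get associativity essentially for free, from the fact that each mixing term $\omega_{i,j}$ depends only on the ratio of two variables, and then to verify the pole and wheel conditions for $F\star G$ by a case analysis on how variables are distributed between the two factors. Throughout I will use that a shuffle product of color-symmetric functions is again color-symmetric, which is immediate from the definition.

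\textbf{Associativity.} I would first record the iterated shuffle product: for $F_1,\ldots,F_k$ with $F_a\in\mathbb{S}_{\vec n^{(a)}}$,
\[
F_1\star\cdots\star F_k=\underset{\vec n^{(1)},\ldots,\vec n^{(k)}}{\Sym}\left(\prod_{a=1}^k F_a(B_a)\prod_{1\le a<b\le k}\ \prod_{\substack{x\in B_a\\ y\in B_b}}\omega_{i(x),i(y)}(x/y)\right),
\]
where for each color $i$ the variables of that color are split into consecutive blocks $B_1,\ldots,B_k$ of the prescribed sizes, $i(x)$ denotes the color of $x$, and the normalized symmetrization divides by $\prod_a\vec n^{(a)}!$. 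The claim is that both $(F_1\star F_2)\star F_3$ and $F_1\star(F_2\star F_3)$ equal the $k=3$ instance of this formula. To check it for $(F_1\star F_2)\star F_3$, one expands the inner product $F_1\star F_2$ and observes that the factor $\prod_{x\in B_1,\,y\in B_2}\omega$ is invariant under permuting variables within either of the two blocks separately, so the inner symmetrization can be absorbed into the outer one; a count of cosets of $\Sigma_{\vec n^{(1)}+\vec n^{(2)}}$ inside $\Sigma_{\vec n^{(1)}+\vec n^{(2)}+\vec n^{(3)}}$ then shows the two normalizing factors combine correctly and the two layers of mixing terms assemble into $\prod_{a<b}$. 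The essential point is that, since $\omega_{i,j}$ is a function of a single ratio, this triangular array of mixing terms is manifestly symmetric in its appearance, so no cocycle condition on $\omega$ is needed; the computation for $F_1\star(F_2\star F_3)$ is identical.

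\textbf{Pole conditions.} Passing to coset representatives, $F\star G$ is a sum over ``shuffles'' (for each color $i$, a choice of which variables lie in block $1$) of the function $F(B_1)\,G(B_2)\prod_{x\in B_1,\,y\in B_2}\omega_{i(x),i(y)}(x/y)$. Rewriting each mixing term as a Laurent polynomial over the single linear factor $x_{i,r}-x_{j,s}$ and invoking the pole conditions of $F$ and $G$, each such summand has at most simple poles, located only along divisors $\{x_{i,r}=x_{j,s}\}$ with $j\in\{i,i+1,i-1\}$. The loci with $j=i\pm1$ are exactly those allowed by the definition of $\Sss_{\vec n+\vec m}$, so it remains to kill the poles along a diagonal $\{x_{i,a}=x_{i,b}\}$ with $a,b$ of the same color. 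A summand is already regular there unless $x_{i,a}$ and $x_{i,b}$ lie in opposite blocks, and the two shuffles that differ only by interchanging the block memberships of $x_{i,a}$ and $x_{i,b}$ are swapped by the transposition $x_{i,a}\leftrightarrow x_{i,b}$; hence near the diagonal their contributions sum to $R\bigl(\omega_{i,i}(x_{i,a}/x_{i,b})+\omega_{i,i}(x_{i,b}/x_{i,a})\bigr)+O(x_{i,a}-x_{i,b})$ for a common factor $R$ regular at the diagonal. Since $\omega_{i,i}(z)+\omega_{i,i}(z^{-1})=1+\qqq^{-2}$, this is regular, so $F\star G$ has no pole along $\{x_{i,a}=x_{i,b}\}$. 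Therefore $F\star G$ multiplied by $\prod_{i\in\ZZ/\ell\ZZ}\prod_{r\le k_i}^{r'\le k_{i+1}}(x_{i,r}-x_{i+1,r'})$, with $\vec k=\vec n+\vec m$, is a rational function regular on all of $(\FF^\times)^{|\vec k|}$, hence a color-symmetric Laurent polynomial: this is the pole condition for $\Sss_{\vec k}$.

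\textbf{Wheel conditions, and the main obstacle.} Fix a wheel locus $x_{i,r_1}/x_{i+\epsilon,s}=\qqq\ddd^{\epsilon}$, $x_{i+\epsilon,s}/x_{i,r_2}=\qqq\ddd^{-\epsilon}$, and abbreviate $a=x_{i,r_1}$, $b=x_{i+\epsilon,s}$, $c=x_{i,r_2}$, so that $a=\qqq\ddd^{\epsilon}b$, $c=\qqq^{-1}\ddd^{\epsilon}b$, and $a/c=\qqq^{2}$. For each shuffle summand I would distribute $a,b,c$ between $B_1$ and $B_2$. If all three lie in the same block, the summand vanishes on the locus because $F$ (respectively $G$) itself obeys the wheel conditions and the locus lies in no pole divisor of the remaining factors. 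In every other distribution at least one of the pairs $(a,b)$, $(b,c)$, $(a,c)$ is split between the blocks, and the corresponding cross-block mixing factor vanishes on the locus: for $(a,b)$ or $(b,c)$ one lands on one of $\omega_{i,i+1}(\qqq\ddd)=0$, $\omega_{i,i-1}(\qqq\ddd^{-1})=0$, $\omega_{i+1,i}(\qqq\ddd^{-1})=0$, $\omega_{i-1,i}(\qqq\ddd)=0$, while for $(a,c)$ one uses $\omega_{i,i}(\qqq^{-2})=0$ (note $\omega_{i,i}(\qqq^{2})\neq0$, so this pair helps precisely when the shuffle puts $c$, not $a$, into the numerator). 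Since the wheel locus is contained in no pole divisor of $F\star G$ (by the pole analysis it could only be one of the diagonals $\{a=b\}$ or $\{c=b\}$, which the locus avoids because $\qqq\ddd^{\pm1}\neq1$ in $\FF$), each summand, hence $F\star G$, vanishes on it. I expect this last step to be the only real obstacle, and it is bookkeeping rather than conceptual: one must organize the distributions so that every ``split'' case exhibits a genuinely vanishing mixing factor — the subtle point being that the useful split pair is sometimes the same-color pair $(a,c)$ at ratio $\qqq^{\pm2}$ rather than a consecutive pair — and one must be sure no such zero is cancelled by a pole, which is exactly what the preceding pole analysis guarantees.
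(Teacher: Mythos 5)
Your proof is correct. The paper in fact states this proposition without proof (it is a standard fact in the shuffle-algebra literature, going back to Feigin--Odesskii and used in \cite{NegutTor, FeiTsym}), so there is no argument to compare against; yours is the standard one and all the key points check out: the iterated-product formula for associativity, the cancellation of same-color diagonal poles via the transposition pairing and $\omega_{i,i}(z)+\omega_{i,i}(z^{-1})=1+\qqq^{-2}$, and the case analysis for the wheel conditions, where I verified that in each of the six split distributions of $a=\qqq\ddd^{\epsilon}b$, $b$, $c=\qqq^{-1}\ddd^{\epsilon}b$ at least one cross-block mixing term vanishes (including the two cases where the only vanishing factor is $\omega_{i,i}(c/a)=\omega_{i,i}(\qqq^{-2})=0$, exactly the subtlety you flag), and that the wheel locus is contained in no pole divisor, so the zero survives.
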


\noindent We call $\Sss$ the \textit{shuffle algebra of type $\hat{A}_{\ell-1}$}.
Let $\Sss^+:=\Sss$ and $\Sss^-:=\Sss^{\mathrm{op}}$.
The relation between $\Sss^\pm$ and the quantum toroidal algebra is given by the following result of Negu\cb{t}:

\begin{thm}[\cite{NegutTor}]\label{NegutShuffIso}
$\Sss$ is generated by $\{x_{i,1}^k\}_{i\in\ZZ/\ell\ZZ}^{k\in\ZZ}$ and there exist algebra isomorphisms $\Psi_+:\Sss^+\rightarrow\ddot{U}^+$ and $\Psi_-:\Sss^-\rightarrow\ddot{U}^-$ such that 
\begin{align*}
\Psi_+\left(x_{i,1}^k\right)&=e_{i,k}\\
\Psi_-\left(x_{i,1}^k\right)&=f_{i,k}
\end{align*}
\end{thm}

\begin{rem}\label{ShuffleConvention}
Let $\Sss_{\text{FT}}$ denote the shuffle algebra used in \cite{FeiTsym} and \cite{Tsym}.
Our shuffle algebra $\Sss$ is isomorphic to $\Sss_{\text{FT}}$ via the map
\begin{equation}
\Sss_{\vec{k}}\ni F\mapsto F\prod_{i\in\ZZ/\ell\ZZ}\frac{\displaystyle\prod_{1\le r<r'\le k_i}\left( x_{i,r}-\qqq^2x_{i,r'} \right)\left( x_{i,r}-\qqq^{-2}x_{i,r'} \right)}{\displaystyle\prod_{\substack{1\le r\le k_i\\1\le s\le k_{i+1}}}\left( x_{i+1,s}-x_{i,r} \right)}\in\left(\Sss_{\text{FT}}\right)_{\vec{k}}
\label{FTRenorm}
\end{equation}
On the other hand, let $\Sss_{\text{N\cb{t}}}$ denote the shuffle algebra used in \cite{NegutTor}.
Instead of $(\qqq,\ddd)$, $\Sss_{\text{N\cb{t}}}$ depends on parameters $(\qqq,\bar{\qqq})$.
Our parameter $\ddd$ is
\[\ddd=\qqq^{-1}\bar{\qqq}^{-\frac{2}{\ell}}\]
and $\Sss$ is isomorphic to $\Sss_{\text{N\cb{t}}}$ by
\begin{equation}
\Sss_{\vec{k}}\ni F\mapsto \left(F\prod_{i\in\ZZ/\ell\ZZ}\frac{\displaystyle\prod_{1\le r<r'\le k_i}\left( x_{i,r}-\qqq^2x_{i,r'} \right)\left( \qqq x_{i,r}-\qqq^{-1}x_{i,r'} \right)}{\displaystyle\prod_{\substack{1\le r\le k_i\\1\le s\le k_{i+1}}}\left(  x_{i+1,s}-\qqq\ddd^{-1}x_{i,r} \right)}\right|_{x_{i,r}\mapsto \bar{\qqq}^{\frac{2i}{\ell}}x_{i,r}}
\in\left(\Sss_{\text{N\cb{t}}}\right)_{\vec{k}}
\label{NegCon}
\end{equation}
(cf. Remark 3.4 of \textit{loc. cit.}).
\end{rem}

\subsubsection{Pairing}
Theorem \ref{NegutShuffIso} allows us to represent elements of $\UTor$ as rational functions.
We present here a result of Negu\cb{t} \cite{NegutTor}, which states that on the shuffle side, the bialgebra pairing $\varphi\left( -,- \right)$ of Theorem \ref{TorPairDef} can be presented as a multiple contour integral.
To do this, let us introduce an auxiliary variable $\ppp$.
For $F\in\Sss^\pm_{\vec{k}}$, present $F$ as in the pole conditions (\ref{PoleCond}).
We define
\[
F_\ppp:=\frac{f(\{x_{i,r}\})}{\displaystyle\prod_{i\in\ZZ/\ell\ZZ}\,\prod_{\substack{1\le r,r'\le k_i\\r\not= r'}}(x_{i,r}-\ppp^2x_{i,r'})}
\] 
Similarly, we define the modified mixing term:
\[
\omega_{i,i}^\ppp\left( z,w \right)=\left( z-\ppp^2w \right)^{-1}\left( z-w \right)^{-1}
\]
Finally, we set 
\[
Dx_{i,r}=\frac{dx_{i,r}}{2\pi \sqrt{-1}}
\]

\begin{defn}\label{PairDef}
Define the pairing $\langle F,G\rangle\in\FF$ for $F\in \Sss^+_{\vec{k}}$ and $G\in \Sss^-_{\vec{k}}$ by first symbolically computing the integral
\begin{align*}
\langle F,G\rangle_\ppp&:=\frac{1}{\vec{k}!}\underset{|x_{i,r}|=1}{\oint\cdots\oint}
\frac{(\qqq-\qqq^{-1})^{-|\vec{k}|}F_\ppp G_\ppp\displaystyle\prod_{i\in\ZZ/\ell\ZZ}\prod_{r=1}^{k_i}Dx_{i,r}}
{\displaystyle\prod_{i\in\ZZ/\ell\ZZ}\,\prod_{\substack{1\le r,r'\le k_i\\r\not=r'}}\omega_{i,i}^\ppp\left(x_{i,r},x_{i,r'}\right)\prod_{\substack{i,j\in\ZZ/\ell\ZZ\\i\not=j}}\,\prod_{\substack{1\le r\le k_i\\1\le s\le k_j}}\omega_{i,j}\left(x_{i,r},x_{j,s}\right)}
\end{align*}
Here, the cycle is oriented counterclockwise on the $x_{i,r}$-plane, and we pretend that $\qqq$, $\ddd$, and $\ppp$ are constants satisfying $|\qqq^{-1}|<|\ddd|<1$ and $|\qqq\ppp|=1$. 
We obtain $\langle F,G\rangle$ by specializing $\ppp\mapsto\qqq$, which is possible due to our restrictions on the poles.
If $F\in\Sss^+_{\vec{k}}$ and $G\in\Sss^-_{\vec{l}}$ and $\vec{k}\not=\vec{l}$, then we set $\langle F,G\rangle=0$.
\end{defn}

\begin{rem}
To make this definition slightly more transparent, let us give a naive definition: for each specialization of $\qqq$ and $\ddd$, we define
\begin{align*}
\langle F,G\rangle
&=\frac{1}{\vec{k}!}\oint_C\frac{(\qqq-\qqq^{-1})^{-|\vec{k}|}FG\displaystyle\prod_{i\in\ZZ/\ell\ZZ}\prod_{r=1}^{k_i}Dx_{i,r}}
{\displaystyle\prod_{i\in\ZZ/\ell\ZZ}\,\prod_{\substack{1\le r,r'\le k_i\\r\not=r'}}\omega_{i,i}\left(x_{i,r},x_{i,r'}\right)\prod_{\substack{i,j\in\ZZ/\ell\ZZ\\i\not=j}}\,\prod_{\substack{1\le r\le k_i\\1\le s\le k_j}}\omega_{i,j}\left(x_{i,r},x_{j,s}\right)}
\end{align*}
where the integration cycle $C$ is now a color-symmetric cycle satisfying
\begin{enumerate}
\item $\qqq^{-1}\ddd^{-1} x_{i-1,s}$, $\qqq^{2} x_{i,r'}$, and $\qqq^{-1}\ddd x_{i+1,s}$ are inside the contour for $x_{i,r}$, where $r'\not=r$;
\item $\qqq\ddd x_{i+1,s}$, $\qqq^{-2} x_{i,r'}$, and $\qqq\ddd^{-1} x_{i-1,s}$ are outside the contour for $x_{i,r}$, where $r'\not=r$;
\end{enumerate}
and oriented so that we take the residues contained within.
In short, we want to take an integral where the cycle encloses half of the nonzero poles that appear while excluding the others. 
We will note why we include and exclude these poles in Remark \ref{PairConvert} below.
The definition we use, although strange at first, gives us a ``concrete'' cycle to work with. 
For the privilege of integrating over our explicit torus, we must pay the price of introducing the auxiliary variable $\ppp$.
Our approach here is taken from \cite{NegutTor}, and we thank the author for explaining this to us.
\end{rem}

\begin{prop}[\cite{NegutTor}]\label{Twine}
The maps $\Psi_\pm$ intertwine $\langle -,-\rangle$ and $\varphi(-,-)$. 
\end{prop}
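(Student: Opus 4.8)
The plan is to verify the intertwining property $\langle F, G\rangle = \varphi(\Psi_+(F), \Psi_-(G))$ by exploiting the integral formulas for $\Psi_\pm$ (Proposition \ref{Intform}) together with the definition of the bialgebra pairing $\varphi$ via the canonical $R$-matrix. Since both $\langle-,-\rangle$ and $\varphi(-,-)$ vanish on non-matching graded pieces, fix a degree vector $\vec{k}$ and work inside $\Sss^+_{\vec{k}} \times \Sss^-_{\vec{k}}$.

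First I would use the integral presentations from Proposition \ref{Intform}: $\Psi_+(F)$ is a contour integral of $\prod e_i(x_{i,r}) F$ against the mixing-term denominator, and similarly $\Psi_-(G)$ an integral of $\prod f_i(x_{i,r}) G$. Applying $\varphi$ and using that $\varphi$ is a bialgebra pairing — so $\varphi(ab, c) = \varphi(a \otimes b, \Delta(c))$ and $\varphi(a, bc) = \varphi(\Delta(a), b \otimes c)$, extended to iterated coproducts — the pairing $\varphi(\Psi_+(F), \Psi_-(G))$ becomes a multi-contour integral whose integrand is built from the elementary pairings $\varphi(e_i(z), f_j(w)) = \tfrac{\delta_{i,j}}{\qqq-\qqq^{-1}}\delta(z/w)$ and $\varphi(\psi_i^-(z), \psi_j^+(w)) = g_{i,j}(z/w)$, the only nonzero pairings among the relevant generators. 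The delta functions force the $x$-variables on the $F$ side to be identified, color by color, with those on the $G$ side, collapsing half the contour integrals; the $\psi$-$\psi$ pairings $g_{i,j}$ that survive from the coproduct terms $\psi_i^-(\gamma^{1/2}z) \otimes e_i(\gamma z)$ in $\Delta(e_i(z))$ (and the mirror terms in $\Delta(f_i(z))$) reassemble — up to the factors $(\qqq-\qqq^{-1})^{-1}$ and exactly the mixing-term denominators $\omega_{i,j}$, $\omega^\ppp_{i,i}$ appearing in Definition \ref{PairDef} — into the integrand $\tfrac{(\qqq-\qqq^{-1})^{-|\vec{k}|} FG}{\prod \omega_{i,i}^\ppp \prod \omega_{i,j}}$ of $\langle F, G\rangle_\ppp$. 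Here one must be careful that the $\ppp$-regularization of the equal-color denominators in both $\Psi_+$ and in $\langle-,-\rangle_\ppp$ matches up, and that the $\ppp \mapsto \qqq$ specialization is taken consistently — but this is exactly the bookkeeping already set up in the proof of Proposition \ref{Intform}.

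An alternative, cleaner route avoids recomputing the coproduct combinatorics: since $\Sss^\pm$ are generated by $\{x_{i,1}^k\}$ (the theorem of Negu\c{t} quoted above) and both $\langle-,-\rangle$ and $\varphi\circ(\Psi_+ \times \Psi_-)$ are bialgebra pairings on $\Sss^+ \times \Sss^-$ — the former by construction (it is designed so that $\Psi_\pm$ become bialgebra maps to $\ddot{U}^\pm$, which is the content of how $\Sss$ relates to $U_{\qqq,\ddd}(\ddot{\mathfrak{sl}}_\ell)$), the latter because $\varphi$ is one and $\Psi_\pm$ are algebra maps respecting coproducts — it suffices to check the identity on the generators. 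On generators we need $\langle x_{i,1}^k, x_{j,1}^m \rangle = \varphi(e_{i,k}, f_{j,m})$. Extracting the $z^{-k} w^{-m}$ coefficient from $\varphi(e_i(z), f_j(w)) = \tfrac{\delta_{i,j}}{\qqq-\qqq^{-1}}\delta(z/w)$ gives $\tfrac{\delta_{i,j}\delta_{k,m}}{\qqq-\qqq^{-1}}$, while the left side is a single residue $\tfrac{1}{\qqq-\qqq^{-1}}\oint x_{i,1}^k x_{j,1}^{-m} \tfrac{Dx}{1}$ over $|x| = |\qqq^{-1}|$ (no mixing terms or $\omega^\ppp$ survive in rank one with one variable), which evaluates to the same thing. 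One then needs uniqueness of a bialgebra pairing with the prescribed values on generators — but this follows from the generation statement together with the fact that a bialgebra pairing is determined on products and coproducts of generators, which is precisely how both sides are built.

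The main obstacle is making the first, direct approach rigorous: tracking which coproduct terms of $\prod e_i(x_{i,r})$ and $\prod f_i(x_{i,r})$ contribute nonzero pairings, and checking that the surviving $g_{i,j}$ and $(\qqq-\qqq^{-1})$ factors assemble into exactly the $\omega_{i,j}$, $\omega^\ppp_{i,i}$ denominators of $\langle-,-\rangle_\ppp$ with no stray constants — this is the same delicate contour-and-regularization bookkeeping as in Proposition \ref{Intform}, just run with $f$-currents paired against $e$-currents rather than evaluated in a representation. If one instead takes the generator-reduction route, the obstacle shifts to proving that $\langle-,-\rangle$ really is a bialgebra pairing (rather than merely a pairing), i.e. that it is compatible with $\star$ on both sides in the coalgebra sense; this is essentially built into Negu\c{t}'s framework but should be cited or spelled out. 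I would present the generator-reduction argument as the main proof and remark that it is consistent with the direct $R$-matrix computation.
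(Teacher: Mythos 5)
Your first route is, in outline, exactly the paper's proof: one checks the identity on the generators $x_{i,1}^k$, uses the bialgebra property $\varphi(ab,c)=\varphi(a\otimes b,\Delta(c))$ to derive an explicit formula for $\varphi\bigl(\prod e_i(x_{i,r}),\prod f_i(y_{i,r})\bigr)$ as a sum over color-preserving permutations of $\delta$-functions times $g_{i,i}$ factors, integrates out the $y$-variables against the $\delta$-functions, and verifies that the surviving $g_{i,i}$ factors combine with the $\omega_{i,i}^{\ppp}$ denominators (after the $\ppp\mapsto\qqq$ specialization, using that swapping $\omega_{i,i}$ and $\omega_{i,i}^{\ppp}$ does not change the integral) and with the color symmetry of $G$ to reproduce the integrand of $\langle F,G\rangle_{\ppp}$. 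You have correctly identified every step of this, including the one genuinely delicate point (the $\ppp$-regularization bookkeeping), but you do not carry out the computation; in particular you do not write down the pairing formula for products of currents, which is where the permutation sum and the $g_{i,i}$ factors actually appear, nor do you note that the $f$-currents must first be reordered within each color so that the iterated-coproduct formula applies cleanly.

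The route you propose to present as the main proof has a genuine gap. Reducing a bialgebra pairing to its values on generators requires a coproduct on $\Sss^{\pm}$ for which $\Psi_{\pm}$ are coalgebra maps and for which $\langle-,-\rangle$ is compatible with $\star$ in the coalgebra sense; no such coproduct is defined in this paper, and the statement that the contour-integral pairing of Definition \ref{PairDef} satisfies $\langle F_1\star F_2,G\rangle=\langle F_1\otimes F_2,\Delta(G)\rangle$ is not ``built in'' --- it is precisely equivalent to the assertion being proved, and any proof of it runs through the same contour manipulation. Moreover, the comultiplication of a product of $f$-currents produces $\psi$-currents whose nonzero pairings $g_{i,j}$ with the $e$-currents are exactly what must reassemble into the mixing terms, so the claim cannot be checked ``on generators'' without the full computation. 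A smaller point: your generator check computes $\oint x^k x^{-m}Dx$, whereas the pairing as defined integrates $FG=x^{k+m}$, giving $\delta_{m,-k}$ rather than $\delta_{k,m}$; the matching coefficient on the toroidal side is likewise $\varphi(e_{i,k},f_{j,m})=\delta_{i,j}\delta_{m,-k}(\qqq-\qqq^{-1})^{-1}$. The conclusion is unaffected, but you should state the base case with the correct indices. To repair the proposal, promote your first route to the main argument and supply the missing computation.
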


\begin{rem}\label{PairConvert}
This result is Proposition 3.9 of \cite{NegutTor}, although as noted in Remark \ref{ShuffleConvention}, our conventions differ from those of \textit{loc. cit.} to the degree that it may be worth mentioning why our formulas for $\langle-,-\rangle$ are equivalent.
By Theorem \ref{NegutShuffIso}, $\Sss^\pm$ and $\ddot{U}^\pm$ are generated by $\{ x_{i,1}^k \}_{i\in\ZZ/\ell\ZZ}^{k\in\ZZ}$ and $\{ \Psi_{\pm}(x_{i,1}^k) \}_{i\in\ZZ/\ell\ZZ}^{k\in\ZZ}$, respectively.
Therefore, $\langle-,-\rangle$ and its compatibility with $\varphi(-,-)$ is completely determined by:
\begin{enumerate}
\item $\langle x_{i,1}^{k},x_{j,1}^{k'}\rangle=\varphi(e_{i,k},f_{j,k'})$
\item $\langle -, -\rangle$ is a bialgebra pairing.
\end{enumerate}
Point (1) is clear, although we note that our maps $\Psi_{\pm}$ differ from those of \textit{loc. cit.} by some scalars, which accounts for the discrepancy in the factor $(\qqq-\qqq^{-1})^{-|\vec{k}|}$.
For point (2), we have not discussed how the coproduct $\Delta$ is manifested in $\Sss^\pm$. 
This entails enlarging $\Sss^\pm$ to include the Cartain currents $\{ \psi_i^\pm(z) \}_{i\in\ZZ/\ell\ZZ}$, and the coproduct formula is written in terms of the mixing terms $\omega_{i,j}(z,w)$.
One then extends $\langle-,-\rangle$ so that
\begin{equation}
\langle\psi_i^+(z),\psi_j^-(w)\rangle=\varphi(\psi_i^+(z),\psi_j^-(w))=\left.\frac{\qqq^{a_{i,j}}\ddd^{m_{i,j}}z/w-1}{\ddd^{m_{i,j}}z/w-\qqq^{a_{i,j}}}\right|_{\|z\|\gg\|w\|}
\label{CartanPair}
\end{equation}
The above pairing of Cartan currents appears in the proof given in \textit{loc. cit.}, and it is this Laurent series expansion that dictates the residues wrapped in our cycle.
Namely, in order to shrink $\|w\|\ll\|z\|$, we need the poles of (\ref{CartanPair}) outside the contour when we integrate the variable $w$.
For $z$ of appropriate colors, these poles are $\qqq^{-2}z$, $\qqq\ddd^{-1}z$, and $\qqq\ddd z$.
The proof of the bialgebra pairing property then carries on similarly.
Finally, we note that in \textit{loc. cit.}, the cycle involves an auxiliary variable $\bar{\qqq}$, but the variable shift $x_{i,r}\mapsto\bar{\qqq}^{\frac{2i}{\ell}}x_{i,r}$ in (\ref{NegCon}) turns this into the unit circle.
\end{rem}

\subsubsection{Action on the Fock module}
The following result is an adaptation of a result of Negu\cb{t} to our conventions.

\begin{prop}[cf. \cite{NegutCyc} Proposition IV.8]\label{ShuffleFock}
For $F\in \Sss^+_{\vec{k}}$ and $G\in\Sss^-_{\vec{k}}$, the actions of $(\tau_{p,\upsilon}^-\circ\Psi_{+})(F)$ and $(\tau_{p,\upsilon}^-\circ\Psi_-)(G)$ on $\mathcal{F}$ are such that the only nonzero matrix elements involve pairs of partitions $\mu$ and $\lambda$ where $\mu$ adds $k_i$ $(i-p)$-nodes to $\lambda$ for all $i\in\ZZ/\ell\ZZ$. 
In this case, assign to each node $\square\in\mu\backslash\lambda$ its own variable $x_\square$ of color $\bar{c}_\square+p$. 
We then have
\begin{align*}
\langle\lambda|(\tau_{p,\upsilon}^-\circ\Psi_+)(F)|\mu\rangle &=
\frac{(-\ddd)^{|\mu\backslash\lambda|}\left(F\left( \left\{ x_{i,r} \right\} \right)\bigg|_{x_\square\rightarrow\chi_\square \upsilon}\right)}
{\displaystyle\prod_{\square\in\mu\backslash\lambda}
(\chi_\square-1)^{\delta_{\bar{c}_\square=0}\left( 1-\delta_{\square=(1,1)} \right)}
\displaystyle\prod^*_{\substack{\blacksquare\in\mu\\\blacksquare\not=\square}}\omega_{\bar{c}_\square,\bar{c}_\blacksquare}(\chi_\square,\chi_\blacksquare)}\\
\langle\mu|(\tau_{p,\upsilon}^-\circ\Psi_-)(G)|\lambda\rangle
&=
\left(G(\{x_{i,r}\})\bigg|_{x_{\square}\rightarrow \chi_{\square}\upsilon}\right)
\prod_{\square\in\mu\backslash\lambda}
\frac{\left( \qqq\chi_\square-\qqq^{-1} \right)^{\delta_{\bar{c}_{\square}=0}}}
{\chi_\square(\qqq-\qqq^{-1})}
\displaystyle\prod_{\blacksquare\in\lambda}\omega_{\bar{c}_{\blacksquare},\bar{c}_\square}(\chi_{\blacksquare},\chi_\square)
\end{align*}
Here, $\displaystyle\prod^*$ denotes taking the product only over nonzero factors.
\end{prop}

\noindent The proof of this proposition is identical to that of \textit{loc. cit.} once we establish the following lemma:

\begin{lem}\label{MatrixLem}
We have:
\begin{align}
\label{FockEForm}\langle\lambda|\tau_{p,\upsilon}^-(e_i(z))|\lambda+\square\rangle&=
(-\ddd)\delta_{\bar{c}_\square\equiv i-p}\delta\left(\frac{ z}{\chi_\square\upsilon}\right)
\left((\chi_\square -1)^{\delta_{\bar{c}_\square=0}\left(1-\delta_{\square=(1,1)}\right)}
\displaystyle\prod_{\blacksquare\in\lambda}^*\omega_{\bar{c}_\square,\bar{c}_\blacksquare}\left(\chi_\square,\chi_\blacksquare\right)\right)^{-1}\\
\label{FockFForm} \langle\lambda+\square|\tau_{p,\upsilon}^-(f_i(z))|\lambda\rangle&=\delta_{\bar{c}_\square\equiv i-p}\delta\left(\frac{z}{\chi_\square \upsilon}\right)
\frac{(\qqq\chi_\square-\qqq^{-1})^{\delta_{\bar{c}_\square=0}}}
{\chi_\square(\qqq-\qqq^{-1})}
\prod_{\blacksquare\in\lambda}\omega_{\bar{c}_\blacksquare,\bar{c}_\square}\left(\chi_\blacksquare,\chi_\square\right)
\end{align}
\end{lem}

\begin{proof}[Proof of Lemma \ref{MatrixLem}]
We will only consider the case $p=0$ to prevent clutter.
It will be helpful to recall that we identify the parameters by
\[
q=\qqq\ddd,\, t=\qqq\ddd^{-1}
\]
For each $i$, the proposed formula only has factors for boxes of color $i-1$, $i$, and $i+1$. 
We can account for all of these by considering all values of the content $c$ congruent to $i$ modulo $\ell$.
Namely, for each $c$, we consider the nodes of $\lambda$ with content $c-1$, $c$, and $c+1$.
If $c>0$, then starting from the western border, these nodes come in vertically adjacent triples until we reach the outer rim of $\lambda$.
The mixing terms in (\ref{FockEForm}) with $\blacksquare$ set as these triples will cancel, leaving behind $-\ddd$.
We note that in the case factors are excluded because they are zero, they are excluded in canceling pairs.
However, the extra $-\ddd$ does not appear in this case, so we include it to compensate.
Similar considerations appear in the later cases but are handled likewise---therefore we will not explicitly address them.
For (\ref{FockFForm}), there are no such considerations and these triples leave behind $-\ddd^{-1}$.
We attribute these powers of $-\ddd^{\pm 1}$ to the $(i+1)$-node.

If there are no such vertically adjacent triples at the outer rim, then there are only three possible arrangements left:
\begin{enumerate}
\item \textit{A $\Gamma$ of one $i$-node and two $(i-1)$-nodes:} Note that there is no addable or removable $i$-node in this case.
The mixing terms in both (\ref{FockEForm}) and (\ref{FockFForm}) cancel out, leaving nothing behind.
\item \textit{Vertically adjacent pair of $i$- and $(i-1)$-nodes:} Here, the $i$-node $\blacksquare$ is removable.
In (\ref{FockEForm}), there is a leftover factor of 
\[(\chi_\square-\qqq^2\chi_\blacksquare)\]
For (\ref{FockFForm}), there is a leftover factor of 
\[\frac{1}{\qqq(\chi_\square-\chi_\blacksquare)}\]
\item \textit{A lonely $(i-1)$-node:} Let $\blacksquare$ denote this $(i-1)$-node. 
In this case, the space directly above $\blacksquare$ is an addable $i$-node $\blacksquare^\uparrow$.
For (\ref{FockEForm}), the mixing term contribution is 
\[\frac{1}{(\chi_\square-t\chi_\blacksquare)}=\frac{1}{(\chi_\square-\chi_{\blacksquare^\uparrow})}\]
For (\ref{FockFForm}), the contribution is 
\[(\qqq\chi_\square-\ddd^{-1}\chi_\blacksquare)=(\qqq\chi_\square-\qqq^{-1}\chi_{\blacksquare^\uparrow})\]
\end{enumerate}

If $c<0$, we work similarly, but now our triples are horizontally adjacent, starting from the southern border.
As before, the mixing terms for these horizontal triples will cancel and leave behind $-\ddd$ for (\ref{FockEForm}) and $-\ddd^{-1}$ (\ref{FockFForm}).
In the case where factors in $(\ref{FockEForm})$ are zero, the $-\ddd$ is present but there will be a $-\ddd$ missing for the $(i+1)$-node in case (3) below.
The leftovers at the rim are transposes of the $c>0$ case:
\begin{enumerate}
\item \textit{A \rotatebox[origin=c]{180}{$\Gamma$} of one $i$-node and two $(i+1)$-nodes:} Here, (\ref{FockEForm}) cancels to leave behind $\ddd^2$ and (\ref{FockFForm}) cancels to leave behind $\ddd^{-2}$, which we attribute to the two $(i+1)$-nodes.
\item \textit{Horizontally adjacent pair of $i$- and $(i+1)$-nodes:} The $i$-node $\blacksquare$ is removable.
In (\ref{FockEForm}), the mixing terms leave behind 
\[-\ddd(\chi_\square-\qqq^2\chi_\blacksquare)\]
and in (\ref{FockFForm}), we have 
\[\frac{-\ddd^{-1}}{\qqq(\chi_\square-\chi_\blacksquare)}\]
Again, we attribute the factors of $-\ddd^{\pm1}$ to the $(i+1)$-node.
\item \textit{A lonely $(i+1)$-node:} Let $\blacksquare$ denote this $(i+1)$-node.
The space to the right of $\blacksquare$ is an addable $i$-node $\blacksquare_\rightarrow$.
For (\ref{FockEForm}), the mixing term is 
\[\frac{1}{(\qqq\chi_{\blacksquare}-\ddd^{-1}\chi_\square)}=\frac{-\ddd}{(\chi_\square-\chi_{\blacksquare_\rightarrow})}\]
whereas for (\ref{FockFForm}), the mixing term is 
\[(\chi_\blacksquare-t\chi_\square)=-\ddd^{-1}(\qqq\chi_\square-\qqq^{-1}\chi_{\blacksquare_\rightarrow})\]
As before, we attribute the factors of $-\ddd^{\pm1}$ to the single $(i+1)$-node.
\end{enumerate}

Finally, we consider the case $c=0$.
Here, we group adjacent triples into L-shaped arrangements.
Let $\boxdot$ be the $0$-node at the corner of the L.
The cancellation in (\ref{FockEForm}) will leave behind 
\[-\ddd\frac{(\chi_\square-\chi_\boxdot)}{(\chi_\square-\qqq^2\chi_\boxdot)}\]
whereas that in (\ref{FockFForm}) will leave behind
\[-\ddd^{-1}\frac{\qqq^2(\chi_\square-\chi_\boxdot)}{(\qqq^2\chi_\square-\chi_\boxdot)}\]
Starting from the origin, we account for these triples along the content zero line until we reach the boundary.
The terms above will cancel.
For (\ref{FockEForm}), they will leave behind the numerator when $\boxdot=(1,1)$ and the denominator when $\boxdot$ is the outermost node of content zero to fit in such a triple (when that factor is nonzero).
In (\ref{FockFForm}), things are the other way around: the denominator is left behind when $\boxdot=(1,1)$ and the numerator remains for such an outermost node.
If there are no leftovers, then the next space in the content zero line is an addable $i$-node $\blacksquare$ with $\qqq^2\chi_\boxdot=\chi_\blacksquare$.
Otherwise, the leftovers are:
\begin{enumerate}
\item \textit{Horizontally adjacent pair of $0$- and $-1$-nodes:} There is no addable or removable $0$-node in this case.
For (\ref{FockEForm}), there is complete cancellation, whereas for (\ref{FockFForm}), there is a leftover factor of $\qqq$.
\item \textit{Vertically adjacent pair of $0$- and $1$-nodes:} Again, there is no addable or removable $0$-node in this case.
In (\ref{FockEForm}), there is a leftover factor of $-\ddd$, and in (\ref{FockFForm}), there is a leftover factor of $-t=-\qqq\ddd^{-1}$.
We attribute the $-\ddd^{-1}$ to the $(i+1)$-node.
\item \textit{Lonely $0$-node:} In this case, the $0$-node $\blacksquare$ is removable.
For (\ref{FockEForm}), there is a leftover factor of
\[
(\chi_\square-\qqq^2\chi_\blacksquare)
\]
whereas for (\ref{FockFForm}), there is a leftover factor of
\[
\frac{1}{(\chi_{\square}-\chi_{\blacksquare})}=\frac{\qqq}{\qqq(\chi_\square-\chi_\blacksquare)}
\]
\end{enumerate}
The lemma follows from accounting for all the leftover factors.
Note that the restricted product $\prod^*$ in (\ref{FockEForm}) and the extra factor $\chi_\square(\qqq-\qqq^{-1})$ in the denominator of (\ref{FockFForm}) remove the factor where $\square\in A_i(\lambda)$.
\end{proof}

\subsection{\textit{L}-operators}\label{Loperators} 
Our goal for the rest of Section \ref{Shuffle} is to find the shuffle elements corresponding to $\tilde{h}_n(i)$ and $\tilde{e}_n(i)$ of (\ref{EHAbbrev}).
Combining this with Proposition \ref{ShuffleFock}, we will be able to discern how these elements act via $\tau_{0,\upsilon}^-$ on $\mathcal{F}$. 
Before doing this though, we will need to find the shuffle elements for another generating set of the horizontal Heisenberg subalgebra, and that is the goal of this subsection. 
The shuffle elements found here are just slight alterations of those computed in \cite{FeiTsym}.
Our Corollary \ref{LoperatorCalc} is very similar to Theorem 4.2 of \cite{Tsym}, which also uses elements of \cite{FeiTsym}.
However, our proofs are quite different as we apply the $R$-matrix factorization results reviewed in \ref{Rmatrix}.

\subsubsection{Vacuum correlations}\label{Correlation} 
Using bra-ket notation, we will consider the $\FF$-linear functional on $(\ddot{U}^{\ge})'$ given by vacuum-to-vacuum matrix element of the representation $\rho_{-p,\vec{\ccc}}$ for $0\le p\le \ell-1$:
\begin{equation}
x\mapsto \langle\mathbb{1}_{-p}| \rho_{-p,\vec{\ccc}}(x)|\mathbb{1}_{-p}\rangle
\label{VacDef}
\end{equation}
We can encode its restriction to $(\ddot{U}^+)'\simeq\ddot{U}^+$ via \textit{correlation functions}:
\begin{equation}
\left\langle \mathbb{1}_{-p}\left|\rho_{-p,\vec{\ccc}}\left(\overset{\curvearrowright}{\prod_{i=0}^{\ell-1}}\overset{\curvearrowright}{\prod_{r=1}^{k_i}}e_i(x_{i,r})\right)\right|\mathbb{1}_{-p}\right\rangle
\label{CorrelationDef}
\end{equation}
These are generating functions of values of the functional (\ref{VacDef}) evaluated at certain elements of $\ddot{U}^+$.
The correlation function completely determines the restriction of the functional $\langle\mathbb{1}_{-p}|-|\mathbb{1}_{-p}\rangle$ onto $\ddot{U}^+$.

Because of the $e^{\alpha_i}$ terms in the vertex operators, (\ref{CorrelationDef}) is nonzero only when all the $k_i$ are equal to the same value $n$. 
In this case, we can explicitly compute it:

\begin{lem}\label{CorrCompute}
We have
\begin{align}
\label{VacCorr}
\begin{split}
\left\langle \mathbb{1}_{-p}\left|\rho_{-p,\vec{\ccc}}\left( \overset{\curvearrowright}{\prod_{i=0}^{\ell-1}}\overset{\curvearrowright}{\prod_{r=1}^{n}}e_i(x_{i,r})\right)\right|\mathbb{1}_{-p}\right\rangle
&=(-\uuu)^{-n}\frac{(-1)^{\frac{\ell n (n+1)}{2}}\qqq^{n(\ell-1)}}{\ddd^{\frac{\ell n^2}{2}}}\prod_{r=1}^n\frac{x_{0,r}}{x_{p,r}}\\
&\times\frac{\displaystyle \prod_{i\in\ZZ/\ell\ZZ}\prod_{1\le r< r'\le n}(x_{i,r'}-\qqq^2 x_{i,r})(x_{i,r'}-x_{i, r})\prod_{r=1}^n x_{i,r}}
{\displaystyle\prod_{r,s=1}^n\left( \qqq x_{0,r}-\ddd^{-1}x_{\ell-1,s} \right)\prod_{i=1}^{\ell-1}\prod_{r,s=1}^n\left( x_{i,s}-\qqq\ddd^{-1}x_{i-1,r} \right)}
\end{split}
\end{align}
where the rational function is a Laurent series expanded in the region
\begin{align}\label{VacExp}
\begin{split}
\|x_{i,r}\|&\gg\|x_{j,s}\|\hbox{ for }0\le i<j\le \ell-1\\
\end{split}
\end{align}
and $\uuu$ is given by the formula (\ref{uform}).
\end{lem}

\begin{proof}
Writing out the vertex operators gives us
\begin{equation}
\begin{aligned}
&\left\langle \mathbb{1}_{-p}\left|\rho_{-p,\vec{\ccc}}\left( \overset{\curvearrowright}{\prod_{i=0}^{\ell-1}}\overset{\curvearrowright}{\prod_{r=1}^{n}}e_i(x_{i,r})\right)\right|\mathbb{1}_{-p}\right\rangle\\
&=\left\langle\mathbb{1}_{-p}\left|\overset{\curvearrowright}{\prod_{i=0}^{\ell-1}}\overset{\curvearrowright}{\prod_{r=1}^{n}}c_{i}\exp\left(\sum_{k>0}\frac{\qqq^{-\frac{k}{2}}}{[k]_\qqq}b_{i,-k}x_{i,r}^{k}\right)
\exp\left(-\sum_{k>0}\frac{\qqq^{-\frac{k}{2}}}{[k]_\qqq}b_{i,k}x_{i,r}^{-k}\right)e^{\alpha_{i}}x_{i,r}^{1+H_{i,0}}\right|\mathbb{1}_{-p}\right\rangle
\end{aligned}
\label{VacEta}
\end{equation}
The statement about the region of expansion for the Laurent series follows from (\ref{VacEta}) and Corollary \ref{Annihilate}.

To compute the vaccuum correlation, we introduce the usual notion of normal ordering $:-:$ for vertex operators, wherein all Fourier coefficients are products, from left to right, of elements of $\mathcal{H}_\ell^-$, $\mathcal{H}_\ell^+$, $\mathbb{F}\left\{ Q \right\}$, then finally the operators $x_{i,r}^{H_{i,0}}$.
We then have
\begin{align*}
\rho_{-p,\vec{\ccc}}(e_i(z))\rho_{-p,\vec{\ccc}}(e_{j}(w))&=:\rho_{-p,\vec{\ccc}}(e_i(z))\rho_{-p,\vec{\ccc}}(e_{j}(w)):\\
&\times 
\begin{cases}
\left(z- w)(z-\qqq^{-2}w\right) & i=j\vspace{.1cm}\\ 
\displaystyle\frac{1}{\ddd^{\frac{1}{2}}\left( z-\qqq^{-1}\ddd^{-1}w \right)} & i-1=j\vspace{.1cm}\\
\displaystyle\frac{1}{\ddd^{-\frac{1}{2}}\left( z-\qqq^{-1}\ddd w\right)} & i+1=j\vspace{.1cm}\\
1 & \hbox{otherwise}
\end{cases}
\end{align*}
The rational functions above are considered as Laurent series expanded in the region (\ref{VacExp}).

Next, we consider the $\FF\left\{ Q \right\}$ terms.
From (\ref{TwistedAlpha}), we have
\begin{align*}
e^{\alpha_1}&=  e^{\ell\Lambda_{\ell-1}}e^{-(\ell-1)\alpha_{\ell-1}}\cdots e^{-3\alpha_3}e^{-2\alpha_2}\\
e^{\alpha_0}e^{\alpha_1}&= (-1)^{\frac{(\ell-2)(\ell-3)}{2}}e^{-\alpha_{\ell-1}}\cdots e^{-\alpha_3}e^{-\alpha_2}\\
(e^{\alpha_0}e^{\alpha_1}\cdots e^{\alpha_i})e^{\alpha_i}&= -e^{\alpha_i}(e^{\alpha_0}e^{\alpha_1}\cdots e^{\alpha_i})\hbox{ for }i\not=0,\ell-1
\end{align*}
Altogether, we then have
\[
\overset{\curvearrowright}{\prod_{i=0}^{\ell-1}}\overset{\curvearrowright}{\prod_{r=1}^{n}}e^{\alpha_i}=(-1)^{\frac{n(\ell-2)(\ell-3)+(\ell-2)n(n-1)}{2}}=(-1)^{\frac{n(\ell-2)(\ell-3)+\ell n(n-1)}{2}}
\]

The correlation function then becomes
\begin{align*}
&\left\langle \mathbb{1}_{-p}\left|\rho_{-p,\vec{\ccc}}\left( \overset{\curvearrowright}{\prod_{i=0}^{\ell-1}}\overset{\curvearrowright}{\prod_{r=1}^{n}}e_i(x_{i,r})\right)\right|\mathbb{1}_{-p}\right\rangle\\
&=(-1)^{\frac{n(\ell-2)(\ell-3)+\ell n(n-1)}{2}}(\ccc_0\cdots \ccc_{\ell-1})^n\prod_{r=1}^n\frac{x_{0,r}}{x_{p,r}}\\
&\times\frac{\displaystyle\prod_{i\in\ZZ/\ell\ZZ}\prod_{1\le r<r'\le n}(x_{i,r}-x_{i,r'})(x_{i,r}-\qqq^{-2}x_{i,r'})\prod_{r=1}^n x_{i,r}}
{\displaystyle\prod_{r,s=1}^n\ddd^{\frac{1}{2}}(x_{0,r}-\qqq^{-1}\ddd^{-1} x_{\ell-1,s})\prod_{i=1}^{\ell-1}\prod_{r,s=1}^n\ddd^{-\frac{1}{2}}(x_{i-1,r}-\qqq^{-1}\ddd x_{i,s})}\\
&=(-1)^{\frac{\ell n(n-1)}{2}}\frac{\uuu^{-n}}{\qqq^n}\prod_{r=1}^n\frac{x_{0,r}}{x_{p,r}}\\
&\times\frac{\displaystyle\prod_{i\in\ZZ/\ell\ZZ}\prod_{1\le r<r' \le n}(x_{i,r}-x_{i,r'})(x_{i,r}-\qqq^{-2}x_{i,r'})\prod_{r=1}^n x_{i,r}}
{\displaystyle\prod_{r,s=1}^n\ddd^{\frac{1}{2}}(x_{0,r}-\qqq^{-1}\ddd^{-1} x_{\ell-1,s})\prod_{i=1}^{\ell-1}\prod_{r,s=1}^n\ddd^{-\frac{1}{2}}(x_{i-1,r}-\qqq^{-1}\ddd x_{i,s})}\\
&=(-1)^{\frac{\ell n(n+1)}{2}+n}\frac{\qqq^{-\ell n(n-1)+\ell n^2-n} \uuu^{-n}}{\ddd^{\frac{\ell n^2}{2}}}\prod_{r=1}^n\frac{x_{0,r}}{x_{p,r}}\\
&\times\frac{\displaystyle\prod_{i\in\ZZ/\ell\ZZ}\prod_{1\le r<r'\le n}(x_{i,r}-x_{i,r'})(\qqq^2x_{i,r}-x_{i,r'})\prod_{r=1}^n x_{i,r}}
{\displaystyle\prod_{r,s=1}^n(\qqq x_{0,r}-\ddd^{-1} x_{\ell-1,s})\prod_{i=1}^{\ell-1}\prod_{r,s=1}^n(x_{i,s}-\qqq\ddd^{-1}x_{i-1,r})}\qedhere
\end{align*}
\end{proof}

\subsubsection{Dual element in $\Sss^-$}\label{CorrelationShuff}
Composing with the isomorphism $\Psi_+$, we can consider the functional $\langle\mathbb{1}_{-p}|-|\mathbb{1}_{-p}\rangle$ on $\Sss^+$.
Our goal here is to find the dual element to this functional on $\Sss^+$ with respect to the integral pairing $\langle -,-\rangle$. 
Specifically, we define the series of shuffle elements
\[
\sum_{n\ge 0}F_{p,n}\uuu^{-n}
\]
such that
\[
\sum_{n\ge 0}\langle x,F_{p,n}\rangle \uuu^{-n}=\langle\mathbb{1}_{-p}|(\rho_{-p,\vec{\ccc}}\circ\Psi_+)(x)|\mathbb{1}_{-p}\rangle
\]
for any $x\in \Sss^+$.
Here, $\uuu$ is the parameter from (\ref{uform}) and (\ref{VacCorr}). 
$F_{p,n}$ is well-defined because $\varphi$ is nondegenerate when restricted to $\ddot{U}^+\times\ddot{U}^-$ and the isomorphisms $\Psi_\pm$ intertwine $\varphi(-,-)$ and $\langle-,-\rangle$ (Proposition \ref{Twine}).

To compute $F_{p,n}$, note that the correlation function calculation in \ref{Correlation} records the values of $\langle\mathbb{1}_{-p}|-|\mathbb{1}_{-p}\rangle$ on elements of $\ddot{U}^+$ of the form
\[
e_{0,N_{0,1}}\cdots e_{0,N_{0,k_0}}e_{1,N_{1,1}}\cdots e_{1,N_{1,k_1}}\cdots e_{\ell-1,N_{\ell-1,1}}\cdots e_{\ell-1,N_{\ell-1,k_{\ell-1}}}
\]
Under the isomorphism $\Psi_+$ of Theorem \ref{NegutShuffIso}, the element above corresponds to the shuffle element
\begin{equation}
x_{0,1}^{N_{0,1}}\star\cdots\star x_{0,k_{0}}^{N_{0,k_0}}\star x_{1,1}^{N_{1,1}}\star\cdots\star x_{1,k_1}^{N_{1,k_1}}\star\cdots\star x_{\ell-1,1}^{N_{\ell-1,1}}\star\cdots\star x_{\ell-1,k_{\ell-1}}^{N_{\ell-1,k_{\ell-1}}}
\label{ShuffMono}
\end{equation}
As mentioned in \ref{Correlation}, the functional is only nonzero when all $k_i=n$ for some $n$.
In this case, (\ref{ShuffMono}) becomes
\begin{equation}
\Sym\left\{ \left[\prod_{i\in\ZZ\ell\ZZ}\left(\prod_{r=1}^nx_{i,r}^{N_{i,r}}\right)\left(\prod_{1\le r<s\le n}\omega_{i,i}(x_{i,r},x_{i,s})  \right)\right]\left[\prod_{0\le i< j\le\ell-1}\,\prod_{r,s=1}^n\omega_{i,j}(x_{i,r},x_{j,s}) \right]\right\}
\label{ShuffMonoSym}
\end{equation}
Our strategy is straightforward: since $\langle-,-\rangle$ is nondegenerate, we need to find shuffle elements whose pairing with (\ref{ShuffMonoSym}) equals the appropriate coefficient of the series computed in Lemma \ref{CorrCompute}.

\begin{lem}\label{LoperatorShuff}
We have
\begin{align*}
\sum_{n\ge 0} F_{p,n}\uuu^{-n}=\sum_{n\ge 0}\frac{(-1)^{\frac{\ell n(n+1)}{2}}(\qqq^2-1)^{n\ell} }{(-\uuu)^{n}\qqq^n\ddd^{\frac{\ell n^2}{2}}}\prod_{r=1}^n\frac{x_{0,r}}{x_{p,r}}\prod_{i\in\ZZ/\ell\ZZ}\prod_{r=1}^nx_{i,r}
\end{align*}
\end{lem}

\begin{proof}
To compute the pairing $\langle -,-\rangle$, we will need to insert the auxillary variable $\ppp$.
Our candidate for $F_{p,n}$ becomes
\begin{equation*}
\frac{(-1)^{\frac{\ell n(n+1)}{2}}\qqq^{n(\ell-1) }(\qqq-\qqq^{-1})}{(-\uuu)^{n}\ddd^{\frac{\ell n^2}{2}}}\prod_{r=1}^n\frac{x_{0,r}}{x_{p,r}}
\prod_{i\in\ZZ/\ell\ZZ}\left[\left(\prod_{r=1}^nx_{i,r}\right)\left(\prod_{\substack{1\le r,r'\le n\\r\not=r'}}\frac{\omega_{i,i}^\ppp(x_{i,r},x_{i,r'})}{\omega_{i,i}(x_{i,r},x_{i,r'})}\right)\right]
\label{FpnWithP}
\end{equation*}
On the other hand, (\ref{ShuffMonoSym}) becomes
\[
\Sym\left\{ \left[\prod_{i\in\ZZ\ell\ZZ}\frac{\displaystyle\prod_{r=1}^n x_{i,r}^{N_{i,r}}\prod_{\substack{1\le r,r'\le n \\r\not=r'}}\omega_{i,i}^\ppp(x_{i,r},x_{i,r'})}{\displaystyle\prod_{1\le r<r'\le n}\omega_{i,i}(x_{i,r'},x_{i,r})}  \right]\left[\prod_{0\le i< j\le\ell-1}\prod_{r,s=1}^n\omega_{i,j}(x_{i,r},x_{j,s}) \right]\right\}
\]
The resulting pairing is:
\begin{align}
\nonumber&\frac{(-1)^{\frac{\ell n (n+1)}{2}}\qqq^{n(\ell-1)}}{(-\uuu)^{n}\ddd^{\frac{\ell n^2}{2}}(n!)^\ell}
\underset{|x_{i,r}|=1}{\oint\cdots\oint}\left\{
\prod_{r=1}^n\frac{x_{0,r}}{x_{p,r}}\prod_{i\in\ZZ/\ell\ZZ}\left(\prod_{r=1}^n x_{i,r}^{N_{i,r+1}}\right)\left( \prod_{\substack{1\le r,r'\le n\\r\not=r'}}\frac{\omega_{i,i}^\ppp(x_{i,r},x_{i,r'})}{\omega_{i,i}(x_{i,r},x_{i,r'})} \right)
\right.\\
\nonumber&\times
\Sym\left[\left(\prod_{i\in\ZZ/\ell\ZZ}\prod_{1\le r<r'\le n}\omega_{i,i}(x_{i,r'},x_{i,r})^{-1}  \right)\left( \prod_{0\le i<j\le\ell-1}\prod_{r,s=1}^n\omega_{j,i}(x_{j,s},x_{i,r})^{-1} \right) \right]\displaystyle
\\
\nonumber&\times\left.\prod_{i\in\ZZ/\ell\ZZ}\prod_{r=1}^{n}Dx_{i,r}\right\}
\end{align}
Furthermore, we can use the color symmetry of the integration cycle to remove the symmetrization and $(n!)^\ell$ (the other factors are already color-symmetric):
\begin{align}
\nonumber&=\frac{(-1)^{\frac{\ell n (n+1)}{2}}\qqq^{n(\ell-1)}}{(-\uuu)^{n}\ddd^{\frac{\ell n^2}{2}}}
\underset{|x_{i,r}|=1}{\oint\cdots\oint}\left\{
\prod_{r=1}^n\frac{x_{0,r}}{x_{p,r}}\prod_{i\in\ZZ/\ell\ZZ}\prod_{r=1}^n x_{i,r}^{N_{i,r}+1}
\right.\\
&\times
\left(\prod_{i\in\ZZ/\ell\ZZ}\prod_{1\le r<r'\le n}\omega_{i,i}(x_{i,r'},x_{i,r})^{-1}  \right)\left( \prod_{0\le i<j\le\ell-1}\prod_{r,s=1}^n\omega_{j,i}(x_{j,s},x_{i,r})^{-1} \right) \displaystyle
\label{CorrShuffPair2}\\
\label{CorrShuffPair1}&\times\left.\prod_{i\in\ZZ/\ell\ZZ} \prod_{\substack{1\le r,r'\le n\\r\not=r'}}\frac{\omega_{i,i}^\ppp(x_{i,r},x_{i,r'})}{\omega_{i,i}(x_{i,r},x_{i,r'})}\prod_{r=1}^{n}Dx_{i,r}\right\} 
\end{align}

We evaluate the integral by taking iterated residues starting from $x_{\ell-1,n}$.
The only possible poles within its contour are:
\begin{enumerate}
\item at $x_{\ell-1,n}=0$
\item and at $x_{\ell-1,n}=\ppp^{2}x_{\ell-1,r}$ coming from $\omega_{\ell-1,\ell-1}^\ppp(x_{\ell-1,n},x_{\ell-1,r})$ in (\ref{CorrShuffPair1}).
\end{enumerate}
Evaluating at the second kind of pole will result in zero when $\ppp\mapsto\qqq$ due to the $\omega_{i,i}(x_{i,n},x_{i,r})$ factor in (\ref{CorrShuffPair1}).
Thus, we can shrink the contour so that $\|x_{\ell-1,n}\|$ is arbitrarily small without crossing any poles that would contribute to the pairing.
The other variables $x_{\ell-1,r}$ of color $\ell-1$ work similarly, and in fact $\|x_{\ell-1,r}\|$ and $\|x_{\ell-1,s}\|$ do not have to hold any relationship with each other.

After the variables of color $\ell-1$ have been integrated, we move on to color $x_{\ell-2,n}$.
The poles are as before except that there is now a new kind of pole:
\begin{enumerate}
\item[(3)] $x_{\ell-2,n}=\qqq^{-1}\ddd x_{\ell-1,r}$ coming from $\omega_{\ell-1,\ell-2}(x_{\ell-1,r},x_{\ell-2,s})^{-1}$ in (\ref{CorrShuffPair2}).
\end{enumerate}
However, since we can shrink $\|x_{\ell-1,r}\|$ arbitrarily small, we can also shrink $\|x_{\ell-2,n}\|$ arbitrarily small as long as we maintain the relationship $\|x_{\ell-2,n}\|\gg\|x_{\ell-1,r}\|$.
The other variables of color $\ell-2$ work the same way, and in fact we can even continue downwards in color.
Finally, at color $0$, the only novelty is that there is a fourth kind of pole:
\begin{enumerate}
\item[(4)] $x_{0,r}=\qqq^{-1}\ddd^{-1} x_{\ell-1,s}$ coming from $\omega_{\ell-1,0}(x_{\ell-1,s},x_{0,r})^{-1}$ in (\ref{CorrShuffPair2}).
\end{enumerate}
Overall, we can expand the integrand into a Laurent series where $\|x_{j,s}\|\ll\|x_{i,r}\|$ for $0\le i<j\le \ell-1$, and in our order of taking iterated residues, we only need to take the residue at $0$, which coincides with the constant term.

Computing the integral and taking $\ppp\mapsto\qqq$ yields the constant term of the Laurent series:
\begin{align*}
&\frac{(-1)^{\frac{\ell n (n+1)}{2}}\qqq^{n(\ell-1)}}{(-\uuu)^{n}\ddd^{\frac{\ell n^2}{2}}}\prod_{r=1}^n\frac{x_{0,r}}{x_{p,r}}\prod_{i\in\ZZ/\ell\ZZ}\prod_{r=1}^n x_{i,r}^{N_{i,r}+1}\\
&\left.\times\displaystyle\left(\prod_{i\in\ZZ/\ell\ZZ}\prod_{1\le r<r'\le n}\omega_{i,i}(x_{i,r'},x_{i,r})^{-1}  \right)\left( \prod_{0\le i<j\le\ell-1}\prod_{r,s=1}^n\omega_{j,i}(x_{j,s},x_{i,r})^{-1} \right) \right|_{\substack{\|x_{j,s}\|\gg\|x_{i,r}\|\\0\le i<j\le\ell-1}}\\
&=\frac{(-1)^{\frac{\ell n (n+1)}{2}}\qqq^{n(\ell-1)}}{(-\uuu)^{n}\ddd^{\frac{\ell n^2}{2}}}\left(\prod_{i\in\ZZ/\ell\ZZ}\prod_{r=1}^n x_{i,r}^{N_{i,r}}\right)\prod_{r=1}^n\frac{x_{0,r}}{x_{p,r}}\\
&\left.\times\frac{\displaystyle \prod_{i\in\ZZ/\ell\ZZ}\prod_{1\le r< r'\le n}(x_{i,r'}-\qqq^2 x_{i,r})(x_{i,r'}-x_{i, r})\prod_{r=1}^n x_{i,r}}
{\displaystyle\prod_{r,s=1}^n\left( \qqq x_{0,s}-\ddd^{-1}x_{\ell-1,r} \right)\prod_{i=1}^{\ell-1}\prod_{r,s=1}^n\left( x_{i,r}-\qqq\ddd^{-1}x_{i-1,s} \right)}\right|_{\substack{\|x_{j,s}\|\gg\|x_{i,r}\|\\0\le i<j\le\ell-1}} 
\end{align*}
This is precisely the desired coefficient of the correlation function.
\end{proof}
\subsubsection{Proof of Theorem \ref{Factor1}}\label{FactorProof}
In \cite{NegutTor}, the author defines for each $\mu\in\QQ$ the \textit{slope sublagebras} $\mathcal{B}_\mu^+\subset[\ddot{U}^\ge]'_{\kappa=1}$ and $\mathcal{B}_\mu^-\subset[\ddot{U}^\le]'_{\kappa=1}$.
Strictly speaking, the author works with the quotient $\gamma=1$ instead, but switching to the quotient $\kappa=1$ does not significantly change his formulas for the coproduct.
The slope parameter $\mu$ is extended to $\infty$ by setting $\mathcal{B}_{\infty}^\pm$ to be the subalgebra generated by the Cartan currents $\{\psi_i^\pm(z)\}$.
In the proof of Proposition 3.10 of \textit{loc. cit.}, the author establishes the factorizations:
\begin{align}
\begin{split}
[\ddot{U}^{\ge}]_{\kappa=1}'=\overset{\curvearrowright}{\prod_{\mu\in\QQ\cup\left\{ \infty \right\}}}\mathcal{B}_\mu^+\\
[\ddot{U}^{\le}]_{\kappa=1}'=\overset{\curvearrowright}{\prod_{\mu\in\QQ\cup\left\{ \infty \right\}}}\mathcal{B}_\mu^-
\end{split}\label{TorFactor}
\end{align}
Namely, elements on the left can be written as linear combinations of ordered products of elements from the slope subalgebras, where the slope parameter increases as one goes from left to right in the product.

Moreover, the proof of (\ref{TorFactor}) yields a factorization of $\mathcal{R}$ into an ordered product of $R$-matrices for the subalgebras $\left\{ \mathcal{B}_\mu^\pm \right\}$:
\begin{equation}
\mathcal{R}=\left(\overset{\curvearrowright}{\prod_{\mu\in\QQ\cup\left\{ \infty \right\}}}\mathcal{R}_\mu\right)\qqq^{t_\infty}
\label{NegutRMatrixFact}
\end{equation}
Specifically, $\mathcal{B}_\mu^+$ and $\mathcal{B}_\mu^-$ are mutually dual subspaces under $\varphi$, and $\mathcal{R}_\mu$ is the canonical tensor of the restricted pairing, which sits in a suitable completion of $\mathcal{B}_\mu^+\otimes\mathcal{B}_\mu^-$.
Except for the Cartan elements $\left\{ \psi_{i,0} \right\}$, elements in $\mathcal{B}_\mu^+$ have \textit{negative} homogeneous degree when $\mu<0$ and \textit{positive} homogeneous degree for $\mu>0$.  
Likewise but opposite, non-Cartan elements of $\mathcal{B}_\mu^-$ have \textit{positive} homogeneous degree when $\mu<0$ and \textit{negative} homogeneous degree for $\mu>0$.
The product of $R$-matrices in (\ref{NegutRMatrixFact}) over positive slopes yields $\left( 1+\mathcal{R}^{++} \right)$ and the product over negative slopes yields $\left( 1+\mathcal{R}^- \right)$.
It remains to be shown that 
\[\mathcal{R}_0=h\left[(1+\bar{\mathcal{R}}^+)\bar{\mathcal{R}}^0_\mathfrak{gl}(1+\bar{\mathcal{R}}^-)\right]\]
This would follow from showing that $\mathcal{B}_0^+=h(\dot{U}_\mathfrak{gl}^\ge)'$ and $\mathcal{B}_0^-=h(\dot{U}_\mathfrak{gl}^\le)'$ (recall that the right prime means we drop $h(D)=\qqq^{d_2}$).
We note that Corollary \ref{QuasiRFact} (the quasi $R$-matrix case) is obtained by removing $\mathcal{R}_\infty\qqq^{t_\infty}$.

In \cite{NegutTor}, it is obvious that $h(\dot{U}^{\ge}_\mathfrak{sl})'\subset\mathcal{B}_0^+$ and $h(\dot{U}^{\le}_\mathfrak{sl})'\subset\mathcal{B}_0^-$.
$\mathcal{B}_0^+$ is obtained by adjoining elements to $h(\dot{U}^{\ge}_\mathfrak{sl})'$ whose shuffle presentations are given by certain functions $\left\{ G_{n} \right\}_{n>0}$, and likewise $\mathcal{B}_0^-$ is obtained by adjoining elements to $h(\dot{U}^{\le}_\mathfrak{sl})'$ whose shuffle presentations are given by functions $\left\{ G_{-n} \right\}_{n>0}$ (cf. 3.26 of \textit{loc. cit.}).
After making convention changes, we in fact have
\begin{align*}
G_n&= a_n^+F_{0,n}\\
G_{-n}&= a_n^-F_{0,n}
\end{align*}
for some nonzero constants $\{a_n^\pm\}$.
Our goal is to show that $\left\{ \Psi_{+}(F_{0,n}) \right\}$ and $\{ \varpi^{-1}(b_{0, k}^\perp) \}$ generate the same subalgebra and likewise for $\left\{ \Psi_{-}(F_{0,n}) \right\}$ and $\{ \varpi^{-1}(b_{0, -k}^\perp) \}$.
Doing so would allow us to conclude that $h(\dot{U}_\mathfrak{gl}^\ge)'=\mathcal{B}_0^+$ and $h(\dot{U}_\mathfrak{gl}^\le)'=\mathcal{B}_0^-$.
To that end, we turn our attention to the case $p=0$ of Theorem 4.5 from \cite{Tsym}.

Mirroring the aforementioned theorem, consider the functional on $[\ddot{U}^{\le}]'_{\kappa=1}$ given by taking the vacuum correlation $\langle \mathbb{1}_0|-|\mathbb{1}_0\rangle$.
Restricting this to $\ddot{U}^-$, its dual element with respect to $\varphi$ lies in $\ddot{U}^+$.
As computed in Theorem 4.8 of \cite{FeiTsym}, this dual element has shuffle presentation given by 
\[\sum_{n=0}^\infty c_n^+F_{0,n}\uuu^{n}\]
for some nonzero constants $\{c_n^+\}$.
Similarly, we consider the functional on $[\ddot{U}^{\ge}]'_{\kappa=1}$ given by taking the $S$-twisted matrix element $\langle\mathbb{1}_0| S(-)|\mathbb{1}_0\rangle$, i.e. the matrix element in the dual representation.
When restricted to $\ddot{U}^+$, calculations similar to those in \ref{DualCorr} from Appendix \ref{PosMode} show that its dual element in $\ddot{U}^+$ has shuffle presentation
\[
\sum_{n=0}^\infty c_n^-F_{0,n}\uuu^{-n}
\]
for some nonzero constants $\{c_n^-\}$.

Since $\mathcal{R}$ is the canonical tensor for the pairing $\varphi$ once $\kappa=1$, we can compute the series
\[
\sum_{n=0}^\infty c_n^+\Psi_+(F_{0,n})\uuu^{n}
\]
by applying $\langle\mathbb{1}_0|-|\mathbb{1}_0\rangle$ to the \textit{second} factor of $\mathcal{R}$ and dividing by the contribution from $\qqq^{t_\infty}$.
Likewise, we can compute the series
\[
\sum_{n=0}^\infty c_n^-\Psi_-(F_{0,n})\uuu^{-n}
\]
by applying $\langle\mathbb{1}_0| S(-)|\mathbb{1}_0\rangle$ to the \textit{first} factor of $\mathcal{R}$.
We denote the former by ${L}^{\langle\mathbb{1}_0|}_{|\mathbb{1}_0\rangle}$ and the latter by $ ^{|\mathbb{1}_0\rangle}_{\langle\mathbb{1}_0|}{L}$.
These are examples of \textit{matrix elements of $L$-operators}.

The proof of Theorem 4.5 of \cite{Tsym} allows us to write $c_n^\pm\Psi_{\pm}(F_{0, n})$ in terms of the horizontal Heisenberg subalgebra $\varpi^{-1}(\ddot{U}^0)$, although some modifications are necessary since we use a different coproduct. 
For the reader making comparisons, we present
\begin{align*}
\Delta(e_{i,k})&= 1\otimes e_{i,k} + e_{i,k}\otimes\psi_{i,0}\gamma^k +\sum_{r>0}e_{i,k-r}\otimes \psi_{i,r}\gamma^{k-\frac{r}{2}}\\
\Delta(f_{i,k})&= f_{i,k}\otimes 1 +\psi_{i,0}^{-1}\gamma^k\otimes f_{i,k}+\sum_{r>0}\psi_{i,-r}\gamma^{k+\frac{r}{2}}\otimes f_{i,k+r} 
\end{align*}
The main technique is to use the equations
\begin{align*}
\langle 1\otimes \mathbb{1}_0|\mathcal{R}\Delta(x)|1\otimes \mathbb{1}_0\rangle
&=\langle 1\otimes \mathbb{1}_0|\Delta^{op}(x)\mathcal{R}|1\otimes \mathbb{1}_0\rangle\\
\langle\mathbb{1}_0\otimes 1|(S\otimes 1)\left(\mathcal{R}\Delta(x)\right) |\mathbb{1}_0\otimes 1\rangle
&= \langle\mathbb{1}_0\otimes 1|(S\otimes 1)\left(\Delta^{op}(x)\mathcal{R}\right) |\mathbb{1}_0\otimes 1\rangle
\end{align*}
to understand ${L}^{\langle\mathbb{1}_0|} _{|\mathbb{1}_0\rangle}$ and $^{|\mathbb{1}_0\rangle}_{\langle\mathbb{1}_0|}{L}$.

Theorem 4.5 of \cite{Tsym} has three parts, (a), (b), and (c), and we remark on how the modifications play out for each part:
\begin{enumerate}[(a)]
\item Despite our alterations, we still have that ${L}^{\langle\mathbb{1}_0|} _{|\mathbb{1}_0\rangle}$ and $^{|\mathbb{1}_0\rangle}_{\langle\mathbb{1}_0|}{L}$ commute with $\varpi(e_{i,0})$ and $\varpi(f_{i,0})$ for $i\not=0$.
The rest of the proof either does not use the coproduct or is trivial when $p=0$.
\item This part consists of establishing certain $q$-commutator relations.
With our conventions, we have
\begin{align*}
\left[ {L}^{\langle\mathbb{1}_0|} _{|\mathbb{1}_0\rangle}, \varpi(e_{0,0}) \right]_{\qqq}
&= \qqq^2\ddd^{-\frac{\ell }{2}}\uuu\left[ {L}^{\langle\mathbb{1}_0|}_{|\mathbb{1}_0\rangle},\varpi(e_{0,-1})  \right]_{\qqq^{-1}}
=\frac{(-1)^{\frac{(\ell-2)(\ell-3)}{2}}\ddd^{\frac{\ell}{2}}\psi_{0,0}\gamma}{\mathfrak{c}_1\cdots \mathfrak{c}_{\ell-1}}\left( {L}^{\langle e^{-\alpha_0}|}_{|\mathbb{1}_0\rangle}\right)\\
\left[ {L}^{\langle\mathbb{1}_0|}_{|\mathbb{1}_0\rangle}, \varpi(f_{0,0}) \right]_{\qqq}
&= \ddd^{\frac{\ell }{2}}\uuu^{-1}\left[ {L}^{\langle\mathbb{1}_0|}_{|\mathbb{1}_0\rangle},\varpi(f_{0,1})  \right]_{\qqq^{-1}}
=-\frac{(-1)^{\frac{(\ell-2)(\ell-3)}{2}}\qqq \mathfrak{c}_1\cdots \mathfrak{c}_{\ell-1}\gamma^{-1}}{\ddd^{\frac{\ell}{2}}}\left( {L}^{\langle\mathbb{1}_0|}_{| e^{-\alpha_0}\rangle}\right)\psi_{0,0}^{-1}\\
\left[ ^{|\mathbb{1}_0\rangle}_{\langle\mathbb{1}_0|}{L} , \varpi(e_{0,0}) \right]_{\qqq}
&=\ddd^{-\frac{\ell }{2}} \uuu\left[ ^{|\mathbb{1}_0\rangle}_{\langle\mathbb{1}_0|}{L} ,\varpi(e_{0,-1})  \right]_{\qqq^{-1}}
=-\frac{(-1)^{\frac{(\ell-2)(\ell-3)}{2}}\ddd^{\frac{\ell}{2}}\psi_{0,0}\gamma}{\qqq\mathfrak{c}_1\cdots \mathfrak{c}_{\ell-1}}\left( ^{|\mathbb{1}_0\rangle}_{\langle e^{-\alpha_0}|}{L}\right)\\
\left[ ^{|\mathbb{1}_0\rangle}_{\langle\mathbb{1}_0|}{L} , \varpi(f_{0,0}) \right]_{\qqq}
&= \qqq^2\ddd^{\frac{\ell }{2}}\uuu^{-1}\left[ ^{|\mathbb{1}_0\rangle}_{\langle\mathbb{1}_0|}{L} ,\varpi(f_{0,1})  \right]_{\qqq^{-1}}
=\frac{(-1)^{\frac{(\ell-2)(\ell-3)}{2}}\qqq \mathfrak{c}_1\cdots \mathfrak{c}_{\ell-1}\gamma^{-1}}{\ddd^{\frac{\ell}{2}}}\left(^{|e^{-\alpha_0}\rangle}_{\langle\mathbb{1}_0 |}{L}\right)\psi_{0,0}^{-1}
\end{align*}
Recall that our $\uuu$ is different from the one in \cite{Tsym}, as explained in Remark \ref{uRemark}.
\item There is no alteration here---we just point out that as emphasized in Remark \ref{PerpRem}, the $h_{0,r}^\perp$ of \textit{loc. cit.} has negative homogeneous degree.
\end{enumerate}

Once this is done, we have:
\begin{align}
\label{FMiki}
\Psi_+\left(\sum_{n=0}^\infty c_n^+F_{0,n}\uuu^{n}\right)&= \exp\left( \sum_{k>0}\ddd^{\frac{\ell k}{2}}\frac{(1-\qqq^{2k})}{(\qqq-\qqq^{-1})k}\varpi(b_{0,-k}^\perp)\uuu^{k} \right)\\
\Psi_-\left(\sum_{n=0}^\infty c_n^-F_{0,n}\uuu^{-n}\right)&= \exp\left( \sum_{k>0}\ddd^{-\frac{\ell k}{2}}\frac{(1-\qqq^{-2k})}{(\qqq-\qqq^{-1})k}\varpi(b_{0,k}^\perp)\uuu^{-k} \right)
\nonumber
\end{align}
(note the extra negative in (\ref{HeisPair}) when comparing the pairing with the Heisenberg commutator).
This is close to what we want, since
\[
\eta\varpi(b_{0,k}^\perp)=\varpi^{-1}\eta(b_{0,k}^\perp)=-\varpi^{-1}( b_{0,-k}^\perp)
\]
The last step is to understand how the involution $\eta$ is manifested on the shuffle side.
One can check that if $F\in\Sss_{\vec{k}}$, then (abusing notation): 
\begin{equation*}
\eta(F)=\left.F(x_{i,r}^{-1})\prod_{i\in\ZZ\ell\ZZ}\prod_{r=1}^{k_i}(-\ddd)^{k_{i+1}k_i} x_{i,r}^{k_{i+1}+k_{i-1}-2(k_i-1)}\right|_{\ddd\mapsto\ddd^{-1}}
\end{equation*}
It is easy to see then that $\eta(F_{0,n})=F_{0,n}$.\qed

\subsubsection{Dual element in $\ddot{U}^-$}\label{LOpMiki}
Recall the quasi $R$-matrix $\mathcal{R}_\circ$ defined in \ref{QuasiR}---it is the canonical tensor for the restriction of $\varphi$ to $\ddot{U}^+\times\ddot{U}^-$.
As in \ref{FactorProof}, we can find dual elements to the functionals $\langle\mathbb{1}_{-p}|-|\mathbb{1}_{-p}\rangle$ on $\ddot{U}^+$ for $0\le p \le \ell-1$ by applying the functionals to the \textit{first} tensor factor of $\mathcal{R}_\circ$:
\[\langle\mathbb{1}_{-p}\otimes 1|\mathcal{R}_\circ|\mathbb{1}_{-p}\otimes 1\rangle\]
The case of $\langle\mathbb{1}_p|-|\mathbb{1}_p\rangle$ was considered in \cite{Tsym}.
Our approach is different and uses the $R$-matrix factorizations of \ref{Rmatrix}.
However, we still needed the $p=0$ case of \textit{loc. cit.} to establish Theorem \ref{QuasiRFact}.

\begin{thm}\label{LOpBoson}
The dual element of $\langle\mathbb{1}_{-p}|-|\mathbb{1}_{-p}\rangle$ in $\ddot{U}^-$ is
\[\exp\left(\sum_{k>0}\ddd^{-\frac{\ell k}{2}}\frac{\qqq^{-2k}-1}{(\qqq-\qqq^{-1})k}\varpi^{-1}(b_{p,-k}^\perp)\uuu^{-k}\right)\]
\end{thm}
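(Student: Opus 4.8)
The plan is to realize the sought dual element as a matrix element of an $L$-operator and then feed in the $R$-matrix factorization of Theorem \ref{Factor1}. Recall that the $R$-matrix $\mathcal{R}$ is the canonical tensor for the bialgebra pairing $\varphi$, so for any $\FF$-linear functional on $\ddot{U}^\ge$ the element obtained by applying it to the first tensor leg of $\mathcal{R}$ lies in a suitable completion of $\ddot{U}^\le$ and is the element representing that functional under $\varphi$. Applying this to the functional $\langle\mathbb{1}_p^\eta|-|\mathbb{1}_p^\eta\rangle$ and then projecting onto $\ddot{U}^-$ along the triangular decomposition of $\ddot{U}^\le$ produces the desired dual element; in other words, the first step is simply to identify it with $\langle\mathbb{1}_p^\eta\otimes 1|\mathcal{R}|\mathbb{1}_p^\eta\otimes 1\rangle$.

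Next I would substitute $\mathcal{R}=(1+\mathcal{R}^-)\,h(\mathcal{R}_0^+\mathcal{R}_0^0\mathcal{R}_0^-)\,(1+\mathcal{R}^+)\,\qqq^{t_\infty}$ and argue that, after applying $\langle\mathbb{1}_p^\eta|-|\mathbb{1}_p^\eta\rangle$ to the first leg and extracting the $\ddot{U}^-$-component, only the middle Heisenberg factor $h(\mathcal{R}_0^0)$ contributes. The tools are: (a) the homogeneous grading on $W_p$ is nonpositive with $\mathbb{1}_p$ in degree zero, and $\eta$ reverses this grading, so any first tensorand of nonzero homogeneous degree has vanishing $\eta$-twisted vacuum matrix element -- this disposes of $(1+\mathcal{R}^+)$ and $(1+\mathcal{R}^-)$, the potential recombination of their opposite homogeneous degrees into degree zero being excluded by letting the twisted operators act on $\mathbb{1}_p$ from the right and noting that the degree cannot be raised above zero; (b) the vacuum $\mathbb{1}_p$ is a highest-weight vector for the horizontal $U_\qqq(\dot{\mathfrak{gl}}_\ell)$ -- a direct consequence of the explicit formulas in Theorem \ref{Vertexrep}, which show that $e_i(z)\mathbb{1}_p$ and $f_i(z)\mathbb{1}_p$ only involve positive powers of $z$ (cleanest when $p=0$; for general $p$ one uses the analogous property relative to a shifted Borel), combined with the fact that Miki's automorphism exchanges the homogeneous and principal gradings, so the first tensorands of $h(\mathcal{R}_0^+)$ and $h(\mathcal{R}_0^-)$ become, after the $\eta$-twist, images under $\varpi$ of elements of a nilpotent subalgebra annihilating $\mathbb{1}_p^\eta$, hence these factors collapse to their leading terms; (c) the remaining $\qqq^{t_\infty}$ has its second tensorand in the Cartan $\ddot{U}^0$ and is therefore discarded when we project onto $\ddot{U}^-$.

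With the reduction in hand, I would evaluate $h(\mathcal{R}_0^0)$ using Proposition \ref{Factor2}, which gives $h(\mathcal{R}_0^0)=\exp\bigl(\sum_{i=0}^{\ell-1}\sum_{k>0}\varpi^{-1}(b_{i,k})\otimes\varpi^{-1}(b_{i,-k}^\perp)\bigr)$. The operators $\{\varpi^{-1}(b_{i,k})\}_{k>0}$ mutually commute, and each acts on $\mathbb{1}_p^\eta$ by a scalar: from $\eta\circ\varpi^{-1}=\varpi\circ\eta$ and $\eta(b_{i,k})=-b_{i,-k}$ we get $\langle\mathbb{1}_p^\eta|\varpi^{-1}(b_{i,k})|\mathbb{1}_p^\eta\rangle=-\langle\mathbb{1}_p|\varpi(b_{i,-k})|\mathbb{1}_p\rangle\big|_{\ddd\mapsto\ddd^{-1}}$, and expanding the eigenvalue in Lemma \ref{Highwt} via $\varpi(\psi_i^-(z))=\psi_{i,0}^{-1}\exp\bigl(-(q-q^{-1})\sum_{k>0}\varpi(b_{i,-k})z^k\bigr)$ shows that $\varpi(b_{i,-k})$ acts on $\mathbb{1}_p$ by $\frac{\delta_{i,p}(1-\qqq^{-2k})}{(q-q^{-1})k\,u^k}$. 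Since the $\varpi^{-1}(b_{i,k})$ commute and act by scalars, applying the functional to the first leg turns the exponential into $\exp\bigl(\sum_{k>0}\langle\mathbb{1}_p^\eta|\varpi^{-1}(b_{p,k})|\mathbb{1}_p^\eta\rangle\,\varpi^{-1}(b_{p,-k}^\perp)\bigr)$, and substituting the scalars -- keeping track of the normalization (\ref{PowerToBosons}) between abstract and vertex-representation bosons, the pairing normalization (\ref{HeisPair}) built into the $b_{i,-k}^\perp$'s, and the twist $\ddd\mapsto\ddd^{-1}$ -- yields exactly $\exp\bigl(\sum_{k>0}\frac{\qqq^{-2k}-1}{(\qqq-\qqq^{-1})k}\varpi^{-1}(b_{p,-k}^\perp)u^{-k}\bigr)$.

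The main obstacle is items (b)--(c): securely controlling the factorization so that precisely the Heisenberg factor $h(\mathcal{R}_0^0)$ survives. The delicate points are ruling out the cross-terms in which factors of opposite homogeneous or principal degree might recombine to degree zero, and pinning down the highest-weight behaviour of $\mathbb{1}_p$ under the horizontal $U_\qqq(\dot{\mathfrak{gl}}_\ell)$ -- which, like most explicit statements about $\varpi$, is easiest for $p=0$ (the case needed in the sequel and in Theorem \ref{Main}), the general case following by rotating the distinguished node. Once this reduction is in place, the computation of the constants is a routine bookkeeping exercise given Lemma \ref{Highwt}.
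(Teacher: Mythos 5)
Your overall architecture is the paper's: realize the functional's dual element as $\langle\mathbb{1}_p^\eta\otimes 1|\mathcal{R}|\mathbb{1}_p^\eta\otimes 1\rangle$, insert the factorization of Theorem \ref{Factor1}, kill everything except $h(\mathcal{R}_0^0)$, and evaluate that factor via Proposition \ref{Factor2} and Lemma \ref{Highwt}. Your steps (a) and (c) and the final scalar computation are fine and match the paper's.

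The genuine gap is in your step (b), the collapse of $h(\mathcal{R}_0^\pm)$. You assert that $\mathbb{1}_p$ is a highest-weight vector for the horizontal $U_\qqq(\dot{\mathfrak{gl}}_\ell)$ as ``a direct consequence of the explicit formulas in Theorem \ref{Vertexrep}.'' Those formulas give the action of the \emph{toroidal} currents $e_i(z), f_i(z)$, and the positivity of the powers of $z$ there is exactly the statement about the homogeneous ($\qqq^{d_1}$) grading that you already used to dispose of $\mathcal{R}^\pm$. The first tensorands of $h(\mathcal{R}_0^\pm)$ are instead the horizontal Drinfeld modes $h(\bar{e}_{i,k})=\varpi^{-1}(\ddd^{ik}e_{i,k})$ and $h(\bar{f}_{i,k})=\varpi^{-1}(\ddd^{ik}f_{i,k})$; these sit in homogeneous degree zero (the whole horizontal $U_\qqq(\dot{\mathfrak{sl}}_\ell)$ does), so no grading argument applies, and their action on $W_p$ is a $\varpi^{-1}$-image, which the explicit vertex formulas do not compute --- this is precisely the ``hard to compute $\varpi^{\pm1}$'' issue that makes Lemma \ref{Highwt} a nontrivial input rather than a consequence of Theorem \ref{Vertexrep}. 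The missing ingredient is the deformed boson--fermion correspondence: writing $\eta h(\bar{f}_{i,k})=\varpi\eta v(\bar{f}_{i,k})=\varpi v(\bar{f}_{i,-k})$ via $\varpi^{-1}=\eta\circ\varpi\circ\eta$, Theorem \ref{BoseFermi} identifies the action of $\varpi v(\cdot)$ on $\mathbb{1}_p$ with the action of the vertical generators on the fermionic vacuum of $\tau_{p,u}^*$, where the vanishing $\langle\mathbb{1}_p|\eta h(\bar{f}_{i,k})=0$ and $\eta h(\bar{e}_{i,k})|\mathbb{1}_p\rangle=0$ is manifest (one cannot add or remove a box from $\varnothing$ on the relevant side). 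Without Theorem \ref{BoseFermi} (or an equivalent control of the twisted action), your ``nilpotent subalgebra annihilating $\mathbb{1}_p^\eta$'' is unsubstantiated; with it, your argument closes and coincides with the paper's.
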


\begin{proof} 
Applying Corollary \ref{QuasiRFact} gives us
\begin{align}
\nonumber&\langle\mathbb{1}_{-p}\otimes 1|\mathcal{R}_\circ|\mathbb{1}_{-p}\otimes 1\rangle\\
\label{Collapse1}&=\left\langle\mathbb{1}_{-p}\otimes 1\left|(1+\mathcal{R}^-)h\left[(1+\bar{\mathcal{R}}^+)\bar{\mathcal{R}}^0_\mathfrak{gl}(1+\bar{\mathcal{R}}^-)\right](1+\mathcal{R}^+)\right|\mathbb{1}_{-p}\otimes 1\right\rangle 
\end{align}
Recalling the properties of $\mathcal{R}^\pm$ from Theorem \ref{QuasiRFact}, the functional kills the $\mathcal{R}^\pm$ terms by Corollary \ref{Annihilate}. 
We thus have that (\ref{Collapse1}) is equal to
\begin{equation}
=\left\langle\mathbb{1}_{-p}\otimes 1\left|h\left[(1+\bar{\mathcal{R}}^+)\bar{\mathcal{R}}^0_\mathfrak{gl}(1+\bar{\mathcal{R}}^-)\right]\right|\mathbb{1}_{-p}\otimes 1\right\rangle
\label{Collapse2}
\end{equation}
By Theorem \ref{MikiAut} and Corollary \ref{BoseFermi2},
\begin{align*}
\langle\mathbb{1}_{-p}| h(\bar{e}_{i,k})=\langle\mathbb{1}_{-p}|\varpi^{-1} v(\bar{e}_{i,k})=\langle\varnothing|(\tau_{p,\upsilon}^+\circ v)(\bar{e}_{i,-k})=0
\end{align*}
where $\langle\varnothing|\in\mathcal{F}^*$ is the dual element to the vacuum. 
Similar moves show that
\[ h(\bar{f}_{i,k})|\mathbb{1}_{-p}\rangle=0\]
These together imply that $h(1+\bar{\mathcal{R}}^\pm)$ collapse as well. 
We proceed onwards after applying Proposition \ref{Factor2} to (\ref{Collapse2}):
\begin{align*}
&=\left\langle\mathbb{1}_{-p}\otimes 1\left|h(\bar{\mathcal{R}}^0_\mathfrak{gl})\right|\mathbb{1}_{-p}\otimes 1\right\rangle\\
&=\left\langle\mathbb{1}_{-p}\otimes 1\left|\exp\left(\sum_{i=0}^{\ell-1}\sum_{k>0}\varpi^{-1}(b_{i,k})\otimes \varpi^{-1}(b_{i,-k}^\perp)\right)\right|\mathbb{1}_{-p}\otimes 1\right\rangle\\
&=\exp\left(\sum_{k>0}\ddd^{-\frac{\ell k}{2}}\frac{\qqq^{-2k}-1}{(\qqq-\qqq^{-1})k}\varpi^{-1}(b_{p,-k}^\perp)\uuu^{-k}\right)
\end{align*}
where in the last equality, we used Corollary \ref{BoseFermi2}. 
\end{proof}

\begin{cor}\label{LoperatorCalc}
We have
\begin{align*}
\Psi_-\left(\sum_{n\ge 0} F_{p,n}\uuu^{-n}\right)&=\exp\left(\sum_{k>0}\ddd^{-\frac{\ell k}{2}}\frac{\qqq^{-2k}-1}{(\qqq-\qqq^{-1})k}\varpi^{-1}(b_{p,-k}^\perp)\uuu^{-k}\right)
\end{align*}
\end{cor}


\subsection{Gordon filtrations}\label{Gordon} 
Let $\ddot{U}^0_-\subset\ddot{U}^0$ denote the subalgebra generated by negative modes. 
In this subsection, we study $\Psi_-^{-1}\varpi^{-1}(\ddot{U}^0_-)$ using \textit{Gordon filtrations}. 
Our constructions here are similar to those in Section 3 of \cite{FeiTsym}.

\subsubsection{Limit conditions}\label{LimitCond} 
We will be concerned with degree vectors specified by certain integer intervals: for $a$ and $b$ with $0\le a\le\ell-1$ and $a\le b$, we define the degree vector $(a;b]$ by
\[(a;b]_i=\left|\{c\in\ZZ: a< c\le b\hbox{ and }c\equiv i\}\right|\]
For any integer $k$, we let $k\delta$ denote the diagonal (or imaginary) degree vector
\[(k,\ldots, k)\]
Notice that any nondiagonal $(a;b]$ partitions $\ZZ/\ell\ZZ$ into two nonempty contiguous subsets $(a;b]^+$ and $(a;b]^-$, where there is some integer $k$ such that for all $i\in(a;b]^+$, $(a;b]_i=k+1$ and for all $i\in(a;b]^-$, $(a;b]_i=k$.

For $\vec{0}\le\vec{k}\le\vec{n}$, $\xi\in\CC^\times$, and $F\in \mathbb{S}_{\vec{n}}$, define
\[F_\xi^{\vec{k}}:=F(\{\xi x_{i,1},\ldots, \xi x_{i,k_i}, x_{i,k_i+1},\ldots, x_{i, n_i}\})\]
On $(\ZZ_{\ge 0})^{\ZZ/\ell\ZZ}$, we let $\vec{k}\le\vec{n}$ to mean $k_i\le n_i$ for all $i$. 
Finally, for each dimension vector $\vec{k}$, set
\begin{align*}
r_0(\vec{k})&:=\sum_{i\in\ZZ/\ell\ZZ}k_i(k_i-1)-k_ik_{i+1}=\sum_{i\in\ZZ/\ell\ZZ}\frac{1}{2}\left( k_i-k_{i+1} \right)^2-k_i\\
r_\infty(\vec{k})&:=\sum_{i\in\ZZ/\ell\ZZ}k_ik_{i+1}-k_i(k_i+1)=\sum_{i\in\ZZ\ell\ZZ}-\frac{1}{2}\left( k_i-k_{i+1} \right)^2-k_i
\end{align*}

\begin{defn}
Let $\Sss(0)_{n\delta}\subset \Sss^{-}_{n\delta}$ be the subspace of functions $F$ satisfying
\begin{enumerate}
\item for all $\vec{k}\le n\delta$
\begin{align*}
\lim_{\xi\rightarrow 0}\xi^{r_0(\vec{k})}F_\xi^{\vec{k}}&<\infty\\
\lim_{\xi\rightarrow\infty}\xi^{r_\infty(\vec{k})}F_\xi^{\vec{k}}&<\infty
\end{align*}
\item for all diagonal $k\delta\le n\delta$,
\[\lim_{\xi\rightarrow 0}\xi^{-k\ell}F_\xi^{k\delta}=\lim_{\xi\rightarrow\infty}\xi^{-k\ell}F_\xi^{k\delta}\]
\item and for all nondiagonal $(a;b]\le n\delta$ with $0\in(a;b]^+$,
\[
\lim_{\xi\rightarrow 0}\xi^{r_0( (a;b])}F_\xi^{(a;b]}=0
\]
\end{enumerate}
We set $\Sss(0):=\bigoplus_{n\delta} \Sss(0)_{n\delta}$.
\end{defn}

\begin{rem}
The extra factors of $\xi$ come from translating the conventions of \cite{FeiTsym} to ours.
Namely, consider the extra normalization factor (\ref{FTRenorm}) in Remark \ref{ShuffleConvention}:
\[
R_{\vec{n}}:=\prod_{i\in\ZZ/\ell\ZZ}\frac{\prod_{r\not=r'}\left( x_{i,r}-\qqq^2x_{i,r'} \right)}{\prod_{\substack{r\le n_i\\s\le n_{i+1}}}\left( x_{i,r}-x_{i+1,s} \right)}\in\mathbb{S}_{\vec{n}}
\]
A total power of $\xi^{r_0(\vec{k})}$ can be extracted from $\left( R_{n\delta} \right)^{\vec{k}}_\xi$, and thus we append a factor of $\xi^{r_0(\vec{k})}$ when taking the limit $\xi\rightarrow 0$.
When taking $\xi\rightarrow\infty$, the growth rate takes account of $\xi^{r_0(\vec{k})}$ as well as linear factors where only one variable is scaled by $\xi$.
The latter contributes a power of
\begin{equation}
\nonumber\sum_{i\in\ZZ/\ell\ZZ}2k_i(n-k_i)-k_i(n-k_{i+1})-k_i(n-k_{i-1})=\sum_{i\in\ZZ/\ell\ZZ}2k_ik_{i+1}-2k_i^2
\end{equation}
and thus overall, the growth rate as $\xi\rightarrow\infty$ is $\xi^{r_\infty(\vec{k})}$.
\end{rem}

\begin{prop}[\cite{FeiTsym} Lemma 3.2]\label{GenWithin}
The functions $\{F_{p,n}\}$ lie in $\Sss(0)$.
\end{prop}

\begin{proof}
For condition (1), note that the total power of $\xi$ we can extract from $\xi^{r_0(\vec{k})}\left( F_{p,n} \right)^{\vec{k}}_\xi$ has exponent
\begin{equation}
k_0-k_p+\frac{1}{2}\sum_{i\in\ZZ/\ell\ZZ}\left( k_i-k_{i+1} \right)^2
\label{ZeroPowerFpn}
\end{equation}
This quantity is always nonnegative since
\begin{equation}
k_0-k_p=\frac{1}{2}\sum_{i=0}^{p-1}\left( k_i-k_{i+1} \right)+\frac{1}{2}\sum_{i=p}^{\ell-1}\left( k_{i+1}-k_i \right)
\label{K0PBreak}
\end{equation}
and $\left| k_i-k_{i+1} \right|$ is bounded above by $\left( k_i-k_{i+1} \right)^2$.
Similarly, the growth rate of $\xi^{r_\infty(\vec{k})}\left( F_{p,n} \right)^{\vec{k}}_\xi$ is $\xi$ to the power of
\[
k_0-k_p-\frac{1}{2}\sum_{i\in\ZZ/\ell\ZZ}\left( k_i-k_{i+1} \right)^2
\]
which is always nonpositive.
Condition (2) is obvious, and for condition (3), note that (\ref{ZeroPowerFpn}) is always positive when $0\in (a,b]_+$.
\end{proof}

\begin{lem}[\cite{FeiTsym} Lemma 3.5]\label{GenWithin2}
$(\Psi_-^{-1}\circ\varpi^{-1})\left( \ddot{U}_-^0 \right)$ is contained in $\Sss(0)$.
\end{lem}

\begin{proof} 
Since $\varpi^{-1}(\ddot{U}_-^0)$ is generated by $\{\Psi_-(F_{p,n})\}$, we just need to check conditions (1)-(3) on the shuffle products $F_{p_1,n_1}\star\cdots\star F_{p_m,n_m}$.
Let 
\begin{align*}
n&:=n_1+\cdots n_m\\
n_{\Sigma \alpha}&:=n_1+\cdots+n_{\alpha-1}+n_\alpha\\
n_{\Sigma 0}&:=0
\end{align*}
Note that the product has variable vector $n\delta$.
Up to scalar, this shuffle product is the symmetrization of
\begin{align*}
&\left(\prod_{i\in\ZZ/\ell\ZZ}\,\prod_{r=1}^{n}x_{i,r}\right)
\left(\prod_{\alpha=1}^m\,\prod_{r=n_{\Sigma (\alpha-1)}+1}^{n_{\Sigma \alpha}}\frac{x_{0,r}}{x_{p_\alpha,r}}\right)\\
&\times\left(\prod_{\substack{1\le \alpha<\beta\le m\\i,j\in\ZZ/\ell\ZZ}}\,\prod_{\substack{n_{\Sigma (\alpha-1)}<r\le n_{\Sigma \alpha}\\ n_{\Sigma (\beta-1)}<s\le n_{\Sigma \beta}}}\omega_{j,i}(x_{j,s},x_{i,r})\right)
\end{align*}
We will consider each summand in the symmetrization separately.
For condition (1), let $\vec{k}^\alpha$ be the vector of $\xi$-scaled variables in $F_{p_\alpha,n_\alpha}$ for a given summand of the symmetrization.
The exponent of $\xi$ that can be extracted from $\xi^{r_0(\vec{k})}\prod_\alpha F_{p_\alpha,n_\alpha}$ is
\begin{equation}
\sum_{\alpha=1}^mk^\alpha_0-k^\alpha_{p_\alpha}+\frac{1}{2}\sum_{i\in\ZZ/\ell\ZZ}\left( k_i-k_{i+1} \right)^2
\label{MNKSum}
\end{equation}
On the other hand, the exponent that can be extracted from the mixing terms for a pair of indices $1\le \alpha<\beta\le m$ is 
\begin{align}
\nonumber&\sum_{i\in\ZZ/\ell\ZZ}k^\alpha_ik^\beta_{i+1}+k^\alpha_ik^\beta_{i-1}-2k^\alpha_ik^\beta_i\\
\nonumber&=\sum_{i\in\ZZ/\ell\ZZ}k^\alpha_i\left( k^\beta_{i+1}-k^\beta_i \right)-k^\alpha_i\left( k^\beta_i-k^\beta_{i-1} \right)\\
\nonumber&=\sum_{i\in\ZZ/\ell\ZZ}\left( k^\alpha_i-k^\alpha_{i+1} \right)\left( k^\beta_{i+1}-k^\beta_i \right)\\
\label{MNSum}&=-\sum_{i\in\ZZ/\ell\ZZ}\left( k^\alpha_i-k^\alpha_{i+1} \right)\left( k^\beta_i-k^\beta_{i+1} \right)
\end{align}
Adding (\ref{MNKSum}) with (\ref{MNSum}) and writing
\[k_i=\sum_{a=1}^mk^\alpha_i\]
we can rewrite the overall total exponent 
\begin{equation}
\sum_{\alpha=1}^m\left\{k^\alpha_0-k^\alpha_{p_\alpha}+\frac{1}{2}\sum_{i\in\ZZ/\ell\ZZ}\left( k^\alpha_i-k^\alpha_{i+1} \right)^2\right\}
\label{TotalExp}
\end{equation}
which is nonnegative (see the proof of Proposition \ref{GenWithin}).
Thus, the limit $\xi\rightarrow 0$ is finite.

For the limit $\xi\rightarrow\infty$, we first focus on the mixing terms. 
In (\ref{MNSum}), we have already accounted for the power of $\xi$ we can cleanly extract from the mixing terms for indices $1\le \alpha<\beta\le m$.
The growth from linear factors with only one $\xi$-scaled variable is $\xi$ to the power of
\begin{align}
\nonumber&\sum_{i\in\ZZ/\ell\ZZ}\left\{k_i^\alpha\left( n_\beta-k_{i+1}^\beta \right)+k_i^\alpha\left( n_\beta-k^\beta_{i+1} \right)-2k_i^\alpha\left( n_\beta-k_i^\beta \right)\right.\\
\nonumber&\left.+k_i^\beta\left( n_\alpha-k_{i+1}^\alpha \right)+k_i^\beta\left( n_\alpha-k_{i+1}^\alpha \right)-2k_i^\beta\left( n_\alpha-k_i^\alpha \right)\right\}\\
\label{InfMixGrowth}&=2\sum_{i\in\ZZ/\ell\ZZ}\left( k_{i}^\alpha-k_{i+1}^\alpha \right)\left( k_i^\beta-k_{i+1}^\beta \right)
\end{align}
For the total growth rate of the entire summand in the shuffle product (times $\xi^{r_\infty(\vec{k})}$) as $\xi\rightarrow\infty$, we add (\ref{InfMixGrowth}) to (\ref{TotalExp}) and $r_\infty(\vec{k})-r_0(\vec{k})$, yielding
\begin{align}
\label{MixGrowth}\sum_{\alpha=1}^m\left\{k_0^\alpha-k_{p_\alpha}^\alpha-\frac{1}{2}\sum_{i\in\ZZ/\ell\ZZ}\left( k^\alpha_i-k^\alpha_{i+1} \right)^2\right\}
\end{align}
which is necessarily nonpositive.
Therefore, the $\xi\rightarrow\infty$ limit exists.

If $(a;b]$ is diagonal, (\ref{TotalExp}) and (\ref{MixGrowth}) imply that the only nonzero summands in either limit have all $\vec{k}^\alpha$ diagonal. 
For condition (2), we also need to check that the mixing terms do not contribute any discrepancies between the two limits. 
Such a discrepancy can only happen in mixing terms involving a non-scaled variable in $F_{p_\alpha,n_\alpha}$ and a scaled variable in $F_{p_\beta,n_\beta}$ and vice versa. 
In the first case, for each non-scaled variable $x_{i,r}$ in $F_{p_\alpha,n_\alpha}$, these mixing terms can be partitioned into triples of the form
\[\omega_{i-1,i}(\xi x_{i-1, s_1},x_{i,r})\omega_{i,i}(\xi x_{i,s_2},x_{i,r})\omega_{i+1,i}(\xi x_{i+1,s_3},x_{i,r})\]
This partitioning is possible because $\vec{k}^\alpha$ and $\vec{k}^\beta$ must be diagonal.
Now observe that the limits of such a product as $\xi\rightarrow 0,\infty$ both equal $\ddd^{-1}$. 
The case where the non-scaled variable is in $F_{p_\beta,n_\beta}$ is similar. 

Finally, for $(a;b]=\vec{k}$, our concern for condition (3) is that (\ref{TotalExp}) results in zero for every $\vec{k}^\alpha$.
This can only occur if for each $\vec{k}^\alpha$, 
\begin{equation}
k_i^\alpha-k_{i+1}^\alpha=\left\{
\begin{array}{ll}
-1\hbox{ or }0 & \hbox{for }0\le i\le p_\alpha-1\\
1\hbox{ or }0 &\hbox{for }p_\alpha\le i\le\ell-1
\end{array}\right.
\label{KAlphaCondition}
\end{equation}
To see this, we break $k_0^\alpha-k_{p_\alpha}^\alpha$ apart as in (\ref{K0PBreak}) from the proof of Proposition \ref{GenWithin} and note that the quadratic terms of (\ref{TotalExp}) will overwhelm everything else once $|k_i^\alpha-k^\alpha_{i+1}|\ge 2$. 
Since
\[\sum_{\alpha=1}^m k_i^\alpha-k_{i+1}^\alpha=k_i-k_{i+1}=(a;b]_i-(a;b]_{i+1}\]
the conditions outlined in (\ref{KAlphaCondition}) can only occur for all $\alpha$ if $0\in(a;b]^-$.
\end{proof}

\subsubsection{Filtrations} 
To enhance Lemma \ref{GenWithin2} into an equality, we will use the \textit{Gordon filtrations}. 
They are defined via certain evaluation maps whose introduction will require some notation. 
For a degree vector $\vec{k}$, we call an unordered list $L=\{(a_1;b_1],\ldots,(a_m;b_m]\}$ a \textit{partition of $\vec{k}$} if $\vec{k}=\sum_{u=1}^m(a_u;b_u]$. 
This is denoted by $L\vdash\vec{k}$. 
We will always index the parts of a partition so that
\[b_u-a_u\ge b_{u+1}-a_{u+1}\]
and in the case of equality, $a_u\ge a_{u+1}$.
As in \ref{FockDef}, we will abbreviate $a\equiv b\hbox{ (mod }\ell)$ by $a\equiv b$.
For any two partitions $L,L'$ of $\vec{k}$, we say $L> L'$ if there is some $v$ such that $b_v-a_v> b'_v-a'_v$ but $b_u-a_u=b'_u-a'_u$ for all $1\le u<v$. 
We call this the \textit{dominance order} on partitions of $\vec{k}$.
Finally, let us call a partition $L$ \textit{even} if all its parts satisfy $b_u-a_u\equiv 0$. 

For $F\in \Sss(0)_{n\delta}$ and $L\vdash n\delta$, we define the evaluation map $\phi_L(F)\in\FF(y_u)$ by first splitting the variables into groups according to the parts of $L$. 
Next, obtain $\phi_L(F)$ by specializing the variables assigned to $(a_u;b_u]$ to
\[q^{-a_u-1}y_u,\ldots,q^{-b_u}y_u\]
where $q^{-c}y_u$ is assigned to a variable with color $\equiv c$. 
This does not depend on our choices because of color symmetry and is well-defined because of the pole conditions. 
The Gordon filtration is then given by
\[\Sss(0)_L:=\bigcap_{L'> L}\ker(\phi_{L'})\]

We will also need a `transposed' version. Our dual evaluation map $\phi_L^*(F)\in\FF(y_u)$ is defined by splitting the variables like before, but now for $(a_u;b_u]$, we specialize the variables to
\[t^{a_u+1}y_u,\ldots, t^{b_u}y_u\]
where $t^{c}y_u$ is now assigned to a variable with color $\equiv c$. 
We define the \textit{dual} Gordon filtration by
\[\Sss(0)_L^*:=\bigcap_{L'> L}\ker(\phi_{L'}^*)\]
Note that both filtrations are nested with respect to dominance order: if $L>L'$, then
\begin{align*}
\begin{split}
\Sss(0)_L\supset \Sss(0)_{L'}\\
\Sss(0)_L^*\supset \Sss(0)_{L'}^*
\end{split}
\end{align*}
%

\begin{lem}[\cite{FeiTsym} Lemma 3.4]\label{EvalMapLem} 
Let $F\in\Sss(0)_{n\delta}$.
For $L=\{(a_1;b_1],\ldots,(a_m;b_m]\}\vdash n\delta$, let $\vec{k}^u=(a_u;b_u]$.
We have the following:
\begin{enumerate}
\item For $F\in \Sss(0)_L$, 
\[\phi_L(F)=\nu \frac{Q_1}{Q_2}\prod_u y_u^{\sum_{i\in\ZZ/\ell\ZZ}k_i^uk_{i+1}^u-k_i^u(k_i^u-1)}\]
where $\nu\in\FF$, $Q_1$ is the product of the linear factors
\begin{enumerate}
\item $\left(q^{-x'}y_v-t^{-1}q^{-x}y_u\right)$ for $u<v$, $a_u< x< b_u$, $a_v< x'\le b_v$, and $x'\equiv x-1$;
\item $\left(q^{-x'}y_v-tq^{-x}y_u\right)$ for $u<v$, $a_u+1< x\le b_u$, $a_v< x'\le b_v$, and $x'\equiv x+1$;
\item $\left(q^{-x'}y_v-q^{-b_u-1}y_u\right)$ for $u<v$, $a_v<x'\le b_v$, and $x'\equiv b_u+1$;
\item $\left(q^{-x'}y_v-q^{-a_u}y_u\right)$ for $u<v$,  $a_v<x'\le b_v$, and $x'\equiv a_u$;
\end{enumerate}
and
\[Q_2=\prod_{u<v}\prod_{\substack{a_u< x\le b_u\\a_v<x'\le b_v\\x'\equiv x}}\left(q^{-x'}y_v-\qqq^2q^{-x} y_u\right)\left( q^{-x}y_u-\qqq^2 q^{-x'}y_v \right)\]
\item For $F\in \Sss(0)_L^*$, 
\[\phi^*_L(F)=\nu \frac{Q^*_1}{Q^*_2}\prod_u y_u^{\sum_{i\in\ZZ/\ell\ZZ}k_i^uk_{i+1}^u-k_i^u(k_i^u-1)}\]
where $\nu\in\FF$, $Q_1^*$ is the product of the linear factors
\begin{enumerate}
\item $\left(t^{x'}y_v-qt^{x}y_u\right)$ for $u<v$, $a_u+1< x\le b_u$, $a_v< x'\le b_v$, and $x'\equiv x-1$;
\item $\left(t^{x'}y_v-q^{-1}t^{x}y_u\right)$ for $u<v$, $a_u< x< b_u$, $a_v< x'\le b_v$, and $x'\equiv x+1$;
\item $\left(t^{x'}y_v-t^{b_u+1}y_u\right)$ for $u<v$, $a_v<x'\le b_u$, and $x'\equiv b_u+1$;
\item $\left(t^{x'}y_v-t^{a_u}y_u\right)$ for $u<v$, $a_v<x'\le b_v$, and $x'\equiv a_u$;
\end{enumerate}
and
\[Q_2^*=\prod_{u<v}\prod_{\substack{a_u< x\le b_u\\a_v< x'\le b_v\\x'\equiv x}}\left(t^{x'}y_v-\qqq^2t^{x}y_u\right)\left( t^{x}y_u-\qqq^2t^{x'}y_v \right)\]
\end{enumerate}
\end{lem}

\begin{proof}
The proof is similar to that of Lemma 3.4 of \cite{FeiTsym}, but we provide it anyways because of our change in conventions.
As in the pole conditions, we write $F\in\Sss_{n\delta}$ as
\[
F\left( \left\{ x_{i,r} \right\} \right)=\frac{f\left( \left\{ x_{i,r} \right\} \right)}
{\displaystyle\prod_{i\in\ZZ/\ell\ZZ}\,\prod_{\substack{1\le r,r'\le n\\r\not=r'}}\left( x_{i,r}-\qqq^2x_{i,r'} \right)}
\]
for some Laurent polynomial $f$.
We will focus first on $f$.
Limit condition (2) implies that $\phi_L(f)$ must be homogeneous with total degree $n^2\ell$.
On the other hand, limit condition (1) implies that the degree in the variable $y_u$ is bounded above by
\[
-r_\infty(\vec{k})+\sum_{i\in\ZZ/\ell\ZZ}k_i^u(k_i^u-1)+2k_i^u(n-k_i^u)=\sum_{i\in\ZZ/\ell\ZZ}k_i^u(n-k^u_{i+1})+k_i(n-k^u_{i-1})+k^u_ik^u_{i+1}
\]

The condition that $F\in\Sss(0)_L$ or $F\in\Sss(0)^*_L$ forces $\phi_L(f)$ to be divisible by $Q_1$ or $Q_1^*$, respectively.
Specifically, the factors of type (a) and (b) arise from the wheel conditions while those of type (c) and (d) arise from membership in $\Sss(0)_L$ or $\Sss(0)^*_L$.
Both $Q_1$ and $Q_1^*$ have total degree
\[
\sum_{1\le u<v\le m}\sum_{i\in\ZZ/\ell\ZZ} k_i^uk_{i+1}^v+k_i^uk_{i-1}^v=n^2\ell-\sum_{u=1}^m\sum_{i\in\ZZ/\ell\ZZ}k_i^uk_{i+1}^u
\]
and degree
\[
\sum_{i\in\ZZ/\ell\ZZ}k_i^u\left( n-k_{i-1}^u\right) +k_i^u\left(n-k_{i+1}^u\right)
\]
in the variable $y_u$.
Thus, the quotients $\phi_L(f)/Q_1$ and $\phi_L^*(f)/Q_1^*$ have total degree 
\[
\sum_{u=1}^m\sum_{i\in\ZZ/\ell\ZZ}k_i^uk_{i+1}^u\]
and degree in $y_u$ bounded above by
\[
\sum_{i\in\ZZ/\ell\ZZ}k_i^uk_{i+1}^u
\]
It follows that $\phi_L(f)/Q_1$ and $\phi_L^*(f)/Q_1^*$ equal a monomial of the form
\begin{equation}
\nu' \prod_uy_u^{\sum_{i\in\ZZ/\ell\ZZ}k_i^uk_{i+1}^u}
\label{Nu1}
\end{equation}
for some $\nu'\in\FF$.
In evaluating the denominator of $F$, one obtains $Q_2$ or $Q_2^*$ times the monomial
\begin{equation}
\nu''\prod_uy_u^{\sum_{i\in\ZZ/\ell\ZZ}k_i^u(k_i^u-1)}
\label{Nu2}
\end{equation}
for some \textit{nonzero} $\nu''\in\FF$.
The lemma follows from dividing (\ref{Nu1}) by (\ref{Nu2}).
\end{proof}

\begin{cor}\label{HorShuff}
$\Psi_-\left(\Sss(0)\right)=\varpi^{-1}(\ddot{U}^0_-) $.
\end{cor}

\begin{proof} 
Recall that if $L'< L$, then $\Sss(0)_{L'}\subset\Sss(0)_L$.
Define the associate graded as
\[\mathrm{gr}\,\Sss(0)_L:=\left.\Sss(0)_L\middle/\bigcup_{L'< L}\Sss(0)_{L'}\right.\]
By the definition of the Gordon filtration, we have
\begin{equation}
\ker\left(\left.\phi_L\right|_{\Sss(0)_L}\right)\supset\bigcup_{L'< L}\Sss(0)_{L'}
\label{KerGordon}
\end{equation}
On the other hand, unless $L$ is minimal in dominance order (i.e. all parts have size 1), there is a class of partitions immediately below $L$ in the order.
Specifically, suppose $L=\{ (a_1;b_1],\ldots, (a_m;b_m] \}$, where we allow empty parts.
Let $(a_{u_d}; b_{u_d}]$ be the last part with $b_{u_d}-a_{u_d}>1$ (the \textit{donor} part) and let $(a_{u_r};b_{u_r}]$ be the first part, possibly empty, with $b_{u_r}-a_{u_r}<b_{u_d}-a_{u_d}-1$ (the \textit{receiver} part).
Consider then a partition $L^\downarrow=\{(a_1^\downarrow;b_1^\downarrow],\ldots,(a_m^\downarrow;b_m^\downarrow]\}$ where
\[
b_u^\downarrow-a_u^\downarrow=
\left\{
\begin{array}{ll}
b_u-a_u & \hbox{for }u\not=u_d, u_r\\
b_{u_d}- a_{u_d}-1 & \hbox{for }u= u_d\\
b_{u_r}- a_{u_r}+1 & \hbox{for }u=u_r
\end{array}
\right.
\]
Such a partition can be constructed out of $L$ itself, but there will be multiple due to color shifts.
It immediately follows from the definition of dominance order that for any such $L^\downarrow$, $L'> L^\downarrow$ implies either $L'=L$ or $L'>L$.
Therefore, in the case where $L$ is not minimal, (\ref{KerGordon}) is an equality.
On the other hand, for minimal $L$, $\phi_L(F)$ is just $F$ written with a different set of variables, so $\ker(\phi_L)=0$ and (\ref{KerGordon}) is trivially an equality as well.
Lemma \ref{EvalMapLem} then implies that $\dim_{\FF}\mathrm{gr}\,\Sss(0)_L\le 1$ since the image of $\phi_L$ restricted to $\Sss(0)_L$ is at most 1-dimensional.

Next, observe that $\left.\phi_L\right|_{\Sss(0)_L}=0$ is trivial if $L$ is not even. 
To see this, let $F\in \Sss(0)_L$ for such an $L$. 
Since $L$ partitions a diagonal vector, there must be some nondiagonal interval $(a_u;b_u]$ with $0\in(a_u;b_u]^+$.
Using the notation from Lemma \ref{EvalMapLem}(1), the definition of $\Sss(0)$ forces
\[\lim_{\xi\rightarrow 0}\left.\frac{\nu Q_1}{Q_2}\right|_{y_u\mapsto \xi y_u}=0\]
This is only possible if $\nu=0$ and thus $\mathrm{gr}\,\Sss(0)_L=0$. 
Overall, we obtain the following dimension bound: 
\[\dim_{\FF}\Sss(0)_{n\delta}\le|\{L\vdash n\delta : L\hbox{ is even}\}|=\dim_{\FF}(\Psi_-^{-1}\circ\varpi^{-1})(\ddot{U}^0_-)_{n\delta}\]
The result then follows from Lemma \ref{GenWithin}.
\end{proof}

\subsubsection{Evaluation functionals} 
It will be useful for us to `downgrade' the evaluation maps into $\FF$-linear functionals.
When $F\in\Sss(0)_L$, we essentially replace $\phi_L(F)$ with the constant $\nu\in\FF$ of Lemma \ref{EvalMapLem} and likewise for $\Sss(0)_L^*$ and $\phi_L^*$.

\begin{defn}\label{EvalFuncDef}
Let $L=\{(a_1;b_1],\ldots,(a_m;b_m]\}\vdash n\delta$. 
Define
\begin{align*}
\begin{split}
N_L&:=\prod_{u=1}^m \prod_{\substack{c,c'\in (a_u,b_u]\\c<c'}}\omega_{\bar{c}',\bar{c}}\left( q^{-c'}y_u,q^{-c}y_u \right) \\
&\times\prod_{1\le u<v\le m} \prod_{\substack{c\in(a_u,b_u]\\c'\in(a_v,b_v]}}\omega_{\bar{c}',\bar{c}}\left( q^{-c'}y_v,q^{-c}y_u \right) \\
N_L^*&:=\prod_{u=1}^m \prod_{\substack{c,c'\in (a_u,b_u]\\c<c'}}\omega_{\bar{c},\bar{c}'}\left( t^{c}y_u,t^{c'}y_u \right) \\
&\times\prod_{1\le u<v\le m} \prod_{\substack{c\in(a_u,b_u]\\c'\in(a_v,b_v]}}\omega_{\bar{c'},\bar{c}}\left( t^{c'}y_v,t^{c}y_u \right)
\end{split}
\end{align*}
The \textit{evaluation functionals} $\rho_L$ and $\rho_L^*$ are defined as
\begin{align*}
\rho_L(F)&:=\left[ \left.\frac{\phi_L(F)}{N_L}\right|_{|y_{u}|\ll|y_{u-1}|} \right]_0\\
\rho_L^*(F)&:=\left[ \left.\frac{\phi_L^*(F)}{N_L}\right|_{|y_{u}|\ll|y_{u-1}|} \right]_0
\end{align*}
where for a Laurent series $f(y_1,\ldots, y_m)$, $[f]_0$ denotes its constant term.
\end{defn}

\begin{cor}\label{FuncFilt}
The evaluation functionals also determine the Gordon filtrations. Namely,
\begin{align*}
\Sss(0)_L&=\bigcap_{L'> L}\ker(\rho_{L'})\\
\Sss(0)_L^*&=\bigcap_{L'> L}\ker(\rho_{L'}^*)
\end{align*}
\end{cor}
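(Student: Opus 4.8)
\emph{Proof proposal.} The plan is to deduce the statement from Lemma~\ref{EvalMapLem} by a downward induction on $L$ in the dominance order. One inclusion is immediate: since $\rho_{L'}$ (resp.\ $\rho_{L'}^*$) is obtained from $\phi_{L'}$ (resp.\ $\phi_{L'}^*$) by specializing the auxiliary variables $y_u$, we have $\ker(\phi_{L'})\subseteq\ker(\rho_{L'})$, so $\Sss(0)_L\subseteq\bigcap_{L'>L}\ker(\rho_{L'})$ and likewise in the dual case. I will concentrate on the reverse inclusion, and spell out only the primal case, the dual one being identical after interchanging the roles of $q$ and $t$.

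For the reverse inclusion I would induct on $L$ with respect to a linear refinement of the dominance order, the base case being the maximal $L$'s, where both sides equal all of $\Sss(0)_{\vec{k}}$ because the intersection is empty. For the inductive step, take $F\in\bigcap_{L'>L}\ker(\rho_{L'})$; it suffices to prove $\phi_{L'}(F)=0$ for every $L'>L$. Fix such an $L'$. By transitivity, $\{L'':L''>L'\}\subseteq\{L':L'>L\}$, so the inductive hypothesis applied at $L'$ gives $F\in\bigcap_{L''>L'}\ker(\rho_{L''})=\Sss(0)_{L'}$. Now Lemma~\ref{EvalMapLem}(I) applies and writes $\phi_{L'}(F)=\nu\,Q_1/Q_2$ for a scalar $\nu\in\FF$ and the explicit products $Q_1,Q_2$ of linear factors listed there. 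If $L'$ is not even, the limit conditions defining $\Sss(0)$ force $\nu=0$, exactly as in the proof that $\mathrm{gr}\,\Sss(0)_{L'}$ is trivial for non-even $L'$, so $\phi_{L'}(F)=0$. If $L'$ is even, then $\rho_{L'}(F)=0$ by hypothesis, and it is enough to show that the snake specialization of Definition~\ref{EvalFuncDef} carries $Q_1/Q_2$ to a nonzero element of $\FF$: this forces $\nu=0$, hence $\phi_{L'}(F)=0$, which closes the induction (and the dual case runs identically with $\phi_{L'}^*$, $\rho_{L'}^*$, and Lemma~\ref{EvalMapLem}(II)).

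The one genuine computation, and the step I expect to require the most care, is the nondegeneracy of the snake specialization on $Q_1/Q_2$. After the substitutions of Definition~\ref{EvalFuncDef} each variable $y_u$ becomes a Laurent monomial, namely $q^{(n_u+\cdots+n_k)\ell}t^{-(k-u)}$, in the algebraically independent elements $q,t$ of $\FF$. Hence a linear factor of the shape $q^{a}y_v-q^{b}t^{c}y_u$ with $u<v$ and $c\in\{-1,0,1\}$ can vanish only when its two monomials have equal $t$-degree, i.e.\ $-(k-v)=-(k-u)+c$, forcing $v=u+c$; since $v>u$ this leaves only $c=1$ and $v=u+1$. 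Inspecting the four families of numerator factors in Lemma~\ref{EvalMapLem}(I), only type~(i) has $c=1$, and there the constraint $x'\equiv x-1$ gives $x-x'\equiv 1\pmod\ell$, whereas equality of the $q$-degrees of the two monomials would force $x-x'=n_u\ell\equiv 0\pmod\ell$, a contradiction since $\ell\ge 3$. Therefore no factor of $Q_1$ or of $Q_2$ is killed, $Q_1/Q_2$ specializes to a nonzero scalar, and the induction goes through; in the dual statement the monomials are powers of $q^{-1}$ and $t$ and the congruence obstruction appears in exactly the same way.
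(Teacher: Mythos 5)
Your proof is correct and follows essentially the same route as the paper: downward induction along dominance order, using Lemma \ref{EvalMapLem} to reduce everything to checking that the snake specialization of Definition \ref{EvalFuncDef} does not annihilate $Q_1/Q_2$. You in fact supply the degree-counting verification (the $t$-degree forcing $v=u+1$ and the mod-$\ell$ obstruction in the $q$-degree) that the paper dismisses with ``it is easy to see,'' and your explicit treatment of non-even $L'$ via the limit conditions is a careful touch the paper leaves implicit.
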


\begin{proof}
If $L$ is a partition with a single part, by Lemma \ref{EvalMapLem}, $\phi_L(F)$ and $\phi_{L}^*(F)$ are monomials in $y_1$.
It is easy to see that $N_L$ and $N_L^*$ are also monomials in the same power, so $\ker(\rho_L)=\ker(\phi_L)$ and $\ker(\rho_L^*)=\ker(\phi_L^*)$ in this case, which is the most dominant case. 
By downward induction along dominance order, it suffices to show that if $F\in \Sss(0)_L$, then $\phi_L(F)=0$ if and only if $\rho_L(F)=0$ and likewise for the dual picture. 
Using the notation of Lemma \ref{EvalMapLem}, the factors of $N_L$ and $N_L^*$ involving different variables $y_u$ and $y_v$ are in bijection with the factors in $Q_1/Q_2$ and $Q_1^*/Q_2^*$ in such a manner that $\phi_L(F)/N_L$ and $\phi_L^*(F)/N_L^*$ have degree zero in each variable.
Thus, up to a nonzero scalar coming from taking the appropriate Laurent series expansion, $\rho_L(F)$ picks out the constant $\nu$ from Lemma \ref{EvalMapLem}.
This implies $\phi_L(F)=0$ if and only if $\rho_L(F)=0$ and likewise in the dual case.
\end{proof}

We will take particular interest in two kinds of partitions of $n\delta$. 
For $p\in\ZZ/\ell\ZZ$, the \textit{short partition} has $n$ parts all equal to $(p;p+\ell]$. 
We denote it by $L_p^{1^n}$. 
The \textit{long partition} has a single part given by $(p;p+\ell n]$, and we will denote it by $L_p^{(n)}$. 
We will call their corresponding evaluations the short and long evaluations, respectively.

\subsubsection{Bottom elements} 
When we restrict ourselves to even partitions, the short partitions lie at the bottom of the dominance order.
Therefore, $\Sss(0)_{L_p^{1^n}}$ and $\Sss(0)_{L_p^{1^n}}^*$ are one-dimensional.

\begin{prop}\label{BottomElements}
$\Sss(0)_{L_p^{1^n}}$ is spanned by
\begin{align*}
E_{p,n}&:=
\Sym\left( \prod_{1\le r<s\le n}\left\{\frac{x_{p+1,r}-q^{-1}x_{p,s}}{x_{p+1,r}-tx_{p,s}}\prod_{i,j\in\ZZ/\ell\ZZ}\omega_{i,j}\left( x_{i,r},x_{j,s} \right)\right\}\right.\\
&\times \left.\prod_{r=1}^n\left\{\left( q^{-1}\frac{x_{0,r}}{x_{p+1,r}}-\frac{x_{0,r}}{x_{p,r}} \right)\prod_{i\in\ZZ/\ell\ZZ}x_{i,r}  \right\} \right)
\end{align*}
and $\Sss(0)_{L_p^{1^n}}^*$ is spanned by
\begin{align*}
H_{p,n}&:=
\Sym\left( \prod_{1\le r<s\le n}\left\{\frac{t^{-1}x_{p+1,s}-x_{p,r}}{qx_{p+1,s}-x_{p,r}}\prod_{i,j\in\ZZ/\ell\ZZ}\omega_{i,j}\left( x_{i,r},x_{j,s} \right)\right\}\right.\\
&\times \left.\prod_{r=1}^n\left\{\left( t\frac{x_{0,r}}{x_{p+1,r}}-\frac{x_{0,r}}{x_{p,r}} \right)\prod_{i\in\ZZ/\ell\ZZ}x_{i,r}  \right\} \right)
\end{align*}
\end{prop}

\begin{proof}
First observe that within the symmetrizations, we have
\begin{align*}
\frac{x_{p+1,r}-q^{-1}x_{p,s}}{x_{p+1,r}-tx_{p,s}}&= \qqq\frac{\omega_{p,p+1}\left( x_{p,s},x_{p+1,r} \right)}{\omega_{p+1,p}\left( x_{p+1,r},x_{p,s} \right)}\\
\frac{t^{-1}x_{p+1,s}-x_{p,r}}{qx_{p+1,s}-x_{p,r}}&= \qqq\frac{\omega_{p+1,p}\left( x_{p+1,s},x_{p,r} \right)}{\omega_{p,p+1}\left( x_{p,r},x_{p+1,s} \right)}
\end{align*}
With these factors written using mixing terms, the proof that $E_{p,n},H_{p,n}\in\Sss$ is similar to the proof that $\Sss$ is closed under the shuffle product.
Also, the proof that both elements lie in $\Sss(0)$ is similar to the proof of Lemma \ref{GenWithin2}.
Specifically, up to scalar, $E_{p,n}$ is the symmetrization of products of $\left(q^{-1}F_{p+1,1}-F_{p,1}\right)$ and mixing terms among them, and likewise $H_{p,n}$ is up to scalar the symmetrization of products of $\left( tF_{p+1,1}-F_{p,1} \right)$ and their mixing terms.
Thus, the analysis in the proof of Lemma \ref{GenWithin2} applies to $E_{p,n}$ and $H_{p,n}$.

Next, observe that every summand in $E_{p,n}$ evaluates to zero once $q^{-1}x_{p,r}=x_{p+1,s}$ for any pair of indices $r,s$, from which we obtain $E_{p,n}\in\Sss(0)_{L_p^{1^n}}$.
Similarly, $H_{p,n}$ evaluates to zero once $tx_{p,r}=x_{p+1,s}$ for any pair of indices $r,s$ and thus $H_{p,n}\in\Sss(0)_{L_p^{1^n}}^*$.
Finally, it is not obvious that $E_{p,n}$ and $H_{p,n}$ are nonzero.
This is a corollary of the computations of (\ref{FuncEpn}) and (\ref{FuncHpn}) done in the proof of Theorem \ref{FinalShuffle} below.
We note that these computations are done by brute force and require no theory.
\end{proof}

To conclude this subsection, we note the following consequence of Proposition \ref{ShuffleFock}:

\begin{prop}\label{Adjacency}
In $\mathcal{F}$, $\langle\mu|(\tau_{0,\upsilon}^-\circ\Psi_-)(E_{p,n})|\lambda\rangle\not=0$ only if $\mu\backslash\lambda$ has no horizontally adjacent $p$- and $(p+1)$-nodes.
Similarly, $\langle\mu|(\tau_{0,\upsilon}^-\circ\Psi_-)(H_{p,n})|\lambda\rangle\not=0$ only if $\mu\backslash\lambda$ has no vertically adjacent $p$- and $(p+1)$-nodes.
\end{prop}

\subsection{Functionals}\label{FuncSec}
In this subsection, we show that the shuffle elements corresponding to $\tilde{h}_n(p)$ and $\tilde{e}_n(p)$ are $\FF$-multiples of $H_{p,n}$ and $E_{p,n}$, respectively. 
Since each of the latter span a lowest piece of the Gordon and dual Gordon filtrations, we first need to understand how the evaluation functionals are manifested in the toroidal side of the picture.

\subsubsection{Dual elements of evaluation functionals} 
For an integer $a$, we denote by $\bar{a}$ its class modulo $\ell$. 
In what follows, we will first consider the long partitions, and in this case, we will index variables by nonnegative integers so that $x_{i,r}$ is assigned to $x_a$, where $a$ is the $r$th integer greater than or equal to $p$ with $\bar{a}=i$. 
Recall that ``$\Sym$'' is the color-symmetrizer, and note that our choice of labeling the variables using $\left\{ x_a \right\}$ is not important if those variables are in a color-symmetric function.

We begin with an adaptation of Proposition 3.25 from \cite{NegutTor} in the case $\mu=0$:

\begin{prop}\label{LongDual}
Let
\begin{align*}
R_{p,n}&:=(\qqq-\qqq^{-1})^{n\ell}\Sym\left(\prod_{a=p+2}^{p+n\ell}\left(1-\frac{q^{-1}x_{a-1}}{x_{a}}\right)^{-1}\prod_{p< a<b\le p+n\ell}\omega_{\bar{a},\bar{b}}\left(x_a,x_b\right)\right)\\
R_{p,n}^*&:=(\qqq-\qqq^{-1})^{n\ell}\Sym\left(\prod_{a=p+1}^{p+n\ell-1}\left(1-\frac{ t^{-1}x_{a+1}}{x_{a}}\right)^{-1}\prod_{p< a<b\le p+n\ell}\omega_{\bar{b},\bar{a}}\left(x_b,x_a\right)\right)
\end{align*}
These functions lie in $\Sss$.
Viewing them in $\Sss^+$, we have for $F\in\Sss(0)_{n\delta}$,
\begin{align*}
\rho_{L_p^{(n)}}(F)&=\langle R_{p,n},F\rangle\\
\rho^*_{L_p^{(n)}}(F)&=\langle R^*_{p,n},F\rangle
\end{align*}
\end{prop}

\begin{proof}
It is not too difficult to see that $R_{p,n}$ and $R_{p,n}^*$ lie in $\Sss$.
For the pairing calculation, first consider $R_{p,n}$. 
Setting up the pairing for $F\in\Sss(0)_{n\delta}$, we have:
\[
\frac{1}{(n!)^\ell}\left.\underset{|x_{i,r}|=1}{\oint\cdots\oint}
\frac{\displaystyle(\qqq-\qqq^{-1})^{-n\ell}\left(R_{p,n}\right)_\ppp F_\ppp\prod_{i\in\ZZ/\ell\ZZ}\,\prod_{r=1}^{k_i}Dx_{i,r}}
{\displaystyle\prod_{i\in\ZZ/\ell\ZZ}\,\prod_{\substack{1\le r,r'\le n\\ r\not=r'}}\omega_{i,i}^\ppp(x_{i,r},x_{i,r'})\prod_{i\not=j}\,\prod_{r,s=1}^n\omega_{i,j}(x_{i,r},x_{j,s})}\right|_{\ppp\mapsto\qqq}
\]
Observe that
\begin{align*}
\left(R_{p,n}\right)_\ppp&=(\qqq-\qqq^{-1})^{n\ell}\prod_{i\in\ZZ/\ell\ZZ}\,\prod_{\substack{1\le r,r'\le n\\r\not=r'}}\omega_{i,i}^\ppp\left( x_{i,r},x_{i,r'} \right)\\
&\times\Sym\left(\prod_{a=p+2}^{p+n\ell}\frac{1}{\displaystyle \left(1-\frac{q^{-1}x_{a-1}}{x_a}\right)}\cdot\frac{\displaystyle\prod_{\substack{p< a<b\le p+n\ell\\\bar{a}\not=\bar{b}}}\omega_{\bar{a},\bar{b}}\left(x_a,x_b\right)}{\displaystyle\prod_{\substack{p< a<b\le p+n\ell\\\bar{a}=\bar{b}}}\omega_{\bar{b},\bar{a}}\left( x_{b},x_{a} \right)}\right)
\end{align*}
Since $F_\ppp$ is color-symmetric, we can move it within the symmetrization for $R_{p,n}$, and thus the integrand becomes:
\begin{align*}
&\frac{1}{(n!)^\ell}\Sym\left(\frac{\displaystyle F_\ppp\prod_{a=p+1}^{p+n\ell}Dx_a}{\displaystyle\prod_{a=p+2}^{p+n\ell}\left(1-\frac{q^{-1} x_{a-1}}{x_a}\right)\prod_{p< a<b\le p+n\ell}\omega_{\bar{b},\bar{a}}(x_b,x_a)}\right)
\end{align*}

Let us consider the integral of the summand in the symmetrization corresponding to the identity permutation; it will become apparent that the other summands will yield the same result if we change the order of integration.
For the identity summand, we begin by integrating $x_{p+n\ell}$.
The only possible poles in its contour are:
\begin{enumerate}
\item $x_{p+n\ell}=0$;
\item $x_{p+n\ell}=\ppp^2 x_{p+s\ell}$ for $1\le s\le n-1$ (from the denominator of $F_\ppp$);
\item $x_{p+n\ell}=q^{-1}x_{p+n\ell-1}$ (from the denominator of $R_{p,n}$).
\end{enumerate}
For the residue at $x_{p+n\ell}=0$, note that by the first limit condition for $F$ applied to the $p$th coordinate vector, $F$ is holomorphic at $x_{p+n\ell}=0$.
The rest of the integrand is obviously holomorphic at $x_{p+n\ell}=0$ once we multiply the $x_{p+n\ell}^{-1}$ factor in $D x_{p+1}$ to $\left( 1-q^{-1} x_{p+n\ell-1}/x_{p+n\ell} \right)^{-1}$.
Thus, the residue here is trivial.
Next, the residue at $x_{p+n\ell}=\ppp^2 x_{p+s\ell}$ vanishes once we evaluate $\ppp\mapsto\qqq$ due to the factor $\omega_{i,i}\left( x_{p+n\ell},x_{p+s\ell} \right)$.
Finally, the residue at $x_{p+n\ell}=q^{-1}x_{p+n\ell-1}$ is the first step for our desired evaluation.

Continuing onwards in \textit{descending} order to the final integral, the integrand is a $q$-shift of
\[
\frac{1}{(n!)^\ell}\frac{\phi_L(F)}{ N_L}(x_{p+1})Dx_{p+1}
\] 
As stated before, for the other summands in the symmetrization, we obtain the same result by switching the order of integration.
The proof of Corollary \ref{FuncFilt} shows that $\phi_L(F)/N_L$ is actually a constant when $L$ is a long partition, so the residue yields $\rho_L(F)$.
For $R_{p,n}^*$, the proof is similar except that we integrate the variables in \textit{ascending} order.
\end{proof}

Similar to how for long evaluations, we indexed shuffle variables using natural numbers, for an even partition $L$ with multiple parts, we will index the variables in the natural way using $x_{c_u}$ for the $u$th part.

\begin{prop}\label{DualProduct}
Let $L=\{(a_1;b_1],\ldots, (a_m;b_m]\}\vdash n\delta$ with
\begin{align*}
a_u&= p_u\\
b_u&= p_u+n_u\ell
\end{align*}
for $0\le p_i\le\ell-1$.
For $F\in \Sss(0)_{n\delta}$, 
\begin{align*}
\langle R_{p_1,n_1}\star\cdots\star R_{p_m,n_m}, F \rangle&=\rho_L(F)\\
\langle R_{p_1,n_1}^*\star\cdots\star R_{p_m,n_m}^*, F  \rangle&=\rho_L^*(F)
\end{align*}
\end{prop}

\begin{proof}
We will only prove the $\rho_L$ statement.
As in the proof of Proposition \ref{LongDual}, the integrand can be manipulated into
\begin{align}
\nonumber&\frac{1}{(n!)^\ell}\Sym\left\{\frac{\displaystyle F_\ppp\prod_{u=1}^m\,\prod_{c=a_u+1}^{b_u}Dx_{c_u}}
{\displaystyle\prod_{u=1}^m\left[\prod_{c=a_u+2}^{b_u}\left(1-\frac{q^{-1} x_{(c-1)_u}}{x_{c_u}}\right)\prod_{a_u< c<d\le b_u}\omega_{\bar{d},\bar{c}}(x_{d_u},x_{c_u})\right]}\right.\\
&\times
\left.\frac{1}{\displaystyle\prod_{1\le u<v\le m}\,\prod_{\substack{a_u<c\le b_u\\a_v<d\le b_v}}\omega_{\bar{d},\bar{c}}(x_{d_v},x_{c_u})}
\right\}
\label{ProductDual}
\end{align}
Also as before, the symmetrization only affects the order of integration.
For the summand corresponding to the identity permutation, we begin with the variable $x_{ (b_m)_m}$ and proceed as in the proof of Proposition \ref{LongDual} until we reach $x_{(a_m+1)_m }$.
Arguing again as in the proof of Proposition \ref{LongDual}, one can see from the factors in (\ref{ProductDual}) that the only pole in the contour for $x_{(a_m+1)_m}$ that can contribute a nonzero residue is at $0$. 
Thus, we can deform the contour so that $|x_{(a_m+1)_m}|$ is much smaller than the magnitudes of the other variables and then take the constant term.
Doing the same for the other parts of $L$ in descending order of parts yields $\rho_L(F)$. 
\end{proof}

\subsubsection{Restriction of $\varphi(-,-)$ to $\varpi^{-1}( \ddot{U}^0 )$}
Recall that the quasi $R$-matrix $\mathcal{R}_\circ$ is the canonical tensor for the pairing $\varphi\left( -,- \right)$ restricted to $\ddot{U}^+\times\ddot{U}^-$, which is nondegenerate. 
Furthermore, Theorem \ref{QuasiRFact} and Proposition \ref{Factor2} combine to show that $\varphi\left( -,- \right)$ restricts to a nondegenerate pairing on $\varpi^{-1}( \ddot{U}^0 )$ and that $\varpi^{-1}( \ddot{U}^0 )$ pairs trivially with elements outside of it.
To add more detail, let $\ddot{U}^0_+$ and $\ddot{U}^0_-$ denote the subalgebras generated by the positive and negative modes, respectively.
The formula for $h(\bar{\mathcal{R}}^0_\mathfrak{gl})$ in Proposition \ref{Factor2} and its appearance in $\mathcal{R}_\circ$ as outlined in Corollary \ref{QuasiRFact} imply that the dual element of 
\[
\varpi^{-1}(b_{i_1,\pm k_1})^{n_1}\cdots\varpi^{-1}(b_{i_m,\pm k_m})^{n_m}\in\varpi^{-1}(\ddot{U}^0_\pm)
\]
is
\[
n_1!\varpi^{-1} (b_{i_1,\mp k_1}^\perp)^{n_1}\cdots n_m!\varpi^{-1}( b_{i_m,\mp k_m}^\perp)^{n_m}\in\varpi^{-1}(\ddot{U}^0_\mp)
\]

Consider the following coproduct $\Delta^\pm_H$ on $\varpi^{-1}(\ddot{U}^0_\pm)$:
\[
\Delta^\pm_H\left(\varpi^{-1}(b_{i,\pm k})\right)=\varpi^{-1}(b_{i,\pm k})\otimes 1+1\otimes \varpi^{-1}(b_{i,\pm k})
\]
for $k>0$.
We then also have:
\[
\Delta^\pm_H\left(\varpi^{-1}(b_{i,\pm k}^\perp)\right)=\varpi^{-1}(b_{i,\pm k}^\perp)\otimes 1+1\otimes \varpi^{-1}(b_{i,\pm k}^\perp)
\]
Note that $\Delta^\pm_H$ is only defined on each half of $\varpi^{-1}(\ddot{U}^0)$.
From our observations above about dual elements, it is easy to deduce the following: 

\begin{prop}\label{DrinBialgPair}
The restriction of $\varphi(-,-)$ to $\varpi^{-1}(\ddot{U}^0_+)\times\varpi^{-1}(\ddot{U}^0_-)$ is a bialgebra pairing with respect to the coproducts $\Delta_{H}^\pm$.
\end{prop}
%

By Proposition \ref{LongDual}, we have seen that $\langle R_{p,n},-\rangle$ and $\langle R_{p,n}^*,-\rangle$ define functionals on $\Sss(0)$.
Combining this with Proposition \ref{Twine} and Corollary \ref{HorShuff}, we have that $\varphi(\Psi_+(R_{p,n}),-)$ and $\varphi(\Psi_+(R_{p,n}^*),-)$ define functionals on $\varpi^{-1}(\ddot{U}_-^0)$.
These functionals are nonzero because they determine nonzero graded pieces of the Gordon filtrations.
Therefore, we define the nonzero elements $\Psi_+(R_{p,n})_0$ and $\Psi_+(R^*_{p,n})_0$ to be the images of $\Psi_+(R_{p,n})$ and $\Psi_+(R_{p,n}^*)$ under the composition
\[
\ddot{U}^+\twoheadrightarrow \varpi^{-1}(\ddot{U}^0_-)^*\cong\varpi^{-1}(\ddot{U}^0_+)\hookrightarrow\ddot{U^+}
\]
i.e. we take the components contributing to these functionals on $\varpi^{-1}(\ddot{U}^0_-)$.
Finally, we define the currents
\begin{align}
\label{RCurrentsDef}
\begin{split}
R_p^*(z)&:=\sum_{n\ge 0}\Psi_+(R_{p,n}^*)_0z^{-n}\\
R_p(z)&:=\sum_{n\ge 0}\Psi_+(R_{p,n})_0z^{-n}
\end{split}
\end{align}

\begin{rem}
It is not too difficult to show that $\Psi_+(R_{p,n}^*)=\Psi_+(R_{p,n}^*)_0$ and $\Psi_+(R_{p,n})=\Psi_+(R_{p,n})_0$ by studying the shuffle realization of $\varpi^{-1}(\ddot{U}^0_+)$. 
\end{rem}

\subsubsection{Coproducts}\label{CoproductStrat} 
We observe that by Lemma \ref{ReduxLem}, the currents
\begin{align*}
\sum_{n\ge0}\tilde{h}_n(p)z^n&=\varpi^{-1}\exp\left((\qqq-\qqq^{-1})^{-1}\sum_{n>0}(-\qqq^{-n}b_{p,-n}^\perp+\ddd^{-n}b_{p+1,-n}^\perp)\frac{z^n}{n}\right)\\
\sum_{n\ge0}\tilde{e}_n(p)z^n&=\varpi^{-1}\exp\left((\qqq-\qqq^{-1})^{-1}\sum_{n>0}(\qqq^{n}b_{p,-n}^\perp-\ddd^{-n}b_{p+1,-n}^\perp)\frac{(-z)^n}{n}\right)
\end{align*}
are group-like under the comultiplication $\Delta_{H}^-$ of $\varpi^{-1}(\ddot{U}^0_-)$, i.e.
\begin{align*}
\Delta^-_{H}(\tilde{h}_n(p))&=\sum_{n_1+n_2=n}\tilde{h}_{n_1}(p)\otimes\tilde{h}_{n_2}(p)\\
\Delta^-_{H}(\tilde{e}_n(p))&=\sum_{n_1+n_2=n}\tilde{e}_{n_1}(p)\otimes\tilde{e}_{n_2}(p)
\end{align*}
Recall that by Proposition \ref{DrinBialgPair}, the restriction of $\varphi(-,-)$ to $\varpi^{-1}( \ddot{U}^0_+ )\times\varpi^{-1}( \ddot{U}^0_- )$ is a bialgebra pairing with respect to $\Delta^\pm_{H}$. 
Thus, 
\begin{align}
\label{CoproductEH}
\begin{split}
\varphi\left(\overset{\curvearrowright}{\prod_{u=1}^k}\Psi_+(R^*_{a_u,n_u})_0,\tilde{h}_n(p)\right)
&=\prod_{u=1}^k\varphi\left(\Psi_+(R^*_{a_u,n_u}),\tilde{h}_{n_u}(p)\right)\\
\varphi\left(\overset{\curvearrowright}{\prod_{u=1}^k}\Psi_+(R_{a_u,n_u})_0,\tilde{e}_n(p)\right)
&=\prod_{u=1}^k\varphi\left(\Psi_+(R_{a_u,n_u}),\tilde{e}_{n_u}(p)\right)
\end{split}
\end{align}

Our goal is to prove the required vanishing conditions for $\tilde{h}_n(p)\in\Psi_-(\Sss(0))^*_{L_{p}^{1^n}}$ and $\tilde{e}_n(p)\in\Psi_-(\Sss(0))_{L_{p}^{1^n}}$
By Proposition \ref{DualProduct}, the evaluation functionals $\rho_L$ can be written as a pairing with a product of the elements $\{R_{p,n}\}_{p\in\ZZ/\ell\ZZ}$ and likewise for the dual functionals and $\{R_{p,n}^*\}_{p\in\ZZ/\ell\ZZ}$.
Combining this with (\ref{CoproductEH}), we only need to show that
\[\rho^*_{L_{p'}^{(n)}}\left(\Psi_-^{-1}\tilde{h}_n(p)\right)=\rho_{L_{p'}^{(n)}}\left(\Psi_-^{-1}\tilde{e}_n(p)\right)=0\]
if $p\not=p'$ or $n>1$. 
To that end, we will investigate the behaviour of $R_p(z)$ and $R^*_p(z)$ under $\Delta_{H}^+$.  

\begin{prop}[cf. \cite{NegutTor} Lemma 3.20]\label{Grouplike}
The currents $R_p(z)$ and $R_p^*(z)$ are group-like under the coproduct $\Delta_{H}^+$. 
Specifically,
\begin{align*}
\Delta^+_{H}(\Psi_+(R_{p,n})_0)&=\sum_{n_1+n_2=n}\Psi_+(R_{p,n_1})_0\otimes\Psi_+(R_{p,n_2})_0\\
\Delta^+_{H}(\Psi_+(R^*_{p,n})_0)&=\sum_{n_1+n_2=n}\Psi_+(R^*_{p,n_1})_0\otimes\Psi_+(R^*_{p,n_2})_0
\end{align*}
\end{prop}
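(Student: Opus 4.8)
The plan is to prove the group-likeness of $R_p(z)$ and $R_p^*(z)$ by comparison with an already group-like object, using the fact that the corresponding shuffle elements span one-dimensional pieces of a filtration that is itself compatible with the coproduct. I would only write out the argument for $R_p(z)$, the case of $R_p^*(z)$ being identical after replacing the Gordon filtration with its dual.

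The first step is to recall the factorization of $\mathcal{R}$ from Theorem \ref{Factor1}, specifically the middle factor $h(\mathcal{R}_0^0)$, which by Proposition \ref{Factor2} equals $\exp\left(\sum_{i=0}^{\ell-1}\sum_{k>0}\varpi^{-1}(b_{i,k})\otimes\varpi^{-1}(b_{i,-k}^\perp)\right)$. Reading off from Section \ref{AffineR} that $\mathcal{R}_0^0$ is group-like for $\Delta_{Dr}$ when restricted to the Heisenberg subalgebra (this follows from the exponential form and the fact that the $\bar b_{i,k}$ are primitive under $\Delta_{Dr}$ modulo terms killed by the coproduct structure of the Drinfeld realization), one sees that $h(\mathcal{R}_0^0)$, and hence the element $\Psi_+(R_{p,n})_0$ obtained by projecting $\Psi_+(R_{p,n})$ to $\varpi^{-1}(\ddot U_+^0)$, is group-like provided one knows that $\Psi_+(R_{p,n})_0$ really is the dual of the long evaluation functional $\rho_{L_p^{(n)}}$ restricted to the Heisenberg side. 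The point of Proposition \ref{DualProduct} is precisely that the functionals $\rho_L$ (equivalently the products $R_{p_k,n_k}\star\cdots\star R_{p_1,n_1}$) detect $\Sss(0)_L$, so that the generating series of the $R_{p,n}$ behaves multiplicatively under concatenation of partitions.

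The cleanest route, then, is the following. Let $\Delta_{Dr}(\Psi_+(R_{p,n})_0)=\sum_{n_1+n_2=n} A_{n_1}\otimes B_{n_2}$ be the (a priori unknown) coproduct expansion within $\varpi^{-1}(\ddot U_+^0)\otimes\varpi^{-1}(\ddot U_+^0)$; the Heisenberg subalgebra being a polynomial algebra on primitive-like generators, the coproduct is determined by pairing both sides against products of generators. Pair $\Delta_{Dr}(\Psi_+(R_{p,n})_0)$ against $\tilde h_{n_1}(p')\otimes \tilde h_{n_2}(p'')$ for all choices; by the bialgebra property of the Killing form on the Heisenberg subalgebra (noted in \ref{CoproductStrat}) this equals $\varphi(\Psi_+(R_{p,n})_0,\,\tilde h_{n_1}(p')\tilde h_{n_2}(p''))$. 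Using Proposition \ref{DualProduct} — or equivalently the fact that $\Psi_+(R_{p,n})_0$ is dual to $\rho_{L_p^{(n)}}$ — together with the known action of $\tilde h$ on the long evaluation and the compatibility of $\star$-products of the $R$'s with concatenation of partitions, one computes that this pairing vanishes unless $p'=p''=p$, in which case it reproduces the coefficient $1$ expected from $\sum_{n_1+n_2=n}\Psi_+(R_{p,n_1})_0\otimes\Psi_+(R_{p,n_2})_0$. Since the $\tilde h_m(p)$ (together with the $\tilde e_m(p)$, or rather a suitable generating set) span the dual of $\varpi^{-1}(\ddot U_+^0)$, this determines $A_{n_1}\otimes B_{n_2}$ uniquely and yields the claimed formula; the $R_p^*(z)$ statement is verbatim with $\tilde e$, the dual Gordon filtration, and $\rho^*$.

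The main obstacle is bookkeeping the projection to $\varpi^{-1}(\ddot U_+^0)$: a priori $\Psi_+(R_{p,n})$ has components in $\varpi^{-1}(\ddot U^{\geq})$ outside the Heisenberg part, and one must check that $\Delta_{Dr}$ does not mix the Heisenberg-part projection with those other components in a way that spoils the group-like formula — i.e. that $\Delta_{Dr}\big(\Psi_+(R_{p,n})_0\big)=\big(\mathrm{pr}\otimes\mathrm{pr}\big)\Delta_{Dr}\big(\Psi_+(R_{p,n})\big)$ where $\mathrm{pr}$ is the projection. This is exactly the subtlety handled in \cite{NegutTor} Proposition 3.23, and the argument there — that the off-Heisenberg pieces carry strictly positive homogeneous degree in at least one tensor factor and hence are orthogonal to $\tilde h_{n_1}(p)\otimes\tilde h_{n_2}(p)$ — transplants directly to our conventions. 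I would therefore structure the proof as: (1) reduce to a pairing identity against $\tilde h$'s via the Heisenberg bialgebra-pairing property; (2) invoke Proposition \ref{DualProduct} and the $\star$-multiplicativity of the $R_{p,n}$ to evaluate those pairings; (3) handle the projection subtlety exactly as in \cite{NegutTor}.
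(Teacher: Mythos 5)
Your framework---detect group-likeness under $\Delta_{Dr}$ by pairing against products, using that $\varphi$ restricts to a bialgebra pairing on the Heisenberg subalgebra---is the right one, but the step that actually does the work is missing. The paper's entire proof consists of establishing the factorization
\[
\langle R_{p,n}, F_1\star F_2\rangle=\langle R_{p,n_1},F_1\rangle\,\langle R_{p,n_2},F_2\rangle\qquad\hbox{for }F_i\in\Sss(0)_{n_i\delta},
\]
which is the statement dual to the claimed coproduct formula. This rests on two concrete observations: (a) under the long evaluation $x_a\mapsto q^{-a}$, every summand of the symmetrization defining $F_1\star F_2$ that mixes variables between $F_1$ and $F_2$ is killed by a vanishing $\omega_{i,i-1}$ mixing term, so only the unmixed summand survives; and (b) the long evaluation is ``translation-invariant,'' i.e.\ evaluating $F_1$ on the window $p+n_2\ell<b\le p+n\ell$ and dividing by the corresponding $\omega$-product gives the same answer as using the window $p<b\le p+n_1\ell$, so the surviving summand really is $\langle R_{p,n_1},F_1\rangle\langle R_{p,n_2},F_2\rangle$. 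You replace this with an appeal to ``the compatibility of $\star$-products of the $R$'s with concatenation of partitions,'' but Proposition \ref{DualProduct} concerns $R_{p_k,n_k}\star\cdots\star R_{p_1,n_1}$ paired against a \emph{single} $F$; it does not give the factorization of a single $R_{p,n}$ against a product $F_1\star F_2$. The key computation is therefore asserted rather than proved.

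Two further issues. First, testing against $\tilde h_{n_1}(p')\otimes\tilde h_{n_2}(p'')$ is circular in the paper's logical order: the pairing of $R_{p'}(w)$ with $\tilde h_n(p)$ and the shuffle realization of $\tilde h_n(p)$ are obtained in Lemma \ref{DualCurrentsCalc} and Corollary \ref{FinalShuffle}, both of which rely on Proposition \ref{Grouplike}. The correct test objects are arbitrary $F_1\star F_2$ with $F_i\in\Sss(0)_{n_i\delta}$, which is legitimate because $\Sss(0)=\Psi_-^{-1}\varpi^{-1}(\ddot{U}^0_-)$ has already been established. Second, your opening inference that group-likeness of $h(\mathcal{R}_0^0)$ ``hence'' yields group-likeness of $\Psi_+(R_{p,n})_0$ is a non sequitur: the exponential form of the canonical tensor says nothing about the coproduct of the particular Heisenberg element dual to the long evaluation; that has to be extracted from the evaluation functional itself, which is exactly what the computation above does.
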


\begin{proof}
We will only prove the statement for $R_p(z)$. 
A consequence of the proof of Corollary \ref{FuncFilt} is that 
\[
\langle R_{p,n},F\rangle=\rho_{L_p^{(n)}}(F)=\frac{\phi_{L_p^{(n)}}(F)}{N_{L_p^{(n)}}}
\]
are all equal to the constant
\[\frac{F\left(\{q^{-a}\}_{a=p+1}^{p+n\ell}\right)}{\displaystyle\prod_{p< a<b\le p+n\ell}\omega_{\bar{b},\bar{a}}(q^{-b},q^{-a})}\]
Now, let $F_1\in \Sss(0)_{n_1\delta}$ and $F_2\in \Sss(0)_{n_2\delta}$ with $n_1+n_2=n$. 
Notice that in $F_1\star^{op} F_2=F_2\star F_1$, the mixing terms $\omega_{i,i+1}(z,w)$ vanish when $q^{-1}z=w$. 
This implies that upon evaluation of the functional, a summand of the symmetrization will vanish if $q^{-a-1}$ is plugged into $F_1$ and $q^{-a}$ is plugged into $F_2$.
Therefore, all indices $a$ for $q^{-a}$ plugged into $F_1$ must be \textit{less} than any index $b$ for $q^{-b}$ plugged into $F_2$, giving us
\[\langle R_{p,n}, F_1\star^{op} F_2\rangle
=\frac{F_1\left(\{q^{-a}\}_{a=p+1}^{ p+n_1\ell}\right)F_2\left(\{q^{-b}\}_{b=p+n_1\ell+1}^{p+n\ell}\right)}
{\displaystyle\prod_{p+1< a<a'\le p+n_1\ell}\omega_{\bar{a'},\bar{a}}(q^{-a'},q^{-a})\prod_{p+n_1\ell+1<b<b'\le p+n\ell}\omega_{\bar{b'},\bar{b}}(q^{-b'},q^{-b})}\]
The equality with $\langle R_{p,n_1},F_1\rangle\langle R_{p,n_2},F_2\rangle$ comes from applying again the fact that $\phi_{L_p^{(n)}}(F_1)/N_{L_p^{(n)}}$ is a constant and thus the starting index for $b$ is irrelevant. 
Finally, consider the chain of equalities: 
\begin{align*}
\varphi\left( \Delta_H^+\left( \Psi_+(R_{p,n})_0 \right), \Psi_-(F_1)\otimes \Psi_-(F_2) \right)&=\varphi\left( \Psi_+(R_{p,n})_0, \Psi_-(F_1) \Psi_-(F_2) \right)\\
&=\langle R_{p,n}, F_1\star^{op} F_2\rangle\\
&= \langle R_{p_1,n_1}, F_1\rangle\langle R_{p_2,n_2}, F_2\rangle\\
&= \langle R_{p_1,n_1}\otimes R_{p_2,n_2}, F_1\otimes F_2\rangle\\
&= \varphi\left( \Psi_+(R_{p_1,n_1})\otimes\Psi_+(R_{p_2,n_2}), \Psi_-(F_1)\otimes\Psi_-(F_2) \right)\\
&= \varphi\left( \Psi_+(R_{p_1,n_1})_0\otimes\Psi_+(R_{p_2,n_2})_0, \Psi_-(F_1)\otimes\Psi_-(F_2) \right)
\end{align*}
Varying the degrees $n_1$ and $n_2$ of $F_1$ and $F_2$, we obtain the coproduct formulas by applying the nondegeneracy of $\varphi$ when restricted to $\varpi^{-1}(\ddot{U}^0_+)\times\varpi^{-1}(\ddot{U}^0_-)$.
\end{proof}

\subsubsection{Calculation of functionals} 
By Proposition \ref{Grouplike}, $R_p(z)$ and $R_p^*(z)$ are group-like currents with respect to the coproduct $\Delta_H^+$.
On the other hand, we can also consider the following group-like currents that generate $\varpi^{-1}(\ddot{U}_-^0)$:
\begin{equation*}
\exp\left( \sum_{k>0}\ddd^{-\frac{\ell k}{2}}\frac{\qqq^{-2k}-1}{(\qqq-\qqq^{-1})k}\varpi^{-1}(b_{p,-k}^\perp)z^{-k} \right)\overset{\hbox{\tiny (Cor. \ref{LoperatorCalc})}}{=}\sum_{n=0}^\infty \Psi_-(F_{p,n})z^{-n} 
\end{equation*}
Because $\varphi(-,-)$ restricted to $\varpi^{-1}(\ddot{U}^0_+)\times\varpi^{-1}(\ddot{U}^0_-)$ is a bialgebra pairing with respect to $\Delta_H^+$, we can obtain formulas similar to (\ref{CoproductEH}):
\begin{align*}
\varphi\left( \Psi_+(R_{p,n}^*), \prod_{u=1}^k\Psi_-(F_{p_u,n_u}) \right)&= \prod_{u=1}^k\varphi\left( \Psi_+(R^*_{p,n_u}),\Psi_-(F_{p_u,n_u}) \right)\\
\varphi\left( \Psi_+(R_{p,n}), \prod_{u=1}^k\Psi_-(F_{p_u,n_u}) \right)&= \prod_{u=1}^k\varphi\left( \Psi_+(R_{p,n_u}),\Psi_-(F_{p_u,n_u}) \right)
\end{align*}
We deduce that pairing with the coefficients of $R_p(z)$ and $R_p^*(z)$ is determined by the pairings on the generators $\{\Psi_-(F_{p',n})\}$ of $\varpi^{-1}(\ddot{U}^0_-)$ (cf. Corollary \ref{LoperatorCalc}). 
Since the restriction of $\varphi(-,-)$ to $\varpi^{-1}(\ddot{U}^0_+)\times\varpi^{-1}(\ddot{U}^0_-)$ is nondegenerate, we can present $R_p(z)$ and $R^*_p(z)$ in terms of bosons by finding group-like currents that have the same pairings with $\{\Psi_-(F_{p',n})\}$.

\begin{lem}\label{FuncCalc}
For $1\le i\le \ell$, we have
\begin{align}
\rho_{L_p^{(n)}}\left( F_{p+i,n} \right)&=\ddd^{-\frac{\ell n}{2}}q^{n(p+i)}\prod_{r=1}^n\frac{\qqq^{-2}-q^{(r-1)\ell}}{1-q^{r\ell}}\label{RhoF}\\
\rho^*_{L_p^{(n)}}\left( F_{p+i,n} \right)&=(-1)^{n\ell}\qqq^{n\ell}\ddd^{-\frac{\ell n}{2}}t^{-n(p+i)}\prod_{r=1}^n\frac{1-\qqq^{-2}t^{-(r-1)\ell}}{1-t^{-r\ell}}\label{Rho*F}
\end{align}
\end{lem}

\begin{proof}
Recall that in the proof of Corollary \ref{FuncFilt}, we showed that in the course of evaluating $\rho_{L_p^{(n)}}(F)$ and $\rho_{L_p^{(n)}}^*(F)$, the functions $\phi_{L_{p}^{(n)}}(F)/N_{L_p^{(n)}}$ and $\phi_{L_{p}^{(n)}}^*(F)/N^*_{L_p^{(n)}}$, which \textit{a priori} are functions in the single variable $y_1$, actually lie in $\FF$.
Setting $y_1=1$, we are then tasked with cleaning up
\begin{align*}
\rho_{L_p^{(n)}}\left( F_{p+i,n} \right)&=\frac{\displaystyle (-1)^{\frac{\ell n(n+1)}{2}+n}(\qqq^2-1)^{n\ell }q^{n(-\ell+p+i)}\prod_{a=p+1}^{p+n\ell}q^{-a}}{\displaystyle\qqq^n\ddd^{\frac{\ell n^2}{2}}\prod_{p<a<b\le p+n\ell}\omega_{\bar{b},\bar{a}}\left( q^{-b},q^{-a} \right)}\\
\rho^*_{L_p^{(n)}}\left( F_{p+i,n} \right)&=\frac{\displaystyle (-1)^{\frac{\ell n(n+1)}{2}+n}(\qqq^2-1)^{n\ell }t^{n(\ell-p-i)}\prod_{a=p+1}^{p+n\ell}t^{a}}{\displaystyle\qqq^n\ddd^{\frac{\ell n^2}{2}}\prod_{p<a<b\le p+n\ell}\omega_{\bar{a},\bar{b}}\left( t^{a},t^{b} \right)}
\end{align*}

Let us first consider the computation for $\rho_{L_p^{(n)}}$.
The indexed products can be grouped in the following ways to produce nice cancellation:
\begin{enumerate}
\item \textit{Adjacent $a$ and $a-1$}: The ``adjacent'' mixing term $\omega_{\bar{a},\bar{a}-1}(q^{-a},q^{-a+1})$ together with the $q^{-a}$ in the numerator cancel to leave behind
\[
\frac{q^{-a}}{ q^{-a}-tq^{-a+1}}=\frac{1}{ 1-\qqq^2 }
\]
There are $n\ell-1$ such terms and we leave behind $q^{-p-1}$ in the numerator.
\item \textit{Triples $a-1+k\ell$, $a+k\ell$, and $a+1+k\ell$ with $a$}: We have
\begin{align*}
&\omega_{\bar{a}-1,\bar{a}}(q^{-a+1-k\ell}, q^{-a})^{-1}\omega_{\bar{a},\bar{a}}(q^{-a-k\ell}, q^{-a})^{-1}\omega_{\bar{a}+1,\bar{a}}(q^{-a-1-k\ell}, q^{-a})^{-1}\\
&=\frac{ \left( q^{-a-k\ell}-\qqq^2q^{-a} \right)\left( q^{-a-k\ell}-q^{-a} \right)}
{ \left( q^{-a-1-k\ell}-\qqq\ddd^{-1}q^{-a} \right)
\left( \qqq q^{-a}-\ddd^{-1}q^{-a+1-k\ell} \right)}\\
&= \frac{\left( q^{-a-k\ell}-\qqq^2q^{-a} \right)\left( q^{-a-k\ell}-q^{-a} \right)}
{\left( q^{-a-k\ell}-\qqq^2 q^{-a} \right)\left( \ddd^{-1}q^{-a}-\ddd^{-1}q^{-a-k\ell} \right)}=-\ddd
\end{align*}
There are $\frac{\ell n(n-1)}{2}-n+1$ such terms.
\item \textit{The ends $p-1+n\ell$ and $p+n\ell$ with $p+k\ell$ for $1\le k\le n-1$}: Here, we have
\begin{align*}
&\omega_{p-1,p}\left( q^{-p+1-n\ell},q^{-p-k\ell} \right)^{-1}
\omega_{p,p}\left( q^{-p-n\ell},q^{-p-k\ell} \right)^{-1}\\
&= \frac{\left( q^{-p-n\ell}-\qqq^2 q^{-p-k\ell} \right)\left( q^{-p-n\ell}-q^{-p-k\ell} \right)}
{\left( \qqq q^{-p-k\ell}-\ddd^{-1}q^{-p+1-n\ell} \right)}\\
&= -\qqq^{-1}\frac{\left( q^{-p-n\ell}-\qqq^2q^{-p-k\ell} \right)\left( q^{-p-k\ell}-q^{-p-n\ell} \right)}{q^{-p-k\ell}-q^{-p-n\ell}}\\
&=-\qqq^{-1}q^{-p-n\ell}\left(1-\qqq^2q^{(n-k)\ell}  \right)
\end{align*}
There are $n-1$ of these terms.
\item \textit{The end $p+n\ell$ with $p+1+k\ell$ for $0\le k\le n-1$}: This is just a lone mixing term, which we write as
\[
\frac{1}{ \qqq q^{-p-1-k\ell}-\ddd^{-1}q^{-p-n\ell} }=\frac{-\ddd}{q^{-p-n\ell}\left(1- q^{(n-k)\ell} \right)}
\]
Now there are $n$ of these terms.
\end{enumerate}
Indexing the factors from (3) and (4) in terms of $r=n-k$, these cancellations give us
\begin{align*}
\rho_{L_p^{(n)}}(F_{p+i,n})&= \frac{\displaystyle (-1)^{\frac{\ell n(n+1)}{2}+n}\left( \qqq^2-1 \right)^{n\ell}q^{n\left( -\ell+p+i \right)}(-\ddd)^{\frac{\ell n(n-1)}{2}-n+1+n}q^{-p-1}\prod_{r=1}^{n-1}(-\qqq)^{-1}q^{-p-n\ell}\left( 1-\qqq^2q^{r\ell} \right)}
{\displaystyle\qqq^n\ddd ^{\frac{\ell n^2}{2}}\left( 1-\qqq^2 \right)^{n\ell-1}\prod_{r=1}^nq^{-p-n\ell}\left( 1-q^{r\ell} \right)}\\
&= \frac{\displaystyle(-1)^{ n\ell+n\ell} q^{n\left( p+i \right)-1}\qqq^{-n}q\prod_{ r=0}^{ n-1}\left( 1-\qqq^2q^{ r\ell } \right)}
{\displaystyle\qqq^n\ddd^{\frac{\ell n}{2}}\prod_{r=1}^n\left( 1-q^{r\ell} \right)}
=\frac{\displaystyle q^{n\left( p+i \right)}\prod_{r=1}^{n}\left( 1-\qqq^2q^{(r-1)\ell} \right)}
{\displaystyle \ddd^{\frac{\ell n}{2}} \qqq^{2n}\prod_{r=1}^n\left( 1-q^{r\ell} \right)}
\end{align*}
From here, the final formula (\ref{RhoF}) easily follows.

The derivation for $\rho_{L_p^{(n)}}^*$ is entirely analogous, although we present its details as it still requires care.
Our groupings are similar to as before:
\begin{enumerate}
\item \textit{Adjacent $a$ and $a+1$}: We cancel the mixing term $\omega_{\bar{a},\bar{a}+1}\left( t^{a},t^{a+1} \right)$ with the $t^{a+1}$ in the numerator, giving us
\[
\frac{t^{a+1}}{\qqq t^{a+1}-\ddd^{-1}t^{a}}=\frac{1}{\qqq-\qqq^{-1}}
\]
Now we leave behind a $t^{p+1}$ in the numerator.
\item \textit{Triples $a-1+k\ell$, $a+k\ell$, and $a+1+k\ell$ with $a$}: As before, we have
\begin{align*}
&\omega_{\bar{a},\bar{a}-1}(t^{a}, t^{a-1+k\ell})^{-1}\omega_{\bar{a},\bar{a}}(t^{a}, t^{a+k\ell})^{-1}\omega_{\bar{a},\bar{a}+1}(t^{a}, t^{a+1+k\ell})^{-1}\\
&=\frac{ \left( t^{a}-\qqq^2t^{a+k\ell} \right)\left( t^{a}-t^{a+k\ell} \right)}
{ \left( \qqq t^{a+1+k\ell}-\ddd^{-1}t^{a} \right)\left( t^{a}-\qqq\ddd^{-1}t^{a-1+k\ell} \right)}\\
&= -\ddd\frac{\left( t^{a}-\qqq^2t^{a-k\ell} \right)\left( t^{a}-t^{a-k\ell} \right)}
{\left( t^a-\qqq^2 t^{a-k\ell} \right)\left( t^a-t^{a-k\ell} \right)}=-\ddd
\end{align*}
\item \textit{The ends $p-1+n\ell$ and $p+n\ell$ with $p+k\ell$ for $1\le k\le n-1$}: Now we have
\begin{align*}
&\omega_{p,p-1}\left( t^{p+k\ell},t^{p-1+n\ell} \right)^{-1}\omega_{p,p}\left( t^{p+k\ell},t^{p+n\ell} \right)^{-1}\\
&= \frac{\left( t^{p+k\ell}-\qqq^2 t^{p+n\ell} \right)\left( t^{p+k\ell}-t^{p+n\ell} \right)}
{\left( t^{p+k\ell}-\qqq\ddd^{-1}t^{p-1+n\ell} \right)}\\
&= \frac{\left( t^{p+k\ell}-\qqq^2t^{p+n\ell} \right)\left( t^{p+k\ell}-t^{p+n\ell} \right)}
{\left( t^{p+k\ell}-t^{p+n\ell} \right)}\\
&= t^{p+n\ell}\left( t^{(k-n)\ell}-\qqq^2 \right)
\end{align*}
\item \textit{The end $p+n\ell$ with $p+1+k\ell$ for $0\le k\le n-1$}: We write the lone mixing term as
\[
\frac{1}{ t^{p+1+k\ell}-\qqq\ddd^{-1}t^{p+n\ell}}=\frac{1}{t^{p+1+n\ell}\left(  t^{(k-n)\ell}-1 \right)}
\]
\end{enumerate}
Setting $r=n-k$ for the terms in (3) and (4), we now have 
\begin{align*}
\rho_{L_p^{(n)}}^*(F_{p+i,n})
&=\frac{\displaystyle (-1)^{\frac{\ell n(n+1)}{2}+n} \left( \qqq^2-1 \right)^{n\ell}t^{n(\ell-p-i)} \left( -\ddd \right)^{\frac{\ell n(n-1)}{2}-n+1} t^{p+1}\prod_{r=1}^{n-1}t^{p+n\ell}\left( t^{-r\ell}-\qqq^2 \right)}
{\displaystyle \qqq^n \ddd^{\frac{\ell n^2}{2}} \left( \qqq -\qqq^{-1}\right)^{n\ell-1}\prod_{r=1}^nt^{p+1+n\ell}\left( t^{-r\ell}-1 \right)}\\
&= \frac{\displaystyle (-1)^{n\ell} \qqq^{n\ell-1}\ddd t^{-n(p+i)+1}\prod_{ r=0}^{n-1}\left( t^{-r\ell}-\qqq^2 \right)}
{\displaystyle\qqq^{n}\ddd^{\frac{\ell n}{2}+n}t^n\prod_{r=1}^n\left( t^{-r\ell}-1 \right)}
=\frac{\displaystyle(-1)^{n\ell} \qqq^{n\ell}t^{-n(p+i)}\prod_{r=1}^{n}\left(t^{-(r-1)\ell}-\qqq^2\right)}
{\displaystyle \qqq^{2n}\ddd^{\frac{\ell n}{2}}\prod_{r=1}^n\left( t^{-r\ell}-1 \right)}
\end{align*}
from which (\ref{Rho*F}) follows.
\end{proof}

\begin{cor}\label{DualCurrentsCalc}
In terms of bosons,
\begin{align*}
R_p(z)&=\exp\left((\qqq-\qqq^{-1})\sum_{n>0}\frac{\sum_{i=1}^{\ell}q^{n(p+i)}\varpi^{-1}(b_{p+i,n})}{(1-q^{n\ell})}\cdot\frac{z^{-n}}{n}\right)\\
R_p^*(z)&=\exp\left(-(\qqq-\qqq^{-1})\sum_{n>0}(-\qqq)^{n\ell}\frac{\sum_{i=1}^{\ell}t^{-n(p+i)}\varpi^{-1}(b_{p+i,n})}{1-t^{-n\ell}}\cdot\frac{z^{-n}}{n}\right)
\end{align*}

\end{cor}

\begin{proof}
Recall that $\varpi^{-1}(b_{i,n})$ is dual to $\varpi^{-1}(b_{i,-n}^\perp)$ with respect to the pairing $\varphi(-,-)$.
Our expressions for the currents follow from combining Lemma \ref{FuncCalc}, Theorem \ref{LOpBoson}, and a generalized partition identity (cf. \cite{Mac} Example I.2.5):
\[\sum_{n\ge 0}\prod_{r=1}^n\frac{a-bc^{r-1}}{1-c^r}z^n=\prod_{n=0}^\infty\frac{1-bc^nz}{1-ac^nz}=\exp\left(\sum_{n>0}\frac{a^n-b^n}{1-c^n}\cdot\frac{z^n}{n}\right)\qedhere\]
\end{proof}

\subsubsection{Shuffle elements for $\tilde{h}_n(p)$ and $\tilde{e}_n(p)$}
For the main result of this paper, we only need to know $\Psi_-^{-1}(\tilde{h}_n(p))$ and $\Psi_-^{-1}(\tilde{e}_n(p))$ up to scalar.
However, we compute the precise scalar for future use.

\begin{thm}\label{FinalShuffle}
We have 
\begin{align*}
\tilde{e}_n(p)&=c_{n}\Psi_-(E_{p,n})\\
\tilde{h}_n(p)&=(qt)^{-n}c_{n}\Psi_-(H_{p,n})
\end{align*}
where
\begin{align*}
c_{n}&= \frac{\left( 1-\qqq^2 \right)^{n\ell}}{\prod_{r=1}^n\left( 1-(qt)^{-r} \right)}
\end{align*}
\end{thm}

\begin{proof}
First, we show that $\tilde{e}_n(p)$ lies in $\Psi_-(\Sss(0)_{L_p^{1^n}})$ and $\tilde{h}_n(p)$ lies in $\Psi_-(\Sss(0)_{L_p^{1^n}}^*)$. 
By Corollary \ref{DualCurrentsCalc}, (\ref{ModHForm}), and (\ref{ModEForm}), we have
\[\begin{array}{rcll}
\varphi\left(R_{p}(w),\sum_n \tilde{e}_n(p+i)(-z)^n\right)&=&\left\{\begin{array}{l}
1\\ 
1-\qqq q^{p}\dfrac{z}{w} 
\end{array}\right.&
\begin{array}{ll}
\vspace{2.5pt}\hbox{if }i\not=0\\
\hbox{if }i=0
\end{array}\\
\varphi(R_{p}^*(w),\sum_n \tilde{h}_n(p+i)z^n)&=&\left\{\begin{array}{ll}
1 \\
1-(-1)^\ell\qqq^{\ell-1} t^{-p}\dfrac{z}{w} 
\end{array}\right.&
\begin{array}{ll}
\vspace{2.5pt}\hbox{if }i\not=0\\
\hbox{if }i=0
\end{array}
\end{array}\]
As mentioned in \ref{CoproductStrat}, the formulas in (\ref{CoproductEH}) combined with Proposition \ref{DualProduct} imply that the necessary vanishing conditions hold.
By Proposition \ref{BottomElements}, $\Psi_-^{-1}(e_n(p))$ is an $\FF$-multiple of $E_{p,n}$ and $\Psi_-^{-1}(h_n(p))$ is an $\FF$-multiple of $H_{p,n}$.

To compute the actual scalars, it will be helpful to first note that by (\ref{CoproductEH}), we have
\begin{align}
\varphi\left( \Psi_+(R_{p,1})^n, \tilde{e}_n(p)\right) &=\qqq^{n}q^{np}\label{FuncETilde}\\
\varphi( \Psi_+(R_{p,1}^*)^n, \tilde{h}_n(p)) &=(-1)^{n\ell-n}\qqq^{n(\ell-1)}t^{-np}\label{FuncHTilde}
\end{align}
On the other hand, we can calculate the pairings $\langle R_{p,1}^n, E_{p,n}\rangle$ and $\langle \left(R_{p,1}^*\right)^n, H_{p,n}\rangle$, which by Proposition \ref{DualProduct} are equal to $\rho_{L_p^{1^n}}\left( E_{p,n} \right)$ and $\rho_{L_p^{1^n}}^*\left( H_{p,n} \right)$, respectively.
We begin by marking the following pieces of $E_{p,n}$ and $H_{p,n}$:
\begin{align*}
E_{p,n}&:=
\Sym\left( \prod_{1\le r<s\le n}\left\{\overbrace{\frac{x_{p+1,r}-q^{-1}x_{p,s}}{x_{p+1,r}-tx_{p,s}}}^{(d)}\,\overbrace{\prod_{i,j\in\ZZ/\ell\ZZ}\omega_{i,j}\left( x_{i,r},x_{j,s} \right)}^{(a)}\right\}\right.\\
&\times \left.\prod_{r=1}^n\left\{\overbrace{\left( q^{-1}\frac{x_{0,r}}{x_{p+1,r}}-\frac{x_{0,r}}{x_{p,r}} \right)}^{(b)}\,\overbrace{\prod_{i\in\ZZ/\ell\ZZ}x_{i,r}}^{(c)}  \right\} \right)\\
H_{p,n}&:=
\Sym\left( \prod_{1\le r<s\le n}\left\{\overbrace{\frac{t^{-1}x_{p+1,s}-x_{p,r}}{qx_{p+1,s}-x_{p,r}}}^{(d)}\,\overbrace{\prod_{i,j\in\ZZ/\ell\ZZ}\omega_{i,j}\left( x_{i,r},x_{j,s} \right)}^{(a)}\right\}\right.\\
&\times \left.\prod_{r=1}^n\left\{\overbrace{\left( t\frac{x_{0,r}}{x_{p+1,r}}-\frac{x_{0,r}}{x_{p,r}} \right)}^{(b)}\,\overbrace{\prod_{i\in\ZZ/\ell\ZZ}x_{i,r}}^{(c)}  \right\} \right)
\end{align*}
For $E_{p,n}$, we will evaluate
\[
x_{p+i,r}\mapsto q^{-p-i}y_r
\]
where $1\le i\le \ell$.
Because of the mixing terms from (a), the only permutations in the symmetrization for $E_{p,n}$ that survive upon this evaluation will be the diagonal permutations in $\Sigma_{n\delta}=\Sigma_n^{\times \ell}$.
We will abuse notation and denote this diagonal copy as $\Sigma_n$.
Let us now document how each part of $E_{p,n}$ cancels with $N_{L_p^{1^n}}$ and fares in the limit $|y_s|\ll|y_{s-1}|$:
\begin{enumerate}[(a)]
\item Depending on the summand in the symmetrization, these mixing terms may or may not totally cancel with the factors
\[
\prod_{1\le u<v\le n}\,\prod_{\substack{c\in(a_u,b_u]\\c'\in(a_v,b_v]}}\omega_{\bar{c}',\bar{c}}\left( q^{-c'}y_v,q^{-c}y_u \right)
\]
of $N_{L_p^{1^n}}$.
However, even if they do not cancel, one can easily check that they do totally cancel in the limit $|y_s|\ll |y_{s-1}|$.
Namely, one can group the mixing terms into ``adjacent triples'' in which the limits for (a) and $N_{L_p^{1^n}}$ are both always $-\ddd^{-1}$, canceling out and leaving nothing behind.
\item These factors will always evaluate to $q^{p-\ell}\left( 1-q^\ell \right)$, regardless of the permutation.
\item This will cancel with the factors
\[
\prod_{u=1}^n\,\prod_{\substack{c,c'\in(a_u,b_u]\\c<c'}}\omega_{\bar{c}',\bar{c}}\left( q^{-c'}y_u,q^{-c}y_u \right)
\]
of $N_{L_p^{1^n}}$.
The cancellation is similar to the $q$-version of enumerated items (1) and (4) from the proof of Lemma \ref{FuncCalc}.
In total, the contribution is
\begin{align*}
\frac{(-\ddd)^n q^{n(-p-1)}}{\left( 1-\qqq^2 \right)^{n(\ell-1)}q^{n(-p-\ell)}\left( 1-q^\ell \right)^{n}}=
\frac{(-\ddd)^n q^{n(\ell-1)}}{\left( 1-\qqq^2 \right)^{n(\ell-1)}\left( 1-q^\ell \right)^{n}}
\end{align*}
\item First consider the summand for the identity permutation.
In this case, these factors will evaluate to
\[
\prod_{1\le r<s\le n}\frac{q^{-p-1}y_r-q^{-p-\ell-1}y_s}{q^{-p-1}y_r-tq^{-p-\ell}y_s}
\]
Taking the limit $|y_s|\ll |y_r|$ and then taking the constant term turns each term in the product into $1$.
Now, for $w\in\Sigma_n$, the factor for $r<s$ becomes $(qt)^{-1}$ instead when $w(r)>w(s)$.
Since the contribution of the other parts of $E_{p,n}$ in the computation of $\rho_{L_p^{1^n}}(E_{p,n})$ do not depend on $w$, the contribution of part (4) is
\[
\sum_{w\in\Sigma_n}(qt)^{-\ell(w)}=\prod_{r=1}^n\frac{1-(qt)^{-r}}{1-(qt)^{-1}}=\frac{(-qt)^{n}\prod_{r=1}^n\left( 1-(qt)^{-r} \right)}{\left( 1-\qqq^2 \right)^{n}}
\]
where $\ell(w)$ is the length of $w$.
\end{enumerate}
Altogether, we have
\begin{equation}
\rho_{L_p^{1^n}}(E_{p,n})=\frac{q^{np}\qqq^n\prod_{r=1}^n\left( 1-(qt)^{-r} \right)}{\left( 1-\qqq^2 \right)^{n\ell}}
\label{FuncEpn}
\end{equation}
We obtain $c_{n}$ by dividing (\ref{FuncETilde}) by (\ref{FuncEpn}).

The computation for $H_{p,n}$ is similar, although we note some details.
We now evaluate the variables as
\[
x_{p+i,r}\mapsto t^{p+i}y_r
\]
for $1\le i\le\ell$.
The contributions are:
\begin{enumerate}[(a)]
\item As before, these factors contribute nothing.
\item Each of these factors contributes $t^{-p}\left( t^\ell-1 \right)$.
\item Borrowing the $t$-versions of enumerated items (1) and (4) from the proof of Lemma \ref{FuncCalc}, we are left with
\[
\frac{t^{n(p+1)}}{\left( \qqq-\qqq^{-1} \right)^{n(\ell-1)}t^{n(p+1)}\left(1-t^\ell\right)^n}
=\frac{(-1)^n}{\left( \qqq-\qqq^{-1} \right)^{n(\ell-1)}\left( t^\ell-1 \right)^n}
\]
\item The contribution is the same is in the case of $E_{p,n}$.
\end{enumerate}
Putting this together, we have
\begin{equation}
\rho_{L_p^{1^n}}^*(H_{p,n})=\frac{t^{-np}(-qt)^n\prod_{r=1}^n\left(1- (qt)^{-r} \right)}{\qqq^{-n(\ell-1)}\left( \qqq^{2}-1 \right)^{n\ell}}
\label{FuncHpn}
\end{equation}
We obtain $(qt)^{-n}c_{n}$ by dividing (\ref{FuncHTilde}) by (\ref{FuncHpn}).
\end{proof}

\section{Combinatorics}\label{Combi}

\subsection{Setup}\label{CombInt}
Let us remind the reader of the definitions of $\hat{h}_n(p)$, $\hat{h}_\lambda$, $\hat{e}_n(p)$, and $\hat{e}_\lambda$ given by (\ref{HatBoson}) and the definitions of $\tilde{h}_n(p)$ and $\tilde{e}_n(p)$ given by (\ref{EHAbbrev}).
We define
\begin{align}
\label{HTilde}
\tilde{h}_\lambda&:=\tau_{0,\upsilon}^-(\tilde{h}_{\quot(\lambda)})|\core(\lambda)\rangle\\
\label{ETilde}
\tilde{e}_\lambda&:=\tau_{0,\upsilon}^-(\tilde{e}_{\quot(\lambda)})|\core(\lambda)\rangle
\end{align}
These are the versions of $\hat{h}_\lambda$ and $\hat{e}_\lambda$ in $\mathcal{F}$.
Finally, recall as well that for two partitions $\mu$ and $\nu$, $\mu\ge_\ell\nu$ means $\mu\ge \nu$ and $\core(\mu)=\core(\nu)$. 

We wish to show that for each $\lambda$,
\begin{align}
\label{HTri}
\mathrm{span}\{ \tilde{h}_\mu\, |\, \mu\ge_\ell\lambda \}&= \mathrm{span}\left\{ |\mu\rangle\, |\, \mu \ge_\ell\lambda \right\}\\
\label{ETri}
\mathrm{span}\{ \tilde{e}_\mu\, |\, \mu\le_\ell\lambda \}&= \mathrm{span}\left\{ |\mu\rangle\, |\, \mu \le_\ell\lambda \right\}
\end{align}
By induction, it suffices to show that $\tilde{h}_\lambda$ and $\tilde{e}_\lambda$ are in their respective subspaces on the right hand side.
The intersection of the two subspaces on the left is by definition spanned by $\mathrm{T}_{-0}(H_\lambda\otimes e^{\core(\lambda)})$.
On the other hand, the intersection of the two subspaces on the right is spanned by $|\lambda\rangle$.
Thus, establishing (\ref{HTri}) and (\ref{ETri}) would yield Theorem \ref{Main}.
We note as well that it would give a new existence proof for $H_\lambda$.

Our approach is, in a sense, a qualitative study of the Pieri rules.
Combining Theorem \ref{FinalShuffle} with Proposition \ref{Adjacency}, we have that $\tau_{0,\upsilon}^-\big(\tilde{e}_{n}(p)\big)|\lambda\rangle$ is a linear combination of $|\mu\rangle$ such that $\mu\backslash\lambda$ has $n$ nodes of each color such that no $p$- and $(p+1)$-nodes are \textit{horizontally} adjacent.
Similarly, $\tau_{0,\upsilon}^-\big(\tilde{h}_{n}(p)\big)|\lambda\rangle$ consists of $|\mu\rangle$ where $\mu\backslash\lambda$ has $n$ nodes of each color such that no $p$- and $(p+1)$-nodes \textit{vertically} adjacent.
Contrary to the case of $\ell=1$, these adjacency conditions on their own are quite weak: the $\mu$ that appear are not necessarily less or more dominant than $\lambda$ with the appropriate row or column appended to $\quot(\lambda)$.
However, the conditions acquire force when combined with two considerations:
\begin{enumerate}
\item In (\ref{HTilde}) and (\ref{ETilde}), each $\tilde{h}_{n}(p)$ and $\tilde{e}_n(p)$ adds boxes to $|\core(\lambda)\rangle$ in a certain way to obtain the set of $|\mu\rangle$ that appear in the expressions.
Choosing an ordering of the rows or columns of $\quot(\lambda)$, we obtain a tableau on $\mu$ with special properties.
Moreover, the commutativity of $\tilde{h}_n(p)$ and $\tilde{e}_n(p)$ imply that such $\mu$ must possess such a tableau for \textit{every} ordering of the columns/rows of $\quot(\lambda)$.
Although it seems opaque ate first, this turns out to be a very strong condition.
\item On the other hand, \textit{specific} orderings in which we apply the $\tilde{h}_n(p)$ and $\tilde{e}_n(p)$ can provide concrete constraints for the $\mu$ that appear.
\end{enumerate}
The necessary definitions for point (1) is addressed in \ref{LTableaux} and those for point (2) are addressed in \ref{Orders}.
Finally, we put them all together in \ref{ProofMain} to prove Theorem \ref{Main}.
Our definitions and arguments will only be shown in their entirety for proving (\ref{ETri}); those for (\ref{HTri}) can be obtained by transposing partitions.

\subsection{On $\ell$-tableaux}\label{LTableaux}
Viewing a partition $\mu$ as its set of nodes, recall that a \textit{tableau} on $\mu$ is a function $\mu\rightarrow\mathbb{N}$ that is weakly increasing along each row as one goes right and along each column as one goes up.

\begin{defn}
For $\ell>0$, an \textit{$\ell$-tableau} on $\mu$ is a tableau $T:\mu\rightarrow\ZZ_{\ge 0}$ such that:
\begin{itemize}
\item $T^{-1}(0)=\core(\mu)$;
\item for each label $k\ge 1$, $T^{-1}(k)$ contains an equal number of nodes of each color.
\end{itemize}
For each label $k$, we define the \textit{truncation} to be
\[
\mu(T,k):=\bigcup_{j=0}^k T^{-1}(j)
\]
Note that $\mu(T,k)\subset\mu$ is a partition with the same $\ell$-core.
\end{defn}

\subsubsection{Strictness and content}
Now, let us include the datum of another partition $\lambda$.

\begin{defn}
Let $\mathcal{O}_c$ and $\mathcal{O}_r$ be orderings of the columns and rows of $\quot(\lambda)$, respectively.
For $\mu$ with $\core(\mu)=\core(\lambda)$, we define the following:
\begin{enumerate}
\item \textit{Row-strict $\ell$-tableau of content $(\lambda,\mathcal{O}_c)$}:
This is an $\ell$-tableau $T_c$ on $\mu$ whose max value is the number of columns in $\quot(\lambda)$.
If the $k$th column in the ordering $\mathcal{O}_c$ has length $n$ and color $p$, then:
\begin{itemize}
\item $T^{-1}_c(k)$ has exactly $n$ nodes of each color;
\item no $p$- and $(p+1)$-nodes in $T_c^{-1}(k)$ are \textit{horizontally} adjacent.
\end{itemize}
\item \textit{Column-strict $\ell$-tableau of content $(\lambda,\mathcal{O}_r)$}:
This is an $\ell$-tableau $T_r$ on $\mu$ whose max value is the number of rows in $\quot(\lambda)$.
If the $k$th row in the ordering $\mathcal{O}_r$ has length $n$ and color $p$, then:
\begin{itemize}
\item $T_r^{-1}(k)$ has exactly $n$ nodes of each color labeled by $k$;
\item no $p$- and $(p+1)$-nodes in $T_r^{-1}(k)$ are \textit{vertically} adjacent.
\end{itemize}
\end{enumerate}
\end{defn}

\noindent In the case $\ell=1$ and the usual (right-left and bottom-top) orderings on columns and rows, we obtain the row-strict and column-strict tableaux.

Given an order $\mathcal{O}_c$ on the columns of $\quot(\lambda)$, we denote by $\lambda(\mathcal{O}_c,k)$ the partition with the same $\ell$-core but with $\ell$-quotient obtained by removing all columns $\quot(\lambda)$ that are $>k$ in the order $\mathcal{O}_c$.
For an order $\mathcal{O}_r$, we similarly define $\lambda(\mathcal{O}_r,k)$.

\begin{defn}\label{TabloidDef}
Let $\lambda$ and $\mu$ be partitions with $\core(\lambda)=\core(\mu)$ and $|\lambda|=|\mu|$. 
Suppose $\quot(\lambda)$ has $M$ rows and $N$ columns.
We give the following recursive definitions:
\begin{enumerate}
\item We call $\mu$ \textit{strongly column-strict $\lambda$-tabloidizable} if for every ordering $\mathcal{O}_r$ of the rows of $\quot(\lambda)$, $\mu$ has a column-strict $\ell$-tableau $T_{\lambda,\mathcal{O}_r}$ of content $(\lambda,\mathcal{O}_r)$ such that for any $k<M$, the truncation $\mu(T_{\lambda,\mathcal{O}_r},k)$ is strongly column-strict $\lambda(\mathcal{O}_r,k)$-tabloidizable.
\item We call $\mu$ \textit{strongly row-strict $\lambda$-tabloidizable} if for every ordering $\mathcal{O}_c$ of the columns of $\quot(\lambda)$, $\mu$ has a row-strict $\ell$-tableau $T_{\lambda,\mathcal{O}_c}$ of content $(\lambda,\mathcal{O}_c)$ such that for any $k<N$, the truncation $\mu(T_{\lambda,\mathcal{O}_c},k)$ is strongly row-strict $\lambda(\mathcal{O}_c,k)$-tabloidizable.
\end{enumerate}
The base case $\lambda=\mu=\core(\lambda)$ vacuously satisfies these conditions.
\end{defn}

We summarize part of the discussion from \ref{CombInt} with the following:

\begin{prop}\label{TabProp}
Writing $\tilde{h}_\lambda$ in terms of the Fock basis, $|\mu\rangle$ appears with nonzero coefficient only if $\mu$ is strongly column-strict $\lambda$-tabloidizable.
Likewise, $|\mu\rangle$ appears with nonzero coefficient in the expansion for $\tilde{e}_\lambda$ only if $\mu$ is strongly row-strict $\lambda$-tabloidizable.
\end{prop}

\begin{proof}
For each part, the existence of some column/row-strict $\ell$-tableau for each order is explained in \ref{CombInt}.
The statements about truncations come from considering products of subsets of the $\tilde{h}_n(p)$'s and $\tilde{e}_n(p)$'s appearing in (\ref{HTilde}) and (\ref{ETilde}).
\end{proof}

\subsubsection{Strong $\ell$-dominance}
The conditions given in Definition \ref{TabloidDef} and Proposition \ref{TabProp} for a Fock basis vector $|\mu\rangle$ to appear in the expansion of $\tilde{e}_\lambda$ or $\tilde{h}_\lambda$ do not at first glance have anything to do with dominance order.
Here, we will formulate their relationship.
%

\begin{defn}
For $\lambda=(\lambda_1,\lambda_2,\ldots)$ and $p\in\ZZ/\ell\ZZ$, let
\[
\lambda_i(p):=\#\left\{ \square\in\lambda\,\bigg|\, \square\hbox{ is in row $i$ and has color $p$}\right\}
\]
Given another partition $\mu$ with $|\mu|=|\lambda|$ and $\core(\mu)=\core(\lambda)$, we say that $\lambda$ is \textit{strongly $\ell$-dominant} over $\mu$ if for all $k\in\ZZ_{\ge 1}$ and $p\in\ZZ/\ell\ZZ$,
\[
\lambda_1(p)+\cdots+\lambda_k(p)\ge \mu_1(p)+\cdots\mu_k(p)
\]
This relation is denoted by $\lambda\succeq_\ell\mu$.
\end{defn}

Note that since $\lambda_i=\lambda_i(0)+\cdots+\lambda_i(\ell-1)$, strong $\ell$-dominance implies dominance.
One way to view dominance order is as follows: $\mu\le \lambda$ if and only if it is possible to define a bijection $\mu\backslash\lambda\rightarrow\lambda\backslash\mu$ that sends each node to one that is strictly south of it.
On the other hand, strong $\ell$-dominance imposes that there exists such a bijection that respects color.
We enhance Proposition \ref{TabProp} to the following:
\begin{lem}\label{StrongLem}
We have the following:
\begin{enumerate}
\item If $\mu$ is strongly column-strict $\lambda$-tabloidizable, then $\nu\succeq_\ell\lambda$.
\item If $\mu$ is strongly row-strict $\lambda$-tabloidizable, then $\mu\preceq_\ell\lambda$.
\end{enumerate}
\end{lem}

We will prove Lemma \ref{StrongLem} in \ref{ProofMain} below.
Theorem \ref{Main} follows:

\begin{cor}\label{StrongCor}
$H_\lambda$ exists and moreover, $\mathrm{T}_{-0}\left(H_\lambda\otimes e^{\core(\lambda)}\right)$ is collinear to $|\lambda\rangle$.
\end{cor}

\begin{proof}
Lemma \ref{StrongLem} implies that
\begin{align*}
\tilde{h}_\lambda&\in\mathrm{span}\left\{ |\mu\rangle\, \middle|\, \mu\ge_\ell\lambda \right\}\\
\tilde{e}_\lambda&\in\mathrm{span}\left\{ |\mu\rangle\, \middle|\, \mu\le_\ell\lambda \right\}
\end{align*}
from which (\ref{HTri}) and (\ref{ETri}) follows.
We then have
\begin{equation*}
\mathrm{span}\{ \tilde{h}_\mu\, |\, \mu\ge_\ell\lambda \}\cap\mathrm{span}\{ \tilde{e}_\mu\, |\, \mu\le_\ell\lambda \}
= \mathrm{span}\left\{ |\mu\rangle\, \middle|\, \mu\ge_\ell\lambda \right\}\cap\mathrm{span}\left\{ |\mu\rangle\, \middle|\, \mu\le_\ell\lambda \right\}= \FF|\lambda\rangle
\end{equation*}
In particular, this independently shows that the left-hand-side is nonzero.
It is necessarily spanned by $\mathrm{T}_{-0}\left( H_\lambda\otimes e^{\core(\lambda)} \right)$.
\end{proof}

\subsubsection{Examples}
We end this subsection with some examples to illustrate both the necessity and the strength of the conditions imposed by Definition \ref{TabloidDef}.
Throughout, $\ell=5$ and $\lambda=(12, 9, 9, 2)$:
\begin{equation*}
\begin{tikzpicture}[scale=.5]
\draw (-1,2) node {$\lambda=$};;
\draw (0,0)--(12,0)--(12,1)--(9,1)--(9,3)--(2,3)--(2,4)--(0,4)--(0,0);;
\draw (1,0)--(1,4);;
\draw (2,0)--(2,3);;
\draw (3,0)--(3,3);;
\draw (4,0)--(4,3);;
\draw (5,0)--(5,3);;
\draw (6,0)--(6,3);;
\draw (7,0)--(7,3);;
\draw (8,0)--(8,3);;
\draw (9,0)--(9,1);;
\draw (10,0)--(10,1);;
\draw (11,0)--(11,1);;
\draw (0,1)--(9,1);;
\draw (0,2)--(9,2);;
\draw (0,3)--(2,3);;
\end{tikzpicture}
\end{equation*}
\noindent Note that $\core(\lambda)=(2)$ and $\quot(\lambda)=\left( \varnothing, \varnothing, (1^2), (2^2), \varnothing \right)$, where in the latter, the coordinates correspond to the colors $(0, 1, 2, 3, 4)$.
We will only consider row-strict $\ell$-tableau.
To describe an order on the columns of $\quot(\lambda)$, we will use a parenthesized superscript to indicate color (although it is unnecessary in this example), e.g. $\mathcal{O}_c=(2^{(3)}, 1^{(2)}, 1^{(2)}, 2^{(3)})$.

\begin{enumerate}
\item $\mu=(12,10, 5, 2, 1,1,1)$:
Note that $\mu$ is incomparable to $\lambda$ with respect to dominance order:

\begin{equation*}
\begin{tikzpicture}[scale=.5]
\draw (-1,2) node {$\mu=$};;
\draw (0,0)--(12,0)--(12,1)--(10,1)--(10,2)--(5,2)--(5,3)--(2,3)--(2,4)--(1,4)--(1,7)--(0,7)--(0,0);;
\draw (1,0)--(1,4);;
\draw (2,0)--(2,3);;
\draw (3,0)--(3,3);;
\draw (4,0)--(4,3);;
\draw (5,0)--(5,3);;
\draw (6,0)--(6,2);;
\draw (7,0)--(7,2);;
\draw (8,0)--(8,2);;
\draw (9,0)--(9,2);;
\draw (10,0)--(10,1);;
\draw (11,0)--(11,1);;
\draw (0,1)--(10,1);;
\draw (0,2)--(5,2);;
\draw (0,3)--(2,3);;
\draw (0,4)--(1,4);;
\draw (0,5)--(1,5);;
\draw (0,6)--(1,6);;
\end{tikzpicture}
\end{equation*}

\noindent Even though $\mu\not\le_\ell\lambda$, it (miraculously) possesses an $\ell$-tableau of content $\left( \lambda, \left( 2^{(3)},2^{(3)}, 1^{(2)}, 1^{(2)} \right) \right)$:
\begin{equation*}
\begin{tikzpicture}[scale=.5]
\draw (0,0)--(12,0)--(12,1)--(10,1)--(10,2)--(5,2)--(5,3)--(2,3)--(2,4)--(1,4)--(1,7)--(0,7)--(0,0);;
\draw (1,0)--(1,4);;
\draw (2,0)--(2,3);;
\draw (3,0)--(3,3);;
\draw (4,0)--(4,3);;
\draw (5,0)--(5,3);;
\draw (6,0)--(6,2);;
\draw (7,0)--(7,2);;
\draw (8,0)--(8,2);;
\draw (9,0)--(9,2);;
\draw (10,0)--(10,1);;
\draw (11,0)--(11,1);;
\draw (0,1)--(10,1);;
\draw (0,2)--(5,2);;
\draw (0,3)--(2,3);;
\draw (0,4)--(1,4);;
\draw (0,5)--(1,5);;
\draw (0,6)--(1,6);;
\draw (0.5,0.5) node {$0$};;
\draw (1.5,0.5) node {$0$};;
\draw (2.5,0.5) node {$1$};;
\draw (3.5,0.5) node {$1$};;
\draw (4.5,0.5) node {$1$};;
\draw (5.5,0.5) node {$1$};;
\draw (6.5,0.5) node {$1$};;
\draw (7.5,0.5) node {$2$};;
\draw (8.5,0.5) node {$2$};;
\draw (9.5,0.5) node {$2$};;
\draw (10.5,0.5) node {$2$};;
\draw (11.5,0.5) node {$2$};;
\draw (0.5,1.5) node {$1$};;
\draw (1.5,1.5) node {$1$};;
\draw (2.5,1.5) node {$1$};;
\draw (3.5,1.5) node {$2$};;
\draw (4.5,1.5) node {$2$};;
\draw (5.5,1.5) node {$2$};;
\draw (6.5,1.5) node {$2$};;
\draw (7.5,1.5) node {$2$};;
\draw (8.5,1.5) node {$3$};;
\draw (9.5,1.5) node {$4$};;
\draw (0.5,2.5) node {$1$};;
\draw (1.5,2.5) node {$3$};;
\draw (2.5,2.5) node {$4$};;
\draw (3.5,2.5) node {$4$};;
\draw (4.5,2.5) node {$4$};;
\draw (0.5,3.5) node {$1$};;
\draw (1.5,3.5) node {$3$};;
\draw (0.5,4.5) node {$3$};;
\draw (0.5,5.5) node {$3$};;
\draw (0.5,6.5) node {$4$};;
\end{tikzpicture}
\end{equation*}
\noindent On the other hand, it does not possess an $\ell$-tableau of content $\left( \lambda, \left( 2^{(3)}, 1^{(2)}, 2^{(3)}, 1^{(2)} \right) \right)$.
We illustrate with an attempt below:
\begin{equation*}
\begin{tikzpicture}[scale=.5]
\draw[fill=lightgray] (8,1)--(8,2)--(10,2)--(10,1)--(8,1);;
\draw (0,0)--(12,0)--(12,1)--(10,1)--(10,2)--(5,2)--(5,3)--(2,3)--(2,4)--(1,4)--(1,7)--(0,7)--(0,0);;
\draw (1,0)--(1,4);;
\draw (2,0)--(2,3);;
\draw (3,0)--(3,3);;
\draw (4,0)--(4,3);;
\draw (5,0)--(5,3);;
\draw (6,0)--(6,2);;
\draw (7,0)--(7,2);;
\draw (8,0)--(8,2);;
\draw (9,0)--(9,2);;
\draw (10,0)--(10,1);;
\draw (11,0)--(11,1);;
\draw (0,1)--(10,1);;
\draw (0,2)--(5,2);;
\draw (0,3)--(2,3);;
\draw (0,4)--(1,4);;
\draw (0,5)--(1,5);;
\draw (0,6)--(1,6);;
\draw (0.5,0.5) node {$0$};;
\draw (1.5,0.5) node {$0$};;
\draw (2.5,0.5) node {$1$};;
\draw (3.5,0.5) node {$1$};;
\draw (4.5,0.5) node {$1$};;
\draw (5.5,0.5) node {$1$};;
\draw (6.5,0.5) node {$1$};;
\draw (7.5,0.5) node {$2$};;
\draw (8.5,0.5) node {$3$};;
\draw (9.5,0.5) node {$3$};;
\draw (10.5,0.5) node {$3$};;
\draw (11.5,0.5) node {$3$};;
\draw (0.5,1.5) node {$1$};;
\draw (1.5,1.5) node {$1$};;
\draw (2.5,1.5) node {$1$};;
\draw (3.5,1.5) node {$3$};;
\draw (4.5,1.5) node {$3$};;
\draw (5.5,1.5) node {$3$};;
\draw (6.5,1.5) node {$3$};;
\draw (7.5,1.5) node {$3$};;
\draw (0.5,2.5) node {$1$};;
\draw (1.5,2.5) node {$2$};;
\draw (2.5,2.5) node {$2$};;
\draw (3.5,2.5) node {$3$};;
\draw (0.5,3.5) node {$1$};;
\draw (1.5,3.5) node {$2$};;
\draw (0.5,4.5) node {$2$};;
\end{tikzpicture}
\end{equation*}
\noindent The two gray squares have colors $2$ and $3$, so they would violate the adjacency condition for label $4$.
However, we leave it to the reader to see that there is no way to save the situation by filling the left gray square with any of the previous labels.

\item $\mu=(11,10,9, 1,1)$:
We have that $\mu\le_\ell\lambda$ but $\mu\not\preceq_\ell\lambda$:

\begin{equation*}
\begin{tikzpicture}[scale=.5]
\draw (-1,2) node {$\mu=$};;
\draw (0,0)--(11,0)--(11,1)--(10,1)--(10,2)--(9,2)--(9,3)--(1,3)--(1,5)--(0,5)--(0,0);;
\draw (1,0)--(1,3);;
\draw (2,0)--(2,3);;
\draw (3,0)--(3,3);;
\draw (4,0)--(4,3);;
\draw (5,0)--(5,3);;
\draw (6,0)--(6,3);;
\draw (7,0)--(7,3);;
\draw (8,0)--(8,3);;
\draw (9,0)--(9,2);;
\draw (10,0)--(10,1);;
\draw (11,0)--(11,1);;
\draw (0,1)--(10,1);;
\draw (0,2)--(9,2);;
\draw (0,3)--(1,3);;
\draw (0,4)--(1,4);;
\end{tikzpicture}
\end{equation*}

\noindent It lacks an $\ell$-tableau of content $\left( \lambda, \left( 2^{(3)}, 2^{(3)}, 1^{(2)}, 1^{(2)} \right) \right)$.
An attempt is given below:

\begin{equation*}
\begin{tikzpicture}[scale=.5]
\draw[fill=lightgray] (4,2)--(6,2)--(6,3)--(4,3)--(4,2);;
\draw[fill=lightgray] (8,1)--(10,1)--(10,2)--(8,2)--(8,1);;
\draw (0,0)--(11,0)--(11,1)--(10,1)--(10,2)--(9,2)--(9,3)--(1,3)--(1,5)--(0,5)--(0,0);;
\draw (1,0)--(1,3);;
\draw (2,0)--(2,3);;
\draw (3,0)--(3,3);;
\draw (4,0)--(4,3);;
\draw (5,0)--(5,3);;
\draw (6,0)--(6,3);;
\draw (7,0)--(7,3);;
\draw (8,0)--(8,3);;
\draw (9,0)--(9,2);;
\draw (10,0)--(10,1);;
\draw (11,0)--(11,1);;
\draw (0,1)--(10,1);;
\draw (0,2)--(9,2);;
\draw (0,3)--(1,3);;
\draw (0,4)--(1,4);;
\draw (0.5,0.5) node {$0$};;
\draw (1.5,0.5) node {$0$};;
\draw (2.5,0.5) node {$1$};;
\draw (3.5,0.5) node {$1$};;
\draw (4.5,0.5) node {$1$};;
\draw (5.5,0.5) node {$1$};;
\draw (6.5,0.5) node {$1$};;
\draw (7.5,0.5) node {$2$};;
\draw (8.5,0.5) node {$2$};;
\draw (0.5,1.5) node {$1$};;
\draw (1.5,1.5) node {$1$};;
\draw (2.5,1.5) node {$1$};;
\draw (3.5,1.5) node {$2$};;
\draw (4.5,1.5) node {$2$};;
\draw (5.5,1.5) node {$2$};;
\draw (6.5,1.5) node {$2$};;
\draw (7.5,1.5) node {$2$};;
\draw (0.5,2.5) node {$1$};;
\draw (1.5,2.5) node {$2$};;
\draw (2.5,2.5) node {$2$};;
\draw (3.5,2.5) node {$2$};;
\draw (0.5,3.5) node {$1$};;
\end{tikzpicture}
\end{equation*}

\noindent Both pairs of shaded boxes are pairs of $2$- and $3$-nodes, which cannot be filled by the labels $3$ and $4$.
There is no way to place labels $1$ and $2$ so that either left gray square is filled.

\end{enumerate}

\subsection{Orders}\label{Orders} 
Recall that for a strip, we call its northwesternmost node its \textit{initial} node and its southwesternmost node its \textit{final node}.
These are also the nodes with the greatest and least content, respectively.

\subsubsection{Order on columns}
Suppose we are building up a partition $\lambda$ from $\core(\lambda)$ by adding columns to the $\ell$-quotient. 
Recall that by Remark \ref{Rem12}(1), if we add the columns in an order such that for each component partition of the $\ell$-quotient, longer columns are added before shorter columns, then adding a column of length $n$ corresponds to adding an $n\ell$-strip to the partition. 
We define an order on the columns of $\quot(\lambda)$ that refine the left-to-right order of columns in each component; since longer columns are before shorter columns in each component, we then obtain a tiling of $\lambda\backslash\core(\lambda)$ by strips.

\begin{defn}
The \textit{The initial order on columns} is defined as follows. 
For each $\lambda$, there is a unique ordering on the columns of $\quot(\lambda)$ so that for $r<s$, the \textit{initial} node of the strip corresponding to the $r$th column has content strictly \textit{greater} than that of the \textit{initial} node of the strip corresponding to the $s$th column.
\end{defn}
We illustrate this column order for $\lambda=(4^22^4)$ with $\ell=3$ in Figure \ref{fig:initcolumnorder} below. 

\begin{figure}[h]
\centering
\begin{tikzpicture}[scale=.5]
\draw (-9,0)--(-6,0)--(-6,1)--(-8,1)--(-8,2)--(-9,2)--(-9,0);;
\draw (-9,1)--(-8,1);;
\draw (-7,0)--(-7,1);;
\draw (-8,0)--(-8,1);;
\draw (-7.5,-1) node {$\lambda(0)=\core(\lambda)$};;
\draw (-5,1) node {$\mapsto$};;
\draw [fill=lightgray] (-4,2)--(-4,6)--(-3,6)--(-3,3)--(-2,3)--(-2,1)--(-3,1)--(-3,2)--(-4,2);;
\draw (-4,0)--(-1,0)--(-1,1)--(-2,1)--(-2,3)--(-3,3)--(-3,6)--(-4,6)--(-4,0);;
\draw (-9,1)--(-8,1);;
\draw (-7,0)--(-7,1);;
\draw (-8,0)--(-8,1);;
\draw (-4,5)--(-3,5);;
\draw (-4,4)--(-3,4);;
\draw (-4,3)--(-3,3);;
\draw (-4,2)--(-2,2);;
\draw (-4,1)--(-2,1);;
\draw (-3,3)--(-3,0);;
\draw (-2,1)--(-2,0);;
\draw (-2.5,-1) node {$\lambda(1)$};;
\draw (0,1) node {$\mapsto$};;
\draw [fill=lightgray] (2,6)--(3,6)--(3,3)--(2,3)--(2,6);;
\draw (1,0)--(4,0)--(4,1)--(3,1)--(3,6)--(1,6)--(1,0);;
\draw (1,5)--(3,5);;
\draw (1,4)--(3,4);;
\draw (1,3)--(3,3);;
\draw (1,2)--(3,2);;
\draw (1,1)--(3,1);;
\draw (2,0)--(2,6);;
\draw (3,0)--(3,1);;
\draw (2.5,-1) node {$\lambda(2)$};;
\draw (5,1) node {$\mapsto$};;
\draw [fill=lightgray] (8,1)--(8,2)--(10,2)--(10,0)--(9,0)--(9,1)--(8,1);;
\draw (6,0)--(10,0)--(10,2)--(8,2)--(8,6)--(6,6)--(6,0);;
\draw (6,5)--(8,5);;
\draw (6,4)--(8,4);;
\draw (6,3)--(8,3);;
\draw (6,2)--(8,2);;
\draw (6,1)--(10,1);;
\draw (7,0)--(7,6);;
\draw (8,0)--(8,2);;
\draw (9,0)--(9,2);;
\draw (8,-1) node {$\lambda(3)=\lambda$};;
\draw (-11,-2) node {$\quot(\lambda(i)):$};;
\draw (-7.5,-2) node {$(\varnothing,\varnothing,\varnothing)$};;
\draw (-2.5,-2) node {$(\varnothing,\varnothing,(1^2))$};;
\draw (2.5,-2) node {$(\varnothing,(1),(1^2))$};;
\draw (8,-2) node {$(\varnothing,(1),(2^11^1))$};;
\draw (11,0) node {\hspace{1cm}};;
\end{tikzpicture}
\caption{The initial order on columns for $\lambda=(4^22^4)$ and $\ell=3$. 
Here, $\lambda(r)$ is the partition resulting from the $i$th column addition.}
\label{fig:initcolumnorder}
\end{figure}

The existence and uniqueness of this order comes from translating Remark \ref{Rem12}(1) back to $m(\lambda)$. 
For any partition $\rho$ and each $i\in\ZZ/\ell\ZZ$, $m_i(\rho)$ will contain a maximal semiinfinite string of black beads on the right---we call this the \textit{black sea}. 
Adding a column of length $n$ to the $i$th coordinate of $\quot(\rho)$ that is shorter than all the previous columns in its component corresponds to swapping the first black bead in the black sea of $m_i(\rho)$ with the white bead $n$ places to its left.
This white bead exists because we are adding shorter columns before longer ones within the $i$th coordinate of $\quot(\lambda)$.
We call this black bead the \textit{initial black bead} for the column.
We call the original position of the initial black bead its \textit{starting spot} and its final position its \textit{landing spot}.
The intial column order stipulates that the landing spot, when viewed in the total Maya diagram $m(\lambda)$, is right of those from the previous column additions.

\subsubsection{Critical height}
We will make use of the following:

\begin{defn}
For a partition $\lambda$ that is not an $\ell$-core, consider the strip corresponding to the last column of the initial order on columns of $\quot(\lambda)$.
The height of the northwesternmost node of this strip is called its \textit{critical height} and denoted $\eta_\lambda$.
For $\lambda$ an $\ell$-core, we set $\eta_\lambda=\infty$.
\end{defn}


\subsubsection{Order on rows}
The analogue for rows is straightforward but some things get flipped.

\begin{defn}
The \textit{The final order on rows} is defined as follows.
For each $\lambda$, there is a unique ordering on the rows of $\quot(\lambda)$ so that for $r<s$, the \textit{final} node of the strip corresponding to the $r$th row has content strictly \textit{less} than that of the \textit{final} node of the strip corresponding to the $s$th row.
\end{defn}

For the row orders, each row addition displaces a white bead from $m(\core(\lambda))$ to the right.
We analogously define the \textit{white sea}, the \textit{initial white bead} for the row, and that bead's starting and landing spots.
The final order merely dictates that the landing spot of the initial white bead for a newer row is left of the landing spots of the initial white beads from previous rows. 

\subsection{Proof of Theorem \ref{Main}}\label{ProofMain}
Recall that to prove Theorem \ref{Main}, it suffices to prove Lemma \ref{StrongLem}.
We will only prove Lemma \ref{StrongLem}(2), and we assume it by induction for partitions of size $<|\lambda|$.
The starting point is Definition \ref{TabloidDef} and Proposition \ref{TabProp}.
We need to show for all $k\ge 1$ and $p\in\ZZ/\ell\ZZ$,
\begin{equation}
\lambda_1(p)+\cdots+\lambda_k(p)\ge \mu_1(p)+\cdots+\mu_k(p)
\label{EllDom}
\end{equation}

\subsubsection{Above critical height}
An easy first case of (\ref{EllDom}) is when $k>\eta_\lambda$.
Let $\mathcal{O}_c^I$ denote the initial order on columns of $\quot(\lambda)$.
By Proposition \ref{TabProp}, $\mu$ has a row-strict $\ell$-tableau of content $(\lambda,\mathcal{O}_c^I)$, which we denote by $T_c^I$.
Suppose $\quot(\lambda)$ has $N$ columns and consider the penultimate truncations:
\begin{align*}
\mu'&:=\mu(T_c^I,N-1)\\
\lambda'&:=\lambda(\mathcal{O}_c^I,N-1)
\end{align*}
By Proposiiton \ref{TabProp} and induction, we have $\mu'\preceq_\ell\lambda'$.

If the last column in $\mathcal{O}_c^I$ has length $n$, then for any $k>\eta_\lambda$ and $p\in\ZZ/\ell\ZZ$, we have:
\begin{align*}
\mu_1(p)+\cdots+\mu_k(p)&\le\mu_1'(p)+\cdots+\mu_k'(p)+n\\
&\le\lambda_1'(p)+\cdots+\lambda_k'(p)+n\\
&= \lambda_1(p)+\cdots+\lambda_k(p)
\end{align*}
The first inequality is because $\mu\backslash\mu'$ contains $n$ $p$-nodes, and the second inequality follows from $\mu'\preceq_\ell\lambda'$.
The final equality is because when $k>\eta_\lambda$, all $n$ $p$-nodes of $\lambda\backslash\lambda'$ have already been added below height $k$.

\begin{rem}
For what follows, it is helpful to recall that for each value of the content congruent to $i$ mod $\ell$, we can associate a bead in $m_{i}(\lambda)$ and $m_{i-1}(\lambda)$.
Flipped around, we can associate to each $i$- or $(i+1)$-node a bead in $m_i(\lambda)$.
\end{rem}

\subsubsection{Below critical height with colors in $\quot(\lambda)$}\label{QuotColor}
Now consider (\ref{EllDom}) for $k\le\eta_\lambda$.
We will first consider the case where $p$ is such that the $p$th coordinate of $\quot(\lambda)$ is nonempty.
Thus, we can choose an ordering $\mathcal{O}_c^p$ of the columns of $\quot(\lambda)$ where the final column is the smallest column in the $p$th coordinate.
By Proposition \ref{TabProp}, $\mu$ has a row-strict $\ell$-tableau $T_c^p$ with content $(\lambda,\mathcal{O}_c^p)$.
The penultimate truncations are denoted by:
\begin{align*}
\mu''&:=\mu(T_c^p,N-1)\\
\lambda''&:=\lambda(\mathcal{O}_c^p,N-1)
\end{align*}
We have by Proposition \ref{TabProp} and induction that $\mu''\preceq_\ell\lambda''$.

Let $\square\in\mu\backslash\lambda$ be a $p$-node with height $\le\eta_\lambda$.
We claim that the $(p+1)$-node $\square_{\leftarrow}$ immediately to its left is in $\mu''\backslash\lambda''$.
To see that $\square_{\leftarrow}\not\in\lambda''$, we consider three cases which are illustrated in Figure \ref{fig:WhiteCase} below: 
\begin{enumerate}
\item \textit{$\square$ is not adjacent to $\lambda$}:
This is obvious because $\square_{\leftarrow}\not\in\lambda$.
\item \textit{$\square$ is adjacent to $\lambda\backslash\lambda''$}:
The content of the $p$-node $\square$ corresponds to a bead in $m_p(\lambda)$.
In this case, that bead is white and we have $\square_{\leftarrow}\in\lambda\backslash\lambda''$.
\item \textit{$\square$ is adjacent to the border of $\lambda$ but not $\lambda\backslash\lambda''$}:
By the definition of the initial order, any node lying at or below height $\eta_\lambda$ cannot lie above the strip $\lambda\backslash\lambda''$.
Thus, in this case, $\square$ sits below $\lambda\backslash\lambda''$.
Following the reasoning from case (2), the content of the $p$-node $\square$ corresponds to a bead in the black sea of $m_p(\lambda)$.
This implies that $\square_\leftarrow\in\mu\backslash\lambda$.
\end{enumerate}
For all cases, $\square_\leftarrow\in\mu''$ due to the row-strict $\ell$-tableau condition.

\begin{figure}[h]
\centering
\begin{tikzpicture}
\draw[fill=lightgray] (2,1)--(3,1)--(3,2)--(2,2)--(2,1);;
\draw (2.5,1.5) node {$(2)$};;
\draw (2,1)--(3,1)--(3,2)--(2,2)--(2,1);;
\draw[fill=lightgray] (3,2)--(4,2)--(4,3)--(3,3)--(3,2);;
\draw (3.5,2.5) node {$(1)$};;
\draw[fill=lightgray] (5,0)--(6,0)--(6,1)--(5,1)--(5,0);;
\draw (5.5,0.5) node {$(3)$};;
\draw[->] (4.5,1.25)--(4.5,.1);;
\draw (4.5,1.5) node {black bead};;
\draw[->] (1.5,0.75)--(1.5,1.5)--(1.9,1.5);;
\draw (1.5,0.5) node {white bead};;
\draw[thick] (0,3)--(2,3)--(2,1)--(4,1)--(4,0)--(7,0);;
\draw (0.5,2.5) node {$\lambda$};;
\draw[thick] (0,2)--(1,2)--(1,1)--(2,1);;
\draw (0.5,1.5) node {$\lambda''$};;
\end{tikzpicture}
\caption{The three cases of the $p$-node $\square$ from \ref{QuotColor} are displayed here as gray squares.
In all three cases, the $(p+1)$-node immediately to its left is outside of $\lambda''$.}
\label{fig:WhiteCase}
\end{figure}

Because $\mu''\preceq_\ell\lambda''$, for each $(p+1)$-node $\boxdot\in\mu''\backslash\lambda''$, we can assign its own $(p+1)$-node $\varsigma(\boxdot)\in\lambda''\backslash\mu''$ below it.
We claim that for every $(p+1)$-node $\blacksquare\in\lambda''\backslash\mu''$ at height $\le \eta_\lambda$, the $p$-node $\blacksquare_\rightarrow$ immediately to its right is in $\lambda\backslash\mu$. 
First, to see that $\blacksquare_{\rightarrow}\in\lambda$, we consider three cases, which we illustrate in Figure \ref{fig:BlackCase} below:
\begin{enumerate}
\item \textit{$\blacksquare$ does not sit in the border of $\lambda''$}:
This is trivial because $\blacksquare_\rightarrow\in\lambda''\subset\lambda$.
\item \textit{$\blacksquare$ is adjacent to $\lambda\backslash\lambda''$}:
As in the previous paragraph, the content of the $(p+1)$-node $\blacksquare$ corresponds to a white bead in $m_p(\lambda'')$.
Since $\lambda\backslash\lambda''$ starts at a $p$-node and ends at a $(p+1)$-node, it follows that $\blacksquare_\rightarrow\in\lambda\backslash\lambda''$.
\item \textit{$\blacksquare$ sits in the border of $\lambda''$ but is not adjacent to $\lambda\backslash\lambda''$}:
As in the previous paragraph, the content of the $(p+1)$-node $\blacksquare$ now corresponds to a bead in the black sea of $m_p(\lambda'')$.
This implies that $\blacksquare_\rightarrow\in\lambda$.
\end{enumerate}
In both cases, since $\blacksquare\not\in\mu''$, the row-strict $\ell$-tableau condition forces $\blacksquare_\rightarrow\not\in\mu$.

\begin{figure}[h]
\centering
\begin{tikzpicture}
\draw[fill=lightgray] (0,2)--(1,2)--(1,1)--(0,1)--(0,2);;
\draw (0.5,1.5) node {$(1)$};;
\draw[fill=lightgray] (1,3)--(2,3)--(2,2)--(1,2)--(1,3);;
\draw (1.5,2.5) node {$(2)$};;
\draw[fill=lightgray] (3,1)--(4,1)--(4,0)--(3,0)--(3,1);;
\draw (3.5,0.5) node {$(3)$};;
\draw[thick] (-1,3)--(2,3)--(2,2)--(4,2)--(4,1)--(6,1)--(6,0);;
\draw (-0.5,2.5) node {$\lambda''$};;
\draw[thick] (-1,4)--(3,4)--(3,3)--(5,3)--(5,1);;
\draw (-0.5,3.5) node {$\lambda$};;
\draw[->] (5.5,1.5)--(4.5,1.5)--(4.5,1.1);;
\draw (6.5,1.5) node {black bead};;
\draw[->] (4.5,3.25)--(4.5,2.5)--(2.1,2.5);;
\draw (4.5,3.5) node {white bead};;
\end{tikzpicture}
\caption{The three cases of the $(p+1)$-node $\blacksquare$ from \ref{QuotColor} are displayed here as gray squares.
In all three cases, the $p$-node immediately to its right is inside $\lambda$.}
\label{fig:BlackCase}
\end{figure}

Altogether,  we have a sequence of injective maps 
\[
\square\mapsto\square_{\leftarrow}\mapsto \varsigma(\square_{\leftarrow})\mapsto\varsigma(\square_{\leftarrow})_{\rightarrow}
\]
that sends a $p$-node $\square\in\mu\backslash\lambda$ of height $\le\eta_\lambda$ to a $p$-node $\varsigma(\square_\leftarrow)_{\rightarrow}\in\lambda\backslash\mu$.
The outer two maps preserve height while the middle map decreases height, so the composition decreases height overall.
This implies (\ref{EllDom}) for all $k\le\eta_\lambda$.

\subsubsection{Remaining colors}\label{Remain}
Altogether, we have established (\ref{EllDom}) for all $k$ and all colors whose coordinate in $\quot(\lambda)$ is nonempty.
For color $p$ whose coordinate in $\quot(\lambda)$ is empty, we will prove (\ref{EllDom}) for all $k$ by induction \textit{downwards} along cyclic order, assuming (\ref{EllDom}) holds for $p+1$.
Note that for the $p$ we are considering, $m_p(\lambda)$ is a shifted Maya diagram for the empty partition.
Thus, from left to right, it consists entirely of its white sea followed by its black sea.

Let $\left( \mu\backslash\lambda \right)(p)$ and $\left( \lambda\backslash\mu \right)(p)$ denote the nodes of color $p$ in the respective skew diagrams.
Our goal is to construct a bijection $\varsigma_p:\left( \mu\backslash\lambda \right)(p)\rightarrow\left( \lambda\backslash\mu \right)(p)$ that sends every node to one below it.
We will do this in a piecewise manner:
\begin{enumerate}
\item For any $\square\in(\lambda\backslash\mu)(p)$ with content corresponding to a bead in the \textit{white} sea of $m_p(\lambda)$, we find a $\varsigma^{-1}_p(\square)\in(\mu\backslash\lambda)(p)$ to its north.
\item For any $\blacksquare\in(\mu\backslash\lambda)(p)$ with content corresponding to a bead in the \textit{black} sea of $m_p(\lambda)$, we define a $\varsigma_p(\blacksquare)\in(\lambda\backslash\mu)(p)$ to its south.
\item The remaining nodes in the domain have content corresponding to a bead in the white sea of $m_p(\lambda)$, while the remaining nodes in the codomain have content corresponding to a bead in the black sea of $m_p(\lambda)$.
The latter are necessarily south of the former, so we define $\varsigma_p$ to be any bijection between these two sets of remaining nodes.
\end{enumerate}
By induction on $p$, we can assume that $\varsigma_{p+1}$ exists.

For step (1), we first show that the $(p+1)$-node $\square_{\uparrow}$ immediately above $\square$ lies in $\lambda$.
There are two cases:
\begin{enumerate}[(a)]
\item \textit{$\square$ does not lie in the border of $\lambda$}:
In this case, it is obvious that $\square_{\uparrow}\in\lambda$.
\item \textit{$\square$ lies in the border of $\lambda$}:
Because the bead in $m_p(\lambda)$ for this content is white, it follows that $\square_\uparrow\in\lambda$.
\end{enumerate}
In both cases, $\square\not\in\mu$ implies $\square_{\uparrow}\not\in\mu$, so we have $\square_{\uparrow}\in\lambda\backslash\mu$.
Now, by induction, we have the $(p+1)$-node $\boxdot:=\varsigma_{p+1}^{-1}(\square_{\uparrow})\in\mu\backslash\lambda$ above $\square_{\uparrow}$.
We claim that the $p$-node $\boxdot_{\downarrow}$ immediately below it is not in $\lambda$.
As before, there are two cases:
\begin{enumerate}[(a)]
\setcounter{enumi}{2}
\item \textit{$\boxdot$ is not adjacent to $\lambda$}:
In this case, it is obvious that $\boxdot_{\downarrow}\not\in\lambda$.
\item \textit{$\boxdot$ is adjacent to $\lambda$}:
Here, note that the content of $\boxdot_{\downarrow}$ is at least that of $\square$.
Thus the content of $\boxdot$ corresponds to a white bead in $m_p(\lambda)$.
It follows that $\boxdot_\downarrow\not\in\lambda$.
\end{enumerate}
In both cases, $\boxdot\in\mu$ implies $\boxdot_\downarrow\in\mu$ and thus $\boxdot_\downarrow\in\mu\backslash\lambda$.
We illustrate the two pairs of cases in Figure \ref{fig:Remain1} below.
Note that $\boxdot_\downarrow$ is above $\square$, so we set $\varsigma_p^{-1}(\square)=\boxdot_\downarrow$.
This way to assign values to $\varsigma_p$ is well-defined because $\varsigma_{p+1}$ is well-defined and we are just taking adjacent nodes.
Moreover, $\varsigma_p^{-1}(\square)$ is above $\square$.

\begin{figure}[h]
\centering
\begin{tikzpicture}
\draw[fill=lightgray] (2,1)--(1,1)--(1,2)--(2,2)--(2,1);;
\draw (1.5,1.5) node {(b)};;
\draw[fill=lightgray] (1,0)--(0,0)--(0,1)--(1,1)--(1,0);;
\draw (0.5,0.5) node {(a)};;
\draw[fill=lightgray] (3,3)--(2,3)--(2,4)--(3,4)--(3,3);;
\draw (2.5,3.5) node {(d)};;
\draw[fill=lightgray] (5,2)--(4,2)--(4,3)--(5,3)--(5,2);;
\draw (4.5,2.5) node {(c)};;
\draw[thick] (0,4)--(2,4)--(2,1)--(4,1)--(4,0);;
\draw (0.5,3.5) node {$\lambda$};;
\draw[->] (0.75,2.5)--(1.9,2.5);;
\draw (-.25,2.5) node {white bead};;
\end{tikzpicture}
\caption{The two pairs of cases from step (1) of \ref{Remain}.
Here, both $\square$ and $\boxdot$ are displayed here as gray squares, appropriately labeled by letter.}
\label{fig:Remain1}
\end{figure}

Step (2) is quite similar.
We first show that that the $(p+1)$-node $\blacksquare_{\leftarrow}$ immediately to the left of $\blacksquare$ is not in $\lambda$.
There are two cases:
\begin{enumerate}[(a)]
\item \textit{$\blacksquare$ is not adjacent to $\lambda$}:
Here, it is obvious that $\blacksquare_{\leftarrow}\not\in\lambda$.
\item \textit{$\blacksquare$ is adjacent to $\lambda$}:
Because the content of $\blacksquare$ corresponds to a black bead in $m_p(\lambda)$, it follows that $\blacksquare_{\leftarrow}\not\in\lambda$.
\end{enumerate}
For both cases, $\blacksquare\in\mu$ implies $\blacksquare_{\leftarrow}\in\mu$, so we have $\blacksquare_{\leftarrow}\in\mu\backslash\lambda$.
Recycling notation, consider the $(p+1)$-node $\boxdot:=\varsigma_{p+1}(\blacksquare_{\leftarrow})\in\lambda\backslash\mu$.
We claim that the $p$-node $\boxdot_{\rightarrow}$ directly to the right of $\boxdot$ is in $\lambda$.
The two cases are:
\begin{enumerate}[(a)]
\setcounter{enumi}{2}
\item \textit{$\boxdot$ does not lie in the border of $\lambda$}:
The containment $\boxdot_{\rightarrow}\in\lambda$ is obvious.
\item \textit{$\boxdot$ lies in the border of $\lambda$}:
The content of $\boxdot$ corresponds to a black bead in $m_p(\lambda)$, so $\boxdot_{\rightarrow}\in\lambda$.
\end{enumerate}
Note that $\boxdot_{\rightarrow}\not\in\mu$ if $\boxdot\not\in\mu$, so $\boxdot_{\rightarrow}\in\lambda\backslash\mu$.
The two pairs of cases are illustrated in Figure \ref{fig:Remain2} below.
We set $\varsigma_p(\blacksquare)=\boxdot_{\rightarrow}$.
This assignment is injective and downward because $\varsigma_{p+1}$ is injective and downward and we are taking adjacent nodes.
This completes step (2).

Step (3) is automatic, and thus we finish the proof.\qed

\begin{figure}[h]
\centering
\begin{tikzpicture}
\draw[fill=lightgray] (1,2)--(2,2)--(2,1)--(1,1)--(1,2);;
\draw (1.5, 1.5) node {(d)};;
\draw[fill=lightgray] (0,1)--(1,1)--(1,0)--(0,0)--(0,1);;
\draw (0.5, 0.5) node {(c)};;
\draw[fill=lightgray] (3,3)--(4,3)--(4,2)--(3,2)--(3,3);;
\draw (3.5, 2.5) node {(b)};;
\draw[fill=lightgray] (4,4)--(5,4)--(5,3)--(4,3)--(4,4);;
\draw (4.5, 3.5) node {(a)};;
\draw[thick] (0,4)--(1,4)--(1,2)--(4,2)--(4,0);;
\draw (0.5, 3.5) node {$\lambda$};;
\draw[->] (2.5,0.75)--(2.5,1.9);;
\draw (2.5,0.5) node {black bead};;
\end{tikzpicture}
\caption{The two pairs of cases from step (2) of \ref{Remain}.
Both $\blacksquare$ and $\boxdot$ are displayed as gray squares with appropriate letters attached.}
\label{fig:Remain2}
\end{figure}

\appendix
\renewcommand{\thesubsection}{A.\arabic{subsection}}

\section{Dual vertex representation}\label{DualVertexApp}
In this section of the Appendix, we give details on the dual vertex representation and prove an analogue of the Tsymbaliuk isomorphism.

\subsection{Dual vertex operators}
We begin by writing down the formulas for $\rho_{p,\vec{c}}\circ S$ applied to the generators of $\UTor$.
\begin{align}
\nonumber
(\rho_{p,\vec{\ccc}}\circ S)(e_i(z))&=-\qqq\ccc_iz^{-1+H_{i,0}}e^{\alpha_i}\exp\left(\sum_{k>0}\frac{\qqq^{\frac{k}{2}}}{[k]_\qqq}b_{i,-k}z^k\right)\\
\nonumber
&\times\exp\left(-\sum_{k>0}\frac{\qqq^{\frac{k}{2}}}{[k]_\qqq}b_{i,k}z^{-k}\right) ,\\
\label{AntiVertexF}
(\rho_{p,\vec{\ccc}}\circ S)(f_i(z))&=-\frac{(-1)^{\ell\delta_{i,0}}}{\qqq \ccc_i}z^{-1-H_{i,0}}e^{-\alpha_i}\exp\left(-\sum_{k>0}\frac{\qqq^{-\frac{k}{2}}}{[k]_\qqq}b_{i,-k}z^k\right)\\
\nonumber
&\times\exp\left(\sum_{k>0}\frac{\qqq^{-\frac{k}{2}}}{[k]_\qqq}b_{i,k}z^{-k}\right) ,\\
\nonumber
(\rho_{p,\vec{\ccc}}\circ S)(\psi_i^\pm(z))&=\exp\left(\mp(\qqq-\qqq^{-1})\sum_{k>0}b_{i,\pm k}z^{\mp k}\right)\qqq^{\mp\partial_{\alpha_i}},
\end{align}
\[
(\rho_{p,\vec{\ccc}}\circ S)(\gamma^{\frac{1}{2}})=\qqq^{-\frac{1}{2}},\,(\rho_{p,\vec{\ccc}}\circ S)(\qqq^{d_1})=\qqq^{d}
\]
Note that when applying the antipode, scaling $z$ by $\gamma$ in $\rho_{p,\vec{\ccc}}(e_{i}( z))$ and $\rho_{p,\vec{\ccc}}(f_i( z))$ multiplies $z^{1\pm H_{i,0}}$ by $\qqq^{1\pm\partial_{\alpha_i}}$, which cancels with the $\qqq^{\mp\partial_{\alpha_i}}$ coming from the extra $\psi_i^\pm(\gamma^{\frac{1}{2}}z)^{-1}$.

Let us document how each component of the vertex operators affects a dual vector.
For $v\otimes e^\alpha\in W_p$, we have
\begin{align*}
\langle v\otimes e^{\alpha}|z^{H_{i,0}}&= z^{H_{i,0}} (v\otimes e^\alpha)\langle v\otimes e^\alpha|\\
\langle v\otimes e^{\alpha}|e^{\pm\alpha_i}&= (-1)^{\ell\delta_{i,0}}\langle v\otimes e^{\alpha\mp \alpha_i}|\\
\langle v\otimes e^\alpha| \qqq^{\pm\partial_{\alpha_i}}&=\qqq^{\langle \pm\alpha_i,\alpha\rangle}\langle v\otimes e^\alpha|
\end{align*}
and for $k>0$,
\begin{align*}
\langle v\otimes e^{\alpha}|b_{i,k}&=\left\langle \frac{[k]_\qqq}{k}\left(-\ddd^{-k}[k]_\qqq b_{i-1,-k}+[2k]_{\qqq}b_{i,-k}-\ddd^k[k]_\qqq b_{i+1,-k}\right) v\otimes e^\alpha\right|\\
\langle v\otimes e^{\alpha}|b_{i,-k}&=\left\langle -\frac{b_{i,k}^\perp}{\qqq^k-\qqq^{-k}} v\otimes e^\alpha\right| 
\end{align*}
For $H=z^{H_{i,0}},e^{\pm\alpha_i}, \qqq^{\pm\partial_{\alpha_i}},b_{i,\pm k}$, let us abuse notation and denote
\[
H\langle v\otimes e^\alpha|:=\langle H(v\otimes e^\alpha)|
\]
In this way, we can write the dual representation $\rho_{p,\vec{\ccc}}^*$ as a \textit{left} action:
\begin{align*}
\rho_{p,\vec{\ccc}}^*(e_{i}(z))
&= -(-1)^{\ell\delta_{i,0}}\qqq\ccc_i
\exp\left( -\sum_{k>0}\frac{\qqq^{\frac{k}{2}}}{k}\left( -\ddd^{-k}[k]_\qqq b_{i-1,k} +[2k]_\qqq b_{i,-k}-\ddd^k[k]_\qqq b_{i+1,-k}\right)z^{-k} \right)\\
&\times\exp\left( -\sum_{k>0}\frac{\qqq^{\frac{k}{2}}}{[k]_\qqq\left(\qqq^k-\qqq^{-k}\right)}b_{i,k}^\perp z^k \right)
e^{-\alpha_i}
z^{-1+H_{i,0}}\\
\rho_{p,\vec{\ccc}}^*(f_{i}(z))
&= -\frac{1}{\qqq\ccc_i}
\exp\left( \sum_{k>0}\frac{\qqq^{\frac{k}{2}}}{k}\left( -\ddd^{-k}[k]_\qqq b_{i-1,k} +[2k]_\qqq b_{i,-k}-\ddd^k[k]_\qqq b_{i+1,-k}\right)z^{-k} \right)\\
&\times\exp\left( \sum_{k>0}\frac{\qqq^{\frac{k}{2}}}{[k]_\qqq\left(\qqq^k-\qqq^{-k}\right)}b_{i,k}^\perp z^k \right)
e^{\alpha_i}
z^{-1-H_{i,0}}\\
\rho_{p,\vec{\ccc}}^*(\psi^+_{i}(z))&= 
\exp\left( -(\qqq-\qqq^{-1})\sum_{k>0}\frac{[k]_\qqq}{k}\left(-\ddd^{-k}[k]_\qqq b_{i-1,-k}+[2k]_{\qqq}b_{i,-k}-\ddd^k[k]_\qqq b_{i+1,-k}\right)z^{-k} \right)\qqq^{-\partial_{\alpha_i}}\\
\rho_{p,\vec{\ccc}}^*(\psi^-_{i}(z))&= 
\exp\left(- (\qqq-\qqq^{-1})\sum_{k>0}\frac{1}{\qqq^k-\qqq^{-k}}b_{i,k}^\perp z^{k} \right)\qqq^{\partial_{\alpha_i}},
\end{align*}
\[\rho_{p,\vec{\ccc}}^*(\gamma^{\frac{1}{2}})=\qqq^{-\frac{1}{2}},\,\rho_{p,\vec{\ccc}}^*(\qqq^{d_1})=\qqq^{d}\]
Writing the dual vertex representation will aid in repeating the arguments from 3.3 of \cite{Tsym}.

\subsection{Dual Tsymbaliuk isomorphisms\protect\footnote{We thank O. Tsymbaliuk for suggesting the results presented here.}}
Let $\mathbb{1}_p^*:=\langle\mathbb{1}_p|\in W_p^*$ denote the dual vacuum.
The main result of this subsection is:
\begin{thm}\label{DualTsym1}
For $0\le p\le \ell-1$, the vacuum-to-vacuum map $\mathbb{1}_p^*\mapsto |\varnothing\rangle$ induces an isomorphism $\mathrm{T}_p^{*}:W^*_p\rightarrow\mathcal{F}$ between the representations $\rho_{p,\vec{\ccc}}^*\circ\varpi$ and $\tau_{p,\upsilon}^+$ of $'\ddot{U}$, where the parameters $\vec{\ccc}$ and $\upsilon$ are related by
\[
\upsilon=\frac{(-1)^{\frac{(\ell-2)(\ell-3)}{2}}}{\qqq\ddd^{\frac{\ell}{2}}\ccc_0\cdots\ccc_{\ell-1}}=\ddd^{-\frac{\ell}{2}}\uuu
\]
where $\uuu$ is as in Lemma \ref{Highwt}.
\end{thm}

\begin{proof}
As in the proof of Corollary \ref{BoseFermi2}, we need to show that $\mathbb{1}_p^*$ satisfies the following two properties:
\begin{itemize}
\item \textit{Eigenvector property}: $\mathbb{1}_p^*$ is an eigenvector for $\varpi(\psi^\pm_i(z))$ with eigenvalue
\begin{equation}
\varpi(\psi^\pm_i(z))\mathbb{1}_p^*=\delta_{i,p}\frac{\qqq^{-1}z-\qqq\ddd^{-\frac{\ell}{2}}\uuu}{z-\ddd^{-\frac{\ell}{2}}\uuu}\mathbb{1}_p^*
\label{EigenEq}
\end{equation}
Looking at coefficients, this is equivalent to
\begin{align}
\nonumber
\psi_{i,0}^{\pm 1}\mathbb{1}_p^*&= \qqq^{\mp\delta_{i,p}}\mathbb{1}_p^*\\
\label{CoeffEigen}
\psi_{i,\pm n}\mathbb{1}_p^*&= \mp\delta_{i,p}(\qqq-\qqq^{-1})\ddd^{\mp\frac{\ell n}{2}}\uuu^{\pm n}\mathbb{1}_p^*\hbox{ for }n>0
\end{align}
\item \textit{Lowest weight property}: $\mathbb{1}_p^*$ is annihilated by $\varpi(f_i(z))$ for all $i\in\ZZ/\ell\ZZ$.
\end{itemize}
We follow the strategy from \cite{Tsym}, wherein the proofs of the analogous properties are intertwined.
\begin{enumerate}
\item \textit{Zero modes and $\qqq^{d_2}$}: It is easy to see that the action of $\tau_{p,\upsilon}^-(\psi_{i,0})$ agrees with that of $(\rho_{p,\vec{\ccc}}^*\circ\varpi)(\psi_{i,0})$ for all $i\in\ZZ/\ell\ZZ$.
This is true as well for $\tau_{p,\upsilon}^-(\qqq^{d_2})$ and $(\rho_{p,\vec{\ccc}}^*\circ\varpi)(q^{d_2})$.
Since $\varpi(f_{i,0})=f_{i,0}$ for $i\not=0$, we have that $(\rho_{p,\vec{\ccc}}^*\circ\varpi)(f_{i,0})\mathbb{1}_p^*=0$.
For $\varpi(f_{0,0})$, we use formula (\ref{MikiF0}) from Proposition \ref{MikiProp}:
\begin{equation}
\varpi(f_{0,0})=\ddd^{-1}[e_{\ell-1,0},\cdots[e_{2,0},e_{1,-1}]_{\qqq^{-1}}\cdots]_{\qqq^{-1}}\psi_{0,0}^{-1}\gamma^{-1}\tag{\ref{MikiF0}}
\end{equation}
Note that
\begin{align*}
e_{i,0}\mathbb{1}_p^*&= 0\hbox{ for }i\not=p\\
e_{1,-1}\mathbb{1}_p^*&= 0
\end{align*}
Therefore, the only orderings in the multicommutator (\ref{MikiF0}) that could possibly have nonzero action on $\mathbb{1}_p^*$ must have $e_{p,0}$ on the right when $p\not=0$.
As in \textit{loc. cit.}, this must be followed by $e_{p\pm 1,0}$ and increase/decrease sequentially until arriving at $e_{1,-1}$, which acts by zero.
\item \textit{Action of $b_{i,1}$}: 
Taking logarithms in (\ref{EigenEq}), we need to show 
\begin{equation}
\varpi(b_{i,1})\mathbb{1}_p^*=-\delta_{i,p}\qqq\ddd^{-\frac{\ell}{2}}\uuu\mathbb{1}_p^*
\label{P1Eigen}
\end{equation}
To do this, we use (\ref{PlusI}) and (\ref{Plus0}) from Proposition \ref{MikiProp}:
\begin{align}
\varpi(b_{i,1})&= (-1)^{i+1}\ddd^{-i}[ [\cdots[ [\cdots [f_{0,0},f_{\ell-1,0}]_\qqq,\cdots, f_{i+1,0}]_\qqq, f_{1,0}]_\qqq, \cdots, f_{i-1,0}]_\qqq , f_{i,0}]_{\qqq^2}\tag{\ref{PlusI}}\\
\tag{\ref{Plus0}}
\varpi(b_{0,1})&= (-1)^\ell \ddd^{-(\ell-1)}[ [\cdots [f_{1,1},f_{2,0}]_\qqq,\cdots, f_{\ell-1,0}]_\qqq, f_{0,-1}]_{\qqq^2}
\end{align}
There are two cases:
\begin{enumerate}
\item $p\not=0$:
In this case, every operator in multicommutator (\ref{Plus0}) annihilates $\mathbb{1}_{p}^*$.
On the other hand, only $f_{0,0}$ in (\ref{PlusI}) has nonzero action on $\mathbb{1}_p^*$, from which it follows that the only summand with a possibly nonzero action is 
\[
(-1)^{i+\ell}\ddd^{-i}\qqq^{\ell}f_{i,0}f_{i-1,0}\cdots f_{1,0}f_{i+1,0}f_{i+2,0}\cdots f_{0,0}
\]
If $i\not=p$, then when acting with $f_{p,0}$, only one of $f_{p\pm 1,0}$ must have acted before it, and thus the extra weight $e^{\Lambda_p}$ will force $f_{p,0}$ to act by zero.
For $i=p$, the result is
\begin{align*}
&\frac{(-1)^{p}\ddd^{-\frac{\ell}{2}}}{\ccc_{0}\cdots\ccc_{\ell-1}}\left\langle1\otimes e^{\alpha_p} e^{\alpha_{p-1}}\cdots e^{\alpha_2} e^{\alpha_1}e^{\alpha_{p+1}}e^{\alpha_{p+2}}\cdots e^{\alpha_{\ell-1}}e^{\alpha_0}e^{\Lambda_p}\right|\\
&= \frac{(-1)^{\frac{(\ell-2)(\ell-3)}{2}+1}\ddd^{-\frac{\ell}{2}}}{\ccc_{0}\cdots\ccc_{\ell-1}}\mathbb{1}_p^*=-\qqq\ddd^{-\frac{\ell}{2}}\uuu\mathbb{1}_p^*
\end{align*}
\item $p=0$: Here, every operator in (\ref{PlusI}) annihilates $\mathbb{1}_p^*$.
In (\ref{Plus0}), only $f_{1,1}$ has nonzero action on $\mathbb{1}_p^*$.
Consequently, the only summand in the multicommutator that could have nonzero action is
\[
-\qqq^\ell\ddd^{-(\ell-1)}f_{0,-1} f_{\ell-1,0}\cdots f_{2,0}f_{1,1}
\]
Acting with this expression yields
\begin{align*}
&\frac{(-1)^{\ell+1}\ddd^{-\frac{\ell}{2}}}{\ccc_0\cdots\ccc_{\ell-1}}\langle 1\otimes e^{\alpha_0}e^{\alpha_{\ell-1}}\cdots e^{\alpha_2}e^{\alpha_1}|\\
&=\frac{(-1)^{\frac{(\ell-2)(\ell-3)}{2}+1}\ddd^{-\frac{\ell}{2}}}{\ccc_0\cdots\ccc_{\ell-1}}\mathbb{1}_0^*=-\qqq\ddd^{-\frac{\ell}{2}}\uuu\mathbb{1}_0^*
\end{align*}
\end{enumerate}
\item \textit{Action of $b_{i,-1}$}:
Here, we need
\begin{equation}
\varpi(b_{i,0})\mathbb{1}_p^*=-\delta_{i,p}\frac{\ddd^{\frac{\ell}{2}}}{\qqq\uuu}\mathbb{1}_p^*
\label{P-1Eigen}
\end{equation}
We will now use (\ref{MinusI}) and (\ref{Minus0}):
\begin{align}
\tag{\ref{MinusI}}
\varpi(b_{i,-1})&= (-1)^{i+1}\ddd^i[e_{i,0}, [\cdots,[e_{1,0}, [e_{i+1,0},\cdots,[e_{\ell-1,0},e_{0,0}]_{\qqq^{-1}}\cdots]_{\qqq^{-1}}]_{\qqq^{-1}}\cdots]_{\qqq^{-1}}]_{\qqq^{-2}}\\
\tag{\ref{Minus0}}
\varpi(b_{0,-1})&= (-1)^\ell \ddd^{\ell-1}[ e_{0,1},[e_{\ell-1,0},\cdots,[e_{2,0},e_{1,-1}]_{\qqq^{-1}}\cdots]_{\qqq^{-1}}]_{\qqq^{-2}}
\end{align}
As in (2), we split into two cases:
\begin{enumerate}
\item $p\not=0$:
Every operator in (\ref{Minus0}) will annihilate $\mathbb{1}_p^*$.
In (\ref{MinusI}), only $e_{p,0}$ will have a nonzero action and thus it must be furthest to the right.
Here, the extra $e^{\Lambda_p}$ forces $e_{0,0}$ to act by zero unless $e_{1,0}$ and $e_{\ell-1,0}$ are to its right.
From this, we can deduce that the only chance for a summand to have nonzero action on $\mathbb{1}_p^*$ is when $i=p$, in which case the summand must be
\[
(-1)^{p+\ell}\ddd^p\qqq^{-\ell} e_{0,0} e_{\ell-1,0}\cdots e_{p+1,0}e_{1,0}e_{2,0}\cdots e_{p,0}
\]
Acting with this term yields
\begin{align*}
&(-1)^{p+\ell}\ddd^{\frac{\ell}{2}}\ccc_0\cdots \ccc_{\ell-1}\langle 1\otimes e^{-\alpha_0}e^{-\alpha_{\ell-1}}\cdots e^{-\alpha_{p+1}}e^{-\alpha_1}e^{-\alpha_2}\cdots e^{-\alpha_p}e^{\Lambda_p}|\\
&= (-1)^{\frac{(\ell-2)(\ell-3)}{2}+1}\ddd^{\frac{\ell}{2}}\ccc_0\cdots\ccc_{\ell-1}\mathbb{1}_p^*=-\frac{\ddd^{\frac{\ell}{2}}}{\qqq\uuu}\mathbb{1}_p^*
\end{align*}
\item $p=0$: Like in (2)(b), every operator in (\ref{MinusI}) acts trivally on $\mathbb{1}_p^*$.
In (\ref{Minus0}), only $e_{0,1}$ has a nonzero action, and thus the only summand that might not annihilate $\mathbb{1}_p^*$ is
\[
-\ddd^{\ell-1}\qqq^{-\ell}e_{1,-1}e_{2,0}\cdots e_{\ell-1,0}e_{0,1}
\]
Acting with this gives us
\begin{align*}
&-\ddd^{\frac{\ell}{2}}\langle 1\otimes e^{-\alpha_1}e^{-\alpha_2}\cdots e^{-\alpha_{\ell-1}}e^{-\alpha_0}|\\
&= (-1)^{\frac{(\ell-2)(\ell-3)}{2}+1}\ddd^{\frac{\ell}{2}}\ccc_0\cdots\ccc_{\ell-1}\mathbb{1}_0^*=-\frac{\ddd^{\frac{\ell}{2}}}{\qqq\uuu}\mathbb{1}_0^*
\end{align*}
\end{enumerate}
\item \textit{Bootstrap to other modes}:
To access $\varpi(f_{i,n})$ for $n\not=0$ and $\varpi(\psi_{i,\pm n})$ for $n\ge 2$, we apply $\varpi$ to the the toroidal relations:
\begin{align}
\label{OtherF}
[b_{i,\pm 1},f_{i,n}]&= \mp(\qqq+\qqq^{-1})\gamma^{\frac{1}{2}}f_{i,n\pm 1}\\
\label{OtherPsi}
[e_{i,0},f_{i,\pm m}]&= \pm\frac{1}{\qqq-\qqq^{-1}}\gamma^{\mp\frac{m}{2}}\psi_{i,\pm m}\hbox{ for }m\ge 1
\end{align}
Starting with $f_{i,0}$, it easily follows from (1), (2), (3), and (\ref{OtherF}) that $\varpi(f_i(z))$ annihilates $\mathbb{1}_p^*$ for all $i\in\ZZ/\ell\ZZ$.
Applying this to (\ref{OtherPsi}), we have
\begin{equation}
\varpi(\psi_{i,\pm m})\mathbb{1}_p^*=\mp (\qqq-\qqq^{-1})(\rho_{p,\vec{\ccc}}^*\circ\varpi)(f_{i,\pm m}e_{i,0})\mathbb{1}_{p}^*
\label{PsiEF}
\end{equation}
We compute the right hand side one step at a time:
\begin{enumerate}
\item \textit{Action of $e_{i,0}$}:
For $i\not=0$, we use $\varpi(e_{i,0})=e_{i,0}$ to compute
\begin{equation*}
\varpi(e_{i,0})\mathbb{1}_p^*
=-\delta_{i,p}\qqq\ccc_p\langle 1\otimes e^{-\alpha_p}e^{\Lambda_p}|
\end{equation*}
For $i=0$, we use (\ref{MikiE0}):
\begin{equation}
\varpi(e_{0,0})= \ddd\gamma\psi_{0,0}[ \cdots[f_{1,1},f_{2,0}]_\qqq,\cdots, f_{\ell-1,0}]_\qqq
\tag{\ref{MikiE0}}
\end{equation}
Only $f_{1,1}$ can possibly have nonzero action on $\mathbb{1}_p^*$, and in this case, we must have $p\not=1$.
The only summand with nonzero action is then
\[
\ddd\gamma\psi_{0,0}(-\qqq)^{\ell-2}f_{\ell-1,0}f_{\ell-2,0}\cdots f_{2,0}f_{1,1}
\]
If $p\not=0$, $f_{p,0}$ will act by zero due to the extra $e^{\Lambda_p}$.
For $p=0$, we get
\begin{equation*}
\begin{aligned}
\varpi(e_{0,0})\mathbb{1}_0^*
&=\frac{\ddd}{\ccc_1\cdots\ccc_{\ell-1}}\psi_{0,0}\langle 1\otimes e^{\alpha_{\ell-1}}e^{\alpha_{\ell-2}}\cdots e^{\alpha_2}e^{\alpha_1}|\\
&= \frac{(-1)^{\frac{(\ell-2)(\ell-3)}{2}}\ddd\qqq^{-2}}{\ccc_1\cdots\ccc_{\ell-1}}\langle 1\otimes e^{-\alpha_0}|
\end{aligned}
\end{equation*}
\item \textit{Action of $f_{i,\pm m}$}:
From part (a) above, we only need to consider the case $i=p$.
We can use (\ref{OtherF}) to inductively derive the action of $\varpi(f_{p,\pm m})$ starting from the actions of $\varpi(b_{p,\pm 1})$ and $\varpi(f_{p,0})$ on $\varpi(e_{p,0})\mathbb{1}_p^*$, provided that the latter actions are well-behaved.
In computing $(\rho_{p,\vec{\ccc}}^*\circ\varpi)(b_{p,\pm 1}e_{p,0})\mathbb{1}_p$, only one summand from the expressions (\ref{PlusI}), (\ref{Plus0}), (\ref{MinusI}), and (\ref{Minus0}) will survive.
For $p\not=0$, we have:
\begin{align*}
\varpi(b_{p, 1})\langle 1\otimes e^{-\alpha_p}e^{\Lambda_p}|&= 
(-1)^{p+\ell+1}\ddd^{-p}\qqq^{\ell-2}\\
&\times f_{p-1,0}\cdots f_{1,0}f_{p+1,0} \cdots f_{\ell-1,0}f_{0,0}f_{p,0}
\langle 1\otimes e^{-\alpha_p}e^{\Lambda_p}|\\
&= \frac{(-1)^{p+1}\qqq^{-2}\ddd^{-\frac{\ell}{2}}}{\ccc_0\cdots\ccc_{\ell-1}}\\
&\times\langle 1\otimes e^{\alpha_{p-1}}\cdots e^{\alpha_1}e^{\alpha_{p+1}}\cdots e^{\alpha_{\ell-1}}e^{\alpha_{0}}e^{-\alpha_p}e^{\Lambda_p}|\\
&= \qqq^{-1}\ddd^{-\frac{\ell}{2}}\uuu\langle 1\otimes e^{-\alpha_p}e^{\Lambda_p}|\\
\varpi(b_{p,-1})\langle 1\otimes e^{-\alpha_p}e^{\Lambda_p}|&=
(-1)^{p+\ell+1}\ddd^p\qqq^{-\ell+2}\ccc_0\cdots\ccc_{\ell-1}\\
&\times e_{p,0}e_{0,0}e_{\ell-1,0}\cdots e_{p+1,0}e_{1,0}\cdots e_{p-1,0}
\langle 1\otimes e^{-\alpha_p}e^{\Lambda_p}|\\
&=(-1)^{p+\ell+1}\qqq^2\ddd^{\frac{\ell}{2}}\ccc_0\cdots\ccc_{\ell-1}\\
&\times \langle 1\otimes e^{-\alpha_p}e^{-\alpha_0}e^{-\alpha_{\ell-1}}\cdots e^{-\alpha_{p+1}}e^{-\alpha_1}\cdots e^{-\alpha_{p-1}}e^{-\alpha_p}e^{\Lambda_p}| \\
&= \frac{\qqq\ddd^{\frac{\ell}{2}}}{\uuu}\langle 1\otimes e^{-\alpha_p}e^{\Lambda_p}|\\
\varpi(f_{p, 0})\langle 1\otimes e^{-\alpha_p}e^{\Lambda_p}|&= -\qqq^{-1}\ccc_p^{-1}\mathbb{1}_p^*
\end{align*}
The case $p=0$ is:
\begin{align*}
\varpi(b_{0,1})\langle 1\otimes e^{-\alpha_0}|&= 
\ddd^{-(\ell-1)}\qqq^{\ell-2}f_{\ell-1,0}\cdots f_{2,0}f_{1,1} f_{0,-1}\langle 1\otimes e^{-\alpha_0}|\\
&=\frac{\qqq^{-2}\ddd^{-\frac{\ell}{2}}}{\ccc_0\cdots\ccc_{\ell-1}}
\langle 1\otimes e^{\alpha_{\ell-1}}\cdots e^{\alpha_2}e^{\alpha_1}|\\
&= \qqq^{-1}\ddd^{-\frac{\ell}{2}}\uuu\langle 1\otimes e^{-\alpha_0}|\\
\varpi(b_{0,-1})\langle 1\otimes e^{-\alpha_0}|&=
\ddd^{\ell-1}\qqq^{-\ell+2}e_{0,1}e_{1,-1}e_{2,0}\cdots e_{\ell-1,0}\langle 1\otimes e^{-\alpha_0}|\\
&= (-1)^\ell\ddd^{\frac{\ell}{2}}\qqq^2\ccc_0\cdots\ccc_{\ell-1}
\langle 1\otimes e^{-\alpha_0}e^{-\alpha_1}e^{-\alpha_2}\cdots e^{-\alpha_{\ell-1}}e^{-\alpha_0}|\\
&= \frac{\qqq\ddd^{\frac{\ell}{2}}}{\uuu}\langle 1\otimes e^{-\alpha_0}|\\
\varpi(f_{0,0})\langle 1\otimes e^{-\alpha_0}|&= 
(-1)^\ell\ddd^{-1}\qqq^{-\ell+3}e_{1,-1}e_{2,0}\cdots e_{\ell-1,0}\langle 1\otimes e^{-\alpha_0}|\\
&=(-1)^{\frac{(\ell-2)(\ell-3)}{2}}\ddd^{-1}\qqq^2\ccc_1\cdots\ccc_{\ell-1}\mathbb{1}_0^*
\end{align*}
Combining these equations with (\ref{P1Eigen}), (\ref{P-1Eigen}), and (\ref{OtherF}), we have:
\begin{equation}
\varpi(f_{p,\pm m}e_{p,0})\mathbb{1}_0^*
=\pm(-1)^{\pm m}\ddd^{\mp\frac{m}{2}}\uuu^{\pm m}\mathbb{1}_0^*
\label{FAction}
\end{equation}
\end{enumerate}
Finally, we obtain (\ref{CoeffEigen}) by combining (\ref{FAction}) with (\ref{PsiEF}).\qedhere
\end{enumerate}
\end{proof}

As in \ref{Pairing}, we can define a pairing $W_p^*\times W_{-p}^*\rightarrow\FF$ under which the adjunction antihomomorphism for toroidal operators equals twisting by $\eta$.
With this, we can prove
\begin{cor}\label{DualTsym2}
For $0\le p\le \ell-1$, the vacuum-to-vacuum map $\mathbb{1}_{-p}^*\mapsto |\varnothing\rangle$ induces an isomorphism $\mathrm{T}_{-p}^{*}:W^*_{-p}\rightarrow\mathcal{F}$ between the representations $\rho_{-p,\vec{\ccc}}^*\circ\varpi^{-1}$ and $\tau_{p,\upsilon}^-$ of $'\ddot{U}'$, where the parameters $\vec{\ccc}$ and $\upsilon$ are related by
\[
\upsilon=\frac{1}{\ddd^{\frac{\ell}{2}}\uuu}
\]
\end{cor}

\section{Positive modes}\label{PosMode}
\renewcommand{\thesubsection}{B.\arabic{subsection}}
Here, we carry out the arguments from the bulk of Section 4 for $\Sss^+$ instead of $\Sss^-$.
We maintain the identification of parameters
\[
q=\qqq\ddd,\, t=\qqq\ddd^{-1}
\]
Let
\begin{align}
\nonumber E^+_{p,n}&:=
\Sym\left( \prod_{1\le r<s\le n}\left\{\frac{x_{p+1,r}-q^{-1}x_{p,s}}{x_{p+1,r}-tx_{p,s}}\prod_{i,j\in\ZZ/\ell\ZZ}\omega_{i,j}\left( x_{i,r},x_{j,s} \right)\right\}\right.\\
\nonumber&\times \left.\prod_{r=1}^n\left\{\left( \frac{x_{p,r}}{x_{0,r}}-q\frac{x_{p+1,r}}{x_{0,r}} \right)\prod_{i\in\ZZ/\ell\ZZ}x_{i,r}  \right\} \right)\\
\nonumber H_{p,n}^+&:=
\Sym\left( \prod_{1\le r<s\le n}\left\{\frac{t^{-1}x_{p+1,s}-x_{p,r}}{qx_{p+1,s}-x_{p,r}}\prod_{i,j\in\ZZ/\ell\ZZ}\omega_{i,j}\left( x_{i,r},x_{j,s} \right)\right\}\right.\\
\nonumber&\times \left.\prod_{r=1}^n\left\{\left( \frac{x_{p,r}}{x_{0,r}}-t^{-1}\frac{x_{p+1,r}}{x_{0,r}} \right)\prod_{i\in\ZZ/\ell\ZZ}x_{i,r}  \right\} \right)\\
\label{PosAlt}
\begin{split}
\sum_{n\ge 0}\hat{h}_{-n}^+(i) z^{-n}&:=\exp\left( (\qqq-\qqq^{-1})^{-1}\sum_{n>0}(-b_{i,n}^\perp+t^{-n}b_{i+1,n}^\perp)z^{-n} \right)\\
\sum_{n\ge 0}\hat{e}_{-n}^+(i) (-z)^{-n}&:=\exp\left( (\qqq-\qqq^{-1})^{-1}\sum_{n>0} (b_{i,n}^\perp-q^{n}b_{i+1,n}^\perp)z^{-n}\right)
\end{split}
\end{align}
and
\begin{align*}
\tilde{e}_{-n}^+(p)&:=\varpi^{-1}\hat{e}_{-n}^+(p)\\ 
\tilde{h}_{-n}^+(p)&:=\varpi^{-1}\hat{h}_{-n}^+(p)
\end{align*}
The main result of this appendix is:
\begin{prop}\label{ShufflePres}
We have
\begin{align*}
\tilde{e}^{+}_{-n}(p)&=c_{-n}\Psi_+ (E_{p,n}^+)\\
\tilde{h}^+_{-n}(p)&=c_{-n} \Psi_+(H_{p,n}^+)
\end{align*}
where
\[
c_{-n}= \frac{(-1)^{n\ell}t^{n\ell}\left( 1-\qqq^{-2} \right)^{n\ell}}{\qqq^n\prod_{r=1}^n\left( 1-(qt)^{-r} \right)}
\]
\end{prop}

\subsection{Limit conditions}
Let $\Sss(0)^+_{n\delta}\subset\Sss^+_{n\delta}$ be the subspace of functions satisfying: 
\begin{enumerate}
\item for all $\vec{k}\le n\delta$
\begin{align*}
\lim_{\xi\rightarrow 0}\xi^{r_0(\vec{k})}F_\xi^{\vec{k}}&<\infty\\
\lim_{\xi\rightarrow\infty}\xi^{r_\infty(\vec{k})}F_\xi^{\vec{k}}&<\infty
\end{align*}
\item for all diagonal $k\delta\le n\delta$,
\[\lim_{\xi\rightarrow 0}\xi^{-k\ell}F_\xi^{(a;b]}=\lim_{\xi\rightarrow\infty}\xi^{-k\ell}F_\xi^{k\delta}\]
\item and for all nondiagonal $(a;b]\le n\delta$ with $0\in(a;b]^-$,
\[
\lim_{\xi\rightarrow 0}\xi^{r_0( (a;b])}F_\xi^{(a;b]}=0
\]
\end{enumerate}
We then set $\Sss(0)^+:=\bigoplus_n\Sss(0)^+_{n\delta}$.
Note that the only difference between these limit conditions and those of \ref{LimitCond} lies in (3).

\subsection{Dual vacuum expectations of \textit{L}-operators}\label{DualCorr}
Recall that $\ddot{U}_+^0$ denotes the part of horizontal Heisenberg subalgebra generated by positive modes.
Our next step is to prove that $\Psi_+(\Sss(0)^+)=\varpi^{-1}(\ddot{U}_+^0)$.
To do so, we find a suitable generating set for both by taking the vacuum-to-vacuum matrix element in $W_{-p}^*$ of the right factor of $\mathcal{R}^\circ$.
Thanks to Corollary \ref{DualTsym2}, we obtain
\[\langle 1\otimes \mathbb{1}_{-p}^*|\mathcal{R}^\circ |1\otimes \mathbb{1}_{-p}^*\rangle
=\exp\left(\sum_{k>0}(-1)^{\ell k}\ddd^{\frac{\ell k}{2}}\frac{(\qqq^{2k}-1)}{(\qqq-\qqq^{-1})k}\varpi^{-1}(b_{p,k}^\perp)\uuu^{k}\right)\]
via arguments similar to those in \ref{LOpMiki}.
To find their corresponding shuffle elements, we need to compute the correlation functions for $\langle\mathbb{1}_{-p}^*|-|\mathbb{1}_{-p}^*\rangle$, but now on the currents $f_i(z)$:
\begin{align*}
 \left\langle\mathbb{1}_{-p}^*\left|\overset{\curvearrowright}{\prod_{i=0}^{\ell-1}}\overset{\curvearrowright}{\prod_{r=1}^{n}}\rho_{-p,\vec{\ccc}}^*(f_i(x_{i,r}))\right| \mathbb{1}_{-p}^*\right\rangle
=\left\langle\mathbb{1}_{-p}\left|\overset{\curvearrowleft}{\prod_{i=0}^{\ell-1}}\overset{\curvearrowleft}{\prod_{r=1}^{n}}(\rho_{-p,\vec{\ccc}}\circ S) (f_i(x_{i,r}))\right|\mathbb{1}_{-p}\right\rangle
\end{align*}
Writing this out using (\ref{AntiVertexF}) gives us
\begin{align*}
&\frac{1}{\qqq^{n\ell}}\left\langle\mathbb{1}_{-p}\left|
\overset{\curvearrowleft}{\prod_{i=0}^{\ell-1}}\overset{\curvearrowleft}{\prod_{r=1}^{n}}
\ccc_{i}^{-1}\exp\left(-\sum_{k>0}\frac{\qqq^{-\frac{k}{2}}}{[k]_\qqq}b_{i,-k}x_{i,r}^{k}\right)
\exp\left(\sum_{k>0}\frac{\qqq^{-\frac{k}{2}}}{[k]_\qqq}b_{i,k}x_{i,r}^{-k}\right)
e^{-\alpha_{i}}x_{i,r}^{1-H_{i,0}}
\right|\mathbb{1}_{-p}\right\rangle\\
&=\frac{(-1)^{\frac{n(\ell-2)(\ell-3)+\ell n(n-1)}{2}+n\ell}}{\qqq^{n\ell}(\ccc_0\cdots \ccc_{\ell-1})^n}\\
&\times\frac{\displaystyle \prod_{i\in\ZZ/\ell\ZZ}\prod_{1\le r<r'<n}(x_{i,r'}-x_{i,r})(x_{i,r'}-\qqq^{-2}x_{i,r})\prod_{r=1}^n x_{i,r}}
	{\displaystyle \prod_{r,s=1}^n\ddd^{-\frac{1}{2}}(x_{\ell-1,s}-\qqq^{-1}\ddd x_{0,r})\prod_{i=0}^{\ell-2}\prod_{r,s=1}^n\ddd^{\frac{1}{2}}(x_{i+1,s}-\qqq^{-1}\ddd^{-1} x_{i,r})}
	\cdot\prod_{r=1}^n\frac{x_{p,r}}{x_{0,r}}\\
&=(-1)^{ \frac{\ell n(n-1)}{2}+n\ell}\qqq^{-n(\ell-1)}\uuu^n\\
&\times\frac{\displaystyle \prod_{i\in\ZZ/\ell\ZZ}\prod_{1\le r<r'<n}(x_{i,r'}-x_{i,r})(x_{i,r'}-\qqq^{-2}x_{i,r})\prod_{r=1}^n x_{i,r}}
	{\displaystyle \prod_{r,s=1}^n\ddd^{-\frac{1}{2}}(x_{\ell-1,s}-\qqq^{-1}\ddd x_{0,r})\prod_{i=0}^{\ell-2}\prod_{r,s=1}^n\ddd^{\frac{1}{2}}(x_{i+1,s}-\qqq^{-1}\ddd^{-1} x_{i,r})}
	\cdot\prod_{r=1}^n\frac{x_{p,r}}{x_{0,r}}\\
&=\frac{(-1)^{ \frac{\ell n (n-1)}{2}+n\ell}\qqq^{-n(\ell-1)-\ell n(n-1)+\ell n^2} (-\uuu)^n}{\ddd^{\frac{\ell n^2}{2}}}\\
&\times\frac{\displaystyle\prod_{i\in\ZZ/\ell\ZZ}\prod_{1\le r<r'<n}(x_{i,r'}-x_{i,r})(\qqq^2x_{i,r'}-x_{i,r})\prod_{r=1}^n x_{i,r}}
	{\displaystyle\prod_{r,s=1}^n( x_{0,r}-\qqq\ddd^{-1}x_{\ell-1,s})\prod_{i=0}^{\ell-2}\prod_{r,s=1}^n( \qqq x_{i+1,s}-\ddd^{-1} x_{i,r})}
	\cdot\prod_{r=1}^n\frac{x_{p,r}}{x_{0,r}}\\
&=\frac{(-1)^{\frac{\ell n (n-1)}{2}+n\ell}\qqq^{n} (-\uuu)^n}{\ddd^{\frac{\ell n^2}{2}}}
\cdot
\frac{\displaystyle\prod_{i\in\ZZ/\ell\ZZ}\prod_{1\le r<r'<n}(x_{i,r}-x_{i,r'})(x_{i,r}-\qqq^{2}x_{i,r'})\prod_{r=1}^n x_{i,r}}
	{\displaystyle\prod_{r,s=1}^n(  x_{0,r}-\qqq\ddd^{-1} x_{\ell-1,s})\prod_{i=0}^{\ell-2}\prod_{r,s=1}^n( \qqq x_{i+1,s}-\ddd^{-1} x_{i,r})}
	\cdot\prod_{r=1}^n\frac{x_{p,r}}{x_{0,r}}
\end{align*}
Here, the rational function is expanded in the region
\[
\|x_{i,r}\|\gg\|x_{j,s}\|\hbox{ for }0\le i<j\le\ell-1
\]
Carrying on as in \ref{Loperators} gives us 
\begin{align}
\label{FirstShuffle}
\begin{split}
\sum_{n\ge 0} F_{p,n}^+\uuu^{n}
&:=\Psi_+^{-1}\left(\exp\left(\sum_{k>0}(-1)^{\ell k}\ddd^{\frac{\ell k}{2}}\frac{(\qqq^{2k}-1)}{(\qqq-\qqq^{-1})k}\varpi^{-1}(b_{p,k}^\perp)\uuu^{k}\right)\right)\\
&=\sum_{n\ge 0}(-\uuu)^{n}\frac{(-1)^{\frac{\ell n (n-1)}{2}+n\ell}\qqq^{n}(\qqq-\qqq^{-1})^{n\ell}}{\ddd^{\frac{\ell n^2}{2}}}\prod_{r=1}^n\frac{x_{p,r}}{x_{0,r}}\prod_{i\in\ZZ/\ell\ZZ}\prod_{r=1}^nx_{i,r}
\end{split}
\end{align}
Arguments similar to those in \ref{Gordon} show that $\{F_{p,n}^+\}$ generates $\Sss(0)^+$.

\subsection{Evaluation functionals}
Next, we turn our attention to the shuffle elements $R_{p,n}$ and $R_{p,n}^*$.
We originally viewed them as elements of $\Sss^+$, but we can also view them as elements of $\Sss^-$.
As before, we let $\Psi_-(R_{p,n})_0$ and $\Psi_-(R_{p,n}^*)_0$ denote the summands of $\Psi_-(R_{p,n})$ and $\Psi_-(R_{p,n}^*)_0$ sitting in $\varpi^{-1}(\ddot{U}_-^0)$, respectively. 
Our next step is to use the elements $F_{p,n}^+$ to explicitly write $\Psi_-(R_{p,n})_0$ and $\Psi_-(R_{p,n}^*)_0$ in terms of Heisenberg generators 

Similar to the proof of Corollary \ref{DualCurrentsCalc}, this entails computing the pairings $\langle F_{p,n}, R_{p,n}\rangle$ and $\langle F_{p,n}, R_{p,n}^*\rangle$.
By comparing the scalars in (\ref{FirstShuffle}) with those in the formula for $F_{p,n}$, we can recycle the computations in the proof of Lemma \ref{FuncCalc} to obtain for $1\le i\le \ell$, 
\begin{align*}
\langle F_{p+i,n}^+ , R_{p,n}\rangle
&=q^{n(2\ell-p-i)}\qqq^{2n-n\ell}\ddd^{-\frac{\ell n}{2}}\prod_{r=1}^n\frac{\qqq^{-2}-q^{(r-1)\ell}}{1-q^{r\ell}}\\
&= q^{n(\ell-p-i)}\ddd^{\frac{\ell n}{2}}\prod_{r=1}^n\frac{1-\qqq^2q^{(r-1)\ell}}{1-q^{r\ell}}\\
\langle F_{p+i,n}^+, R^*_{p,n}\rangle
&=(-1)^{n\ell}\qqq^{2n}t^{n(p+i-2\ell)}\ddd^{-\frac{\ell n}{2}}\prod_{r=1}^n\frac{1-\qqq^{-2}t^{-(r-1)\ell}}{1-t^{-r\ell}}\\
&=(-1)^{n\ell}\qqq^{-n\ell}t^{n(p+i-\ell)}\ddd^{\frac{\ell n}{2}}\prod_{r=1}^n\frac{\qqq^2-t^{-(r-1)\ell}}{1-t^{-r\ell}}
\end{align*}
Combining this with equation (\ref{FirstShuffle}) and the generalized partition identity, we deduce 
\begin{align}
\sum_{n\ge 0}\Psi_-(R_{p,n})_0z^n&=\exp\left(-(\qqq-\qqq^{-1})\sum_{n>0}\frac{\sum_{i=1}^{\ell}q^{n(\ell-p-i)}\varpi^{-1}(b_{p+i,-n})}{1-q^{n\ell}}\cdot\frac{z^{n}}{n}\right)\label{Rboson}\\
\sum_{n\ge0}\Psi_-(R_{p,n}^*)_0z^n&=\exp\left((\qqq-\qqq^{-1})\sum_{n>0}(-1)^{n\ell}\qqq^{-n\ell}\frac{\sum_{i=1}^{\ell}t^{n(p+i-\ell)}\varpi^{-1}(b_{p+i,-n})}{1-t^{-n\ell}}\cdot\frac{z^{n}}{n}\right)\label{Rstarboson}
\end{align}

\subsection{Proof of Proposition \ref{ShufflePres}}
From equations (\ref{PosAlt}), (\ref{Rboson}), and (\ref{Rstarboson}) we can compute
\begin{align*}
\varphi\left( \sum_{n\ge0}\tilde{e}_{-n}^+(p+i)(-z)^{-n},\sum_{n\ge0}\Psi_-(R_{p,n})w^n \right)&=
\left\{\begin{array}{ll}
1 &\hbox{if }i\not=0\\
1- q^{-p}\dfrac{w}{z}&\hbox{if }i=0
\end{array}\right.\\
\varphi\left( \sum_{n\ge0}\tilde{h}_{-n}^+(p+i)z^{-n},\sum_{n\ge0}\Psi_-(R^*_{p,n})w^n \right)&=
\left\{\begin{array}{ll}
1 &\hbox{if }i\not=0\\
1- (-1)^{\ell}\qqq^{-\ell}t^{p}\dfrac{w}{z}&\hbox{if } i=0
\end{array}\right.
\end{align*}
Analogues of the coproduct arguments in \ref{CoproductStrat} then imply that $\Psi_+^{-1}(\tilde{e}_n^+(p))$ and $\Psi_+^{-1}(\tilde{h}_n^+(p))$ differ from $E^+_{p,n}$ and $H^+_{p,n}$ by a scalar, respectively.
To compute these scalars, we calculate 
\begin{align*}
\varphi\left( \tilde{e}_{-n}^+(p), \Psi_-(R_{p,1})^n \right)&=q^{-np}\\
\varphi\left( \tilde{h}_{-n}^+(p), \Psi_-(R^*_{p,1})^n \right)&=(-1)^{n\ell-n}\qqq^{-n\ell}t^{np}
\end{align*}
On the other hand, we can recycle the computations from proof of Theorem \ref{FinalShuffle} to obtain
\begin{align*}
\langle E^+_{p,n}, R_{p,1}^n\rangle&=\frac{q^{n(\ell-p)}\qqq^n\prod_{r=1}^n(1-(qt)^{-r})}{(1-\qqq^2)^{n\ell}}\\
\langle H^+_{p,n}, (R^*_{p,1})^n\rangle&=\frac{(-1)^nt^{n(p-\ell)}\prod_{r=1}^n(1-(qt)^{-r})}{\qqq^{n(\ell-1)}(1-\qqq^{-2})^{n\ell}}
\end{align*}
Taking the appropriate quotients, we obtain $c_{-n}$ for both.


\bibliographystyle{plain}
\bibliography{Wreath}

\begin{thebibliography}{10}

\bibitem{BeckBraid}
Jonathan Beck.
\newblock Braid group action and quantum affine algebras.
\newblock {\em Comm. Math. Phys.}, 165(3):555--568, 1994.

\bibitem{BezFink}
Roman Bezrukavnikov and Michael Finkelberg.
\newblock Wreath {M}acdonald polynomials and the categorical {M}c{K}ay
  correspondence.
\newblock {\em Camb. J. Math.}, 2(2):163--190, 2014.
\newblock With an appendix by Vadim Vologodsky.

\bibitem{BEF}
Alexander Braverman, Pavel Etingof, and Michael Finkelberg.
\newblock Cyclotomic double affine {H}ecke algebras.
\newblock {\em Ann. Sci. \'{E}c. Norm. Sup\'{e}r. (4)}, 53(5):1249--1312, 2020.
\newblock With an appendix by Hiraku Nakajima and Daisuke Yamakawa.

\bibitem{CF2}
Oleg Chalykh and Maxime Fairon.
\newblock On the {H}amiltonian formulation of the trigonometric spin
  {R}uijsenaars-{S}chneider system.
\newblock {\em Lett. Math. Phys.}, 110(11):2893--2940, 2020.

\bibitem{Damiani}
Ilaria Damiani.
\newblock La {$R$}-matrice pour les alg\`{e}bres quantiques de type affine non
  tordu.
\newblock {\em Ann. Sci. \'{E}cole Norm. Sup. (4)}, 31(4):493--523, 1998.

\bibitem{DingFrenk}
Jin~Tai Ding and Igor~B. Frenkel.
\newblock Isomorphism of two realizations of quantum affine algebra
  {$U_q({\germ g}{\germ l}(n))$}.
\newblock {\em Comm. Math. Phys.}, 156(2):277--300, 1993.

\bibitem{DingIohara}
Jintai Ding and Kenji Iohara.
\newblock Generalization of {D}rinfeld quantum affine algebras.
\newblock {\em Lett. Math. Phys.}, 41(2):181--193, 1997.

\bibitem{FJMMSemi}
B.~Feigin, E.~Feigin, M.~Jimbo, T.~Miwa, and E.~Mukhin.
\newblock Quantum continuous {$\germ{gl}_\infty$}: semiinfinite construction of
  representations.
\newblock {\em Kyoto J. Math.}, 51(2):337--364, 2011.

\bibitem{FJMMRep}
B.~Feigin, M.~Jimbo, T.~Miwa, and E.~Mukhin.
\newblock Representations of quantum toroidal {${\germ{gl}}_n$}.
\newblock {\em J. Algebra}, 380:78--108, 2013.

\bibitem{FeiTsymK}
Boris Feigin and Alexander Tsymbaliuk.
\newblock Equivariant {$K$}-theory of {H}ilbert schemes via shuffle algebra.
\newblock {\em Kyoto J. Math.}, 51(4):831--854, 2011.

\bibitem{FeiTsym}
Boris Feigin and Alexander Tsymbaliuk.
\newblock Bethe subalgebras of {$U_q(\widehat{\germ{gl}}_n)$} via shuffle
  algebras.
\newblock {\em Selecta Math. (N.S.)}, 22(2):979--1011, 2016.

\bibitem{FinkTsym}
Michael Finkelberg and Alexander Tsymbaliuk.
\newblock Multiplicative slices, relativistic {T}oda and shifted quantum affine
  algebras.
\newblock In {\em Representations and nilpotent orbits of {L}ie algebraic
  systems}, volume 330 of {\em Progr. Math.}, pages 133--304.
  Birkh\"{a}user/Springer, Cham, 2019.

\bibitem{FJW}
Igor~B. Frenkel, Naihuan Jing, and Weiqiang Wang.
\newblock Quantum vertex representations via finite groups and the {M}c{K}ay
  correspondence.
\newblock {\em Comm. Math. Phys.}, 211(2):365--393, 2000.

\bibitem{Haiman}
Mark Haiman.
\newblock Combinatorics, symmetric functions, and {H}ilbert schemes.
\newblock In {\em Current developments in mathematics, 2002}, pages 39--111.
  Int. Press, Somerville, MA, 2003.

\bibitem{JamesComb}
G.~D. James.
\newblock Some combinatorial results involving {Y}oung diagrams.
\newblock {\em Math. Proc. Cambridge Philos. Soc.}, 83(1):1--10, 1978.

\bibitem{JKerb}
Gordon James and Adalbert Kerber.
\newblock {\em The representation theory of the symmetric group}, volume~16 of
  {\em Encyclopedia of Mathematics and its Applications}.
\newblock Addison-Wesley Publishing Co., Reading, Mass., 1981.
\newblock With a foreword by P. M. Cohn, With an introduction by Gilbert de B.
  Robinson.

\bibitem{KMS}
M.~Kashiwara, T.~Miwa, and E.~Stern.
\newblock Decomposition of {$q$}-deformed {F}ock spaces.
\newblock {\em Selecta Math. (N.S.)}, 1(4):787--805, 1995.

\bibitem{Kod}
Ryosuke Kodera.
\newblock Affine {Y}angian action on the {F}ock space.
\newblock {\em Publ. Res. Inst. Math. Sci.}, 55(1):189--234, 2019.

\bibitem{Lusztig}
George Lusztig.
\newblock {\em Introduction to quantum groups}.
\newblock Modern Birkh\"{a}user Classics. Birkh\"{a}user/Springer, New York,
  2010.
\newblock Reprint of the 1994 edition.

\bibitem{Mac}
I.~G. Macdonald.
\newblock {\em Symmetric functions and {H}all polynomials}.
\newblock Oxford Classic Texts in the Physical Sciences. The Clarendon Press,
  Oxford University Press, New York, second edition, 2015.
\newblock With contribution by A. V. Zelevinsky and a foreword by Richard
  Stanley, Reprint of the 2008 paperback edition.

\bibitem{Miki3}
Kei Miki.
\newblock Toroidal and level {$0\ U'_q(\widehat{{\rm sl}_{n+1}})$} actions on
  {$U_q(\widehat{{\rm gl}_{n+1}})$} modules.
\newblock {\em J. Math. Phys.}, 40(6):3191--3210, 1999.

\bibitem{Miki}
Kei Miki.
\newblock Toroidal braid group action and an automorphism of toroidal algebra
  {$U_q({\rm sl}_{n+1,\rm tor})\ (n\geq 2)$}.
\newblock {\em Lett. Math. Phys.}, 47(4):365--378, 1999.

\bibitem{Miki2}
Kei Miki.
\newblock Representations of quantum toroidal algebra {$U_q({\rm sl}_{n+1,{\rm
  tor}})\ (n\geq 2)$}.
\newblock {\em J. Math. Phys.}, 41(10):7079--7098, 2000.

\bibitem{NagaoK}
Kentaro Nagao.
\newblock {$K$}-theory of quiver varieties, {$q$}-{F}ock space and nonsymmetric
  {M}acdonald polynomials.
\newblock {\em Osaka J. Math.}, 46(3):877--907, 2009.

\bibitem{Nagao1}
Kentaro Nagao.
\newblock Quiver varieties and {F}renkel-{K}ac construction.
\newblock {\em J. Algebra}, 321(12):3764--3789, 2009.

\bibitem{NegutCyc}
Andrei Negu\cb{t}.
\newblock {\em Quantum {A}lgebras and {C}yclic {Q}uiver {V}arieties}.
\newblock ProQuest LLC, Ann Arbor, MI, 2015.
\newblock Thesis (Ph.D.)--Columbia University.

\bibitem{NegutTor}
Andrei Negu\cb{t}.
\newblock Quantum toroidal and shuffle algebras.
\newblock {\em Adv. Math.}, 372:107288, 60, 2020.

\bibitem{Saito}
Yoshihisa Saito.
\newblock Quantum toroidal algebras and their vertex representations.
\newblock {\em Publ. Res. Inst. Math. Sci.}, 34(2):155--177, 1998.

\bibitem{SchiffVass}
Olivier Schiffmann and Eric Vasserot.
\newblock The elliptic {H}all algebra and the {$K$}-theory of the {H}ilbert
  scheme of {$\Bbb A^2$}.
\newblock {\em Duke Math. J.}, 162(2):279--366, 2013.

\bibitem{Tsym}
Alexander Tsymbaliuk.
\newblock Several realizations of {F}ock modules for toroidal {$\ddot
  {U}_{q,d}(\germ {sl}_n)$}.
\newblock {\em Algebr. Represent. Theory}, 22(1):177--209, 2019.

\bibitem{TsymBook}
Alexander Tsymbaliuk.
\newblock {\em Shuffle {A}pproach {T}owards {Q}uantum {A}ffine and {T}oroidal
  {A}lgebras}, volume~49 of {\em SpringerBriefs in Mathematical Physics}.
\newblock Springer, Singapore, 2023.

\bibitem{Ugl}
Denis Uglov.
\newblock Yangian {G}elfand-{Z}etlin bases, {$\germ g\germ l_N$}-{J}ack
  polynomials and computation of dynamical correlation functions in the spin
  {C}alogero-{S}utherland model.
\newblock {\em Comm. Math. Phys.}, 191(3):663--696, 1998.

\bibitem{VV}
M.~Varagnolo and E.~Vasserot.
\newblock Schur duality in the toroidal setting.
\newblock {\em Comm. Math. Phys.}, 182(2):469--483, 1996.

\bibitem{VVCyclic}
M.~Varagnolo and E.~Vasserot.
\newblock On the {$K$}-theory of the cyclic quiver variety.
\newblock {\em Internat. Math. Res. Notices}, (18):1005--1028, 1999.

\end{thebibliography}

\end{document}